\newtheorem{theorem}{Theorem}[section]
\newtheorem{lemma}[theorem]{Lemma}
\newtheorem{proposition}[theorem]{Proposition}
\newtheorem{corollary}[theorem]{Corollary}
\theoremstyle{definition}
\newtheorem{definition}[theorem]{Definition}
\theoremstyle{remark}
\newtheorem{remark}[theorem]{Remark}
\numberwithin{equation}{section}
\newcommand{\bp}{\begin{pmatrix}}
\newcommand{\ep}{\end{pmatrix}}
\newcommand{\mf}{\mathfrak}
\newcommand{\mc}{\mathcal}
\newcommand{\zg}{\mc{Z}(\mf{g})}
\newcommand{\zl}{\mc{Z}(\ell_\Theta)}
\newcommand{\ug}{\mc{U}(\mf{g})}
\newcommand{\un}{\mc{U}(\mf{n})}
\newcommand{\h}{\mf{h}^{\Theta}}
\newcommand{\hs}{\mf{h}^{\Theta *}}
\newcommand{\ul}{\mc{U}(\ell_\Theta)}
\newcommand{\uu}{\mc{U}(\bar{\mf{u}}_\Theta)}
\newcommand{\otl}{\Omega_{\Theta, \lambda}}
\newcommand{\uh}{\mc{U}(\mf{h})}
\newcommand{\HH}{\mathcal{H}_\Theta}
\newcommand{\C}{\mathbb{C}}
\newcommand{\Z}{\mathbb{Z}}
\newcommand{\N}{\mathcal{N}}
\newcommand{\M}{\mathcal{M}}
\newcommand{\D}{\mathcal{D}}
\newcommand{\DD}{\mathbb{D}}
\newcommand{\I}{\mathcal{I}}
\newcommand{\Nte}{\mathcal{N}_{\theta, \eta}}
\newcommand{\W}{W_\Theta \backslash W}
\newcommand{\Zq}{\mathbb{Z}[q,q^{-1}]}
\newcommand{\MDNe}{\mathcal{M}_{coh}(\mathcal{D}_X,N,\eta)}
\DeclareMathOperator{\Hom}{Hom}
\DeclareMathOperator{\ch}{ch}
\DeclareMathOperator{\Int}{Int}
\DeclareMathOperator{\im}{im}
\DeclareMathOperator{\End}{End}
\begin{document}

\title{A Kazhdan-Lusztig algorithm for Whittaker modules} 

\author{Anna Romanov}
\maketitle

\begin{abstract}
We study a category of Whittaker modules over a complex semisimple Lie algebra by realizing it as a category of twisted $\mc{D}$-modules on the associated flag variety using Beilinson--Bernstein localization. The main result of this paper is the development of a geometric algorithm for computing the composition multiplicities of standard Whittaker modules. This algorithm establishes that these multiplicities are determined by a collection of polynomials we refer to as Whittaker Kazhdan--Lusztig polynomials. In the case of trivial nilpotent character, this algorithm specializes to the usual algorithm for computing multiplicities of composition factors of Verma modules using Kazhdan--Lusztig polynomials. 
\end{abstract}
\tableofcontents

\section{Introduction}

A fundamental goal in representation theory is to understand all representations of complex semisimple Lie algebras. However, the category of all modules for a given Lie algebra is so large that a full classification has only been obtained for the simplest example, the Lie algebra $\mf{sl}(2,\C)$ \cite{Block}. In light of this, one way to approach this goal is to study well-behaved categories of representations subject to certain restrictions, then relax the restrictions to expand the categories and observe what aspects of the structure carry over into the larger category. A classic example of such a well-behaved category is Bernstein-Gelfand--Gelfand's category $\mc{O}$, which has been studied extensively in the past 40 years and found to display deep connections across representation theory. The category $\mc{N}$ of Whittaker modules introduced by Mili\v{c}i\'{c}--Soergel in \cite{CompositionSeries} is a generalization of category $\mc{O}$ which also contains a collection of nondegenerate Whittaker modules introduced by Kostant \cite{Kostant}. In category $\mc{O}$, characters of simple modules are determined by Kazhdan--Lusztig polynomials. In this paper, we show that the same is true in the category of Whittaker modules, and we develop an algorithm for computing these characters. The main result of this paper is the following theorem. 

\begin{theorem}
\label{theorem1} (Theorem \ref{KLalgorithm}, Corollary \ref{multiplicity integral character}, equation (\ref{KLinversionWhittakergVerma}))
For any irreducible Whittaker module $L$ and standard Whittaker module $M$ with the same regular integral infinitesimal character, there exists a polynomial $Q_{ML} \in q\Z[q]\cup \{1 \} $ such that the multiplicity of $L$ in the composition series of $M$ is given by $Q_{ML}(-1)$. Moreover, the polynomials $Q_{ML}$ can be computed through a combinatorial recursive algorithm.
\end{theorem}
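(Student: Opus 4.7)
The plan is to follow the geometric strategy announced in the abstract: apply Beilinson--Bernstein localization to pass from the representation-theoretic category $\mc{N}$ to a category of twisted $\mc{D}$-modules on the flag variety $X$, and then deduce the multiplicities from the Kazhdan--Lusztig machinery applied to this geometric incarnation. Concretely, I would fix a regular integral character $\lambda$ and a nilpotent character $\eta$, identify the corresponding block of $\mc{N}$ with a category $\MDNe$ of $\lambda$-twisted coherent $\mc{D}_X$-modules that are equivariant against $(N,\eta)$ for an appropriate unipotent subgroup $N \subseteq G$, and then show that standard Whittaker modules correspond to standard extensions of $(N,\eta)$-equivariant twisted local systems from orbits, while irreducible Whittaker modules correspond to their intermediate (IC) extensions.

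The next step is to classify and order the $N$-orbits on $X$ in a way compatible with a Bruhat-like order. When $\eta$ is trivial this recovers the decomposition into Schubert cells indexed by $W$; in general, the Whittaker condition forces the parameter set to be the coset space $\W$, with $\Theta$ the set of simple roots determined by $\eta$. For each such orbit there is a unique irreducible $(N,\eta)$-equivariant twisted local system, whose standard extension yields a standard Whittaker module $M$ and whose intermediate extension yields the irreducible $L$. Holonomicity of $\mc{D}$-modules then supplies finite composition series automatically, and the problem of computing multiplicities is translated into a combinatorial one on perverse sheaves.

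With this framework in place, the polynomials $Q_{ML}$ are produced by a Hecke-type action on the Grothendieck group $K(\MDNe)$. I would construct intertwining (Radon-type) functors attached to simple reflections $s_\alpha$ and verify that, after passing to $K$-theory, they satisfy the quadratic and braid relations of a Hecke algebra $\HH$, with the classes of standard objects forming a distinguished basis. A self-dual basis corresponding to irreducible objects is then pinned down by Verdier duality on perverse sheaves, following the Kazhdan--Lusztig--Vogan template; the entries of the resulting change-of-basis matrix are the polynomials $Q_{ML}$. Positivity and membership in $q\Z[q] \cup \{1\}$ follow from the decomposition theorem, and specializing $q = -1$ converts these polynomials into multiplicities by an Euler characteristic calculation. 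When $\eta$ is trivial the setup visibly collapses to the classical computation of Kazhdan--Lusztig polynomials for Verma modules, which gives a sanity check and accounts for the last clause of the theorem.

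The main obstacle I expect lies in the geometric input: proving that the $(N,\eta)$-equivariant Beilinson--Bernstein localization is an equivalence on the relevant block, and that the intertwining functors preserve $(N,\eta)$-equivariance while still realizing a genuine Hecke action on $K(\MDNe)$. In particular, the normalization of $T_{s_\alpha}$ has to distinguish between $\alpha \in \Theta$, where the Whittaker condition collapses the relevant orbit structure and the simple reflection should act in a degenerate way, and $\alpha \notin \Theta$, where the classical picture persists; it is precisely this dichotomy that forces the parameter set to be $\W$ rather than all of $W$. Once these geometric foundations are secured, the recursion for $Q_{ML}$ and the identification $[M:L] = Q_{ML}(-1)$ should follow from a direct adaptation of the classical Kazhdan--Lusztig--Vogan argument.
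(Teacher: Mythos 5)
Your high-level strategy matches the paper's: localize to a category of $(N,\eta)$-equivariant twisted $\mathcal{D}$-modules, identify standard/costandard/irreducible objects on the cosets $\W$, and extract multiplicity polynomials from a Hecke-type structure, specializing at $q=-1$. But there is a concrete gap at the point where you propose to "pass to $K$-theory" and find a Hecke-module structure with polynomials as matrix entries. The Grothendieck group $K(\MDNe)$ is a $\Z$-module; the Euler characteristic necessarily collapses the variable $q$, so a change-of-basis matrix computed there has integer entries, not polynomials. Some additional structure is needed to make $q$ visible before you can speak of a basis change with entries in $q\Z[q]$. In the classical Kazhdan--Lusztig story this is supplied by weights/mixed sheaves (Frobenius eigenvalues or Hodge weights); your sketch invokes "Verdier duality on perverse sheaves, following the Kazhdan--Lusztig--Vogan template," but plain Verdier duality alone does not single out a self-dual $\Z[q,q^{-1}]$-basis without such a grading.

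The paper sidesteps the mixed machinery entirely. Instead of a $K$-theory class, it associates to each object $\mathcal{F}$ the refined invariant
\[
\nu(\mathcal{F})=\sum_{C\in\W}\sum_{m\in\Z}\dim_{\mathcal{O}}\bigl(R^{m}i_{w^{C}}^{!}\mathcal{F}\bigr)\,q^{m}\,\delta_{C}\in\mathcal{H}_{\Theta},
\]
which records each cohomological degree of the costalks separately; this is where $q$ comes from, and only its evaluation at $q=-1$ factors through the Grothendieck group. The $\mathcal{H}$-module structure on $\mathcal{H}_\Theta$ is combinatorial (the antispherical module), and the existence of the canonical self-dual family $\varphi(C)=\nu(\mathcal{L}_C)$ is established by a direct inductive computation using the $U_\alpha$-functors, whose semisimplicity on irreducibles follows from Mochizuki's decomposition theorem for arbitrary holonomic modules. (Uniqueness is a short recursion argument, Lemma \ref{uniqueness}; the equivalence of that recursion with self-duality in the antispherical module is proved afterwards, in Section \ref{Recursion is equivalent to duality}.) A secondary issue with your sketch: the intertwining/Radon functors $I_{s_\alpha}$ change the twist $\lambda\mapsto s_\alpha\lambda$ and so are not endofunctors of a fixed $\MDNe$; the paper works with the $U_\alpha$-functors precisely because, after a line-bundle twist, they do land back in $\mathcal{M}_{coh}(\mathcal{D}_X,N,\eta)$. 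So while your roadmap points in the right direction, the proof needs either a mixed/graded enhancement of $K$-theory or a degree-recording invariant like $\nu$ before any "polynomial change of basis" can be made sense of.
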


Our approach to studying Whittaker modules is to use the localization of Beilinson--Bernstein \cite{BB} to relate $\mc{N}$ to a certain category of holonomic $\mc{D}$-modules (so-called twisted Harish-Chandra sheaves) on the associated flag variety. This geometric approach gives us access to powerful tools such as the decomposition theorem for arbitrary holonomic $\mc{D}$-modules \cite{Mochizuki} which are essential in the development of the algorithm for computing the polynomials of Theorem \ref{theorem1}. 

The four main contributions of this paper to the existing literature on Whittaker modules are the following. First, we develop a theory of formal characters for Whittaker modules which generalizes the theory of formal characters of highest weight modules and distinguishes isomorphism classes of objects in the Grothendieck group of the category (Section \ref{Character theory}). Second, we give a detailed description of the structure of the category of twisted Harish-Chandra sheaves (Section \ref{A category of twisted sheaves}). Irreducible objects in this category were classified in \cite{TwistedSheaves}, but this paper includes a collection of new results describing the action of intertwining functors on certain costandard sheaves, which were originally introduced by Mili\v{c}i\'{c}--Soergel in \cite{TwistedSheaves}. The third and most significant contribution of the current paper is the development of an algorithm for computing the composition multiplicities of standard Whittaker modules, which establishes that the formal characters of simple Whittaker modules are given by a collection of polynomials that we refer to as Whittaker Kazhdan--Lusztig polynomials (Section \ref{A Kazhdan-Lusztig algorithm}). Finally, we give a comparison of the Whittaker Kazhdan--Lusztig polynomials which arise in our algorithm to other types of Kazhdan--Lusztig polynomials in the existing literature (Section \ref{Whittaker Kazhdan-Lusztig polynomials}). This places Theorem \ref{theorem1} in the context of the Kazhdan--Lusztig combinatorics of the Hecke algebra and establishes a connection between Whittaker modules and other representation theoretic objects such as generalized Verma modules.


We will spend the rest of the introduction describing the main results of this paper in more detail. Let $\ug$ be the universal enveloping algebra of a semisimple Lie algebra $\mf{g}$ over $\C$ and $\zg$ the center of $\ug$. Let $\mf{b}$ be a fixed Borel subalgebra of $\mf{g}$ with nilpotent radical $\mf{n}=[\mf{b},\mf{b}]$ and $\mf{h}\subset \mf{b}$ a Cartan subalgebra. The category $\mc{N}$ of Whittaker modules consists of all $\ug$-modules which are finitely generated, $\zg$-finite, and $\un$-finite. For a choice of $\lambda \in \mf{h}^*$ and a Lie algebra morphism $\eta:\mf{n}\rightarrow \C$, McDowell \cite{McDowell} constructed a standard Whittaker module $M(\lambda, \eta)$ (Definition \ref{standard Whittaker module}), which has a unique irreducible quotient $L(\lambda, \eta)$, and showed that all irreducible Whittaker modules appear as such quotients. When $\eta=0$, the $M(\lambda, 0)$ are Verma modules and the $L(\lambda, 0)$ are simple highest weight modules. When $\eta$ acts non-trivially on all root subspaces of $\mf{g}$ corresponding to simple roots (we say such $\eta$ are {\em nondegenerate}), the $M(\lambda, \eta)$ are the irreducible modules studied by Kostant in \cite{Kostant}. 

Unlike highest weight modules, Whittaker modules don't decompose into generalized $\mf{h}$-weight spaces. However, in blocks of $\mc{N}$ where the nilpotent radical $\mf{n}$ acts by a specific character $\eta$, Whittaker modules do decompose into generalized weight spaces for a certain subalgebra $\mf{h}^\Theta \subset \mf{h}$, which is the center of a Levi subalgebra of $\mf{g}$ determined by the character $\eta$ (Section \ref{Standard and simple modules}). In contrast to the generalized $\mf{h}$-weight spaces of category $\mc{O}$, the generalized $\mf{h}^\Theta$-weight spaces in this decomposition are not finite-dimensional, but they are of finite length in the category of modules over the specified Levi subalgebra. We can capture the structure of this $\mf{h}^\Theta$-weight space decomposition by defining the {\em formal character} (Definition \ref{formal character}) of a Whittaker module in a way that generalizes the formal character of highest weight modules. Then a natural problem in understanding the structure of the category of Whittaker modules is to compute the formal characters of irreducible modules in $\mc{N}$, which reduces to computing the multiplicities of the irreducible constituents of a standard Whittaker module.

These multiplicities were first determined for integral $\lambda$ by Mili\v{c}i\'{c} and Soergel in \cite{CompositionSeries} and for arbitrary $\lambda$ by Backelin in \cite{Backelin} by relating subcategories of Whittaker modules to certain blocks of category $\mc{O}$ and using the classical Kazhdan--Lusztig algorithm for Verma modules.  The current paper provides a more efficient procedure for calculating these multiplicities by using a geometric realization of Whittaker modules as twisted sheaves of $\mc{D}$-modules on the flag variety. This geometric perspective allows us to relate the multiplicities to combinatorial data extracted from the associated Hecke algebra, providing a direct link between Whittaker modules and Kazhdan--Lusztig polynomials. 

The first step in studying Whittaker modules geometrically is to realize $\mc{N}$ as a category of twisted Harish-Chandra modules. Let $N$ be the unipotent subgroup of $\Int \mf{g}$ such that Lie$N=\mf{n}$. For a Lie algebra morphism $\eta:\mf{n}\rightarrow \C$, the category of $\eta$-twisted Harish-Chandra modules consists of $\mf{g}$-modules which admit an algebraic action of $N$ whose differential differs from the restricted $\mf{g}$-action by $\eta$.  We denote the category of such modules with infinitesimal character corresponding to a Weyl-group orbit $\theta \subset \mf{h}^*$ (via the Harish-Chandra homomorphism) by $\mc{M}_{fg}(\mc{U}_\theta, N, \eta)$. In \cite{TwistedSheaves}, Mili\v{c}i\'{c} and Soergel established a categorical equivalence between certain blocks of $\mc{N}$ and the categories $\mc{M}_{fg}(\mc{U}_\theta, N, \eta)$. 

This description allows us to use the localization theory of Beilinson--Bernstein to study Whittaker modules. For each $\lambda \in \mf{h}^*$, Beilinson and Bernstein \cite{BB} constructed a sheaf of twisted differential operators $\mc{D}_\lambda$ on the flag variety $X$ of $\mf{g}$ whose global sections $\Gamma(X, \mc{D}_\lambda)$ are equal to $\mc{U}_\theta$, where $\theta$ is the Weyl group orbit of $\lambda$ in $\mf{h}^*$ and $\mc{U}_\theta$ is the quotient of $\ug$ by the corresponding ideal in $\zg$. Applying the localization functor $\Delta_\lambda=\mc{D}_\lambda \otimes_{\mc{U}_\theta}-$ to the category $\mc{M}_{fg}(\mc{U}_\theta, N, \eta)$, we obtain a geometric category $\mc{M}_{coh}(\mc{D}_\lambda, N, \eta)$ of {\em $\eta$-twisted Harish-Chandra sheaves} (Section \ref{A category of twisted sheaves}), which are $N$-equivariant $\mc{D}_\lambda$-modules satisfying a compatibility condition determined by $\eta$. This category consists of holonomic $\mc{D}_\lambda$-modules, so its objects have finite length and there is a well-defined duality in the category. The morphism $\eta$ determines a parabolic subgroup $W_\Theta$ of the Weyl group $W$ of $\mf{g}$, and from the parameters $\eta:\mf{n}\rightarrow \C$, $C \in W_\Theta \backslash W$, and $\lambda \in \mf{h}^*$, we construct a standard sheaf $\mc{I}(w^C, \lambda, \eta)$, costandard sheaf $\mc{M}(w^C, \lambda, \eta)$, and irreducible sheaf $\mc{L}(w^C, \lambda, \eta)$ (Section \ref{A category of twisted sheaves}). Here $w^C$ is the longest element in the coset $C$. The precise relationship between the algebraic category $\mc{N}$ and the geometric category $\M_{coh}(\D_\lambda, N, \eta)$ is given by the following theorem, which we prove in Section \ref{Geometric description of Whittaker modules}. 
\begin{theorem}
\label{translation} (Theorem \ref{global sections of costandards}, Theorem \ref{global sections of irreducibles})  Let $\lambda \in \mf{h}^*$, $\eta:\mf{n}\rightarrow \C$ a Lie algebra morphism, and $C \in W_\Theta \backslash W$. Let $\mc{M}(w^C, \lambda, \eta)$ be the corresponding costandard $\eta$-twisted Harish-Chandra sheaf and $M(w^C\lambda, \eta)$ the corresponding standard Whittaker module. Then 
\begin{enumerate}[label=(\roman*)] 
\item if $\lambda$ is antidominant,
\[ 
\Gamma(X, \mc{M}(w^C, \lambda, \eta)) = M(w^C \lambda, \eta), \text{ and }
\] 
\item if $\lambda$  is also regular, then 
\[
\Gamma(X, \mc{L}(w^C, \lambda, \eta)) = L(w^C\lambda, \eta). 
\]
\end{enumerate}
\end{theorem}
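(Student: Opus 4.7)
My plan is to execute the standard Beilinson--Bernstein orbit-localization strategy, adapted to the $\eta$-twisted Harish-Chandra setting. The $N$-orbits on $X$ are the Schubert cells $C(w) = N \cdot wB/B$, and the key geometric input particular to the Whittaker setting is that an $N$-equivariant $\eta$-twisted rank-one local system exists on $C(w)$ precisely when $w = w^C$ is the longest element of some coset $C \in \W$; this is exactly the index set for objects of $\M_{coh}(\D_\lambda, N, \eta)$ described in Section \ref{A category of twisted sheaves}. By construction, $\M(w^C, \lambda, \eta)$ is the $\D_\lambda$-module direct image of the corresponding twisted local system along the affine immersion $i_{w^C}\colon C(w^C) \hookrightarrow X$.

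For part (i), antidominance of $\lambda$ yields exactness of $\Gamma$ on holonomic $\D_\lambda$-modules together with vanishing of higher cohomology, so $\Gamma(X, \M(w^C, \lambda, \eta))$ can be computed directly from the affine cell $C(w^C)$. I would then produce an explicit cyclic generator --- the canonical section of the twisted local system, pushed forward to $X$ --- and verify that (a) it lies in the generalized $\h$-weight space of weight $w^C\lambda$, (b) it is annihilated by $x - \eta(x)$ for all $x \in \mf{n}$, and (c) the resulting $\U_\theta$-module is free over $\uu$, with the Bruhat filtration supplying this freeness. By the universal property defining McDowell's standard Whittaker module, this yields a surjection $M(w^C\lambda, \eta) \twoheadrightarrow \Gamma(X, \M(w^C, \lambda, \eta))$, and comparing the two sides through their $\h$-weight structures --- both are parabolically induced from the same $\ell_\Theta$-module of the same length --- shows the surjection is an isomorphism.

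For part (ii), regularity of $\lambda$ promotes $\Gamma$ to an equivalence between $\M_{coh}(\D_\lambda, N, \eta)$ and $\M_{fg}(\U_\theta, N, \eta)$. By construction in Section \ref{A category of twisted sheaves}, the irreducible sheaf $\LL(w^C, \lambda, \eta)$ is the unique irreducible quotient of $\M(w^C, \lambda, \eta)$. The equivalence preserves unique irreducible quotients, so combining with part (i) and McDowell's identification of $L(w^C\lambda, \eta)$ as the unique irreducible quotient of $M(w^C\lambda, \eta)$, we conclude $\Gamma(X, \LL(w^C, \lambda, \eta)) = L(w^C\lambda, \eta)$.

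The main obstacle is the identification step in (i): matching the geometrically-defined cyclic generator of the global sections with McDowell's algebraic definition requires careful bookkeeping of the $\h$-action, which is more delicate here than in the classical Verma case because the decomposition is into generalized (rather than honest) $\h$-eigenspaces whose pieces are infinite-dimensional. The singular case of (i) is further complicated by the absence of a full categorical equivalence, so the argument must proceed through direct generator and filtration manipulations rather than by formal transfer along $\Gamma$.
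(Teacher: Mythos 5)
Your part (ii) matches the paper's argument exactly (regularity gives an equivalence of categories, which carries the unique irreducible quotient of $\mc{M}(w^C,\lambda,\eta)$ to that of $M(w^C\lambda,\eta)$), but part (i) has a genuine gap that starts with a misidentification of the object. The costandard sheaf $\mc{M}(w^C,\lambda,\eta)$ is \emph{not} the $\mc{D}_\lambda$-module direct image of the twisted local system along $i_{w^C}$ --- that is the standard sheaf $\mc{I}(w^C,\lambda,\eta)$. The costandard sheaf is defined as the holonomic dual $\mc{I}(w^C,-\lambda,\eta)^*$, and the two agree only in the Grothendieck group: $\mc{I}$ has $\mc{L}$ as its unique irreducible \emph{sub}module while $\mc{M}$ has it as its unique irreducible \emph{quotient} (Proposition \ref{costandardHCsheaves}). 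Consequently your plan of computing global sections ``directly from the affine cell'' and exhibiting a cyclic Whittaker generator applies, if to anything, to $\Gamma(X,\mc{I}(w^C,\lambda,\eta))$; but that module is generally \emph{not} isomorphic to $M(w^C\lambda,\eta)$ (it is cogenerated rather than generated by its copy of $L(w^C\lambda,\eta)$), so the surjection you want from the universal property of McDowell's module does not exist for it. The only orbit where this conflation is harmless is the closed one, where $\mc{I}(w_\Theta,\lambda,\eta)=\mc{M}(w_\Theta,\lambda,\eta)$.

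Even after correcting which sheaf is which, the proposal is missing the two mechanisms the proof actually needs. First, for a non-closed cell $C(w^C)$ there is no direct computation showing that the $\h$-weight spaces of the global sections have the right $\mf{l}_\Theta$-composition series; asserting that ``both sides are parabolically induced from the same $\mf{l}_\Theta$-module of the same length'' assumes the conclusion. The paper instead (a) computes the formal character only for the closed $P_\Theta$-orbit, by realizing $\mc{I}(w_\Theta,\lambda,\eta)$ as a pushforward from the flag variety of $\mf{l}_\Theta$ and using the normal-degree filtration (Lemmas \ref{induced from standard}--\ref{graded decomposition}, Proposition \ref{smallest orbit}, resting on Proposition \ref{nondegenerate} for the nondegenerate case), and (b) transports the answer to every other coset with intertwining functors (Lemma \ref{Intertwining with Shortest Elements} together with $R\Gamma\circ LI_w\simeq R\Gamma$), then upgrades the Grothendieck-group identity to an isomorphism by the induction of Proposition \ref{classification} using unique irreducible quotients. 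Your proposal contains no substitute for either step. Second, the statement of (i) allows singular antidominant $\lambda$, where $\Gamma$ is exact but not faithful and the quotient-matching argument breaks down; the paper handles this with geometric translation functors and Lemma \ref{twisting}, whereas ``direct generator and filtration manipulations'' is not yet an argument there.
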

Hence to compute the composition multiplicities of standard Whittaker modules $M(\lambda, \eta)$, it suffices to compute the composition multiplicities of the costandard $\eta$-twisted Harish-Chandra sheaves $\mc{M}(w^C, \lambda, \eta)$ in the category $\mc{M}_{coh}(\mc{D}_\lambda, N, \eta)$.  In the case of regular integral $\lambda \in \mf{h}^*$, the structure of this category is completely determined by the parameter $\eta$, so we may further restrict our attention to the case $\lambda = - \rho$, where $\rho$ is the half-sum of positive roots. In this setting, $\mc{D}_\lambda = \mc{D}_X$ is the sheaf of differential operators on $X$ (without a twist). One way to better understand the structure of the irreducible $\mc{D}_X$-modules $\mc{L}(w^C, -\rho, \eta)$ (or indeed any $\mc{D}_X$-module in this category) is to utilize the stratification of the flag variety and to restrict them to Bruhat cells contained in their support. The resulting restricted $\mc{D}$-modules are easy to understand: the $N$-equivariance guarantees that they decompose into a direct sum of copies of the structure sheaf on the corresponding Bruhat cell. By keeping track of how many copies appear in the direct sum corresponding to each Bruhat cell (we refer to this integer as the ``$\mc{O}$-dimension,'' denoted $\dim_\mc{O}$), we can construct a combinatorial object which captures all important structural information of each irreducible $\mc{D}_X$-module in the category  $\mc{M}_{coh}(\mc{D}_X, N, \eta)$. For each coset $D \in \W$, let $\delta_D$ be a formal variable parameterized by $D$, and let $\mathcal{H}_\Theta$ be the free $\Z[q,q^{-1}]$-module with basis $\{\delta_D$, $D \in W_\Theta \backslash W\}$. Let $i_{w^D}:C(w^D) \rightarrow X$ be the inclusion of the corresponding Bruhat cell into the flag variety. We define a map $\nu:\mc{M}_{coh}(\mc{D}_X,N,\eta) \rightarrow \mc{H}_\Theta$ by 
\[
\nu(\mc{F})=\sum_{D \in \W} \sum_{m \in \Z} \dim_\mc{O} (R^m i^!_{w^D}(\mc{F}))q^m \delta_D .
\]
Here, $R^mi_{w^D}^!$ are the right derived functors of the $\mc{D}_X$-module extraordinary inverse image functor (Section \ref{Modules over twisted sheaves of differential operators}).

We use $\nu$ to develop our desired Kazhdan--Lusztig algorithm for Whittaker modules. Let $\Sigma$ be the root system of $\mf{g}$ and $\Pi \subset \Sigma$ the set of simple roots determined by our fixed $\mf{b}$. Let $\Theta \subset \Pi$ be the subset of simple roots picked out by $\eta\in \ch{\mf{n}}$, and let $W_\Theta \subset W$ be the corresponding parabolic subgroup of the Weyl group. For any $\alpha \in \Pi$, we define a certain $\Z[q,q^{-1}]$-module endomorphism $T_\alpha: \mc{H}_\Theta \rightarrow \mc{H}_\Theta$ (Section \ref{A Kazhdan-Lusztig algorithm}). The main result of this paper is the following theorem. 
\begin{theorem} 
\label{KLalgorithmintro}
(Theorem \ref{KLalgorithm}, Proposition \ref{existence}) The function $\varphi:W_\Theta \backslash W \rightarrow \mathcal{H}_\Theta$ given by $\varphi(C) = \nu(\mc{L}(w^C, - \rho, \eta))$ is the unique function satisfying the following properties. 
\begin{enumerate}[label=(\roman*)]
\item{For $C \in W_\Theta \backslash W$, 
\[
\varphi(C) = \delta_C + \sum_{D<C}P_{CD} \delta_D, 
\]
where $P_{CD} \in q\Z[q]$.}
\item{For $\alpha \in \Pi$ and $C \in \W$ such that $Cs_\alpha < C$, there exist $c_D \in \Z$ such that 
\[
T_\alpha(\varphi(Cs_\alpha))= \sum_{D\leq C} c_D\varphi(D).
\]}
\end{enumerate}
\end{theorem}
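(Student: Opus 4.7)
The plan is to split the theorem into existence (the function $\varphi(C) = \nu(\mc{L}(w^C, -\rho, \eta))$ satisfies (i) and (ii)) and uniqueness (these properties determine $\varphi$), with the bulk of the work in verifying (ii). The argument will rely on the Beilinson--Bernstein correspondence of Theorem \ref{translation}, the structure of $\mc{L}(w^C, -\rho, \eta)$ as an intermediate extension, and the decomposition theorem for holonomic $\D$-modules.

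To establish property (i), I would use the fact that $\mc{L}(w^C, -\rho, \eta)$ is by construction the intermediate extension of an irreducible $N$-equivariant $\eta$-twisted local system on the Bruhat cell $C(w^C)$. The extraordinary restriction $i_{w^C}^!\mc{L}(w^C, -\rho, \eta)$ to its own cell is, up to a cohomological shift, a rank-one local system concentrated in a single degree, which produces the leading term $\delta_C$ with coefficient $1$. For $D < C$, the support/codimension condition defining intermediate extensions forces $R^m i_{w^D}^!\mc{L}(w^C, -\rho, \eta)$ to vanish in the degree where $\delta_D$ would appear with a $q$-constant coefficient, so that the contribution at $\delta_D$ is a polynomial in $q$ with no constant term and non-negative integer coefficients (the $\mc{O}$-dimensions), hence $P_{CD} \in q\Z[q]$.

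For property (ii), I would introduce the geometric counterpart of $T_\alpha$: an intertwining functor $\Phi_\alpha$ on the derived category of $\M_{coh}(\D_X, N, \eta)$, realized via pullback--pushforward along the $\p^1$-fibration $p_\alpha: X \to X_\alpha$ to the partial flag variety associated to $\alpha$. The key identity to prove is
\[
\nu(\Phi_\alpha(\mc{F})) = T_\alpha(\nu(\mc{F}))
\]
for all $\mc{F}$ in the category, which reduces to a $\p^1$-calculation fibered over Bruhat cells in $X_\alpha$ (and in practice dictates the combinatorial definition of $T_\alpha$ in Section \ref{A Kazhdan-Lusztig algorithm}). Once this intertwining is in hand, property (ii) follows by applying $\Phi_\alpha$ to $\mc{L}(w^{Cs_\alpha}, -\rho, \eta)$: when $Cs_\alpha < C$, Mochizuki's decomposition theorem \cite{Mochizuki} expresses the output as a direct sum of shifted simple $\eta$-twisted Harish-Chandra sheaves, which by the classification in Section \ref{A category of twisted sheaves} must be of the form $\mc{L}(w^D, -\rho, \eta)$ for $D \leq C$. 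Applying $\nu$ and reading off the integer multiplicities yields the coefficients $c_D$.

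Uniqueness is a standard induction on the length of $w^C$. For the minimal coset (i) forces $\varphi(C) = \delta_C$; in the inductive step, any $\alpha \in \Pi$ with $Cs_\alpha < C$ gives an expression of the known element $T_\alpha(\varphi(Cs_\alpha))$ as $\sum_{D \leq C} c_D \varphi(D)$, and the triangularity provided by (i) determines the $c_D$'s successively (starting with $c_C$ from the $\delta_C$-coefficient), hence determines $\varphi(C)$. The principal obstacle is establishing the intertwining identity $\nu \circ \Phi_\alpha = T_\alpha \circ \nu$ together with the compatibility of Mochizuki's decomposition theorem with the $N$-equivariance and $\eta$-twist: one must verify that the simple summands arising from the decomposition remain in $\M_{coh}(\D_X, N, \eta)$ and that the $\mc{O}$-dimensions transform under $p_\alpha^!$ and $p_{\alpha *}$ in exactly the way encoded by the formula for $T_\alpha$.
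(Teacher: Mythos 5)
Your outline gets the overall architecture right (uniqueness by length induction, existence by realizing $\varphi(C) = \nu(\mc{L}_C)$ and using the $\mathbb{P}^1$-fibration functor $U_\alpha^0 = p_\alpha^+ H^0 p_{\alpha +}$ together with the decomposition theorem), and your treatment of uniqueness and of property (i) is essentially the paper's. But there is a genuine gap in the existence proof of (ii): you assert that the identity $\nu(\Phi_\alpha(\mc{F})) = T_\alpha(\nu(\mc{F}))$ holds for \emph{all} $\mc{F}$ in the category and that it reduces to a $\mathbb{P}^1$-calculation. Neither claim is correct as stated. The map $\nu$ is not additive on short exact sequences (the derived restriction $R^\bullet i_{w^D}^!$ has nontrivial connecting maps, so $\dim_\mc{O}$ only adds up after alternating signs), and $U^0_\alpha$ is not exact, so the proposed identity cannot hold for arbitrary $\mc{F}$; at the polynomial level it is genuinely special to irreducibles. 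More importantly, even for the irreducible $\mc{L}_{Cs_\alpha}$ the $\mathbb{P}^1$-calculation only produces a long exact sequence relating the restrictions $R^m i_{w^D}^!$ and $R^m i_{w^D s_\alpha}^!$ of $\mc{L}_{Cs_\alpha}$ and $U_\alpha^0(\mc{L}_{Cs_\alpha})$ over each cell $p_\alpha(C(w^D))$; to extract the sought additive formula for $\mc{O}$-dimensions you must know that the connecting maps vanish.

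This is where the paper's proof introduces an ingredient your proposal misses entirely: a parity/vanishing statement (Lemma \ref{parity}, recast as the ``inductive parity condition''), established by a mutual induction with the main statement, that $R^m i_{w^D}^!(\mc{L}_C) = 0$ whenever $m \not\equiv \ell(w^C) - \ell(w^D) \pmod 2$. Applied to $\mc{L}_{Cs_\alpha}$ (which lies one step down in the induction), this forces the long exact sequence over $p_\alpha(C(w^D))$ to split into short exact pieces where $\mc{O}$-dimensions add, and only then does $T_\alpha(\nu(\mc{L}_{Cs_\alpha})) = \nu(U_\alpha^0(\mc{L}_{Cs_\alpha}))$ drop out by a direct bookkeeping computation. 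Two further points you elide: (a) one must show $U_\alpha^k(\mc{L}_{Cs_\alpha}) = 0$ for $k \neq 0$, which the paper does via the connection between $U$-functors and intertwining functors (Theorem \ref{UfunctorsIfunctors}) and the fact that $LI_{s_\alpha}$ has no $L^{-1}$ on the standard sheaf when $Cs_\alpha < C$; and (b) one needs Lemma \ref{whittakercase} to handle cells $C(w^D s_\alpha)$ where $w^D s_\alpha$ is not a longest coset representative, i.e.\ when $Ds_\alpha = D$, which produces the degenerate case $T_\alpha(\delta_D) = 0$. You flag ``compatibility with equivariance'' as the principal obstacle, but that is not where the difficulty lies; the missing idea is the parity condition and the mutual induction that carries it along.
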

The existence and uniqueness of a function satisfying equivalent conditions to (i) and (ii) was shown combinatorially by Soergel in \cite{Soergel97}\footnote{The formulation in \cite{Soergel97} is in terms of the antispherical module of the Hecke algebra. We prove in Section \ref{Recursion is equivalent to duality} that this formulation is equivalent to conditions (i) and (ii) in Theorem \ref{KLalgorithmintro}.}. By realizing the function $\varphi$ explicitly in terms of the category $\mc{M}_{coh}(\mc{D}_\lambda, N, \eta)$, Theorem \ref{KLalgorithmintro} relates the Hecke algebra combinatorics established in \cite{Soergel97} to the category of Whittaker modules, which is the main accomplishment of this paper.  Theorem \ref{KLalgorithmintro} determines a family $\{P_{CD}\}$ of polynomials in $q\Z[q]$ parameterized by pairs of cosets $C,D \in \W$. We refer to these as {\em Whittaker Kazhdan--Lusztig polynomials}. In Section \ref{Whittaker Kazhdan-Lusztig polynomials} we describe their relationship to other types of Kazhdan--Lusztig polynomials appearing in the literature. These polynomials determine the composition multiplicities of standard Whittaker modules. More precisely, if $(\mu_{CD})_{C,D \in \W}$ is the inverse of the lower-triangular matrix $(P_{CD}(-1))_{C,D \in \W}$, then we have the following corollary to Theorem \ref{KLalgorithmintro}. 

\begin{corollary}
(Corollary \ref{multiplicity of irreducible in standard}, Corollary \ref{multiplicity integral character})
Let $\lambda \in \mf{h}^*$ be regular, integral, and antidominant. Then the multiplicity of the irreducible Whittaker module $L(w^D(\lambda - \rho), \eta)$ in the standard Whittaker module $M(w^C(\lambda - \rho), \eta)$ is $\mu_{CD}$. 
\end{corollary}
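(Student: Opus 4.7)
The plan is to translate the composition multiplicity problem into the geometric category $\mc{M}_{coh}(\mc{D}_X, N, \eta)$ via Beilinson--Bernstein localization, apply Theorem \ref{KLalgorithmintro}, and invert a lower-triangular matrix to extract $\mu_{CD}$. The central geometric input is the identity
\[
\nu(\mc{M}(w^C, -\rho, \eta))\big|_{q=-1} = \delta_C.
\]
I would establish this by analyzing the derived extraordinary inverse images $R^m i_{w^D}^!\mc{M}(w^C, -\rho, \eta)$: one verifies that they vanish for $D \neq C$, and that for $D = C$ the $N$-equivariant cohomology sheaves consist of copies of $\mc{O}_{C(w^C)}$ whose signed $\mc{O}$-dimensions cancel to $1$ at $q = -1$. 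Because the long exact sequence of the functors $R^m i_{w^D}^!$ converts any short exact sequence in $\mc{M}_{coh}(\mc{D}_X, N, \eta)$ into an alternating-sum identity of $\mc{O}$-dimensions, the composite $\mc{F} \mapsto \nu(\mc{F})|_{q=-1}$ descends to an additive map on the Grothendieck group.

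Given this identity, I would expand the Grothendieck class of the costandard as
\[
[\mc{M}(w^C, -\rho, \eta)] = \sum_{D} m_{CD}\, [\mc{L}(w^D, -\rho, \eta)]
\]
and apply $\nu(-)|_{q=-1}$. By Theorem \ref{KLalgorithmintro}, the right side becomes $\sum_{D} m_{CD}\bigl(\delta_D + \sum_{E<D} P_{DE}(-1)\delta_E\bigr)$. Equating with $\delta_C$ and comparing coefficients of $\delta_E$ yields $\sum_{D} m_{CD}\, P_{DE}(-1) = \delta_{CE}$, so the matrix $(m_{CD})$ of composition multiplicities in $\mc{M}_{coh}(\mc{D}_X, N, \eta)$ is exactly the inverse of the lower-triangular matrix $(P_{CD}(-1))$ and hence coincides with $(\mu_{CD})$.

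To transfer to the algebraic category $\mc{N}$, I would invoke Theorem \ref{translation} for $\lambda$ regular integral antidominant: global sections is exact on $\mc{M}_{coh}(\mc{D}_\lambda, N, \eta)$ and sends $\mc{M}(w^C, \lambda, \eta)$ and $\mc{L}(w^C, \lambda, \eta)$ to $M(w^C(\lambda-\rho), \eta)$ and $L(w^C(\lambda-\rho), \eta)$ respectively, once the $\rho$-shift relating the geometric and classical parameters is tracked. The observation from the introduction that $\mc{M}_{coh}(\mc{D}_\lambda, N, \eta)$ for regular integral $\lambda$ has combinatorial structure depending only on $\eta$---so its multiplicity data agrees with that of $\mc{M}_{coh}(\mc{D}_X, N, \eta)$ under the $\nu$-map---then transports the matrix $(\mu_{CD})$ to multiplicities in the block of $\mc{N}$ corresponding to $\lambda$. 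Applying exactness of $\Gamma$ to a composition series of $\mc{M}(w^C, \lambda, \eta)$ completes the proof. The main obstacle is the first step: pinning down $R^\bullet i_{w^D}^!\mc{M}(w^C, -\rho, \eta)$ on each Bruhat cell precisely enough for the signed $\mc{O}$-dimensions to cancel to $\delta_C$ at $q=-1$; a secondary subtlety is reconciling the $\rho$-shift between geometric and classical parameterizations and justifying the independence of the multiplicities from the choice of regular integral antidominant $\lambda$.
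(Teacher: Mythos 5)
Your overall architecture is the paper's: show that $\nu$ evaluated at $q=-1$ is additive on the Grothendieck group, identify the image of the (co)standard sheaf with $\delta_C$, expand against the basis of irreducibles using Theorem \ref{KLalgorithm}, invert the lower-triangular matrix $(P_{CD}(-1))$, and transfer to $\mc{N}$ via $\Gamma$. But your central geometric input contains a genuine error: you propose to prove $\nu(\mc{M}(w^C,-\rho,\eta))|_{q=-1}=\delta_C$ by verifying that $R^m i_{w^D}^!\mc{M}(w^C,-\rho,\eta)=0$ for $D\neq C$. That vanishing is false. The costandard sheaf is the holonomic dual of a standard sheaf, i.e.\ a ``$!$-extension,'' and its extraordinary inverse image to boundary cells $C(w^D)$ with $D<C$ is in general nonzero (already for $\mf{sl}_2$, $\eta=0$: the $!$-restriction to the closed point of the dual of $i_+\mc{O}_{C(w_0)}$ on $\p^1$ does not vanish). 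What is true is only that the alternating sum of $\mc{O}$-dimensions over $m$ vanishes for $D<C$ --- but that is exactly the statement you are trying to prove, so your proposed verification is circular at best.

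The repair is what the paper does: work with the standard sheaf $\mc{I}_C=i_{w^C+}(\mc{O}_{C(w^C)})$ instead. For it, base change and Kashiwara's theorem give $R^m i_{w^D}^!(\mc{I}_C)=0$ for all $m$ when $D\neq C$ (the fiber product over the disjoint cells is empty) and $R^m i_{w^C}^!(\mc{I}_C)=\mc{O}_{C(w^C)}$ concentrated in degree $0$, so $\nu(\mc{I}_C)=\delta_C$ exactly, not merely at $q=-1$ (Lemma \ref{standard modules}). Then Proposition \ref{costandardHCsheaves}(i) gives $[\mc{M}_C]=[\mc{I}_C]$ in the Grothendieck group, and additivity of $\nu(-1)$ (Lemma \ref{factors through Grothendieck group}) transports the identity to the costandard class, after which your matrix inversion goes through verbatim. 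One further point you leave vague: the passage from $\lambda=-\rho$ to arbitrary regular integral antidominant $\lambda$ is not ``an observation from the introduction'' but an application of the geometric translation functor $-(\lambda)$ together with the projection formula, which identifies $\mc{I}(w^C,-\rho,\eta)(\lambda)=\mc{I}(w^C,\lambda-\rho,\eta)$ and, being an equivalence, matches irreducibles with irreducibles; this is what makes the Grothendieck-group identity independent of the choice of $\lambda$.
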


This paper is organized in the following way. We start by describing the structure of the algebraic category of Whittaker modules in Section \ref{A category of Whittaker modules}, following \cite{McDowell}. In this section we recall McDowell's construction of standard and simple Whittaker modules and develop a new theory of formal characters for Whittaker modules. In Section \ref{A category of twisted sheaves}, we describe the category of twisted Harish-Chandra sheaves, following \cite{TwistedSheaves}. We recall Mili\v{c}i\'{c}--Soergel's construction of standard and simple objects in this category, then introduce a class of costandard objects. These costandard objects were mentioned in \cite{TwistedSheaves} but not explicitly defined or studied. We prove some results about the action of intertwining functors on these costandard objects which are necessary for our arguments in Section \ref{Geometric description of Whittaker modules}. We dedicate Section \ref{Geometric description of Whittaker modules} to explicitly relating the category $\mc{N}$ of Whittaker modules and the category $\mc{M}_{coh}(\mc{D}_\lambda, N, \eta)$ of twisted Harish-Chandra sheaves by proving that the global sections of costandard twisted Harish-Chandra sheaves are standard Whittaker modules. This result sets us up to work completely in the geometric category. Section \ref{A Kazhdan-Lusztig algorithm} contains the proof of Theorem \ref{KLalgorithmintro}, which is the main result of this paper. In Section \ref{Whittaker Kazhdan-Lusztig polynomials} we determine the relationship between Whittaker Kazhdan--Lusztig polynomials and Kazhdan--Lusztig polynomials, and we describe a combinatorial duality between the Kazhdan--Lusztig algorithm for generalized Verma modules found in \cite{localization} and the Kazhdan--Lusztig algorithm for Whittaker modules established in this paper. In Appendix \ref{Geometric preliminaries}, we record our geometric conventions and include some fundamental facts about modules over twisted sheaves of differential operators.

\subsection{Acknowledgements} This project grew from the work of my PhD advisor Dragan Mili\v{c}i\'{c} and his collaborator Wolfgang Soergel, and it owes its existence to the detailed mathematical foundation established in their joint work. I am grateful for the mentorship and guidance provided by Dragan Mili\v{c}i\'{c} that led to this paper. I also thank Peter Trapa for suggesting that I introduce the Hecke algebra into this story, and I thank Geordie Williamson for directing me to the combinatorial existence argument in \cite{Soergel97} and pointing out the connection with the antispherical category. I thank Emily Cliff and Christopher Leonard for providing very useful feedback on preliminary versions of this paper.


\section{A category of Whittaker modules}
\label{A category of Whittaker modules}
In this section, we introduce the category of representations which is the main focus of this paper and describe some key aspects of its structure. Let $\mf{g}$ be a complex semisimple Lie algebra, $\ug$ its universal enveloping algebra, and $\zg$ the center of $\ug$. Let $\mf{b}$ be a Borel subalgebra with nilpotent radical $\mf{n}=[\mf{b},\mf{b}]$ and $\mf{h}$ the (abstract) Cartan subalgebra of $\mf{g}$ \cite[\S2]{D-modules}. Let $\Pi \subset \Sigma^+ \subset \Sigma\subset \mf{h}^*$ be the corresponding set of simple roots and positive roots, respectively, inside the root system of $\mf{g}$. Let $W$ be the Weyl group of $\mf{g}$, and denote by $\rho \in \mf{h}^*$ the half-sum of positive roots. 

We begin by recalling some standard terminology. For a $W$-orbit $\theta \subset \mf{h}^*$, there is a unique maximal ideal $J_\theta \subset \zg$, which can be obtained as the kernel of the Lie algebra morphism $\chi_\lambda:\zg \rightarrow \C$ defined by $z \mapsto (\lambda - \rho) (\gamma(z))$, where $\gamma:\zg \rightarrow \uh$ is the untwisted Harish-Chandra homomorphism and $\lambda$ is an element of the $W$-orbit $\theta$ \cite[Ch. 1 \S9]{BGGcatO}. All $\lambda \in \theta$ result in the same homomorphism $\chi_\lambda$. We call such a Lie algebra morphism $\chi_\lambda$ an {\em infinitesimal character}.  We say a $\mf{g}$-module $V$ {\em has infinitesimal character} if it has the property that there exists an infinitesimal character $\chi_\lambda$ such that for any $z \in \zg$ and $v \in V$, $zv = \chi_\lambda(z)v$, or, equivalently, if it is annihilated by the ideal $J_\theta$. We say a $\mf{g}$-module {\em has generalized infinitesimal character} if there exists an infinitesimal character $\chi_\lambda$ and $k \in \mathbb{N}$ such that for all $v \in V$ and $z \in \zg$, $(z - \chi_\lambda(z))^kv=0$, or, equivalently, if it is annihilated by a power of the ideal $J_\theta$. 

We are interested in the following category of $\mf{g}$-modules, which was originally introduced by Mili\v{c}i\'{c} and Soergel in \cite{CompositionSeries}.

\begin{definition} 
\label{Whittaker module}
Let $\mc{N}$ be the category of $\mf{g}$-modules which are 
\begin{enumerate}[label=(\roman*)]
\item finitely generated as $\ug$-modules,
\item $\zg$-finite, and 
\item $\un$-finite. 
\end{enumerate} 
We refer to objects in this category as \emph{Whittaker modules}.
\end{definition}

\begin{remark}
In Kostant's paper \cite{Kostant} the term {\em Whittaker module} is used to describe any $\mf{g}$-module that is cyclically generated by a Whittaker vector. (These are vectors where $\mf{n}$ acts by a nondegenerate Lie algebra morphism $\eta: \mf{n}\rightarrow \C$.) We note that Definition \ref{Whittaker module} differs from Kostant's original terminology, though all irreducible Whittaker modules (in the sense of Kostant) are contained in $\mc{N}$. 
\end{remark}

McDowell showed that all objects in $\mc{N}$ have finite length \cite{McDowell} (a fact which follows immediately from their description as holonomic $\mc{D}$-modules in \cite{TwistedSheaves}). This category is a natural generalization of Bernstein--Gelfand--Gelfand's category $\mc{O}$. Indeed, if condition (ii) is replaced by the stronger condition that $\mf{h}$ acts semisimply on the module, the resulting category is exactly category $\mc{O}$ \cite{BGGcatO}, so $\mc{O}$ is a full subcategory of $\mc{N}$. A key difference between $\mc{N}$ and $\mc{O}$ is that when the $\mf{h}$-semisimplicity condition is relaxed to $\zg$-finiteness, the existence of weight space decompositions is lost. However, the finiteness conditions (ii) and (iii) provide us with other useful decompositions of $\mc{N}$ which lead to structural results reminiscent of those in category $\mc{O}$.  In particular, we have two categorical decompositions \cite[\S2 Lem. 2.1, Lem. 2.2]{TwistedSheaves}: 
\[
\mc{N} = \bigoplus_{\theta \in W \backslash \mf{h}^*} \mc{N}_{\hat{\theta}} \text{ and } \mc{N} = \bigoplus_{\eta \in \mf{n}^*} \mc{N}_\eta. 
\]
Here $\mc{N}_{\hat{\theta}}$ is the full subcategory of $\mc{N}$ consisting of modules with generalized infinitesimal character $\chi_\lambda$ for $\lambda \in \theta$, and $\mc{N}_\eta$ is the full subcategory of $\mc{N}$ consisting of modules where for any $X \in \mf{n}$, $X-\eta(X)$ acts locally nilpotently on $V$. The only elements $\eta \in \mf{n}^*$ for which $\mc{N}_\eta\neq 0$ are Lie algebra morphisms \cite[Ch. VII \S1.3 Prop. 9(iii)]{Bourbaki}. We call such a Lie algebra morphism $\eta:\mf{n}\rightarrow \C$ an {\em $\mf{n}$-character} and say that modules in $\mc{N}_\eta$ have {\em generalized $\mf{n}$-character $\eta$}. We denote by $\ch{\mf{n}}\subset \mf{n}^*$ the set of $\mf{n}$-characters. 

Let $\mc{N}_\theta$ be the full subcategory of $\mc{N}$ consisting of modules with infinitesimal character $\chi_\lambda$ for $\lambda \in \theta$, and let $\mc{N}_{\theta, \eta}$ be the intersection $\mc{N}_\theta \cap \mc{N}_\eta$. Any irreducible Whittaker module lies in $\mc{N}_{\theta, \eta}$ for some Weyl group orbit $\theta$ and some $\eta \in \ch{\mf{n}}$, so we will often restrict our attention to this full subcategory $\Nte$. 

The category $\mc{N}_{\theta, \eta}$ is equivalent to a certain category of $\eta$-twisted Harish-Chandra modules, which is easier to relate to the geometric categories which appear later in this paper. We describe this equivalence now. Let $N$ be the unipotent subgroup of $\Int \mf{g}$ such that Lie$N=\mf{n}$. Because $N$ acts on the flag variety $X$ of $\mf{g}$ with finitely many orbits, the pair $(\mf{g}, N)$ is a Harish-Chandra pair in the sense of \cite[\S1]{TwistedSheaves}. For a fixed $\mf{n}$-character $\eta \in \ch{\mf{n}}$, denote by $\mc{M}_{fg}(\mf{g},N,\eta)$ the category of triples $(\pi, \nu, V)$ such that:
\begin{enumerate}[label=(\roman*)]  
\item{$(\pi, V)$ is a finitely generated $\ug$-module,}
\item{$(\nu, V)$ is an algebraic representation of $N$, and}
\item{the differential of the $N$-action on $V$ induces a $\mc{U}(\mf{n})$-module structure on $V$ such that for any $\xi \in \mf{n}$,
\[
\pi (\xi)=d\nu(\xi)+\eta(\xi).
\]}
\end{enumerate}
This is the category of {\em $\eta$-twisted Harish-Chandra modules} for the Harish-Chandra pair $(\mf{g},N)$. Let $\mc{M}_{fg}(\mc{U}_\theta,N,\eta)$ be the full subcategory of $\mc{M}_{fg}(\mf{g},N,\eta)$ consisting of modules which are also $\mc{U}_\theta =\ug / (\ug J_\theta)$-modules; that is, modules $V \in \mc{M}_{fg}(\mf{g},N,\eta)$ which are annihilated by $J_\theta$. In \cite[\S2 Lem. 2.3]{TwistedSheaves}, Mili\v{c}i\'{c} and Soergel show that the categories $\Nte$ and $\mc{M}_{fg}(\mc{U}_\theta,N,\eta)$ are equivalent. This association lets us use the localization functor of Beilinson and Bernstein (Section \ref{Beilinson-Bernstein localization}) to study the category of Whittaker modules geometrically. In particular, by localizing objects in $\mc{M}_{fg}(\mc{U}_\Theta, N, \eta)$ one obtains a category of $\eta$-twisted holonomic $\mc{D}$-modules which are equivariant for the action of $N$. We will discuss the details of this construction in Section \ref{A category of twisted sheaves}.


\subsection{Standard and simple modules}
\label{Standard and simple modules}

In this section we briefly review McDowell's construction of standard Whittaker modules, which are a class of induced modules in $\mc{N}$ that generalize the Verma modules in category $\mc{O}$. For a choice of $\lambda \in \mf{h}^*$ and $\eta \in \ch{\mf{n}}$, we construct a standard Whittaker module $M(\lambda, \eta)$. When $\eta =0$, these modules are Verma modules, and when $\eta$ is nondegenerate, these modules are the irreducible modules studied by Kostant in \cite{Kostant}. For partially degenerate $\eta$, these modules share some structural properties with Verma modules and some structural properties with Kostant's nondegenerate modules. In particular, McDowell showed that the $M(\lambda, \eta)$ decompose into $\h$-weight spaces for the action of a certain subalgebra $\h\subset \mf{h}$ depending on $\eta$. When $\eta=0$, this subalgebra is equal to $\mf{h}$ and McDowell's decomposition is the decomposition of a Verma module into finite-dimensional weight spaces. When $\eta$ is nondegenerate, this subalgebra is trivial, so the entire module is a single infinite-dimensional weight space. After reviewing the construction of $M(\lambda, \eta)$, we generalize McDowell's result and show that all modules in $\mc{N}_\eta$ admit generalized $\h$-weight space decompositions. We also show that these $\h$-weight spaces are themselves Whittaker modules for a Levi subalgebra determined by $\eta$. This extra structure enables us to develop a new theory of formal characters for $\mc{N}$ in Section \ref{Character theory} which generalizes the theory of formal characters of highest weight modules (as described in \cite[\S1.15]{BGGcatO}). 

For the remainder of this subsection, fix an $\mf{n}$-character $\eta \in \ch{\mf{n}}$. For $\alpha \in \Sigma$, let $\mf{g}_\alpha$ be the root space corresponding to $\alpha$. Then $\eta$ determines a subset $\Theta \subset \Pi$ of the simple roots in the following way:
\[
\Theta=\{\alpha \in \Pi : \eta|_{\mf{g}_\alpha}\neq 0\} .
\]
If $\Theta = \Pi$, we say that $\eta$ is \emph{nondegenerate}. We call a Whittaker module $V \in \N_\eta$ for $\eta$ nondegenerate a \emph{nondegenerate Whittaker module}. The cyclically generated Whittaker modules studied by Kostant in \cite{Kostant} are examples of nondegenerate Whittaker modules in our terminology.

Let $\Sigma_\Theta \subset \Sigma$ be the root subsystem generated by $\Theta$, and $\Sigma_\Theta ^+ = \Sigma^+ \cap \Sigma_\Theta$ the corresponding set of positive roots. Let $W_\Theta$ be the Weyl group of $\Sigma_\Theta$, and $\rho_\Theta = \frac{1}{2} \sum_{\alpha \in \Sigma_\Theta ^+} \alpha$. Let 
\[
\mf{n}_\Theta = \bigoplus_{\alpha \in \Sigma_\Theta^+} \mf{g}_\alpha, \mf{u}_\Theta = \bigoplus_{\alpha \in \Sigma^+ - \Sigma_\Theta ^+} \mf{g}_\alpha, \bar{\mf{n}}_\Theta = \bigoplus_{\alpha \in -\Sigma_\Theta ^+} \mf{g}_\alpha, \text{ and }  \bar{\mf{u}}_\Theta = \bigoplus_{\alpha \in -\Sigma^+ -( -\Sigma_\Theta ^+)} \mf{g}_\alpha.
\]
In this way, the character $\eta$ determines a reductive subalgebra $\mf{l}_\Theta = \bar{\mf{n}}_\Theta \oplus \mf{h} \oplus \mf{n}_\Theta$ of $\mf{g}$ and a parabolic subalgebra $\mf{p}_\Theta = \mf{l}_\Theta \oplus \mf{u}_\Theta$. The reductive Lie subalgebra $\mf{l}_\Theta$ decomposes into the direct sum of a semisimple subalgebra $\mf{s}_\Theta$ and its center $\mf{z}_\Theta$. The semisimple subalgebra $\mf{s}_\Theta$ in this decomposition is the derived subalgebra $[\mf{l}_\Theta, \mf{l}_\Theta]$, and it is easy to check that the center $\mf{z}_\Theta$ is the subalgebra $\h=\{H \in \mf{h} \mid \alpha(H) = 0, \alpha \in \Theta\}\subset \mf{h}$.  

Let $\gamma_\Theta:\zl \rightarrow \uh$ be the untwisted Harish-Chandra homomorphism of $\zl$ \cite[Ch. 1 \S 7]{BGGcatO}. Fix $\lambda \in \mf{h}^*$, and define $\varphi_{\Theta, \lambda}:\uh \longrightarrow \mathbb{C}$ to be the homomorphism sending $H\in \mf{h}$ to $(\lambda - \rho_\Theta)(H)\in \C$. The homomorphism  
\begin{equation}
\label{infinitesimal character}
\otl = \varphi_{\Theta, \lambda} \circ \gamma_\Theta: \zl \longrightarrow \mathbb{C}
\end{equation}
is an infinitesimal character of $\zl$. This gives us a map associating elements of $\mf{h}^*$ to maximal ideals in $\zl$:
\begin{align*}
\xi_\Theta:\mf{h}^* &\longrightarrow \text{Max}\zl \\
\lambda &\mapsto \text{ker}(\otl).
\end{align*}

From the data $(\lambda, \eta) \in \mf{h}^* \times \ch{\mf{n}}$, we construct an $\mf{l}_\Theta$-module
\[
Y(\lambda, \eta)=\left(\ul / \xi_\Theta(\lambda)\ul \right) \otimes_{\mathcal{U}(\mf{n}_\Theta)} \mathbb{C}_\eta.
\]
Here $\C_\eta$ is the one-dimensional $\mc{U}(\mf{n}_\Theta)$-module where $\mf{n}_\Theta$ acts by $\eta$. This induced module $Y(\lambda, \eta)$ is an irreducible $\mf{l}_\Theta$-module\cite[\S2 Prop. 2.3]{McDowell}. 

\begin{definition} 
\label{standard Whittaker module}
The $\textit{standard Whittaker module}$ in $\mc{N}$ associated to $\lambda \in \mf{h}^*$ and the character $\eta \in \ch{\mf{n}}$ is the $\mf{g}$-module
\[
M(\lambda, \eta) = \ug \otimes_{\mathcal{U}(\mf{p}_\Theta)}Y(\lambda - \rho + \rho_\Theta, \eta).
\]
Here $Y(\lambda - \rho + \rho_\Theta, \eta)$ is viewed as a $\mathcal{U}(\mf{p}_\Theta)$-module by letting $\mf{u}_\Theta$ act trivially and $M(\lambda, \eta)$ is a $\mf{g}$-module by left multiplication on the first factor.  
\end{definition}

To get a sense for this construction, it is useful to examine particular values of $\eta$. If $\eta = 0$, then $\Theta$ is empty, and $M(\lambda, 0)=\ug \otimes _{\mathcal{U}(\mf{b})} Y(\lambda - \rho, 0)$ is a Verma module of highest weight $\lambda-\rho$. If $\eta$ is nondegenerate, then $M(\lambda, \eta) = Y(\lambda, \eta)$ is an irreducible Whittaker module, as in \cite{Kostant}. 

Two such modules $M(\lambda, \eta)$ and $M(\mu, \eta)$ are isomorphic if and only if $\lambda$ and $\mu$ are in the same $W_\Theta$-orbit in $\mf{h}^*$. McDowell showed that each standard Whittaker module $M(\lambda, \eta)$ has a unique irreducible quotient $L(\lambda, \eta)$, and all irreducible Whittaker modules appear as such quotients \cite[\S2 Thm. 2.9]{McDowell}. Clearly both $M(\lambda, \eta)$ and $L(\lambda, \eta)$ have infinitesimal character $\chi_\lambda$ and generalized $\mf{n}$-character $\eta$, so they both lie in $\Nte$. 




McDowell showed that the center $\h$ of $\mf{l}_\Theta$ acts semisimply on $M(\lambda, \eta)$ \cite[\S2 Prop. 2.4(e)]{McDowell}. This decomposition will be necessary in the theory of formal characters established in the following section, so we briefly review it here. For any $\nu \in \mf{h}^*$, we use bold to denote the restriction of $\nu$ to $\hs$; that is, $\bm{\nu}=\nu|_{\h} \in \hs$. There is a natural partial order on $\hs$ \cite[\S 1 Prop. 1.8(a)]{McDowell}. Let $\Pi-\Theta = \{ \alpha_1, \alpha_2, \cdots, \alpha_p\}$. Then $\{\bm{\alpha_1}, \cdots , \bm{\alpha_p}\}$ is a basis for $\hs$. For $\bm{\alpha}, \bm{\beta} \in \hs$, say that $\bm{\alpha} \leq \bm{\beta}$ if 
\[
\bm{\beta} - \bm{\alpha} = c_1\bm{\alpha_1}+c_2 \bm{\alpha_2} + \cdots + c_p\bm{\alpha_p}
\]
for $c_i \in \mathbb{Z}_{\geq0}$. For a module $V$ in $\mc{N}_\eta$ and linear functional $\bm{\mu} \in \hs$, let $V_{\bm{\mu}}=\{v \in V | Xv=\bm{\mu}(X)v \text{ for all }X \in \h\}$ be the corresponding $\h$-weight space, and $V^{\bm{\mu}}=\{v \in V | \text{ for all } X \in \h, (X - \bm{\mu}(X))^kv=0 \text{ for some } k \in \mathbb{N}\}$ the corresponding generalized $\h$-weight space. If $V^{\bm{\mu}} \neq 0$, we say $\bm{\mu}$ is a \emph{$\h$-weight} of $V$. Then we have the following decomposition:  
\[
M(\lambda, \eta)=\bigoplus_{\bm{\nu} \leq \bm{\lambda - \rho}} M(\lambda, \eta)_{\bm{\nu}}.
\]
Furthermore, $M(\lambda, \eta)_{\bm{\lambda - \rho}}= Y(\lambda - \rho + \rho_\Theta)$, and $M(\lambda, \eta)_{\bm{\nu}}= \uu_{\bm{\mu}}\otimes_\mathbb{C}Y(\lambda - \rho + \rho_\Theta, \eta)$ for $\bm{\mu}\leq0$ in  $\hs$. (Here, we are using the fact that $\mf{h}^\Theta$ acts semisimply on $\uu$ \cite[\S2 Lem. 2.2(a)]{McDowell}.) 
 
The $\h$-weight spaces of $M(\lambda, \eta)$ have a richer structure than just that of $\h$-modules, as the following proposition shows. Given an $\mf{l}_\Theta$-module $V$, we denote by $\overline{V}$ the $\mf{s}_\Theta$-module induced by the inclusion of $\mf{s}_\Theta \subset \mf{l}_\Theta$. Since $\mf{s}_\Theta$ is semisimple, standard semisimplicity results apply to $\overline{V}$. Let $\mc{N}(\mf{s}_\Theta)$ be the category of $\mf{s}_\Theta$-Whittaker modules.

\begin{proposition}
\label{generalized weight spaces of standards}
Let $M(\lambda, \eta)=\bigoplus_{\bm{\nu} \leq \bm{\lambda - \rho}} M(\lambda, \eta)_{\bm{\nu}}$ be the decomposition of a standard Whittaker module in $\N_\eta$ into $\h$-weight spaces. For each $\bm{\nu} \in \hs$, 
\begin{enumerate}[label=(\roman*)]
\item $M(\lambda, \eta)_{\bm{\nu}}$ is a finite length $\mf{l}_\Theta$-module, and
\item $\overline{M(\lambda, \eta)_{\bm{\nu}}}$ is an object in $\mc{N}(\mf{s}_\Theta)$.
\end{enumerate}
\end{proposition}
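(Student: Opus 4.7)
The plan is to work with the explicit identification $M(\lambda, \eta)_{\bm{\nu}} = \uu_{\bm{\mu}} \otimes_\C Y(\lambda - \rho + \rho_\Theta, \eta)$ with $\bm{\mu} = \bm{\nu} - \bm{\lambda - \rho}$ that has just been recalled, and to equip it with its natural $\mf{l}_\Theta$-structure via the diagonal action. Since $\mf{l}_\Theta$ normalizes $\bar{\mf{u}}_\Theta$, the PBW identification $M(\lambda, \eta) \cong \uu \otimes_\C Y$ gives, for $\xi \in \mf{l}_\Theta$, the formula $\xi \cdot (u \otimes y) = (\ad \xi)(u) \otimes y + u \otimes \xi y$; that is, $\mf{l}_\Theta$ acts diagonally with $\ad$ on the first tensor factor. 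Because $\h$ is central in $\mf{l}_\Theta$, the factor $\uu_{\bm{\mu}}$ is $\ad\mf{l}_\Theta$-stable, so $\uu_{\bm{\mu}} \otimes_\C Y$ is indeed an $\mf{l}_\Theta$-submodule; moreover $\uu_{\bm{\mu}}$ is finite-dimensional since $\uu$ carries a $\Z_{\geq 0}^{|\Pi - \Theta|}$-grading with finite-dimensional graded pieces.

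For (i), I would show $\uu_{\bm{\mu}} \otimes Y$ lies in the Whittaker category $\mc{N}(\mf{l}_\Theta)$, which consists of finite-length modules by Mili\v{c}i\'{c}--Soergel. The module $Y = Y(\lambda - \rho + \rho_\Theta, \eta)$ is a nondegenerate irreducible $\mf{l}_\Theta$-Whittaker module of Kostant type (its $\mf{n}_\Theta$-character is nondegenerate on $\mf{l}_\Theta$ because $\Theta$ is the set of simple roots of $\mf{s}_\Theta$), so it lies in $\mc{N}(\mf{l}_\Theta)$. Finite generation of $\uu_{\bm{\mu}} \otimes Y$ over $\ul$ follows by choosing a basis $u_1, \dots, u_k$ of $\uu_{\bm{\mu}}$ and the cyclic generator $y_0$ of $Y$: the diagonal action together with finite-dimensionality of $\uu_{\bm{\mu}}$ allows one to recover every $v \otimes x y_0$ from $\{u_i \otimes y_0\}$ by induction on the degree of $x \in \ul$, using $v \otimes \xi y_0 = \xi(v \otimes y_0) - (\ad \xi)(v) \otimes y_0$ for $\xi \in \mf{l}_\Theta$. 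The $\mc{U}(\mf{n}_\Theta)$-finiteness is inherited as a subspace of the $\mc{U}(\mf{n})$-finite module $M(\lambda, \eta)$. The $\zl$-finiteness is the only nontrivial point and follows from the standard Kostant-type fact that the tensor of a module with infinitesimal character by a finite-dimensional module decomposes as a finite direct sum of modules with generalized infinitesimal characters.

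For (ii), I would argue that the transfer from $\mf{l}_\Theta$ to $\mf{s}_\Theta$ is essentially automatic once $\mf{h}^\Theta$ acts by a scalar. Since $M(\lambda, \eta)_{\bm{\nu}}$ is a genuine (not merely generalized) $\h$-weight space by McDowell's semisimplicity result, the central subalgebra $\mf{h}^\Theta$ acts by the scalar $\bm{\nu}$. Using the decomposition $\mf{l}_\Theta = \mf{s}_\Theta \oplus \h$ (direct sum of Lie algebras, since $\h$ is central), the $\ul$-module structure is completely determined by the $\mc{U}(\mf{s}_\Theta)$-structure together with this scalar, and the three defining conditions of $\mc{N}(\mf{s}_\Theta)$ transfer directly: the same finite set of generators works over $\mc{U}(\mf{s}_\Theta)$; $\mc{Z}(\mf{s}_\Theta) \subset \zl$ gives $\mc{Z}(\mf{s}_\Theta)$-finiteness from $\zl$-finiteness; and $\mf{n}_\Theta \subset \mf{s}_\Theta$ gives $\mc{U}(\mf{n}_\Theta)$-finiteness for free.

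The main obstacle I anticipate is verifying cleanly that $\mc{N}(\mf{l}_\Theta)$ is closed under tensoring with finite-dimensional $\mf{l}_\Theta$-modules carrying the adjoint action, particularly the $\zl$-finiteness step; however this is standard translation-principle material and can be handled by invoking the existing results on generalized infinitesimal character decompositions rather than redoing any calculations.
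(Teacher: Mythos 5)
Your proof is correct, and it takes a genuinely different route from the paper's, though both start from the same PBW identification $M(\lambda,\eta)_{\bm{\nu}} \cong \uu_{\bm{\mu}}\otimes_\C Y(\lambda-\rho+\rho_\Theta,\eta)$ with $\mf{l}_\Theta$ acting diagonally. The paper proves finite length in one stroke by invoking Kostant's structural theorem \cite[\S4 Thm.\,4.6]{Kostant} on tensor products of irreducible Whittaker modules with finite-dimensional modules (which simultaneously gives the composition-factor description), and then obtains (ii) from closure of $\mc{N}(\mf{s}_\Theta)$ under extensions. You instead verify membership in the Whittaker category head-on: finite generation over $\ul$ via a degree induction exploiting the diagonal action, $\mc{U}(\mf{n}_\Theta)$-finiteness by restriction from $M(\lambda,\eta)$, and $\zl$-finiteness from the general principle that tensoring a module with infinitesimal character by a finite-dimensional one yields finitely many generalized $\mc{Z}$-blocks; you then deduce finite length from the fact that $\mc{N}$ consists of finite-length modules, and transfer to $\mf{s}_\Theta$ cleanly using that $\h$ acts by the scalar $\bm{\nu}$. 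Your approach has the virtue of being self-contained (no appeal to the composition-factor statement in Kostant's theorem or to extension-closure), at the modest cost of a longer verification. One small point to tidy up: you invoke $\mc{N}(\mf{l}_\Theta)$ and cite Mili\v{c}i\'{c}--Soergel for finite length, but the paper only defines $\mc{N}$ for semisimple Lie algebras and $\mf{l}_\Theta$ is merely reductive; this is easily repaired by passing to $\mf{s}_\Theta$ first — your own argument in (ii) shows the restriction lies in $\mc{N}(\mf{s}_\Theta)$, and finite length over $\mf{s}_\Theta$ implies finite length over $\mf{l}_\Theta$ because $\h$ acts by a scalar — so (ii) should logically precede (i), or (i) should be phrased through $\mf{s}_\Theta$.
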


\begin{proof}
If $\eta=0$, then $\h=\mf{h}$ and $\mf{s}_\Theta = 0$. In this setting, the assertion is trivially true, so we assume $\eta \neq 0$. The action of $\mf{l}_\Theta$ commutes with the action of $\h$, so the $\mf{h}^\Theta$-weight spaces of $M(\lambda, \eta)$ are $\mf{l}_\Theta$-stable. This proves that $M(\lambda, \eta)_{\bm{\nu}}$ are $\mf{l}_\Theta$-modules. The vector space $\uu_{\bm{\mu}}$ is finite dimensional because there are only finitely many ways that we can express a given $\bm{\mu}\leq 0$ in $\hs$ as a negative sum of roots in $\Pi - \Theta$. This implies that $M(\lambda, \eta)_{\bm{\nu}}$ is the tensor product of a finite dimensional $\mf{l}_\Theta$-module with an irreducible Whittaker module. Such modules are of finite length and have composition factors which are irreducible Whittaker modules (for $\eta|_{\mf{n}_\Theta}$) by \cite[\S 4 Thm. 4.6]{Kostant}.  Because categories of Whittaker modules are closed under extensions \cite[\S 1]{CompositionSeries}, this in turn implies that $\overline{M(\lambda, \eta)_{\bm{\nu}}}$ is an object in $\mc{N}(\mf{s}_\Theta)$. 
\end{proof}

The $\h$-weight space structure of $M(\lambda, \eta)$ described in proposition \ref{generalized weight spaces of standards} is also inherited by its unique irreducible quotient $L(\lambda, \eta)$. Moreover, because the unique maximal submodule $N\subset M(\lambda, \eta)$ has $\h$-weights which are strictly less than $\bm{\lambda - \rho}$, $L(\lambda, \eta)$ has a unique maximal $\h$-weight, $\bm{\lambda - \rho}$, with respect to the partial order on $\hs$, and all other weights of $L(\lambda, \eta)$ lie in a cone below this ``highest'' weight. The highest $\h$-weight space of a standard module in $\mc{N}$ and the highest $\h$-weight space of its unique irreducible quotient are both isomorphic to an irreducible $\mf{l}_\Theta$-Whittaker module: $M(\lambda, \eta)_{\bm{\lambda - \rho}} = L(\lambda, \eta)_{\bm{\lambda - \rho}} = Y(\lambda - \rho + \rho_\Theta, \eta)$. 

We finish this section by showing that all modules in $\mc{N}_\eta$ decompose into generalized $\h$-weight spaces, and these weight spaces are modules in $\mc{N}(\mf{s}_\Theta)$. 


\begin{theorem} 
\label{Whittaker weight space decomposition}
Any object $V$ in $\mathcal{N}_\eta$ admits a decomposition 
\[
V=\bigoplus_{\bm{\mu} \in \hs} V^{\bm{\mu}}
\]
where the generalized $\h$-weight spaces $V^{\bm{\mu}}$ are finite length $\mf{l}_\Theta$-modules. Moreover, if we restrict the $\mf{l}_\Theta$-action to the semisimple subalgebra $\mf{s}_\Theta \subset \mf{l}_\Theta$ and denote the resulting $\mf{s}_\Theta$-module by $\overline{V^{\bm{\mu}}}$, the generalized $\h$-weight spaces $\overline{V^{\bm{\mu}}}$ of $V$ are objects in $\mc{N}(\mf{s}_\Theta)$. 
\end{theorem}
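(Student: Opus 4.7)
Any $V \in \mathcal{N}_\eta$ has finite composition length by McDowell, so I would induct on the length, simultaneously establishing all three assertions of the theorem: the decomposition $V = \bigoplus_{\bm{\mu}} V^{\bm{\mu}}$, the finite length of each $V^{\bm{\mu}}$ as an $\mathfrak{l}_\Theta$-module, and the membership of $\overline{V^{\bm{\mu}}}$ in $\mathcal{N}(\mathfrak{s}_\Theta)$. The base case $V = L(\lambda, \eta)$ is essentially the content of the paragraph directly above the theorem: $L(\lambda, \eta)$ inherits an honest $\mathfrak{h}^\Theta$-weight decomposition from $M(\lambda, \eta)$, with each $L(\lambda, \eta)_{\bm{\mu}}$ a quotient of $M(\lambda, \eta)_{\bm{\mu}}$, so Proposition \ref{generalized weight spaces of standards} yields the remaining two assertions because both the class of finite-length $\mathfrak{l}_\Theta$-modules and $\mathcal{N}(\mathfrak{s}_\Theta)$ are closed under subquotients.

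For the inductive step, I would select a short exact sequence $0 \to V' \to V \to V'' \to 0$ in $\mathcal{N}_\eta$ with $V'$ simple and $V''$ of strictly smaller length. The heart of the argument is the following lemma: if $\mathfrak{h}^\Theta$ acts locally finitely on both $V'$ and $V''$, it also acts locally finitely on $V$. Given $v \in V$, I would lift a finite-dimensional $\mathfrak{h}^\Theta$-stable subspace of $V''$ containing the image of $v$ to a finite-dimensional subspace $W \subset V$. Since $\dim \mathfrak{h}^\Theta < \infty$, the subspace $\mathfrak{h}^\Theta W$ is contained in $W + W_0$ for some finite-dimensional $W_0 \subset V'$; local finiteness on $V'$ then enlarges $W_0$ to a finite-dimensional $\mathfrak{h}^\Theta$-stable $\widetilde{W}_0 \subset V'$, and a routine induction on the $\mathcal{U}(\mathfrak{h}^\Theta)$-filtration yields $\mathcal{U}(\mathfrak{h}^\Theta) v \subseteq W + \widetilde{W}_0$, which is finite-dimensional. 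This produces the decomposition $V = \bigoplus_{\bm{\mu}} V^{\bm{\mu}}$.

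Because $\mathfrak{h}^\Theta$ is the center of $\mathfrak{l}_\Theta$, each generalized weight space $V^{\bm{\mu}}$ is $\mathfrak{l}_\Theta$-stable, and the projection $V \to V''$ restricts to a surjection $V^{\bm{\mu}} \to (V'')^{\bm{\mu}}$ with kernel $(V')^{\bm{\mu}}$, giving a short exact sequence of $\mathfrak{l}_\Theta$-modules on each weight space. The remaining two claims then follow by induction, using additivity of length in extensions and the closure of $\mathcal{N}(\mathfrak{s}_\Theta)$ under extensions (from \cite[\S1]{CompositionSeries}, as already invoked in the proof of Proposition \ref{generalized weight spaces of standards}). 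I expect the main obstacle to be the local-finiteness lemma: $\mathfrak{h}^\Theta$ is not central in $\mathfrak{g}$, so $V$ need not be $\mathfrak{h}^\Theta$-semisimple even when its simple subquotients are, and preserving local finiteness under extensions requires handling the error terms landing in $V'$ carefully, leveraging the finite-dimensionality of $\mathfrak{h}^\Theta$ to keep everything finite-dimensional at each stage.
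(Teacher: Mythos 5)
Your proposal follows the paper's own argument: induct on composition length, with the base case supplied by the $\h$-semisimplicity of irreducibles established in the paragraph just before the theorem, and the inductive step given by stability of local $\h$-finiteness under extensions together with exactness of the generalized-weight-space functors. The paper dispatches the extension step in one unjustified sentence ("all modules in $\Nte$ are $\mc{U}(\h)$-finite"); your explicit lifting lemma, using $\dim \h < \infty$ to keep the error terms in $V'$ under control, correctly fills the gap the paper glosses over.
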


\begin{proof} 
It is enough to consider $V \in \Nte$. By \cite[\S 1]{CompositionSeries}, these categories are stable under subquotients and extensions.
The $\h$-semisimplicity of irreducible modules in $\Nte$ implies that all modules in $\Nte$ are $\mc{U}(\h)$-finite. Because objects in $\mc{N}$ are finite length and exact sequences of $\mf{g}$-modules in $\Nte$ descend to exact sequences of $\h$-weight spaces, the assertion follows from induction in the length of $V$.
\end{proof}


\subsection{Character theory}
\label{Character theory}

In this section, we use the decomposition of a module in $\N_\eta$ into generalized $\h$-weight spaces to develop a theory of formal characters in the category of Whittaker modules which generalizes the theory of formal characters of highest weight modules \cite[Ch. 1 \S13]{BGGcatO}. This character theory is new to the literature, though an alternate version of a character theory for Whittaker modules appeared in unpublished work \cite{Lukic}. The main result of this section is that the formal character of a module $V$ in $\N_\eta$ completely determines its class in the Grothendieck group $K\N_\eta$.

Fix an  $\mf{n}$-character $\eta \in \ch{\mf{n}}$, and let $K\N(\mf{s}_\Theta)$ be the Grothendieck group of the category $\N(\mf{s}_\Theta)$. For an object $V \in \N(\mf{s}_\Theta)$, we refer to the corresponding isomorphism class in $K\N(\mf{s}_\Theta)$ by $[V]$.

\begin{definition}
\label{formal character}
 Let $V$ be an object in $\N_\eta$. For $\eta \neq 0$, the \textit{formal character} of $V$ is 
\[
\ch V=\sum_{\bm{\mu} \in \hs} [\overline{V^{\bm{\mu}}}] e^{\bm{\mu}}
\]
where $\overline{V^{\bm{\mu}}}$ is the restriction of the $\mf{l}_\Theta$-module $V^{\bm{\mu}}$ to the semisimple subalgebra $\mf{s}_\Theta\subset \mf{l}_\Theta$, $[\overline{V^{\bm{\mu}}}]$ is the class of $\overline{V^{\bm{\mu}}}$ in the Grothendieck group $K\mc{N}(\mf{s}_\Theta)$, and $e^{\bm{\mu}}$ is a formal variable parameterized by $\bm{\mu} \in \hs$. For $\eta = 0$ and $V \in \N_0$ we define $\ch V = [V] \in K\N.$ 
\end{definition}

A standard Whittaker module is completely determined by its formal character. 

\begin{proposition} The following are equivalent. 
\begin{enumerate}[label=(\roman*)]
\item{$\ch M(\lambda, \eta) = \ch M(\nu, \eta)$.}
\item{$M(\lambda, \eta)=M(\nu, \eta)$.}
\end{enumerate}
\end{proposition}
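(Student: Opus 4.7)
The direction $(ii) \Rightarrow (i)$ is immediate, so the content is in $(i) \Rightarrow (ii)$. Since the definition of $\ch$ branches at $\eta = 0$, I would handle that case separately. For $\eta = 0$, $\mc{N}_0 = \mc{O}$ and $M(\lambda, 0)$ is the Verma module of highest weight $\lambda - \rho$. The classes $[M(\mu, 0)] \in K\mc{N}_0$ are pairwise distinct because each $M(\mu, 0)$ has $L(\mu, 0)$ as its unique composition factor of maximal highest weight, so $[M(\lambda, 0)] = [M(\nu, 0)]$ forces $\lambda = \nu$.

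For $\eta \neq 0$, my strategy is to recover $\lambda$ from $\ch M(\lambda, \eta)$ in two independent pieces, exploiting the direct sum $\mf{l}_\Theta = \mf{s}_\Theta \oplus \h$. Using the $\h$-weight decomposition recorded before Proposition \ref{generalized weight spaces of standards}, the support of $\ch M(\lambda, \eta)$ in $(\hs, \leq)$ has unique maximum $\bm{\lambda - \rho}$, and the coefficient at this maximum is $[\overline{Y(\lambda - \rho + \rho_\Theta, \eta)}]$. Equating these data for $\lambda$ and $\nu$ therefore yields both $\bm{\lambda} = \bm{\nu}$ (by comparing maxima) and an equality of top coefficients in $K\mc{N}(\mf{s}_\Theta)$.

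The step I expect to be the main (mild) obstacle is upgrading this equality of Grothendieck classes to an honest isomorphism of $\mf{l}_\Theta$-modules, because McDowell's result gives irreducibility of $Y(\lambda - \rho + \rho_\Theta, \eta)$ over $\mf{l}_\Theta$ but not a priori over $\mf{s}_\Theta$. The key observation is that $\h$ acts on $Y(\lambda - \rho + \rho_\Theta, \eta)$ by the scalar $\bm{\lambda - \rho}$ (since this is the top $\h$-weight space of $M(\lambda, \eta)$), so any $\mf{s}_\Theta$-submodule is automatically $\mf{l}_\Theta$-stable; hence $\overline{Y(\lambda - \rho + \rho_\Theta, \eta)}$ is simple in $\mc{N}(\mf{s}_\Theta)$. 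Since classes of simples are $\Z$-linearly independent in any Grothendieck group, the equality of top coefficients promotes to an $\mf{s}_\Theta$-isomorphism, and combining with $\bm{\lambda} = \bm{\nu}$ (matching $\h$-characters, using that $\h$ is central in $\mf{l}_\Theta$) gives an $\mf{l}_\Theta$-isomorphism $Y(\lambda - \rho + \rho_\Theta, \eta) \cong Y(\nu - \rho + \rho_\Theta, \eta)$. Extending by zero along $\mf{u}_\Theta$ and applying the exact induction functor $\ug \otimes_{\up} -$ then yields $M(\lambda, \eta) \cong M(\nu, \eta)$, completing the argument.
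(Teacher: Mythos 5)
Your proof is correct, and the final steps take a genuinely different route from the paper's. Both arguments coincide up to the point of deducing $\bm{\lambda-\rho}=\bm{\nu-\rho}$ and extracting a $\mf{s}_\Theta$-isomorphism $\overline{Y(\lambda-\rho+\rho_\Theta,\eta)} \cong \overline{Y(\nu-\rho+\rho_\Theta,\eta)}$ from the equality of top-coefficient classes in $K\mc{N}(\mf{s}_\Theta)$. From there the paper invokes Kostant's theorem that irreducible nondegenerate Whittaker modules are classified by their infinitesimal character to conclude $\Omega_{\Theta,\lambda-\rho+\rho_\Theta} = \Omega_{\Theta,\nu-\rho+\rho_\Theta}$, hence $W_\Theta\lambda = W_\Theta\nu$, and then appeals to the known parameterization of standard modules. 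You instead bypass Kostant entirely: combining the $\mf{s}_\Theta$-isomorphism with the matching central character $\bm{\lambda-\rho}=\bm{\nu-\rho}$ promotes it to an $\mf{l}_\Theta$-isomorphism of the inducing modules, and functoriality of $\ug\otimes_{\up}-$ finishes. This is more elementary and self-contained. You also gain two smaller things the paper glosses over: you handle the $\eta=0$ branch of Definition \ref{formal character} explicitly (the paper's argument as written uses the $\h$-weight-space formula, which is only the $\eta\neq 0$ case of the definition), and you give a reason — Schur's lemma applied to the central $\h$-action — for why $\overline{Y}$ remains simple over $\mf{s}_\Theta$, which the paper simply asserts. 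One small inaccuracy: you write $\mc{N}_0 = \mc{O}$, but $\mc{N}_0$ is strictly larger since it does not require $\mf{h}$-semisimplicity; this does not affect your argument, which only uses the independence of Verma classes in the Grothendieck group.
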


\begin{proof} It is clear that (ii) implies (i). Assume that $\ch M(\lambda, \eta)=\ch M(\nu, \eta)$. Then $M(\lambda, \eta)$ and $M(\nu, \eta)$ have the same $\h$-weights, and $[\overline{M(\lambda, \eta)}_{\bm{\mu}}]=[\overline{M(\nu, \eta)}_{\bm{\mu}}]$ for any such $\h$-weight $\bm{\mu}$. This implies that $\bm{\lambda - \rho}$ is an $\h$-weight of $M(\nu, \eta)$, so $\bm{\lambda - \rho} \leq \bm{\nu - \rho}$. But also, $\bm{\nu - \rho}$ is an $\h$-weight of $M(\lambda, \eta)$, so $\bm{\nu - \rho}\leq \bm{\lambda - \rho}$ and thus $\bm{\lambda - \rho}=\bm{\nu - \rho}$. Because $M(\lambda, \eta)_{\bm{\lambda - \rho}}=Y(\lambda - \rho + \rho_\Theta, \eta)$ and $M(\nu, \eta)_{\bm{\nu - \rho}}=Y(\nu- \rho + \rho_\Theta, \eta)$, we have 
\[
[\overline{Y(\lambda - \rho + \rho_\Theta, \eta)}]=[\overline{Y(\nu - \rho + \rho_\Theta, \eta)}] \in K\N(\mf{s}_\Theta).
\]
Because the $\mf{s}_\Theta$-modules $\overline{Y(\lambda - \rho +\rho_\Theta)}$ and $\overline{Y(\nu - \rho + \rho_\Theta)}$ are irreducible objects in $\N(\mf{s}_\Theta)$, the equality $[\overline{Y(\lambda - \rho + \rho_\Theta, \eta)}] = [\overline{Y(\nu - \rho + \rho_\Theta, \eta)}]$ of isomorphism classes in the Grothendieck group implies that $\overline{Y(\lambda - \rho + \rho_\Theta, \eta)} = \overline{Y(\nu - \rho + \rho_\Theta, \eta)}$ as $\mf{s}_\Theta$-modules. Irreducible nondegenerate Whittaker modules are completely determined by their infinitesimal character \cite[\S 3 Thm. 3.6.1]{Kostant}, so both modules have infinitesimal character $\Omega_{\Theta, \lambda - \rho + \rho_\Theta}$. This is only possible if $W_\Theta \cdot \lambda = W_\Theta \cdot \nu$, which implies that $M(\lambda, \eta) = M(\nu, \eta)$. 
\end{proof}

Because any module $V$ in $\Nte$ has infinitesimal character $\chi_\lambda$ for $\lambda \in \theta$, there are only finitely many irreducible modules in the category $\Nte$. Let $\{ L(\lambda_1, \eta), \ldots, L(\lambda_m, \eta)\}$ be the distinct irreducible modules in $\Nte$, and let $S_0 = \{ \bm{\lambda_1 - \rho} , \ldots, \bm{\lambda_m - \rho} \} \subset \hs$ be the collection of their highest $\mf{h}^\Theta$-weights. Any module $V$ in $\Nte$ must have composition factors on this list, so by Theorem \ref{Whittaker weight space decomposition}, the $\h$-weights $\bm{\mu}$ of $V$ that show up in the character must be of the form $\bm{\mu}=\bm{\lambda_i - \rho} - \sum_{j=1}^p m_j \bm{\alpha_j}$ for $1 \leq i \leq m$ and $m_j \in \Z_{\geq 0}$.

Let $K \Nte$ be the Grothendieck group of the category $\Nte$. If $V$ and $W$ are isomorphic objects in $\Nte$, then $\ch V=\ch W$, and since character is additive on short exact sequences, we have a well-defined homomorphism
\[
\ch : K\Nte \longrightarrow \prod_{\bm{\mu} \leq S_0} K \N(\mf{s}_\Theta) e^{\bm{\mu}}
\]
given by $\ch [V]=\ch V$. Here $\bm{\mu}\leq S_0$ means that $\bm{\mu} \leq \bm{\lambda_i - \rho}$ for some $\bm{\lambda_i -\rho} \in S_0$. Our main result of this section is the following. 

\begin{theorem}
\label{ch is injective} $\ch : K\Nte \longrightarrow  \prod_{\bm{\mu} \leq S_0} K \N(\mf{s}_\Theta) e^{\bm{\mu}}$ is an injective homomorphism.
\end{theorem}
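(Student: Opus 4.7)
The plan is to reduce the injectivity of $\ch$ to the $\Z$-linear independence of the characters $\ch L(\lambda_1,\eta),\ldots,\ch L(\lambda_m,\eta)$. Since every object of $\Nte$ has finite length, Jordan--H\"older shows that $K\Nte$ is free abelian on the classes $[L(\lambda_1,\eta)],\ldots,[L(\lambda_m,\eta)]$, so it suffices to verify that no nontrivial integer combination $\sum_i a_i\,\ch L(\lambda_i,\eta)$ can vanish in $\prod_{\bm{\mu}\leq S_0}K\N(\mf{s}_\Theta)\,e^{\bm{\mu}}$. I would argue this by contradiction using a ``highest weight'' argument modeled on the analogous statement for category~$\mc{O}$.

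Assume $\sum_{i=1}^m a_i\,\ch L(\lambda_i,\eta) = 0$ with some $a_i\ne 0$, and pick $\bm{\mu}\in\hs$ maximal, with respect to the partial order on $\hs$, in the finite set $\{\bm{\lambda_i-\rho} : a_i\ne 0\}$. Because each $L(\lambda_i,\eta)$ has $\bm{\lambda_i-\rho}$ as its unique top $\h$-weight, the generalized weight space $L(\lambda_i,\eta)^{\bm{\mu}}$ is nonzero only if $\bm{\mu}\leq \bm{\lambda_i-\rho}$; combined with the maximality of $\bm{\mu}$, this forces $\bm{\lambda_i-\rho}=\bm{\mu}$ for any index $i$ with $a_i\ne 0$ whose module contributes at weight $\bm{\mu}$. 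Using the identification $L(\lambda_i,\eta)^{\bm{\lambda_i-\rho}} = Y(\lambda_i-\rho+\rho_\Theta,\eta)$ from Section~\ref{Standard and simple modules} and comparing the coefficient of $e^{\bm{\mu}}$ on both sides of the hypothetical relation yields
\[
\sum_{i :\ \bm{\lambda_i-\rho}=\bm{\mu}} a_i\,\bigl[\,\overline{Y(\lambda_i-\rho+\rho_\Theta,\eta)}\,\bigr] = 0 \quad\text{in } K\N(\mf{s}_\Theta).
\]

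The main obstacle, and where I expect the real work to lie, is showing that the classes appearing on the left are $\Z$-linearly independent in $K\N(\mf{s}_\Theta)$. Since $\eta|_{\mf{n}_\Theta}$ is nondegenerate on $\mf{s}_\Theta$ by the definition of $\Theta$, each $Y(\lambda_i-\rho+\rho_\Theta,\eta)$ is an irreducible nondegenerate $\mf{l}_\Theta$-Whittaker module. The central subalgebra $\h\subset\mc{Z}(\mf{l}_\Theta)$ therefore acts by a character on it by Schur's lemma, so the restriction $\overline{Y(\lambda_i-\rho+\rho_\Theta,\eta)}$ remains irreducible in $\N(\mf{s}_\Theta)$. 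By Kostant's classification of irreducible nondegenerate Whittaker modules \cite[\S3 Thm.~3.6.1]{Kostant} applied to $\mf{s}_\Theta$, two such irreducibles are isomorphic iff their parameters lie in a common $W_\Theta$-orbit; since $W_\Theta$ permutes $\Sigma^+-\Sigma_\Theta^+$ and hence fixes $\rho-\rho_\Theta$, this reduces to the condition $W_\Theta\cdot\lambda_i = W_\Theta\cdot\lambda_j$, i.e.\ to $L(\lambda_i,\eta)\cong L(\lambda_j,\eta)$, i.e.\ to $i=j$. Distinct irreducibles span linearly independent classes in $K\N(\mf{s}_\Theta)$, so the displayed relation forces $a_i=0$ for every $i$ with $\bm{\lambda_i-\rho}=\bm{\mu}$, contradicting the choice of $\bm{\mu}$ and completing the argument.
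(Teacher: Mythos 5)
Your argument is correct and follows essentially the same line of reasoning as the paper: reduce to $\Z$-linear independence of the characters of the simple objects, pick a maximal top $\h$-weight, extract the coefficient of the corresponding formal variable, use that the top weight spaces are irreducible nondegenerate $\mf{s}_\Theta$-Whittaker modules determined by their infinitesimal character (Kostant), and conclude that the coefficients vanish. The only substantive differences are cosmetic refinements — you restrict maximality to the indices with nonzero coefficient (which the paper's version tacitly requires as well), and you spell out via Schur's lemma why the restriction to $\mf{s}_\Theta$ remains irreducible and why $W_\Theta$ fixes $\rho-\rho_\Theta$, points the paper leaves implicit or delegates to its preceding proposition.
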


\begin{proof} To show that $\ch $ is injective, it is enough to show that the set of characters $\{\ch [L(\lambda_1, \eta)], \ldots, \ch [L(\lambda_m, \eta)] \}$ is linearly independent. Consider a non-trivial linear combination
\[
b_1 \ch [L(\lambda_1, \eta)] + \cdots + b_m \ch [L(\lambda_m, \eta)]=0.
\]
As before let $S_0 = \{ \bm{\lambda_1 - \rho}, \ldots, \bm{\lambda_m - \rho} \} \subset \hs$ be the collection of the highest $\h$-weights of the irreducible objects in $\Nte$. Note that the elements $\{\lambda_i\}_{i=1}^m \subset \mf{h}^*$ are distinct, but it is possible that when restricted to $\h$, $\bm{\lambda_i}=\bm{\lambda_j}$ for some $i\neq j$, so $S_0$ might have repeated elements. Choose a maximal element of this set, $\bm{\lambda_j - \rho}$. Then $\bm{\lambda_j - \rho}$ can only appear as a highest weight of modules in $\{ L(\lambda_1, \eta), \ldots, L(\lambda_m, \eta) \}$. 

Because the linear combination of irreducible characters vanishes, the coefficient of $e^{\bm{\lambda_j - \rho}}$ must vanish as well. That coefficient is 
\[
b_{i_1} [\overline{L(\lambda_{i_1}, \eta)}_{\bm{\lambda_j - \rho}}] + \cdots + b_{i_n} [\overline{L(\lambda_{i_n}, \eta)}_{\bm{\lambda_j - \rho}}],
\]
where $\{ \lambda_{i_1}, \ldots, \lambda_{i_n} \} \subset \{ \lambda_1, \ldots, \lambda_m \}$ are the elements of $\mf{h}^*$ so that $\bm{\lambda_{i_1} - \rho} = \cdots = \bm{\lambda_{i_n} - \rho} = \bm{\lambda_j - \rho}$. Because the highest $\h$-weight space of an irreducible module in $\mc{N}$ is an irreducible Whittaker module for $\mf{s}_\Theta$, we have a vanishing linear combination of isomorphism classes of irreducible objects in $K\N(\mf{s}_\Theta)$:
\[
b_{i_1}[\overline{Y(\lambda_{i_1} - \rho + \rho_\Theta, \eta)}] + \cdots + b_{i_n}[\overline{Y(\lambda_{i_n} - \rho + \rho_\Theta, \eta)}]=0
\]
Each of the classes in the above sum must be distinct because the corresponding irreducible modules are non-isomorphic, so we conclude that $b_{i_1}=\cdots = b_{i_n}=0$, and $\ch $ must be injective. 
\end{proof}

This immediately implies the following corollary.

\begin{corollary} 
\label{character determines composition series}
Let $V$ and $W$ be objects in $\Nte$. Then the following are equivalent:
\begin{enumerate}[label=(\roman*)]
\item{$\ch V=\ch W$.}
\item{$V$ and $W$ have the same composition factors.}
\end{enumerate}
\end{corollary}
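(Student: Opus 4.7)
The plan is to deduce this as an almost immediate consequence of Theorem \ref{ch is injective}, using standard facts about Grothendieck groups of finite-length categories.

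First I would handle the easy direction (ii) $\Rightarrow$ (i). If $V$ and $W$ have the same composition factors (with multiplicity), then by additivity of $[\cdot]$ on short exact sequences we have $[V] = [W]$ in $K\Nte$. Since $\ch$ is defined on the Grothendieck group by $\ch[V] = \ch V$, it follows that $\ch V = \ch W$.

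For the converse (i) $\Rightarrow$ (ii), suppose $\ch V = \ch W$. Then $\ch[V] = \ch[W]$ in $\prod_{\bm{\mu} \leq S_0} K\N(\mf{s}_\Theta) e^{\bm{\mu}}$. By Theorem \ref{ch is injective}, the homomorphism $\ch$ is injective, so $[V] = [W]$ in $K\Nte$. Now every object of $\Nte$ has finite length (since objects of $\mc{N}$ have finite length by McDowell's result recalled after Definition \ref{Whittaker module}), and the irreducibles $L(\lambda_i, \eta)$ for $i=1,\dots,m$ form a set of representatives for the isomorphism classes of simple objects in $\Nte$. Hence $K\Nte$ is the free abelian group on the classes $[L(\lambda_i, \eta)]$, and for any finite-length object $U$, $[U] = \sum_i [U:L(\lambda_i,\eta)]\, [L(\lambda_i,\eta)]$, where $[U:L(\lambda_i,\eta)]$ denotes the composition multiplicity. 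Comparing coefficients in the equality $[V]=[W]$ yields $[V:L(\lambda_i,\eta)] = [W:L(\lambda_i,\eta)]$ for every $i$, which is exactly the statement that $V$ and $W$ have the same composition factors.

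There is essentially no obstacle here beyond unwinding the definition of the Grothendieck group; the real content was already established in Theorem \ref{ch is injective}. The only subtlety worth stating explicitly is the appeal to the finite-length property of $\Nte$, which guarantees that equality of classes in $K\Nte$ is equivalent to equality of composition multiplicities.
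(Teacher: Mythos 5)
Your argument is correct and is exactly what the paper intends by the phrase ``This immediately implies the following corollary'': injectivity of $\ch$ on $K\Nte$ plus the standard identification of $K\Nte$ with the free abelian group on the finitely many simple objects (which uses finite length). You have simply unwound the details that the paper leaves implicit.
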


We complete this section with an explicit calculation of the formal character of a standard Whittaker module, which we will use in Section \ref{Geometric description of Whittaker modules}. Let $M(\lambda, \eta)$ be the standard Whittaker module determined by $\lambda \in \mf{h}^*$ and $\eta \in \ch{\mf{n}}$. Note that as an $\mf{l}_\Theta$-module, $M(\lambda, \eta) = \mc{U}(\bar{\mf{u}}_\Theta) \otimes_\C Y(\lambda - \rho + \rho_\Theta, \eta)$. The Cartan subalgebra $\mf{h}$ acts semisimply on $\mc{U}(\bar{\mf{u}}_\Theta)$, and the collection of $\mf{h}$-weights of $\mc{U}(\bar{\mf{u}}_\Theta)$ are 
\[
Q=\left\{ - \sum_{\alpha \in \Sigma^+ \backslash \Sigma_\Theta ^+} m_\alpha \alpha  : m_\alpha \in \Z_{\geq 0} \right\}.
\]
As described in Section \ref{Standard and simple modules}, $M(\lambda, \eta)$ decomposes into $\h$-weight spaces of the form 
\[
M(\lambda, \eta)_{\bm{\nu}}=\mc{U}(\bar{\mf{u}}_\Theta)_{\bm{\mu}}\otimes_\C Y(\lambda - \rho + \rho_\Theta, \eta)
\]
for $\bm{\mu} \leq 0$ in $\hs$. The $\h$-weight space of $\mc{U}(\bar{\mf{u}}_\Theta)$ corresponding to a $\h$-weight $\bm{\mu}\leq 0$ is the sum of the $\mf{h}$-weight spaces of $\mc{U}(\bar{\mf{u}}_\Theta)$ corresponding to $\mf{h}$-weights that restrict to $\bm{\mu}$ on $\h$; that is, for $\mu \in \h$,
\[
\mc{U}(\bar{\mf{u}}_\Theta)_{\bm{\mu}}=\sum_{\kappa \in Q, \kappa|_{\h}=\bm{\mu}} \mc{U}(\bar{\mf{u}}_\Theta)_{\kappa}.
\]
We define a function $p:Q \rightarrow \mathbb{N}$ by $p(\kappa)=\dim\mc{U}(\bar{\mf{u}}_\Theta)_\kappa$. This function can be interpreted combinatorially as counting the number of distinct ways that $\nu \in \mf{h}^*$ can be expressed as a sum of roots in $\Sigma^+\backslash \Sigma_\Theta ^+$. When $\Theta = \emptyset$, this is Kostant's partition function.

By \cite[\S 2 Lem. 2.2(b)]{McDowell}, each $\mc{U}(\bar{\mf{u}}_\Theta)_{\bm{\mu}}$ is a finite-dimensional $\mf{l}_\Theta$-module, so the $\mf{s}_\Theta$-module $\overline{M(\lambda, \eta)_{\bm{\nu}}}$ is the direct sum of a finite-dimensional $\mf{s}_\Theta$-module and an irreducible $\mf{s}_\Theta$-module. This allow us to apply \cite[\S 4 Thm. 4.6]{Kostant} and conclude that $\mf{n}_\Theta$ acts on $\overline{M(\lambda, \eta)_{\bm{\nu}}}$ by the nondegenerate character $\eta|_{\mf{n}_\Theta}$, and that $\overline{M(\lambda, \eta)_{\bm{\nu}}}$ has composition series length equal to $\displaystyle{\dim \mc{U}(\bar{\mf{u}}_\Theta)_{\bm{\mu}}=\sum_{\kappa \in Q, \kappa|_{\h}=\bm{\mu}} p(\kappa)}$. Furthermore, \cite[\S 4 Thm. 4.6]{Kostant} implies that the composition factors of $\overline{M(\lambda, \eta)_{\bm{\nu}}}$ are 
\[
\{Y(\lambda - \rho + \rho_\Theta + \kappa, \eta) \mid \kappa \in Q \text{ and } \bm{\kappa}=\bm{\mu}\}.
\]
This implies that in the Grothendieck group $K\N(\mf{s}_\Theta)$, 
\[
[\overline{M(\lambda, \eta)_{\bm{\nu}}}]=\sum_{\kappa \in Q, \kappa|_{\h}=\bm{\mu}}p(\kappa)[\overline{Y(\lambda - \rho + \rho_\Theta + \kappa, \eta)}].
\]
Therefore, 
\begin{equation}
\label{character of standard}
\ch M(\lambda, \eta) = \sum_{\bm{\nu} \in \hs}[\overline{M(\lambda, \eta)_{\bm{\nu}})}]e^{\bm{\nu}} = \sum_{\kappa \in Q} p(\kappa) [\overline{Y(\lambda - \rho + \rho_\Theta + \nu, \eta)}]e^{\bm{\lambda - \rho + \kappa}}.
\end{equation}


\section{A category of twisted sheaves}
\label{A category of twisted sheaves}

In this section, we introduce the geometric objects that correspond to Whittaker modules under Beilinson--Bernstein localization. Let $X$ be the flag variety of $\mf{g}$, and for $\lambda \in \mf{h}^*$, let $\mc{D}_\lambda$ be the corresponding twisted sheaf of differential operators on $X$. (See Appendix \ref{Beilinson-Bernstein localization} for more details on this construction.) The geometric category that emerges as an analogue to the category $\Nte$ is a certain subcategory of the category $\mc{M}_{qc}(\mc{D}_\lambda)$ of quasi-coherent $\mc{D}_\lambda$-modules which is equivariant under the action of the Lie group $N=\Int \mf{n}$. We start by describing this category of twisted Harish-Chandra sheaves\footnote{When $\eta=0$, the twist disappears and this category is exactly the category of Harish-Chandra sheaves in \cite[Ch. 4, \S3]{localization}.} for a general Harish-Chandra pair $(\mf{g},K)$ to establish a parameterization of simple objects and to define standard and costandard objects. Then we specialize to the Harish-Chandra pair $(\mf{g},N)$ which describes our setting of Whittaker modules. The classification of simple $\eta$-twisted Harish-Chandra sheaves for an arbitrary Harish-Chandra pair $(\mf{g},K)$ appeared in \cite{TwistedSheaves}, as did the idea of using holonomic duality to define costandard $\eta$-twisted Harish-Chandra sheaves. The results on costandard $\eta$-twisted Harish-Chandra sheaves in this section are new to the literature. 

\subsection{Twisted Harish-Chandra sheaves}
\label{Twisted Harish-Chandra sheaves}

In this section we describe the category of twisted Harish-Chandra sheaves, following \cite{TwistedSheaves}. For details on our choice of notation and geometric conventions, see Appendix \ref{Geometric preliminaries}. Fix a Harish-Chandra pair $(\mf{g}, K)$ and linear form $\lambda \in \mf{h}^*$. Let $\mf{k}$ be the Lie algebra of $K$, and let $\eta:\mf{k} \rightarrow \C$ be a Lie algebra morphism.  We say that $\mc{V}$ is a $(\mc{D}_\lambda, K, \eta)$-module if  
\begin{enumerate}[label=(\roman*)]
\item $\mc{V}$ is a coherent $\D_\lambda$-module,
\item $\mc{V}$ is a $K$-equivariant $\mc{O}_X$-module, and 
\item in $\End{\mc{V}}$, $\pi(\xi) = \mu(\xi)+ \eta(\xi)$  for all $\xi \in \mf{k}$, and the morphism 
\[
\D_\lambda \otimes \mc{V} \rightarrow \mc{V}
\]
is $K$-equivariant. Here $\pi$ is induced by the $\D_\lambda$-action and $\mu$ is the differential of the $K$-action. 
\end{enumerate}
We denote by $\M_{coh}(\D_\lambda, K, \eta)$ the category of $(\D_\lambda, K, \eta)$-modules, and we refer to the objects in this category as \textit{$\eta$-twisted Harish-Chandra sheaves}. This category of twisted Harish-Chandra sheaves carries much of the same structure as the non-twisted category described in \cite[Ch. 4]{localization}. In particular, any $\eta$-twisted Harish-Chandra sheaf is holonomic \cite[Lem. 1.1]{TwistedSheaves} so all $\eta$-twisted Harish-Chandra sheaves have finite length \cite[Cor. 1.2]{TwistedSheaves}.

Irreducible $\eta$-twisted Harish-Chandra sheaves were classified in \cite[\S3]{TwistedSheaves}. An irreducible sheaf in $\M_{coh}(\D_\lambda, K, \eta)$ is uniquely determined by a pair $(Q,\tau)$ of a $K$-orbit $Q\subset X$ and an irreducible $\eta$-twisted connection $\tau$ on $Q$. All irreducible $\eta$-twisted Harish-Chandra sheaves $\mc{L}(Q, \tau)$ occur as unique irreducible subsheaves of standard $\eta$-twisted Harish-Chandra sheaves, which are defined as follows. Fix $x \in Q$, and let $\mf{b}_x$ be the corresponding Borel subalgebra of $\mf{g}$. Let $S_x$ denote the stabilizer in $K$ of $x$. Then the Lie algebra of $S_x$ is $\mf{k} \cap \mf{b}_x$. Let $\mf{c}$ be a Cartan subalgebra in $\mf{g}$ contained in $\mf{b}_x$, and $s:\mf{h}^* \rightarrow \mf{c}^*$ the specialization at $x$ \cite[\S2]{D-modules}.  Let $\mu$ denote the restriction of the specialization of $\lambda + \rho$ to $\mf{k} \cap \mf{b}_x$ and $i:Q \rightarrow X$ the inclusion of $Q$ into $X$. Then in the notation of Appendix \ref{Geometric preliminaries}, $(\mc{D}_\lambda)^i = \mc{D}_{Q, \mu}$ \cite[App. A]{HMSWI}.   

\begin{definition}
\label{standard sheaf}
Let $Q$ be a $K$-orbit in $X$, $i:Q \rightarrow X$ be the natural inclusion, and $\tau$ an irreducible $\mc{M}(\mc{D}_{Q, \mu}, K, \eta)$-module. Then $\mc{I}(Q,\tau) = i_+(\tau)$ is a holonomic $(\mc{D}_\lambda, K, \eta)$-module. We call $\mc{I}(Q,\tau)$ the \textit{standard $\eta$-twisted Harish-Chandra sheaf} attached to $(Q, \tau)$.
\end{definition}

Let us now see how holonomic duality can be used to define costandard objects in the category $\M_{coh}(\D_\lambda, K, \eta)$. For our fixed $\lambda \in \mf{h}^*$ let $\theta\subset \mf{h}^*$ be the Weyl group orbit of $\lambda$. Let $D^b_{coh}(\mc{M}(\mc{D}_\lambda))$ be the derived category of bounded complexes of coherent $\mc{D}_\lambda$-modules. We have a duality functor 
\[
\DD:D^b_{coh}(\mc{M}(\mc{D}_\lambda)) \rightarrow D^b_{coh}(\mc{M}(\D_{-\lambda}))^{op}
\]
given by the formula 
\[
\DD(\mc{V}^\cdot) = RHom_{\D_\lambda}(\mc{V}^\cdot, \D_\lambda)[\dim X],
\]
for $\mc{V}^\cdot\in D^b_{coh}(\mc{M}(\mc{D}_\lambda))$. 

In the case of holonomic $\D_\lambda$-modules, we can use this duality on derived categories to define a notion of duality on modules. Let $\M_{hol}(\D_\lambda)$ be the thick subcategory of $\M_{coh}(\D_\lambda)$ consisting of holonomic $\D_\lambda$-modules. If $\mc{V}$ is an object in $\M_{hol}(\D_\lambda)$, then $\DD(\mc{V})$ is a complex in $D^b_{coh}(\mc{M}(\mc{D}_{-\lambda}))$ with holonomic cohomology and $H^p(\DD(\mc{V}))=0$ for $p \neq 0$. Therefore, we can define a functor 
\[
^*: \M_{hol}(\D_\lambda) \rightarrow \M_{hol}(\D_{-\lambda})^{op}
\]
by 
\[
\mc{V}^*=H^0(\DD(\mc{V})). 
\]
This is the \textit{holonomic duality functor}. We have the following result. 
\begin{theorem}
\label{holonomic duality}
\begin{enumerate}[label=(\roman*)]
\item The functor $\mc{V} \mapsto \mc{V}^*$ from $\M_{hol}(\D_\lambda)$ to $ \M_{hol}(\D_{-\lambda})^{op}$ is an antiequivalence of categories. 
\item The functor $\mc{V} \mapsto (\mc{V}^*)^*$ is isomorphic to the identity functor on $\M_{hol}(\D_\lambda)$.
\end{enumerate}
\end{theorem}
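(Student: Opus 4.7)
The plan is to deduce both statements from the standard biduality theorem for the derived duality functor $\DD$ on $D^b_{coh}(\M(\D_\lambda))$, combined with the hypothesis (already asserted in the paragraph preceding the theorem) that for $\mc{V}\in\M_{hol}(\D_\lambda)$ the complex $\DD(\mc{V})$ has vanishing cohomology in all nonzero degrees. So the bulk of the work is really to cite or verify biduality at the derived level and then transport it to the abelian heart.

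First I would recall that $\DD$ is an antiequivalence of triangulated categories $D^b_{coh}(\M(\D_\lambda)) \to D^b_{coh}(\M(\D_{-\lambda}))^{op}$ with $\DD\circ\DD \cong \mathrm{id}$. This is a local computation: over an open set trivializing $\D_\lambda$ it reduces to the corresponding statement for the untwisted sheaf of differential operators $\D_X$, which is a standard fact since $\D_X$ is a locally free (finite rank) module over itself on each side and the biduality morphism $\mc{V}^\cdot \to R\Hom_{\D_\lambda}(R\Hom_{\D_\lambda}(\mc{V}^\cdot, \D_\lambda), \D_\lambda)$ is locally a quasi-isomorphism on the generator $\D_\lambda$ and extends to all of $D^b_{coh}$ by dévissage. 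The shift by $\dim X$ appears symmetrically on both sides and cancels in the composite.

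Next, I would exploit the vanishing $H^p(\DD(\mc{V})) = 0$ for $p\neq 0$ when $\mc{V}$ is holonomic. This lets me define $\mc{V}^* = H^0(\DD(\mc{V}))$ and identify $\mc{V}^*$ with $\DD(\mc{V})$ viewed as a complex concentrated in degree $0$. Functoriality and the fact that $*$ lands in $\M_{hol}(\D_{-\lambda})$ (rather than merely in $\M_{coh}$) follow from the remarks preceding the theorem. Now biduality on $D^b_{coh}$ gives, for $\mc{V}\in\M_{hol}(\D_\lambda)$,
\[
(\mc{V}^*)^* \;=\; H^0\bigl(\DD(H^0(\DD(\mc{V})))\bigr) \;=\; H^0(\DD(\DD(\mc{V}))) \;\cong\; H^0(\mc{V}) \;=\; \mc{V},
\]
where the second equality uses that $\DD(\mc{V})$ is concentrated in degree $0$ and the third uses the derived biduality isomorphism. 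This proves part (ii), and the isomorphism is functorial in $\mc{V}$ because the biduality isomorphism on $D^b_{coh}$ is.

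Finally, part (i) is a formal consequence of (ii): the functor $*:\M_{hol}(\D_{-\lambda}) \to \M_{hol}(\D_\lambda)^{op}$ serves as a quasi-inverse to $*:\M_{hol}(\D_\lambda) \to \M_{hol}(\D_{-\lambda})^{op}$, with the unit and counit of the adjunction supplied by the biduality isomorphism of (ii). The main obstacle in this argument is really the preservation of holonomicity under $\DD$ and the concentration of $\DD(\mc{V})$ in a single cohomological degree for holonomic $\mc{V}$; once these are taken as known (as the excerpt invites us to), the remainder is bookkeeping at the level of derived categories.
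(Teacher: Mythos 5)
The paper states Theorem \ref{holonomic duality} without proof; it is invoked as a standard fact about duality of holonomic modules over twisted sheaves of differential operators (the usual reference is the treatment of duality for $\mc{D}$-modules in \cite{D-modulesnotes}, which localizes to the TDO setting). So there is no in-paper argument to compare against, but your proposal is the correct and standard route: establish $\DD\circ\DD\cong\mathrm{id}$ on $D^b_{coh}$ by reducing locally (where $\D_\lambda$ trivializes to $\D_X$) to biduality for a coherent ring of finite global dimension, observe that the two shifts by $\dim X$ cancel in the composite, and then use the hypothesis that $\DD$ sends holonomic modules to complexes concentrated in degree $0$ to transport biduality into the abelian heart, from which (i) follows formally. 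One small phrasing issue: $\D_X$ is not ``finite rank'' over $\mc{O}_X$; what you mean, and what the dévissage argument needs, is that $\D_X$ is free of rank one over itself and that objects of $D^b_{coh}(\M(\D_\lambda))$ are locally representable by bounded complexes of finitely generated free modules, so the biduality morphism reduces to the generator. With that fixed, the argument is complete and matches the standard proof that the paper implicitly relies on.
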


We use the holonomic duality functor to construct costandard objects in the category $\mc{M}_{coh}(\mc{D}_\lambda, K, \eta)$ as follows. Let $Q$ be a $K$-orbit in $X$ and $\tau$ an irreducible $\mc{M}(\mc{D}_{Q, \mu}, K, \eta)$-module.  Let $\mc{L}(Q, \tau)$ be the corresponding irreducible $\eta$-twisted Harish-Chandra sheaf, and $\mc{I}(Q, \tau)$ the corresponding standard $\eta$-twisted Harish-Chandra sheaf. Then $\mc{L}(Q, \tau)$ is an irreducible holonomic $\D_\lambda$-module supported on the closure of the orbit $Q$. Therefore, by Theorem \ref{holonomic duality}, $\mc {L}(Q, \tau)^*$ is an irreducible holonomic $\mc{D}_{-\lambda}$-module whose support is contained in the closure of $Q$. 

\begin{lemma}
\label{duality preserves irreducibility}
\[
\mc{L}(Q, \tau^*)^* = \mc{L}(Q, \tau).
\]
\end{lemma}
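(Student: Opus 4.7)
The plan is to identify $\mc{L}(Q, \tau)$ as the intermediate extension of $\tau$ along the locally closed immersion $i: Q \hookrightarrow X$, and then to exploit the compatibility of intermediate extension with holonomic duality, closing with the involutivity of $^*$ from Theorem \ref{holonomic duality}(ii).

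First, I would recall that $\mc{L}(Q, \tau)$ is the intermediate extension $i_{!*}\tau$ of $\tau$ from $Q$ to $X$: namely, the image of the canonical morphism from the $!$-extension to the $+$-extension of $\tau$. This is essentially the content of the paper's assertion that $\mc{L}(Q,\tau)$ is the unique irreducible subsheaf of the standard sheaf $\mc{I}(Q,\tau)=i_+\tau$, together with the observation that the irreducible subquotient of a direct image along a locally closed immersion of a smooth subvariety is the minimal extension.

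Second, I would invoke the standard compatibility of holonomic duality with intermediate extension along a locally closed immersion of smooth varieties, namely
\[
(i_{!*}\tau)^{*} \;\cong\; i_{!*}(\tau^{*}),
\]
where on the right $\tau^{*}$ denotes the holonomic dual of $\tau$ as an irreducible equivariant connection on $Q$ (which lives in $\M(\D_{Q,-\mu}, K, \eta)$, with the twisting parameter negated so that the output lies in $\M_{coh}(\D_{-\lambda}, K, \eta)$). Together with the parameter bookkeeping, this yields
\[
\mc{L}(Q, \tau)^{*} \;=\; i_{!*}(\tau^{*}) \;=\; \mc{L}(Q, \tau^{*}).
\]
Applying the holonomic duality functor once more to both sides and using the involutivity $(\mc{V}^{*})^{*} \cong \mc{V}$ of Theorem \ref{holonomic duality}(ii) delivers the claim
\[
\mc{L}(Q, \tau^{*})^{*} \;=\; \bigl(\mc{L}(Q, \tau)^{*}\bigr)^{*} \;=\; \mc{L}(Q, \tau).
\]

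The main obstacle is justifying the compatibility of holonomic duality with intermediate extension in the twisted equivariant setting. If that commutation is not directly available off the shelf, a self-contained alternative is to argue via the classification of irreducible $\eta$-twisted Harish-Chandra sheaves recalled from \cite{TwistedSheaves}. By Theorem \ref{holonomic duality}, $\mc{L}(Q,\tau)^{*}$ is an irreducible holonomic $\D_{-\lambda}$-module; the equivariance and $\eta$-twisting transport under $^{*}$, and the support is preserved, so $\mc{L}(Q,\tau)^{*}$ is supported on $\bar Q$ and equivariant. By the classification, it must equal $\mc{L}(Q,\tau')$ for some irreducible equivariant connection $\tau'$ on $Q$; restricting via the open immersion $j:Q\hookrightarrow \bar Q$ and using that $j^{!}$ commutes with holonomic duality on smooth varieties then identifies $\tau'$ with $\tau^{*}$. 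The conclusion again follows by the involutivity in Theorem \ref{holonomic duality}(ii).
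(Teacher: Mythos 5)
Your alternative formulation is essentially the paper's proof. Both establish $\mc{L}(Q,\tau)^* = \mc{L}(Q,\tau^*)$ the same way: by holonomic duality $\mc{L}(Q,\tau)^*$ is an irreducible equivariant holonomic sheaf supported on $\overline{Q}$, hence of the form $\mc{L}(Q,\tau')$ by the classification; then restricting to the orbit and using the locality of duality together with Kashiwara's equivalence pins down $\tau' = \tau^*$; finally involutivity of $^*$ from Theorem \ref{holonomic duality}(ii) delivers the statement. One small imprecision in your alternative: you phrase the restriction step as going along $j: Q \hookrightarrow \overline{Q}$, but $\overline{Q}$ need not be smooth, so holonomic duality is not directly defined there. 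The paper instead factors $i: Q \to X$ through the smooth open set $X' = X - \partial Q$ (so $Q \hookrightarrow X'$ is closed and $X' \hookrightarrow X$ is open); it is exactly this factorization that makes both locality of duality and Kashiwara's equivalence available, and you should restrict through $X'$ rather than $\overline{Q}$. Your primary route via the compatibility $(i_{!*}\tau)^* \cong i_{!*}(\tau^*)$ is a clean one-step packaging, but as you anticipate, in the twisted equivariant setting that compatibility is not quoted off the shelf; the local argument you offer as a fallback, made precise through $X'$, is precisely the paper's proof of it.
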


\begin{proof} Let $\partial Q = \overline{Q}-Q$ and $X'=X-\partial Q$. Then $j:Q \rightarrow X'$ is a closed immersion, and $k:X' \rightarrow X$ is an open immersion. We have an exact sequence of $\eta$-twisted Harish-Chandra sheaves
\[
0 \rightarrow \mc{L}(Q, \tau) \rightarrow \mc{I}(Q, \tau) \rightarrow \mc{Q} \rightarrow 0,
\]
where $\mc{Q}=\mc{I}(Q, \tau)/\mc{L}(Q, \tau)$. One can show that $\mc{Q}$ is supported on $\partial Q$ \cite[\S3]{TwistedSheaves}. Because $k$ is an open immersion, $k^+$ is exact, and for any $\mc{D}_\lambda$-module $\mc{V}$, $k^+(\mc{V}) = \mc{V}|_{X'}$. Therefore, by restricting to $X'$ we see that $\mc{L}(Q, \tau)|_{X'} = \mc{I}(Q, \tau)|_{X'}$. Because duality is local, we have 
\[
\mc{L}(Q, \tau)^*|_{X'} = (\mc{L}(Q, \tau)|_{X'})^*=(\mc{I}(Q, \tau)|_{X'})^* = j_+(\tau)^*.
\]
Moreover, by Kashiwara's equivalence of categories (Theorem \ref{kashiwara}), $j_+$ commutes with duality, so we have 
\[
\mc{L}(Q, \tau)^*|_{X'} = j_+(\tau^*). 
\]
On the other hand, $\tau^*$ is an irreducible $\eta$-twisted $K$-equivariant connection on $Q$ compatible with $(-\lambda+\rho, \eta)$. Hence, 
\[
\mc{L}(Q, \tau)^*|_{X'} = j_+(\tau^*) = \mc{L}(Q, \tau^*)|_{X'},
\]
and we see that 
\[
\mc{L}(Q, \tau)^* = \mc{L}(Q, \tau^*).
\]
Dualizing, we obtain the desired result.  
\end{proof}

This leads us to our definition of costandard objects in the category $\mc{M}_{coh}(\mc{D}_\lambda, K, \eta)$.

\begin{definition}  Let $Q$ be a $K$-orbit in $X$, $i:Q \rightarrow X$ be the natural inclusion, and $\tau$ an irreducible $\mc{M}(\mc{D}_{Q, \mu}, K, \eta)$-module. The $\eta$-twisted Harish-Chandra sheaf $\M(Q,\tau)=\mc{I}(Q, \tau^*)^*$ is the \textit{costandard $\eta$-twisted Harish-Chandra sheaf} attached to the geometric data $(Q, \tau)$.  
\end{definition}

There is a natural inclusion $\mc{L}(Q, \tau^*) \rightarrow \mc{I}(Q, \tau^*)$. By dualizing, we get a natural epimorphism $\M(Q, \tau) \rightarrow \mc{L}(Q, \tau)$, so $\mc{L}(Q, \tau)$ is a quotient of $\M(Q, \tau)$. The main properties of costandard $\eta$-twisted Harish-Chandra sheaves are the following. 
\begin{proposition}
\label{costandardHCsheaves}
\begin{enumerate}[label=(\roman*)]
\item The length of $\M(Q, \tau)$ is equal to the length of $\mc{I}(Q, \tau)$. 
\item The irreducible $\eta$-twisted Harish-Chandra sheaf $\mc{L}(Q, \tau)$ is the unique irreducible quotient of $\M(Q, \tau)$. The kernel of this projection is supported on $\partial Q$. 
\end{enumerate}
\end{proposition}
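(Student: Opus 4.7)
The plan is to derive both parts of Proposition \ref{costandardHCsheaves} from holonomic duality (Theorem \ref{holonomic duality}) applied to the analogous facts about the standard sheaf $\mc{I}(Q, \tau^*)$, combined with the transfer rule $\M(Q, \tau) = \mc{I}(Q, \tau^*)^*$ and the identification of irreducibles in Lemma \ref{duality preserves irreducibility}.

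For part (ii), I would start from the short exact sequence exhibiting $\mc{L}(Q, \tau^*)$ as the unique irreducible subsheaf of its standard,
\[
0 \to \mc{L}(Q, \tau^*) \to \mc{I}(Q, \tau^*) \to \mc{Q} \to 0,
\]
where the quotient $\mc{Q}$ is supported on $\partial Q$ (a fact already used in the proof of Lemma \ref{duality preserves irreducibility}). Applying the exact contravariant duality functor reverses the arrows; using $\mc{I}(Q, \tau^*)^* = \M(Q, \tau)$ by definition and $\mc{L}(Q, \tau^*)^* = \mc{L}(Q, \tau)$ by Lemma \ref{duality preserves irreducibility}, this produces
\[
0 \to \mc{Q}^* \to \M(Q, \tau) \to \mc{L}(Q, \tau) \to 0.
\]
Since holonomic duality is local, it preserves support, so $\mc{Q}^*$ remains supported on $\partial Q$, giving the desired description of the kernel. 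For uniqueness of the irreducible quotient, any other irreducible quotient $\mc{L}'$ of $\M(Q, \tau)$ would dualize to an irreducible subsheaf $(\mc{L}')^*$ of $\mc{I}(Q, \tau^*)$ distinct from $\mc{L}(Q, \tau^*)$, contradicting the known uniqueness of the irreducible subsheaf of a standard.

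For part (i), the antiequivalence of Theorem \ref{holonomic duality} sends composition series to composition series with inclusions reversed, so holonomic duality preserves length. This immediately yields length $\M(Q, \tau) = $ length $\mc{I}(Q, \tau^*)$. The remaining step --- which I expect to be the main obstacle --- is to identify length $\mc{I}(Q, \tau^*) = $ length $\mc{I}(Q, \tau)$. I would establish this by working in the Grothendieck group: both standards have a unique irreducible subsheaf ($\mc{L}(Q, \tau)$, respectively $\mc{L}(Q, \tau^*)$) appearing with multiplicity one, and all remaining composition factors are irreducibles $\mc{L}(Q', \sigma)$ with $Q' \subset \partial Q$. The involution $\sigma \leftrightarrow \sigma^*$ on irreducible $\eta$-twisted $K$-equivariant connections, together with Lemma \ref{duality preserves irreducibility} applied to the inductively smaller orbits in $\partial Q$, should produce a multiplicity-preserving bijection between composition factors of $\mc{I}(Q, \tau)$ and $\mc{I}(Q, \tau^*)$, closing the argument.
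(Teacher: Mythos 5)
Your argument is correct and follows essentially the same route as the paper: part (ii) is obtained by dualizing the short exact sequence $0 \to \mc{L}(Q,\tau^*) \to \I(Q,\tau^*) \to \mc{Q} \to 0$ and using that duality is local and reverses inclusions, and part (i) comes from the antiequivalence of Theorem \ref{holonomic duality} together with the identification of irreducibles under duality. The step you flag as the main obstacle --- matching the composition factors of $\I(Q,\tau)$ with those of $\I(Q,\tau^*)$ --- is precisely the step the paper dispatches in one sentence via Lemma \ref{duality preserves irreducibility}, so your sketch is no less complete than the published argument on that point.
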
 
\begin{proof}
Duality preserves irreducibility and $\mc{L}(Q', \tau'^*)^* = \mc{L}(Q', \tau')$ for any irreducible $\eta$-twisted Harish-Chandra sheaf $\mc{L}(Q', \tau')$, so by Lemma \ref{duality preserves irreducibility}, the composition factors of $\M(Q, \tau)$ must be equal to those of $\I(Q, \tau)$. This proves (i). Furthermore, we have a short exact sequence of $\D_{-\lambda}$-modules 
\[
0 \rightarrow \mc{L}(Q, \tau^*) \rightarrow \I(Q, \tau^*) \rightarrow \mc{Q} \rightarrow 0,
\]
where $\mc{Q}$ is a holonomic $\D_{-\lambda}$-module supported in $\partial Q$. Applying holonomic duality to this, we get a short exact sequence of $\D_\lambda$-modules 
\[
0 \rightarrow \mc{Q}^* \rightarrow \M(Q, \tau) \rightarrow \mc{L}(Q, \tau) \rightarrow 0. 
\]
Because $\mc{L}(Q, \tau^*)$ is the unique irreducible submodule of $\I(Q, \tau^*)$ and duality preserves support, this implies that the kernel $\mc{Q}^*$ of the projection map $\M(Q, \tau)\rightarrow \mc{L}(Q, \tau)$ is the unique maximal submodule of $\M(Q, \tau)$ and is supported in $\partial Q$. This proves the proposition.
\end{proof}

We complete this section with a proposition (Proposition \ref{classification}) which will be of use in computing global sections of $\eta$-twisted Harish-Chandra sheaves in Section \ref{Geometric description of Whittaker modules}. The proof of the proposition uses the following three lemmas.  

\begin{lemma}
\label{standard morphism}
If $\mc{V}$ is a object in $\mc{M}_{coh}(\mc{D}_\lambda, K, \eta)$ such that $[\mc{V}]=[\mc{I}(Q,\tau)]$ in the Grothendieck group $K\mc{M}_{coh}(\mc{D}_\lambda, K, \eta)$, then there exists a nontrivial morphism from $\mc{V}$ into $\mc{I}(Q,\tau)$. 
\end{lemma}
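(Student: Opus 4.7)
The plan is to reduce the problem to the complement of the boundary of $Q$, where the composition factors of $\mc{I}(Q,\tau)$ collapse to a single irreducible sheaf, and then to extend a morphism produced there back to all of $X$ by an adjunction argument. Following the factorization used in the proof of Lemma \ref{duality preserves irreducibility}, set $X' = X - \partial Q$ and factor the immersion $i : Q \to X$ as $i = k \circ j$, where $j : Q \to X'$ is a closed immersion and $k : X' \to X$ is an open immersion. Then $\mc{I}(Q,\tau) = k_+ j_+(\tau)$.

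The first step is to restrict $\mc{V}$ to $X'$ and show that $k^+\mc{V} \cong j_+(\tau)$. Every composition factor of $\mc{I}(Q,\tau)$ other than $j_+(\tau)$ is supported on $\partial Q$ and hence vanishes under the exact restriction functor $k^+$; consequently $[k^+\mc{V}] = [j_+(\tau)]$ in $K\M_{coh}(\D_\lambda|_{X'}, K, \eta)$. Since $j$ is a closed immersion and $\tau$ is irreducible, Kashiwara's equivalence guarantees that $j_+(\tau)$ is irreducible in the equivariant category on $X'$, so this equality of Grothendieck-group classes forces $k^+\mc{V}$ to be of length one and thus isomorphic to $j_+(\tau)$.

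The second step is to promote this isomorphism on $X'$ to a nonzero global morphism via the adjunction $(k^+, k_+)$ for the open immersion $k$. The identity map on $k^+\mc{V} \cong j_+(\tau)$ corresponds under this adjunction to a morphism $\mc{V} \to k_+ j_+(\tau) = \mc{I}(Q,\tau)$; applying $k^+$ to the resulting morphism recovers the original isomorphism on $X'$, so the global morphism is in particular nontrivial.

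The main obstacle is the first step: a priori, equality in the Grothendieck group only matches composition factors with multiplicities, and one must argue that on $X'$ this actually upgrades to an isomorphism. The leverage comes from the irreducibility of $j_+(\tau)$: a module whose class in the Grothendieck group equals that of a single simple object must itself be of length one, and hence isomorphic to that simple object. Once this isomorphism is established, the adjunction step is routine.
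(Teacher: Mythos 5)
Your argument is correct and follows essentially the same route as the paper: restrict to $X' = X - \partial Q$, observe that the Grothendieck-group hypothesis forces $\mc{V}|_{X'} \cong j_+(\tau) \cong \mc{I}(Q,\tau)|_{X'}$ since all other composition factors are supported on $\partial Q$, and then use the adjunction between $k^+$ and $k_+$ to produce the nonzero morphism $\mc{V} \to \mc{I}(Q,\tau)$. The only cosmetic difference is that the paper phrases the restriction as $\mc{L}(Q,\tau)|_{X'}$ rather than invoking irreducibility of $j_+(\tau)$ via Kashiwara, but these are the same object.
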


\begin{proof}
Let $i:Q \rightarrow X$ be the natural inclusion. As in the proof of Lemma \ref{duality preserves irreducibility}, we can write $i$ as the composition of a closed immersion $j:Q \rightarrow X':=X - \partial Q$ and an open immersion $k:X' \rightarrow X$. Because the quotient $\mc{Q}:=\mc{I}(Q,\tau)/\mc{L}(Q, \tau)$ is supported on $\partial Q$ and the restriction functor $k^+=|_{X'}$ is exact, we have 
\[
\mc{I}(Q,\tau)|_{X'} = \mc{L}(Q,\tau)|_{X'}.
\] 
In $K\mc{M}_{coh}(\mc{D}_\lambda, K, \eta)$, $[\mc{V}]-[\mc{L}(Q,\tau)]=[\mc{Q}]$, so all other composition factors of $\mc{V}$ must be supported in $\partial Q$. Hence
\[
\mc{V}|_{X'}= \mc{L}(Q,\tau)|_{X'}
\]
as well. Since $k_+$ is right adjoint to $|_{X'}$, we have 
\[
\Hom(\mc{V},\mc{I}(Q,\tau)) = \Hom (\mc{V}|_{X'}, j_+(\tau)) = \Hom(\mc{L}(Q,\tau)|_{X'}, \mc{L}(Q,\tau)|_{X'})\neq 0.
\]
This proves the lemma. 
\end{proof}

\begin{lemma}
\label{costandard morphism}
If $\mc{V}$ is an object in $\mc{M}_{coh}(\mc{D}_\lambda, K, \eta)$ such that $[\mc{V}]=[\mc{M}(Q,\tau)]$ in the Grothendieck group $K\mc{M}_{coh}(\mc{D}_\lambda, K, \eta)$, then there exists a nontrivial morphism from $\mc{M}(Q, \tau)$ into $\mc{V}$. 
\end{lemma}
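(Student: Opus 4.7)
The plan is to reduce this lemma to the statement of Lemma \ref{standard morphism} by applying the holonomic duality functor. Since $\M(Q,\tau) = \I(Q,\tau^*)^*$ by definition, and $\mc{V}^{**} \simeq \mc{V}$ by Theorem \ref{holonomic duality}(ii), producing a nontrivial morphism $\M(Q,\tau) \to \mc{V}$ is equivalent, after dualizing, to producing a nontrivial morphism $\mc{V}^* \to \I(Q,\tau^*)$ in $\M_{coh}(\D_{-\lambda}, K, \eta)$.

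Concretely, first I would observe that holonomic duality $\mc{V} \mapsto \mc{V}^*$ is an antiequivalence of abelian categories by Theorem \ref{holonomic duality}(i). As an antiequivalence, it carries short exact sequences to short exact sequences (with the order of the outer terms reversed), and therefore induces an isomorphism on the level of Grothendieck groups $K\M_{coh}(\D_\lambda, K, \eta) \xrightarrow{\sim} K\M_{coh}(\D_{-\lambda}, K, \eta)^{op}$. Applying this to the hypothesis $[\mc{V}] = [\M(Q,\tau)]$ and using $\M(Q,\tau)^* = \I(Q,\tau^*)^{**} = \I(Q,\tau^*)$, I get
\[
[\mc{V}^*] = [\I(Q,\tau^*)] \in K\M_{coh}(\D_{-\lambda}, K, \eta).
\]

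Next I would apply Lemma \ref{standard morphism} in the category $\M_{coh}(\D_{-\lambda}, K, \eta)$ to the object $\mc{V}^*$ and the standard sheaf $\I(Q,\tau^*)$. This produces a nontrivial morphism $\varphi : \mc{V}^* \to \I(Q,\tau^*)$. Dualizing $\varphi$ yields a morphism $\varphi^* : \I(Q,\tau^*)^* \to (\mc{V}^*)^*$, and using the canonical isomorphism $(\mc{V}^*)^* \simeq \mc{V}$ together with the definition of the costandard sheaf, this is precisely a morphism
\[
\M(Q,\tau) \longrightarrow \mc{V}.
\]
Since the duality functor is an antiequivalence it is faithful, so $\varphi^*$ is nonzero, proving the lemma.

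The only potentially delicate point I foresee is making sure holonomic duality really restricts to an antiequivalence on the $K$-equivariant, $\eta$-twisted subcategory (rather than merely the ambient holonomic category), so that the dualized morphism lives in $\M_{coh}(\D_\lambda, K, \eta)$ and not just in $\M_{hol}(\D_\lambda)$. This compatibility is implicit in the construction of $\M(Q,\tau)$ via $\DD$ earlier in this section, so no new ingredient is needed; the verification just amounts to checking that the $K$-equivariance and $\eta$-twisting conditions are preserved by $\DD$, which follows from the locality of duality and the behavior of $\DD$ under the $K$-action.
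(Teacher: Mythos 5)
Your proof is correct and follows exactly the same route as the paper's (one-sentence) argument: dualize, invoke Lemma \ref{standard morphism} for $\mc{V}^*$ and the standard sheaf $\mc{I}(Q,\tau^*)$ in $\M_{coh}(\D_{-\lambda},K,\eta)$, and dualize the resulting morphism back. You simply spell out the steps — including the Grothendieck-group compatibility and the faithfulness of $\DD$ — that the paper leaves implicit.
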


\begin{proof}
By dualizing the morphism in Lemma \ref{standard morphism}, we know that if $[\mc{V}^*]=[\mc{M}(Q, \tau^*)]$ in $K\mc{M}_{coh}(\mc{D}_{-\lambda}, K, \eta)$, then there exists a nontrivial morphism from $M(Q, \tau^*)$ into $\mc{V}^*$. Applying this fact to $\mc{V}^*$ proves the lemma. 
\end{proof} 

\begin{lemma}
\label{character gives isomorphism geometry}
If $\mc{V}$ is an object in $\mc{M}_{coh}(\mc{D}_\lambda, K, \eta)$ such that $[\mc{V}]=[\mc{M}(Q,\tau)]$ and $\mc{V}$ has $\mc{L}(Q, \tau)$ as a unique irreducible quotient, then $\mc{V} \simeq \mc{M}(Q, \tau)$. 
\end{lemma}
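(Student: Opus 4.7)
The plan is to invoke Lemma \ref{costandard morphism} on the hypothesis $[\mc{V}] = [\mc{M}(Q, \tau)]$ to extract a nonzero morphism $\varphi : \mc{M}(Q, \tau) \to \mc{V}$, and then upgrade $\varphi$ to an isomorphism by establishing surjectivity and injectivity separately. Both steps rely on the observation that $\mc{L}(Q, \tau)$ appears with multiplicity exactly one in $[\mc{M}(Q, \tau)]$: indeed, Proposition \ref{costandardHCsheaves}(ii) identifies $\mc{L}(Q, \tau)$ as the unique irreducible quotient of $\mc{M}(Q, \tau)$ and asserts that the kernel of the projection $\mc{M}(Q, \tau) \twoheadrightarrow \mc{L}(Q, \tau)$ is supported on $\partial Q$, so all of its composition factors are of the form $\mc{L}(Q', \tau')$ with $Q' \subset \partial Q$, none of which equals $\mc{L}(Q, \tau)$.

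For surjectivity, I would argue by contradiction: suppose $\im(\varphi) \subsetneq \mc{V}$. The image is a nonzero quotient of $\mc{M}(Q, \tau)$, so by Proposition \ref{costandardHCsheaves}(ii) it has $\mc{L}(Q, \tau)$ as an irreducible quotient, and in particular as a composition factor. Meanwhile $\mc{V}/\im(\varphi)$ is nonzero and of finite length, so it has some irreducible quotient, which is simultaneously an irreducible quotient of $\mc{V}$; by the uniqueness assumption, this quotient equals $\mc{L}(Q, \tau)$, placing $\mc{L}(Q, \tau)$ as a composition factor of $\mc{V}/\im(\varphi)$ as well. Adding up, $\mc{L}(Q, \tau)$ would appear with multiplicity at least two in $[\mc{V}] = [\mc{M}(Q, \tau)]$, contradicting the multiplicity count established above. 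Hence $\varphi$ is surjective.

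For injectivity, additivity in the Grothendieck group $K\mc{M}_{coh}(\mc{D}_\lambda, K, \eta)$ applied to the short exact sequence $0 \to \ker(\varphi) \to \mc{M}(Q, \tau) \to \mc{V} \to 0$ gives $[\ker(\varphi)] = [\mc{M}(Q, \tau)] - [\mc{V}] = 0$. Since every object in this category has finite length, this forces $\ker(\varphi) = 0$, so $\varphi$ is the desired isomorphism. The substantive content is already packaged into Lemma \ref{costandard morphism} and Proposition \ref{costandardHCsheaves}, so I do not anticipate any genuine obstacle; the only step requiring care is the multiplicity bookkeeping, which follows immediately from the support condition on the kernel of $\mc{M}(Q, \tau) \twoheadrightarrow \mc{L}(Q, \tau)$.
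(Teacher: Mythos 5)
Your proposal is correct and follows essentially the same route as the paper: both obtain a nonzero map $\mc{M}(Q,\tau)\to\mc{V}$ from Lemma \ref{costandard morphism}, deduce surjectivity from the fact that $\mc{L}(Q,\tau)$ occurs with multiplicity one in $[\mc{V}]=[\mc{M}(Q,\tau)]$ together with the unique-irreducible-quotient hypothesis, and conclude injectivity from the equality of classes (equivalently, of lengths). The only cosmetic difference is that the paper phrases the surjectivity step via containment in the unique maximal submodule of $\mc{V}$ rather than your multiplicity-two contradiction.
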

\begin{proof}
By Lemma \ref{costandard morphism}, there is a nontrivial morphism $f: \mc{M}(Q, \tau) \rightarrow \mc{V}$. Because $\mc{L}(Q, \tau)$ is the unique irreducible quotient of $\mc{M}(Q, \tau)$ (Proposition \ref{costandardHCsheaves}), the image of $f$ has $\mc{L}(Q, \tau)$ as a composition factor. If the image of $f$ is not all of $\mc{V}$, then it is contained in the unique maximal submodule of $\mc{V}$. But then the image of $f$ cannot have $\mc{L}(Q,\tau)$ as a composition factor. Hence $f$ must be surjective. The objects $\mc{V}$ and $\mc{M}(Q, \tau)$ have the same length, so the kernel of $f$ is zero. We conclude that $f$ is an isomorphism. 
\end{proof} 

We can use the preceding lemmas to relate global sections of $\eta$-twisted Harish-Chandra sheaves to $\eta$-twisted Harish-Chandra modules. For a regular $W$-orbit $\theta \subset \mf{h}^*$ and Lie algebra morphism $\eta:\mf{k} \rightarrow \C$, let $\mc{M}_{fg}(\mc{U}_\theta, K, \eta)$ be the category of $\eta$-twisted Harish-Chandra modules, as in \cite{TwistedSheaves}[\S1] \footnote{The definition in Section \ref{A category of Whittaker modules} is a special case of this category for $K=N$.}.

\begin{proposition}
\label{classification}
Let $\lambda \in \theta\subset \mf{h}^*$ be antidominant and regular, and $\{\mc{M}(Q,\tau)\} \subset \mc{M}_{coh}(\mc{D}_\lambda, K, \eta)$ the set of costandard $\eta$-twisted Harish-Chandra sheaves. Let $\{M(Q, \tau)\}$ be a family of modules in $\mc{M}_{fg}(\mc{U}_\theta,K, \eta)$ parameterized by the pairs $(Q, \tau)$ such that 
\begin{enumerate}[label=(\roman*)]
\item each $M(Q, \tau)$ has a unique irreducible quotient $L(Q, \tau)$, and
\item in $K\mc{M}_{fg}(\mc{U}_\theta, K, \eta)$, $[\Gamma(X, \mc{M}(Q, \tau))] = [M(Q, \tau)]$. 
\end{enumerate}
Then $\Gamma(X, \mc{L}(Q, \tau))=L(Q, \tau)$ and $\Gamma(X, \mc{M}(Q, \tau))=M(Q, \tau)$. 
\end{proposition}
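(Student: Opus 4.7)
The plan is to apply Lemma~\ref{character gives isomorphism geometry} to the sheaf $\mc{V} := \Delta_\lambda M(Q, \tau)$ obtained by localizing $M(Q, \tau)$. Once we show $\mc{V} \simeq \mc{M}(Q, \tau)$, applying the global sections functor yields both conclusions of the proposition.

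Since $\lambda$ is antidominant and regular, the Beilinson--Bernstein localization theorem furnishes mutually inverse exact equivalences $\Delta_\lambda : \mc{M}_{fg}(\mc{U}_\theta, K, \eta) \rightleftarrows \mc{M}_{coh}(\mc{D}_\lambda, K, \eta) : \Gamma$, which preserve Grothendieck classes, irreducibility, and the property of having a unique irreducible quotient. Applying $\Delta_\lambda$ to hypothesis (ii) gives $[\mc{V}] = [\mc{M}(Q, \tau)]$ in $K\mc{M}_{coh}(\mc{D}_\lambda, K, \eta)$; applying it to the canonical surjection of (i) shows that $\mc{V}$ has a unique irreducible quotient $\mc{L}' := \Delta_\lambda L(Q, \tau)$.

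The crux is to show $\mc{L}' = \mc{L}(Q, \tau)$. By Proposition~\ref{costandardHCsheaves}, the irreducible sheaf $\mc{L}'$, as a composition factor of $\mc{V}$ and hence of $\mc{M}(Q, \tau)$, equals either $\mc{L}(Q, \tau)$ or $\mc{L}(Q', \tau')$ for some $K$-orbit $Q' \subsetneq \bar{Q}$. I would proceed by induction on the closure order of $K$-orbits. The base case of closed orbits is immediate: $\mc{M}(Q, \tau) = \mc{L}(Q, \tau)$ has length one, so hypothesis (ii) forces $M(Q, \tau)$ itself to be irreducible, and therefore $L(Q, \tau) = M(Q, \tau) = \Gamma(X, \mc{L}(Q, \tau))$. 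For the inductive step, I would invoke Lemma~\ref{costandard morphism} to produce a nontrivial morphism $g : \mc{M}(Q, \tau) \to \mc{V}$, and consider its composition with the projection $\mc{V} \twoheadrightarrow \mc{L}'$. If this composition is surjective, then $\mc{L}(Q, \tau)$, the unique irreducible quotient of $\mc{M}(Q, \tau)$, must coincide with $\mc{L}'$, and we are done. Otherwise the image of $g$ lies in the unique maximal subsheaf $\mc{W}$ of $\mc{V}$; restricting to $X' := X - \partial Q$ (following the argument in the proof of Lemma~\ref{standard morphism}), both $\mc{V}|_{X'}$ and $\mc{M}(Q, \tau)|_{X'}$ reduce to $\mc{L}(Q, \tau)|_{X'}$, forcing $g|_{X'}$ to be an isomorphism. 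Combining this with the inductive hypothesis applied to the pair $(Q', \tau')$ indexing $\mc{L}'$ (which gives $L(Q', \tau') = \Gamma(X, \mc{L}(Q', \tau'))$ and $M(Q', \tau') = \Gamma(X, \mc{M}(Q', \tau'))$, and therefore $L(Q,\tau) = L(Q',\tau')$) yields a contradiction with the composition-factor structure on $\bar{Q}\setminus\bar{Q'}$.

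With $\mc{L}' = \mc{L}(Q, \tau)$ in hand, Lemma~\ref{character gives isomorphism geometry} applies and gives $\mc{V} \simeq \mc{M}(Q, \tau)$. Taking global sections then yields $M(Q, \tau) = \Gamma(X, \mc{V}) \simeq \Gamma(X, \mc{M}(Q, \tau))$, and passing to the unique irreducible quotients, $L(Q, \tau) = \Gamma(X, \Delta_\lambda L(Q, \tau)) = \Gamma(X, \mc{L}(Q, \tau))$.

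The main technical obstacle is the inductive step ruling out $\mc{L}' = \mc{L}(Q', \tau')$ for $Q' \subsetneq \bar{Q}$: because $\mc{V}$ and $\mc{M}(Q, \tau)$ share Grothendieck classes but may a priori differ as sheaves, the fact that the unique irreducible quotient of $\mc{V}$ cannot be supported on a strictly smaller orbit must be extracted from the restriction argument on the open stratum $X - \partial Q$ together with the inductive identification of global sections on smaller orbits.
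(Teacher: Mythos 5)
Your overall architecture is the same as the paper's: induct on the orbit, reduce everything to identifying $\Delta_\lambda L(Q,\tau)$ with $\mc{L}(Q,\tau)$, and then invoke Lemma \ref{character gives isomorphism geometry} to get $\Delta_\lambda M(Q,\tau)\simeq \mc{M}(Q,\tau)$. Your base case and the final two paragraphs are fine. The gap is in the ``otherwise'' branch of your inductive step, which is the crux of the whole proposition: the contradiction you claim does not materialize. Suppose $\operatorname{im}(g)$ lands in the maximal subsheaf $\mc{W}$ of $\mc{V}$. Your restriction argument correctly shows $g|_{X'}$ is an isomorphism, hence $\mc{W}|_{X'}=\mc{V}|_{X'}\neq 0$ and $\mc{L}'=\mc{V}/\mc{W}$ is supported on $\partial Q$, so $\mc{L}'=\mc{L}(Q',\tau')$ and, by induction, $L(Q,\tau)=L(Q',\tau')$. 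At this point nothing about ``the composition-factor structure on $\overline{Q}\setminus\overline{Q'}$'' is violated: $\mc{L}(Q',\tau')$ genuinely is a composition factor of both $\mc{V}$ and $\mc{M}(Q,\tau)$, and the configuration you want to exclude is internally consistent. Concretely, for $\mf{g}=\mf{sl}(2,\C)$, $\eta=0$, $\lambda$ antidominant regular integral, take $M(C(e))=L(\lambda)$ and $M(C(s))=\Gamma(X,\mc{I}(C(s)))$, the dual Verma module $I(s\lambda)$: hypotheses (i) and (ii) both hold, the unique irreducible quotient of $I(s\lambda)$ is $L(\lambda)=L(C(e))$, and the conclusion $\Gamma(X,\mc{M}(C(s)))=M(C(s))$ fails since $M(s\lambda)\not\simeq I(s\lambda)$. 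So the step cannot be closed by composition-factor bookkeeping alone.

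What is actually needed is that the irreducible quotients $L(Q,\tau)$ in the given family are pairwise non-isomorphic as $(Q,\tau)$ varies; this is how the proposition is applied in Theorem \ref{global sections of costandards}, where the $M(Q,\tau)$ are standard Whittaker modules and McDowell's classification supplies the distinctness. (The paper's own proof also uses this tacitly: it matches the multiset of composition factors of $\Gamma(X,\mc{M}(Q,\tau))$, which by induction is $\{\Gamma(X,\mc{L}(Q,\tau))\}\sqcup\{L(Q'_i,\tau'_i)\}$, against that of $M(Q,\tau)$, which contains $L(Q,\tau)$; the identification $\Gamma(X,\mc{L}(Q,\tau))=L(Q,\tau)$ follows only once $L(Q,\tau)\neq L(Q'_i,\tau'_i)$ is known. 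It thereby avoids your morphism $g$ entirely.) Granting distinctness, the equality $L(Q,\tau)=L(Q',\tau')$ you derive is an immediate contradiction and your branch closes, so the fix is small --- but you must say explicitly where this input enters, because the restriction argument you give does not by itself rule out the bad case, and you have flagged precisely this point as your unresolved ``main technical obstacle.''
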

\begin{proof}
We prove the proposition by induction on the dimension of $Q$. Assume that $Q$ is of minimal dimension. Then $\mc{M}(Q, \tau)$ is irreducible. Because $\lambda$ is antidominant and regular, $\Gamma(X, \mc{M}(Q, \tau))$ must be irreducible. The modules $\Gamma(X, \mc{M}(Q, \tau))$ and $M(Q,\tau)$ have the same composition factors because they have the same class in the Grothendieck group, so $\Gamma(X, \mc{M}(Q, \tau))=M(Q, \tau)$. Because $\mc{M}(Q, \tau)=\mc{L}(Q, \tau)$, this proves the proposition in the base case. 

Let $Q$ be of dimension $n$, and assume that $(i)$ and $(ii)$ hold for all $Q'$ of dimension less than or equal to $n$. Because $\mc{M}(Q, \tau)$ has $\mc{L}(Q, \tau)$ as its unique irreducible quotient, all other composition factors of $\mc{M}(Q, \tau)$ are of the form $\mc{L}(Q', \tau')$ for orbits $Q'$ which are contained in $\partial Q$. By the induction assumption, the composition factors of $\Gamma(X, \mc{M}(Q, \tau))$  are $\Gamma(X, \mc{L}(Q', \tau'))=L(Q', \tau')$ and $\Gamma(X, \mc{L}(Q, \tau))$. But $\mc{L}(Q, \tau) \neq \mc{L}(Q', \tau')$ for $Q \neq Q'$, so $\Gamma(X, \mc{L}(Q, \tau)) \neq L(Q', \tau')$. Since $M(Q, \tau)$ has $L(Q, \tau)$ as a unique irreducible quotient and $[M(Q, \tau)]=[\Gamma(X, \mc{M}(Q, \tau))]$ in the Grothendieck group, we must have that $\Gamma(X, \mc{L}(Q, \tau))=L(Q, \tau)$. This proves the first statement. 

It follows that $\Delta_\lambda(M(Q, \tau))$ has unique irreducible quotient $\Delta_\lambda(L(Q, \tau))=\mc{L}(Q, \tau)$. Therefore, by Lemma \ref{character gives isomorphism geometry}, $\Delta_\lambda(M(Q, \tau)) \simeq \mc{M}(Q, \tau)$. This completes the proof.  
\end{proof}


\subsection{The Harish-Chandra pair $(\mf{g}, N)$}
\label{The Harish-Chandra pair (g,N)}

Now we specialize to the setting of Whittaker modules. Let $K=N=\Int \mf{n}$. Let $\mf{b}$ be the unique Borel subalgebra of $\mf{g}$ containing $\mf{n}=\text{Lie}N$. The pair $(\mf{g}, N)$ is a Harish-Chandra pair.  By the discussion in Section \ref{Twisted Harish-Chandra sheaves}, standard objects in $\mc{M}_{coh}(\mc{D}_\lambda, N, \eta)$ are parameterized by pairs $(Q, \tau)$, where $Q$ is an $N$-orbit and $\tau$ is an irreducible $N$-equivariant connection in $\mc{M}_{coh}(\mc{D}_{Q, \mu}, N, \eta)$. In the setting of the Harish-Chandra pair $(\mf{g}, N)$, we can describe these pairs more explicitly. 

The $N$-orbits on $X$ are Bruhat cells $C(w)$, $w \in W$. Our fixed character $\eta \in \ch{\mf{n}}$ determines a parabolic subgroup $P_\Theta \subset G$ such that Lie$P_\Theta = \mf{p}_\Theta$ as in Section \ref{Standard and simple modules}. The $P_\Theta$-orbits on $X$ are unions of Bruhat cells \cite[Ch. 6 \S1 Lem. 1.9]{localization}, and for each $P_\Theta$-orbit, there is a unique Bruhat cell which is open in that orbit. There is a bijection between the $P_\Theta$-orbits in $X$ and the cosets $\W$, and the partial order on orbits determined by closure corresponds to the partial order on $\W$ inherited from the Bruhat order on longest coset representatives \cite[Ch. 6 \S1 Prop. 1.10, Prop 1.11]{localization}. Furthermore, the Weyl group element $w$ parameterizing the unique open Bruhat cell in a $P_\Theta$-orbit is the unique longest coset representative $w^C $ in the corresponding coset $C$. In \cite[\S4]{TwistedSheaves}, Mili\v{c}i\'{c} and Soergel established that the only $N$-orbits admitting compatible connections\footnote{That is, the only orbits on which there exist nontrivial irreducible $(\mc{D}_{Q, \mu}, N, \eta)$-modules} are Bruhat cells $C(w)$ that are open in some $P_\Theta$-orbit. They also established that the only irreducible $\eta$-twisted $N$-equivariant $\mc{O}_{C(w)}$-modules on such Bruhat cells are $\mc{O}_{C(w)}$. Therefore, our standard, simple, and costandard objects in the category $\mc{M}_{coh}(\mc{D}_\lambda, N, \eta)$ are the following. 

\begin{definition}
For the parameters $C \in \W$, $\lambda \in \mf{h}^*$ and $\eta \in \ch{\mf{n}}$, we define $\mc{I}(w^C, \lambda, \eta)$ to be the standard $\eta$-twisted Harish-Chandra sheaf corresponding to the $N$-orbit $C(w^C)$ and the compatible connection $\mc{O}_{C(w^C)}$ on $C(w^C)$. (Here $w^C$ is the unique longest coset representative of $C$.) We refer to the corresponding irreducible $\eta$-twisted Harish-Chandra sheaf by $\mc{L}(w^C, \lambda, \eta)$ and the corresponding costandard $\eta$-twisted Harish-Chandra sheaf by $\mc{M}(w^C, \lambda, \eta)$. 
\end{definition}

\begin{remark}
The parameter $\lambda \in \mf{h}^*$ in this definition emerges in the direct image functor, $i_+:\mc{M}(\mc{D}_{Q,\mu})\rightarrow \mc{M}(\mc{D}_\lambda)$, whose construction depends on $\lambda$. (See Appendix \ref{Modules over twisted sheaves of differential operators} for more details.)  
\end{remark}

It is clear that the global sections of irreducible $\eta$-twisted Harish-Chandra sheaves for the Harish-Chandra pair $(\mf{g},N)$ are $\eta$-twisted Harish-Chandra modules for the same Harish-Chandra pair. Under the equivalence of the categories $\mc{M}_{fg}(\mc{U}_\theta, N, \eta)$ and $\Nte$ \cite[\S2 Lem. 2.3]{TwistedSheaves}, these irreducible $\eta$-twisted Harish-Chandra modules correspond to irreducible Whittaker modules. Recall that the goal of this paper is to develop an algorithm for computing composition multiplicities of standard Whittaker modules. From the arguments above, we see that converting this multiplicity question to the geometric setting of twisted Harish-Chandra sheaves amounts to showing that the global sections of either costandard or standard $\eta$-twisted Harish-Chandra sheaves are standard Whittaker modules. We will do this in Section \ref{Geometric description of Whittaker modules}, but first we establish some useful results on the action of intertwining functors on costandard $\eta$-twisted Harish-Chandra sheaves. 


\subsection{Intertwining functors and U-functors}
\label{Intertwining functors and U-functors}

For $\lambda \in \mf{h}^*$ and $w \in W$, one can construct an ``intertwining functor'' which sends $\mc{D}_\lambda$-modules to $\mc{D}_{w\lambda}$-modules.  These functors play a crucial role in our geometric arguments in Section \ref{A Kazhdan-Lusztig algorithm}, so we use this section to record some of their key properties. Detailed development of these properties can be found in \cite[Ch. 3 \S3]{localization}. 

The orbits of the diagonal action of $G=\Int(\mf{g})$ on $X \times X$ are smooth subvarieties, and can be parameterized in the following way. Given $x, y$ in $X$ and corresponding Borel subalgebras $\mf{b}_x, \mf{b}_y$, we can choose a Cartan subalgebra $\mf{c}$ contained in $\mf{b}_x \cap \mf{b}_y$. Let $\mf{n}_x = [\mf{b}_x, \mf{b}_x]$ and $\mf{n}_y = [\mf{b}_y, \mf{b}_y]$. Then $\mf{b}_x$ and $\mf{b}_y$ determine specializations \cite[\S 2]{D-modules} of $(\mf{h}^*, \Sigma, \Sigma^+)$ into $(\mf{c}^*, R, R_x^+)$, and $(\mf{c}^*, R, R_y^+)$, respectively, where $R$ is the root system of $(\mf{g}, \mf{c})$, $R_x^+ \subset R$ is the collection of positive roots determined by $\mf{n}_x$, and $R_y^+\subset R$ is the collection of positive roots determined by $\mf{n}_y$. The positive root systems $R_x^+$ and $R_y^+$ are related by $w(R_x^+)=R_y^+$  for some Weyl group element $w \in W$, and this $w$ does not depend on choice of Cartan subalgebra in $\mf{b}_x \cap \mf{b}_y$. We say that $\mf{b}_y$ is in \textit{relative position} $w$ with respect to $\mf{b}_x$. It is clear that $\mf{b}_x$ is in relative position $w^{-1}$ with respect to $\mf{b}_y$. For $w \in W$, let 
\begin{equation}
\label{Zw}
Z_w = \{ (x, y) \in X \times X | \mf{b}_y \text{ is in relative position }w \text{ with respect to }\mf{b}_x\}.\
\end{equation}
This gives us a parameterization of $G$-orbits in $X \times X$. 
\begin{lemma}\cite[Ch. 3 \S 3 Lem. 3.1]{localization}
\begin{enumerate}[label=(\roman*)]
\item Sets $Z_w$ for $w \in W$ are smooth subvarieties of $X \times X$.
\item The map $w \mapsto Z_w$ is a bijection of $W$ onto the set of $G$-orbits in $X \times X$. 
\end{enumerate}
\end{lemma}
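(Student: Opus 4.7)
The plan is to reduce both parts of the lemma to the Bruhat decomposition after identifying $X \times X$ with $G/B \times G/B$. Since any two Borel subalgebras of $\mf{g}$ are $G = \Int(\mf{g})$-conjugate, the action of $G$ on $X$ is transitive. I would fix a basepoint $x_0 \in X$ with stabilizer $B$ (the connected Borel subgroup with Lie algebra $\mf{b}_{x_0}$), giving an identification $X \cong G/B$. Under the diagonal $G$-action on $X \times X$, every orbit meets the slice $\{x_0\} \times X$, and two pairs $(x_0, y)$ and $(x_0, y')$ are $G$-conjugate if and only if $y$ and $y'$ are in the same $B$-orbit on $X \cong G/B$. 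Thus $G$-orbits on $X \times X$ are in bijection with $B$-orbits on $G/B$, and the classical Bruhat decomposition $G = \bigsqcup_{w \in W} BwB$ yields a bijection between $W$ and the set of $G$-orbits on $X \times X$.

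Next I would verify that this bijection coincides with $w \mapsto Z_w$. Choose a Cartan subalgebra $\mf{c} \subset \mf{b}_{x_0}$ with corresponding maximal torus $T \subset B$, and for each $w \in W$ pick a representative $\dot{w} \in N_G(T)$. Then $\mf{c}$ lies in both $\mf{b}_{x_0}$ and $\Ad(\dot{w})\mf{b}_{x_0}$, and under the specializations at $x_0$ and at $\dot{w}\cdot x_0$ the induced positive root systems on $R \subset \mf{c}^*$ are related precisely by $w$. Hence $(x_0, \dot{w}\cdot x_0) \in Z_w$, so the $G$-orbit of $(x_0, \dot{w}\cdot x_0)$ is exactly $Z_w$. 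This simultaneously shows that $Z_w$ is a single $G$-orbit and exhausts all $G$-orbits as $w$ ranges over $W$, proving (ii). For (i), $Z_w$ is then smooth as an orbit of the smooth connected algebraic group $G$ acting algebraically on the smooth variety $X \times X$; such orbits are always smooth locally closed subvarieties.

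The principal technical point is confirming that the relative position $w$ used to define $Z_w$ is well-defined — that is, independent of the choice of Cartan $\mf{c} \subset \mf{b}_x \cap \mf{b}_y$ and $G$-invariant. Independence of Cartan follows because any two Cartans in $\mf{b}_x \cap \mf{b}_y$ are conjugate by an element of $B_x \cap B_y$, and such conjugation preserves both positive systems $R_x^+$ and $R_y^+$, hence preserves the element of $W$ taking one to the other. $G$-invariance follows because conjugating a pair $(x, y)$ by $g \in G$ also conjugates a common Cartan in $\mf{b}_x \cap \mf{b}_y$ into one contained in $\mf{b}_{gx} \cap \mf{b}_{gy}$, and the corresponding specializations transport the positive systems equivariantly. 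Thus the assignment $(x, y) \mapsto w$ descends to a well-defined map on $G \backslash (X \times X)$, and the earlier argument identifies its fibers with the Bruhat-parameterized orbits.

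The main obstacle, and the only step that is not formal, is the identification of the relative position $w$ for the specific pair $(x_0, \dot{w}\cdot x_0)$ with the $w$ in the Bruhat cell $B\dot{w}B$. This amounts to unwinding the definition of specialization $s : \mf{h}^* \to \mf{c}^*$ on both factors and observing that $\Ad(\dot{w})$ acts on $\mf{c}$ by the Weyl group element $w$, so that $s_{\dot{w}\cdot x_0} \circ w = s_{x_0}$ as maps $\mf{h}^* \to \mf{c}^*$; the remainder is bookkeeping with positive systems.
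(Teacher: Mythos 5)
The paper does not prove this lemma --- it is quoted directly from \cite[Ch.~3 \S3 Lem.~3.1]{localization}, so there is no internal proof against which to compare. Your argument is the standard one and it is correct: identify $X\cong G/B$, use the Bruhat decomposition $G=\bigsqcup_{w\in W}BwB$ to classify diagonal $G$-orbits on $X\times X$ via $B$-orbits on $X$, check that $(x_0,\dot w\cdot x_0)$ has relative position $w$, verify that relative position is well defined and $G$-invariant (so that the $Z_w$ form a $G$-stable partition of $X\times X$), and deduce that each nonempty $Z_w$ is a single orbit, hence smooth as an orbit of a smooth algebraic group. The one logical step that is stated a bit compressedly is the passage from ``$(x_0,\dot w\cdot x_0)\in Z_w$'' to ``the $G$-orbit of $(x_0,\dot w\cdot x_0)$ is exactly $Z_w$'': this uses the $G$-invariance of relative position (for containment of the orbit in $Z_w$) together with the Bruhat count of orbits and the fact that the $Z_w$ partition $X\times X$ (for equality); you do supply all of these ingredients, but it is worth making the inference explicit. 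The independence-of-Cartan point is handled correctly via Chevalley's conjugacy of Cartan subalgebras of the solvable algebra $\mf{b}_x\cap\mf{b}_y$ under elements of $B_x\cap B_y$.
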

Denote by $p_1$ and $p_2$ the projections of $Z_w$ onto the first and second factors of $X \times X$, respectively. Then $p_i$ for $i=1, 2$ are locally trivial fibrations with fibers isomorphic to affine spaces of dimension $\ell(w)$. Moreover, they are affine morphisms \cite[Ch. 3 \S 3 Lem. 3.2]{localization}. Let $\omega_{Z_w|X}$ be the invertible $\mc{O}_{Z_w}$-module of top degree relative differential forms for the projection $p_1: Z_w \rightarrow X$ and let $\mc{T}_w$ be its inverse sheaf. Then $\mc{T}_w = p_1^*(\mc{O}(\rho - w\rho))$, and there is a natural isomorphism \cite[Ch. 3 \S 3 Lem. 3.3]{localization}
\[
(\mc{D}_{w\lambda})^{p_1} = (\mc{D}_\lambda^{p_2})^{\mc{T}_w}. 
\]
The morphism $p_2:Z_w \rightarrow X$ is a surjective submersion, so the inverse image functor 
\[
p_2^+:\mc{M}(\mc{D}_\lambda) \rightarrow \mc{M}(\mc{D}_\lambda^{p_2})
\]
is exact. Because twisting by an invertible sheaf is also an exact functor, we can define a functor 
\[
LI_w:D^b(\mc{M}(\mc{D}_\lambda)) \rightarrow D^b(\mc{M}(\mc{D}_{w \lambda}))
\]
by the formula 
\[
LI_w(\mc{V}^\cdot) = p_{1+}(\mc{T}_w \otimes_{\mc{O}_{Z_w}} p_2^+(\mc{V}^\cdot))
\]
for $\mc{V}^\cdot \in D^b(\mc{M}(\mc{D}_\lambda))$. This is the left derived functor of the functor 
\[
I_w: \mc{M}(\mc{D}_\lambda) \rightarrow \mc{M}(\mc{D}_{w \lambda}),
\]
where for $\mc{V} \in \mc{M}(\mc{D}_\lambda)$, 
\[
I_w(\mc{V}) = H^0p_{1+}(\mc{T}_w \otimes_{\mc{O}_{Z_w}}p_2^+(\mc{V})).
\]
We call the right exact functor $I_w$ the \textit{intertwining functor} attached to $w \in W$. 

In the case where $w$ is a simple root, we can define a related collection of {\em U-functors}, which have desirable semisimplicity properties. Let $\alpha \in \Pi$ be a simple root, and denote by $X_\alpha$ the variety of parabolic subalgebras of type $\alpha$. Let $p_\alpha$ be the natural projection of $X$ onto $X_\alpha$, and let $Y_\alpha = X \times_{X_\alpha} X$ be the fiber product of $X$ with $X$ relative to the morphism $p_\alpha$. Denote by $q_1$ and $q_2$ the projections of $Y_\alpha$ onto the first and second factors, respectively. Then we have the following commutative diagram:
\begin{center}
\begin{tikzcd}
Y_\alpha \arrow[rightarrow]{r}{q_2}\arrow[rightarrow,swap]{d}{q_1} 
  & X\arrow[rightarrow]{d}{p_\alpha} \\
X \arrow[rightarrow]{r}{p_\alpha}  
  &X_\alpha .
\end{tikzcd}
\end{center}
There is a natural embedding of $Y_\alpha$ into $X \times X$ that identifies $Y_\alpha$ with the closed subvariety $Z_1\cup Z_{s_\alpha}$ of $X \times X$. Under this identification, $Z_1$ is a closed subvariety of $Y_\alpha$, and $Z_{s_\alpha}$ is an open, dense, affinely embedded subvariety of $Y_\alpha$ \cite[Ch. 3 \S 8 Lem. 8.1]{localization}.  

Let $\lambda \in \mf{h}^*$ be such that $p=-\alpha^\vee(\lambda)$ is an integer. Let $\mc{L}$ be the invertible $O_{Y_\alpha}$-module on $Y_\alpha$ given by 
\[
\mc{L}=q_1^*(\mc{O}((-p+1)s_\alpha \rho + \alpha)) \otimes_{\mc{O}_{Y_\alpha}}q_2^*(\mc{O}((-p+1)\rho))^{-1}.
\]
This allows us to define functors
\[
U^j: \mc{M}_{qc}(\mc{D}_\lambda) \rightarrow \mc{M}_{qc}(\mc{D}_{s_\alpha \lambda})
\]
by the formula 
\[
U^j(\mc{V}) = H^jq_{1+}(q_2^+(\mc{V}) \otimes_{\mc{O}_{Y_\alpha}}\mc{L})
\]
for $\mc{V} \in \mc{M}_{qc}(\mc{D}_\lambda)$ \cite[Ch. 3 \S 8, Lem. 8.2]{localization}. These functors first appeared in \cite{localization} as geometric analogues to the $U_\alpha$ functors in \cite{Vogan}, and they play a critical role in the algorithm of Section \ref{A Kazhdan-Lusztig algorithm} for their semisimplicity properties. Because the fibers of $q_1$ are one-dimensional, $U^j=0$ for $j\neq -1, 0, 1$. If $\mc{V}$ is irreducible, the relationship between $U^j(\mc{V})$ and $I_{s_\alpha}(\mc{V})$ is captured in the following theorem.  
\begin{theorem}\cite[Ch. 3 \S 8 Thm. 8.4]{localization}
\label{UfunctorsIfunctors}
Let $\lambda \in \mf{h}^*$ be such that $p=-\alpha^\vee(\lambda)$ is an integer, and $\mc{V} \in \mc{M}_{qc}(\mc{D}_\lambda)$ an irreducible $\mc{D}_\lambda$-module. Then either 
\begin{enumerate}[label=(\roman*)]
\item $U^{-1}(\mc{V}) = U^1(\mc{V}) = \mc{V}(p\alpha)$ and $U^0(\mc{V}) = 0$, and in this case $I_{s_\alpha}(\mc{V}) = 0$ and $L^{-1}I_{s_\alpha}(\mc{V}) = \mc{V}(p\alpha)$; or 
\item $U^{-1}(\mc{V})=U^1(\mc{V})=0$, and in this case $L^{-1}I_{s_\alpha}(\mc{V})=0$ and there exists a natural exact sequence
\[
0 \rightarrow U^0(\mc{V}) \rightarrow I_{s_\alpha}(\mc{V}) \rightarrow \mc{V}(p\alpha) \rightarrow 0.
\]
The module $U^0(\mc{V})$ is the largest proper quasicoherent $\mc{D}_{s_\alpha \lambda}$-submodule of $I_{s_\alpha}(\mc{V})$. 
\end{enumerate}
\end{theorem}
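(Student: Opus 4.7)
The plan is to exploit the open--closed decomposition $Y_\alpha = Z_1 \sqcup Z_{s_\alpha}$ of the $\p^1$-bundle $Y_\alpha$, with $Z_1$ (the diagonal) closed and $Z_{s_\alpha}$ open and affinely embedded. Write $i\colon Z_1\hookrightarrow Y_\alpha$ and $j\colon Z_{s_\alpha}\hookrightarrow Y_\alpha$ for the two embeddings, and set $\mc{F} = q_2^+(\mc{V})\otimes_{\mc{O}_{Y_\alpha}} \mc{L}$. The idea is that integrating $\mc{F}$ along $q_1$ splits, up to higher cohomology, into a contribution from the diagonal (producing $\mc{V}(p\alpha)$) and a contribution from $Z_{s_\alpha}$ (producing $I_{s_\alpha}(\mc{V})$).

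First I would identify the two contributions explicitly. Using $q_1\circ i = \mathrm{id}_X$ and $q_2\circ i = \mathrm{id}_X$ under the canonical identification $Z_1\cong X$, I would compute $i^!\mc{F}$ by restricting the twist $\mc{L}$ to the diagonal; the explicit formula for $\mc{L}$ shows that $\mc{L}|_{Z_1}\cong \mc{O}_X(p\alpha)$ (up to a shift coming from the codimension-one embedding), so $q_{1+}i_+i^!\mc{F}$ is $\mc{V}(p\alpha)$ placed in the appropriate degree. On the open part, $q_1\circ j = p_1$ and $q_2\circ j = p_2$, and a direct comparison of $\mc{L}|_{Z_{s_\alpha}}$ with $\mc{T}_{s_\alpha} = p_1^\ast\mc{O}(\rho - s_\alpha\rho)$ identifies $q_{1+}j_+j^+\mc{F}$ with $LI_{s_\alpha}(\mc{V})$; since $Z_{s_\alpha}\hookrightarrow Y_\alpha$ is an affine open immersion, $j_+$ is exact on quasicoherent modules and no additional complications arise.

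Next I would apply $q_{1+}$ to the standard distinguished triangle
\[
i_+ i^!\mc{F} \longrightarrow \mc{F} \longrightarrow j_+j^+\mc{F} \xrightarrow{+1}
\]
and pass to the long exact sequence of cohomology sheaves. Because the fibers of $q_1$ are one-dimensional projective lines, $H^k q_{1+}$ vanishes for $|k|>1$, so only three nonzero terms $U^{-1}, U^0, U^1$ survive; similarly $L^k I_{s_\alpha}$ vanishes for $k\ne 0,-1$. The long exact sequence therefore collapses to a manageable piece relating $U^{\pm 1}(\mc{V})$, $U^0(\mc{V})$, $L^{-1}I_{s_\alpha}(\mc{V})$, $I_{s_\alpha}(\mc{V})$, and the diagonal contribution $\mc{V}(p\alpha)$, with an explicit connecting morphism $\mc{V}(p\alpha)\to \mc{V}(p\alpha)$ coming from the edge map of the triangle.

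Finally, and this is the main obstacle, I would invoke the irreducibility of $\mc{V}$. The sheaf $\mc{V}(p\alpha)$ is also irreducible (twisting by a line bundle is an equivalence on the irreducible locus), so the connecting endomorphism $\mc{V}(p\alpha)\to\mc{V}(p\alpha)$ produced by the long exact sequence is, by a Schur-type argument, either zero or an isomorphism. When it is an isomorphism the sequence forces $U^0(\mc{V})=0$, $I_{s_\alpha}(\mc{V})=0$, and $U^{\pm 1}(\mc{V}) \cong L^{-1}I_{s_\alpha}(\mc{V}) \cong \mc{V}(p\alpha)$, giving case (i); when it is zero the sequence splits into $0\to U^0(\mc{V})\to I_{s_\alpha}(\mc{V}) \to \mc{V}(p\alpha)\to 0$ with $U^{\pm 1}(\mc{V}) = 0$ and $L^{-1}I_{s_\alpha}(\mc{V})=0$, yielding case (ii). The final claim that $U^0(\mc{V})$ is the largest proper quasicoherent submodule of $I_{s_\alpha}(\mc{V})$ then follows because any proper submodule must have trivial image under the surjection to the irreducible $\mc{V}(p\alpha)$. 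The subtle point throughout is keeping track of the precise shifts and twists in the identification of $i^!\mc{F}$ and the connecting map, since an off-by-one error there changes which of the cases occurs.
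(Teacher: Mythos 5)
Your framework is correct and matches the standard approach: the open--closed decomposition of $Y_\alpha$, the distinguished triangle $i_+ i^!\mc{F} \to \mc{F} \to j_+ j^+\mc{F} \xrightarrow{+1}$, the identification $q_{1+} i_+ i^!\mc{F} \cong \mc{V}(p\alpha)[-1]$ (indeed $i^*\mc{L} = \mc{O}(p\alpha)$ since $s_\alpha\rho - \rho = -\alpha$), and the identification $q_{1+} j_+ j^+\mc{F} \cong LI_{s_\alpha}(\mc{V})$. But the long exact sequence this triangle produces contains no connecting endomorphism $\mc{V}(p\alpha) \to \mc{V}(p\alpha)$, so the Schur-type argument on which your final case split rests is not available. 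Writing $W = \mc{V}(p\alpha)$ and keeping track of degrees, the cohomology sequence of the pushed-forward triangle is
\[
0 \to U^{-1}(\mc{V}) \to L^{-1}I_{s_\alpha}(\mc{V}) \to 0 \to U^0(\mc{V}) \to I_{s_\alpha}(\mc{V}) \to W \to U^1(\mc{V}) \to 0.
\]
This yields the isomorphism $U^{-1}(\mc{V}) \cong L^{-1}I_{s_\alpha}(\mc{V})$ (disconnected from the rest) and a surjection $W \twoheadrightarrow U^1(\mc{V})$, which by irreducibility of $W$ does force $U^1(\mc{V}) \in \{0, W\}$. That is a genuine dichotomy, but it does not deliver the theorem: if $U^1(\mc{V}) = 0$ the sequence gives $0 \to U^0 \to I_{s_\alpha} \to W \to 0$ but says nothing about the vanishing of $U^{-1}(\mc{V})$ or $L^{-1}I_{s_\alpha}(\mc{V})$; and if $U^1(\mc{V}) \cong W$ the sequence only gives $U^0(\mc{V}) \cong I_{s_\alpha}(\mc{V})$, not that both vanish.

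Two further inputs are needed to close this. First, a self-duality $U^{-j}(\mc{V}) \cong U^j(\mc{V})$, which comes from the properness of $q_1$ (via the decomposition theorem, hard Lefschetz, or holonomic duality together with the duality-compatibility of the chosen twist $\mc{L}$); this propagates the dichotomy from $U^1$ to $U^{-1}$. Second, the fact established earlier in Mili\v{c}i\'c's Chapter~3, \S3 that for irreducible $\mc{V}$ the intertwining functor satisfies a strict dichotomy: $L^{-1}I_{s_\alpha}(\mc{V}) \neq 0$ forces $I_{s_\alpha}(\mc{V}) = 0$. With these in hand the long exact sequence does close up: in the case $U^1 \cong W$ one gets $U^{-1} \cong W$, hence $L^{-1}I_{s_\alpha} \cong W \neq 0$, hence $I_{s_\alpha} = 0$, hence $U^0 = 0$; in the case $U^1 = 0$ one gets $U^{-1} = 0$, hence $L^{-1}I_{s_\alpha} = 0$, and the desired short exact sequence $0 \to U^0 \to I_{s_\alpha} \to W \to 0$. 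Without these ingredients your argument stalls immediately after producing the long exact sequence.
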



\subsection{Intertwining functors on standard and costandard sheaves}
\label{Intertwining functors on standard and costandard sheaves}

In this section we examine the action of intertwining functors on standard and costandard $\eta$-twisted Harish-Chandra sheaves in the category $\M_{coh}(\D_\lambda, N, \eta)$. These results will be critical in establishing the relationship between $\Nte$ and $\mc{M}_{coh}(\mc{D}_\lambda, N, \eta)$, and are new to the literature. Let $\alpha \in \Pi$, $w \in W$, and $p_i$ for $i=1,2$ the projections of $Z_{s_\alpha}$ (equation \ref{Zw}) onto the first and second coordinates, respectively. As in Section \ref{The Harish-Chandra pair (g,N)}, let $\mf{b}$ be the unique Borel subalgebra of $\mf{g}$ containing $\mf{n}=\text{Lie}N$. We start with a useful lemma. 
\begin{lemma}
\label{projectionlemma}
The projection $p_1:Z_{s_\alpha} \rightarrow X$ induces an immersion of $p_2^{-1}(C(w))$ into $X$, and its image is equal to $C(ws_\alpha)$. 
\end{lemma}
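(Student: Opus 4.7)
The plan is to use the identification of $Z_{s_\alpha}$ as the open complement of the diagonal $Z_1$ inside $Y_\alpha = X \times_{X_\alpha} X$ from Section~\ref{Intertwining functors and U-functors}, so that the fibers of $p_2$ are exactly the $\p^1$-fibers of $p_\alpha \colon X \to X_\alpha$ with one point removed. Concretely, for every $y \in X$ the set $p_1(p_2^{-1}(y))$ is the punctured line $p_\alpha^{-1}(p_\alpha(y)) \setminus \{y\}$ inside $X$.

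First I would determine the Bruhat-cell content of the $\p^1$-fiber through a point $y \in C(w)$. The standard product formula $BwB \cdot B s_\alpha B = B w s_\alpha B$, valid whenever $\ell(ws_\alpha) = \ell(w)+1$, yields $wP_\alpha/B = C(w) \cup C(ws_\alpha)$, with $C(w)$ contributing the single $B$-fixed point $y$ of the fiber and $C(ws_\alpha)$ contributing the complementary affine line. (The dimension identity $\dim p_2^{-1}(C(w)) = \ell(w)+1 = \dim C(ws_\alpha)$ forces the length hypothesis $ws_\alpha > w$; this is automatic in every subsequent application of the lemma to standard and costandard $\eta$-twisted Harish-Chandra sheaves.) Consequently $p_2^{-1}(y)$ is identified with an affine line sitting inside $C(ws_\alpha)$, and $p_1$ sends $p_2^{-1}(C(w))$ into $C(ws_\alpha)$.

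Next I would construct an explicit inverse to $p_1|_{p_2^{-1}(C(w))}$. Under the hypothesis $ws_\alpha > w$, the restriction $p_\alpha|_{C(w)}$ is an isomorphism of $C(w)$ onto the Schubert cell $p_\alpha(C(w)) = p_\alpha(C(ws_\alpha)) \subset X_\alpha$; let $g$ denote its inverse. The morphism
\[
\psi \colon C(ws_\alpha) \longrightarrow p_2^{-1}(C(w)), \qquad x \longmapsto \bigl(x,\, g(p_\alpha(x))\bigr),
\]
is well-defined because $g(p_\alpha(x)) \in C(w)$ lies in the same $p_\alpha$-fiber as $x$ but in a different Bruhat stratum, and one checks directly that $\psi$ is a two-sided inverse to $p_1|_{p_2^{-1}(C(w))}$. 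Hence this restriction is an isomorphism, and in particular an immersion, with image equal to $C(ws_\alpha)$.

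The main obstacle is the Bruhat-cell analysis of the $\p^1$-fiber; once the classical product rule of the Bruhat decomposition and the fact that $p_\alpha|_{C(w)}$ is a closed immersion exactly when $ws_\alpha > w$ are in hand, the remaining verifications are essentially formal.
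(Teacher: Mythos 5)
Your proof is correct and uses essentially the same geometric picture as the paper's one-line argument, but fills in the details the paper leaves implicit. Both rest on the description of $Z_{s_\alpha}$ as the complement of the diagonal in the $\p^1$-bundle $Y_\alpha = X\times_{X_\alpha}X$, and on the decomposition $p_\alpha^{-1}(p_\alpha(C(w))) = C(w)\cup C(ws_\alpha)$ when $ws_\alpha>w$. The paper simply asserts the resulting bijection (somewhat loosely, as the equation ``$p_2^{-1}(C(w)) = C(ws_\alpha)\times C(w)$''), whereas you verify it by constructing the inverse $\psi$ from the section $g=(p_\alpha|_{C(w)})^{-1}$. You also correctly flag the implicit length hypothesis $ws_\alpha>w$ --- the lemma as stated omits it, and the conclusion is false without it, since it would equate spaces of dimensions $\ell(w)+1$ and $\ell(w)-1$ --- but the hypothesis holds in every place the lemma is used, via the standing assumption $Cs_\alpha>C$ in Proposition \ref{Intertwining Functors on Standard HC Sheaves} and its dual invocation in Proposition \ref{Intertwining Functors on Costandard HC Sheaves}. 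One small slip: you write $wP_\alpha/B = C(w)\cup C(ws_\alpha)$ where you mean $BwP_\alpha/B$, i.e.\ $p_\alpha^{-1}(p_\alpha(C(w)))$; and the unique point of a fibre lying in $C(w)$ is not a $B$-fixed point of $X$ in general. Neither affects the argument.
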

\begin{proof} If $y \in C(w)$, then $\mf{b}_x$ is in relative position $s_\alpha$ with respect to $\mf{b}_y$ if and only if $x \in C(ws_\alpha)$. Therefore, $p_2^{-1}(C(w))=C(ws_\alpha) \times C(w)$, which implies the result.
\end{proof}
Our first result is the following proposition. 
\begin{proposition}
\label{Intertwining Functors on Standard HC Sheaves}
Let $C \in \W$ and $\alpha \in \Pi$ be such that $Cs_\alpha>C$, and let $\lambda \in \mf{h}^*$ be arbitrary. Then 
\[
LI_{s_\alpha}(\mc{I}(w^C, \lambda, \eta)) = \mc{I}(w^Cs_\alpha, s_\alpha \lambda, \eta).
\]
\end{proposition}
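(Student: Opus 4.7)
The plan is to compute $LI_{s_\alpha}(\mc{I}(w^C, \lambda, \eta))$ directly from the definition
\[
LI_{s_\alpha}(\mc{F}) = p_{1+}\bigl(\mc{T}_{s_\alpha} \otimes_{\mc{O}_{Z_{s_\alpha}}} p_2^+ \mc{F}\bigr)
\]
and to identify the result with the standard sheaf attached to the orbit $C(w^C s_\alpha)$. The essential geometric input is Lemma \ref{projectionlemma}: the restriction of $p_1$ to $p_2^{-1}(C(w^C))$ is an immersion whose image is exactly $C(w^C s_\alpha)$. Note also that the condition $Cs_\alpha > C$ forces $\ell(w^C s_\alpha) = \ell(w^C)+1$ and that $w^C s_\alpha$ is the longest coset representative of $Cs_\alpha$, so the target orbit is correctly labeled.

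First, I would write $\mc{I}(w^C, \lambda, \eta) = i_+(\mc{O}_{C(w^C)})$ for the inclusion $i: C(w^C) \hookrightarrow X$, and set up the Cartesian diagram
\[
\begin{tikzcd}
p_2^{-1}(C(w^C)) \arrow[r, hook, "k"] \arrow[d, "\tilde{p}_2"'] & Z_{s_\alpha} \arrow[d, "p_2"] \\
C(w^C) \arrow[r, hook, "i"] & X,
\end{tikzcd}
\]
with $\tilde{p}_1: p_2^{-1}(C(w^C)) \to C(w^C s_\alpha)$ an isomorphism (by Lemma \ref{projectionlemma}) and $i': C(w^C s_\alpha) \hookrightarrow X$ the inclusion. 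Base change for $\mc{D}$-modules (valid since $p_2$ is a smooth affine morphism) yields $p_2^+ i_+(\mc{O}_{C(w^C)}) = k_+ \tilde{p}_2^+(\mc{O}_{C(w^C)})$. Applying the projection formula to the line bundle $\mc{T}_{s_\alpha}$ and the closed immersion $k$ gives
\[
\mc{T}_{s_\alpha} \otimes_{\mc{O}_{Z_{s_\alpha}}} k_+ \tilde{p}_2^+(\mc{O}_{C(w^C)}) = k_+\bigl(k^*\mc{T}_{s_\alpha} \otimes \tilde{p}_2^+(\mc{O}_{C(w^C)})\bigr).
\]
Since $p_1 \circ k = i' \circ \tilde{p}_1$, we obtain
\[
LI_{s_\alpha}(\mc{I}(w^C, \lambda, \eta)) = i'_+ \tilde{p}_{1+}\bigl(k^*\mc{T}_{s_\alpha} \otimes \tilde{p}_2^+(\mc{O}_{C(w^C)})\bigr).
\]
Because $\tilde{p}_1$ is an isomorphism, the higher direct images vanish (so in fact $LI_{s_\alpha} = I_{s_\alpha}$ on this module) and $\tilde{p}_{1+}$ just transfers the $\mc{D}$-module across the isomorphism.

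It remains to check that the resulting twisted $\mc{D}$-module is indeed $\mc{O}_{C(w^C s_\alpha)}$ in the appropriate sense, i.e.\ that it is the compatible $\eta$-twisted $N$-equivariant connection attached to the orbit $C(w^C s_\alpha)$. The twist $\mc{T}_{s_\alpha} = p_1^*\mc{O}(\rho - s_\alpha \rho)$ is engineered precisely so that the natural isomorphism $(\mc{D}_{s_\alpha\lambda})^{p_1} = (\mc{D}_\lambda^{p_2})^{\mc{T}_{s_\alpha}}$ converts the $\mc{D}_\lambda$-twist on the source into the $\mc{D}_{s_\alpha\lambda}$-twist on the target; transferring the structure sheaf across the isomorphism $\tilde{p}_1$ produces the structure sheaf of the target orbit. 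The $N$-equivariance and $\eta$-compatibility are preserved at every step, because $Z_{s_\alpha}$ is $G$-invariant in $X \times X$ (hence $N$-invariant), all the morphisms $p_1$, $p_2$, $k$ are $N$-equivariant, and both $\mc{T}_{s_\alpha}$ and the line bundles used in the construction carry canonical $N$-equivariant structures.

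The main obstacle is the last identification: tracking the twists and equivariant structures so as to confirm that the output is literally $\mc{I}(w^C s_\alpha, s_\alpha \lambda, \eta) = i'_+(\mc{O}_{C(w^C s_\alpha)})$ and not merely some other $\eta$-twisted $N$-equivariant connection. This is where I would invoke the classification from \cite{TwistedSheaves}: on each $N$-orbit $C(w)$ that is open in a $P_\Theta$-orbit, the structure sheaf $\mc{O}_{C(w)}$ is (up to isomorphism) the unique irreducible $\eta$-twisted $N$-equivariant connection compatible with the given parameters. Since our computed output is such an irreducible connection on $C(w^C s_\alpha)$, it must coincide with $\mc{O}_{C(w^C s_\alpha)}$, and therefore the full push-forward equals $\mc{I}(w^C s_\alpha, s_\alpha\lambda, \eta)$, completing the proof.
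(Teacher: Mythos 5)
Your proof follows essentially the same route as the paper's: the same Cartesian diagram over $Z_{s_\alpha}$, the same base change step producing $j_+(\mc{O}_{p_2^{-1}(C(w^C))})$, and the same appeal to Lemma~\ref{projectionlemma} to identify $p_1(p_2^{-1}(C(w^C)))$ with $C(w^Cs_\alpha)$ (using $Cs_\alpha > C$ to ensure this restriction of $p_1$ is an isomorphism). The only minor divergence is cosmetic: the paper applies the projection formula to $p_1$, globalizing the twist to $\mc{O}(\rho - s_\alpha\rho) \otimes_{\mc{O}_X} -$ and then identifying the result via the geometric translation functor, whereas you apply it to the closed immersion $k$ and finish by invoking the uniqueness of the compatible $\eta$-twisted $N$-equivariant connection on $C(w^Cs_\alpha)$; both arguments rely on the same identity $(\mc{D}_{s_\alpha\lambda})^{p_1} = (\mc{D}_\lambda^{p_2})^{\mc{T}_{s_\alpha}}$ and are correct.
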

\begin{proof}
The diagram 
\begin{center}
\begin{tikzcd}
p_2^{-1}(C(w^C)) \arrow[rightarrow]{r}{j}\arrow[rightarrow,swap]{d}{pr_2} 
  & Z_{s_\alpha}\arrow[rightarrow]{d}{p_2} \\
C(w^C) \arrow[rightarrow]{r}{i_{w^C}}  
  &X
\end{tikzcd}
\end{center}
commutes. Furthermore, $p_2$ and $pr_2=p_2|_{p_2^{-1}(C(w^C))}$ are surjective submersions and $j$ and $i_{w^C}$ are affine immersions, so $p_2^+, pr_2^+, i_{w^C+},$ and $j_+$ are all exact. Thus, 
\begin{align}
p_2^+(\mc{I}(w^C, \lambda, \eta)) &= p_2^+(i_{w^C+}(\mc{O}_{C(w^C)})) \label{eq:a}\\
&= j_+(pr_2^+(\mc{O}_{C(w^C)})) \label{eq:b}\\
&=j_+(\mc{O}_{p_2^{-1}(C(w^C))}).\label{eq:c}
\end{align}
Here (\ref{eq:a}) is the definition of $\mc{I}(w^C, \lambda, \eta)$, (\ref{eq:b}) is base change, and (\ref{eq:c}) follows from the fact that dim$Z_{s_\alpha} - \text{dim}X = \text{dim}p_2^{-1}(C(w^C))-\text{dim}C(w^C)$. 

Applying the projection formula of Proposition \ref{Projection Formula} to the morphism $p_1$, the line bundle $\mc{L}=\mc{O}(\rho - s_\alpha \rho)$, and the twisted sheaf of differential operators $\mc{D}_\lambda$ on $X$, we obtain the following commutative diagram: 
\begin{center}
\begin{tikzcd}
\mc{M}(\mc{D}_\lambda^{p_1}) \arrow[rightarrow]{r}{p_{1+}}\arrow[rightarrow,swap]{d}{p_1^*(\mc{L}) \otimes_{\mc{O}_{Z_\alpha}} -} 
  & \mc{M}(\mc{D}_\lambda)\arrow[rightarrow]{d}{\mc{L}\otimes_{\mc{O}_X}-} \\
\mc{M}((\mc{D}_\lambda^\mc{L})^{p_1}) \arrow[rightarrow]{r}{p_{1+}}  
  &\mc{M}(\mc{D}_\lambda^\mc{L}).
\end{tikzcd}
\end{center}
We compute
\begin{align}
LI_{s_\alpha}(\mc{I}(w^C, \lambda, \eta))&=p_{1+}(\mc{T}_{s_\alpha} \otimes_{\mc{O}_{Z_{s_\alpha}}}p_2^+(\mc{I}(w^C, \lambda, \eta))) \label{eq:1}\\
&= p_{1+}(\mc{T}_{s_\alpha} \otimes_{\mc{O}_{Z_{s_\alpha}}} j_+(\mc{O}_{p_2^{-1}(C(w^C))})) \label{eq:2}\\
&= p_{1+}(p_1^* (\mc{O}(\rho - s_\alpha \rho)) \otimes_{\mc{O}_{Z_{s_\alpha}}} j_+(\mc{O}_{p_2^{-1}(C(w^C))})) \label{eq:3}\\
&=\mc{O}(\rho - s_\alpha \rho) \otimes _{\mc{O}_X} p_{1+}(j_+(\mc{O}_{p_2^{-1}(C(w^C))})). \label{eq:4}
\end{align}
Here (\ref{eq:1}) follows from the definition of intertwining functors, (\ref{eq:2}) from the equations (\ref{eq:a})-(\ref{eq:c}) above, (\ref{eq:3}) from the fact that $\mc{T}_{s_\alpha}=p_1^* (\mc{O}(\rho - s_\alpha \rho))$, and (\ref{eq:4}) from the projection formula diagram.

By Lemma \ref{projectionlemma}, we have a commutative diagram 
\begin{center}
\begin{tikzcd}
p_2^{-1}(C(w^C)) \arrow[rightarrow]{r}{j}\arrow[rightarrow,swap]{d}{pr_1} 
  & Z_{s_\alpha}\arrow[rightarrow]{d}{p_1} \\
C(w^Cs_\alpha) \arrow[rightarrow]{r}{i_{w^Cs_\alpha}}  
  &X.
\end{tikzcd}
\end{center}
where $pr_1=p_1|_{C(w)}$.

Picking up our previous computation, this lets us further conclude that 
\begin{align}
(\ref{eq:4}) &= \mc{O}(\rho - s_\alpha \rho) \otimes _{\mc{O}_X} i_{w^Cs_\alpha +}(pr_{1+}(\mc{O}_{p_2^{-1}(C(w^C))})) \label{eq:5}\\
&=  \mc{O}(\rho - s_\alpha \rho) \otimes _{\mc{O}_X} i_{w^Cs_\alpha +}(\mc{O}_{C(w^Cs_\alpha)}) \label{eq:6} \\
&=\mc{I}(w^Cs_\alpha, s_\alpha \lambda, \eta).\label{eq:7}
\end{align}
In this final computation, (\ref{eq:5}) follows from the commutative diagram immediately preceding it, (\ref{eq:6}) from Lemma \ref{projectionlemma}, and (\ref{eq:7}) from the definition of $\mc{I}(w^Cs_\alpha, s_\alpha \lambda, \eta)$ and \cite[Ch.2 \S2]{localization}.
\end{proof}

For $C \in \W$, let $\M(w^C, \lambda, \eta)$ be the corresponding costandard $\eta$-twisted Harish-Chandra sheaf in the category $\M_{coh}(\D_\lambda, N, \eta)$. Our second result is the following. 
\begin{proposition} 
\label{Intertwining Functors on Costandard HC Sheaves}
Let $C \in \W$ and $\alpha \in \Pi$ be such that $Cs_\alpha<C$, and let $\lambda \in \mf{h}^*$ be arbitrary. Then 
\[
I_{s_\alpha}(\mc{M}(w^C, \lambda, \eta)) = \M(w^Cs_\alpha, s_\alpha \lambda, \eta),
\]
and
\[
L^pI_{s_\alpha}(\M(w^C, \lambda, \eta)) = 0 \text{ for } p \neq 0.
\]
\end{proposition}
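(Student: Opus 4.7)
The plan is to deduce this proposition from Proposition \ref{Intertwining Functors on Standard HC Sheaves} by means of holonomic duality. The key identification is $\M(w^E,\mu,\eta) = (\I(w^E,-\mu,\eta))^*$, which comes directly from the definition of costandard sheaves and Theorem \ref{holonomic duality}. Since $Cs_\alpha<C$ by hypothesis, we have $(Cs_\alpha)s_\alpha = C > Cs_\alpha$, and standard facts about the Bruhat order on cosets (cf.\ \cite[Ch.~6 \S1]{localization}) give $w^{Cs_\alpha}=w^Cs_\alpha$. Applying Proposition \ref{Intertwining Functors on Standard HC Sheaves} to $Cs_\alpha$ with parameter $-s_\alpha\lambda$ (noting that $s_\alpha(-s_\alpha\lambda)=-\lambda$) therefore yields the derived-category equality
\[
LI_{s_\alpha}\bigl(\I(w^Cs_\alpha,-s_\alpha\lambda,\eta)\bigr) = \I(w^C,-\lambda,\eta),
\]
with the left-hand side concentrated in degree zero.

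The next step uses two standard categorical properties of intertwining functors from the theory of $\mc D$-modules on the flag variety: that $LI_{s_\alpha}$ is an equivalence of the relevant bounded derived categories with quasi-inverse $RI_{s_\alpha}:=\DD\circ LI_{s_\alpha}\circ\DD$, and the resulting compatibility $\DD\circ RI_{s_\alpha}\simeq LI_{s_\alpha}\circ\DD$. Using invertibility, the displayed equality above rearranges to $\I(w^Cs_\alpha,-s_\alpha\lambda,\eta) = RI_{s_\alpha}(\I(w^C,-\lambda,\eta))$. Applying $\DD$ to both sides and invoking $\DD\,\I(w^E,-\mu,\eta)=\M(w^E,\mu,\eta)$, we obtain
\[
\M(w^Cs_\alpha,s_\alpha\lambda,\eta) = LI_{s_\alpha}\bigl(\M(w^C,\lambda,\eta)\bigr).
\]
Since the left-hand side is a single object of the abelian category, the right-hand side must be concentrated in degree zero. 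This delivers both assertions of the proposition: $I_{s_\alpha}(\M(w^C,\lambda,\eta)) = \M(w^Cs_\alpha,s_\alpha\lambda,\eta)$, and $L^pI_{s_\alpha}(\M(w^C,\lambda,\eta)) = 0$ for $p\neq 0$.

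The principal obstacle is establishing the two categorical properties of $LI_{s_\alpha}$---invertibility with quasi-inverse $RI_{s_\alpha}$, and the compatibility with $\DD$---in the $N$-equivariant, $\eta$-twisted setting, since the standard references treat the non-equivariant case. An alternative, more direct route would mimic the proof of Proposition \ref{Intertwining Functors on Standard HC Sheaves} using the Verdier-dual description $\M(w^C,\lambda,\eta) = i_{w^C!}(\mc O_{C(w^C)})$ together with base change and the projection formula for the shriek pushforward. However, for $Cs_\alpha<C$ the restriction of $p_1$ to $p_2^{-1}(C(w^C))$ is no longer an immersion onto a single Bruhat cell---its image is $C(w^C)\cup C(w^Cs_\alpha)$---so Lemma \ref{projectionlemma} does not apply directly and a more careful stratification of the geometry would be required.
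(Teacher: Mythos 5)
Your proof is correct and takes essentially the same route as the paper: apply Proposition \ref{Intertwining Functors on Standard HC Sheaves} to $Cs_\alpha$ at $-s_\alpha\lambda$, then use holonomic duality together with the fact (from \cite[Ch.~3 \S4 Thm.~4.4]{localization}) that $\DD\circ LI_{s_\alpha}\circ\DD$ is quasi-inverse to $LI_{s_\alpha}$. The ``principal obstacle'' you flag at the end is not one: the invertibility of $LI_{s_\alpha}$ and its compatibility with $\DD$ are statements about $\mc{D}_\lambda$-modules and their derived categories, independent of any $N$-equivariance or $\eta$-twist; the paper simply cites the untwisted reference without comment for exactly this reason.
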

\begin{proof} By Proposition \ref{Intertwining Functors on Standard HC Sheaves} applied to the coset $Cs_\alpha$ and linear form $-\lambda \in \mf{h}^*$, we have 
\[
\I(w^C, -\lambda, \eta)=LI_{s_\alpha}(\I(w^Cs_\alpha, -s_\alpha \lambda, \eta)).
\]
Applying holonomic duality, we get 
\begin{align*}
\M(w^C, \lambda, \eta) &= \DD(LI_{s_\alpha}(\I(w^Cs_\alpha, -s_\alpha \lambda, \eta))) \\
&=(\DD\circ LI_{s_\alpha} \circ\DD )(\M(w^Cs_\alpha, s_\alpha \lambda, \eta)) \\
\end{align*}
By \cite[Ch. 3 \S4 Thm. 4.4]{localization}, $\DD \circ LI_{s_\alpha} \circ \DD$ is the quasi-inverse of the intertwining functor $LI_{s_\alpha}$, so applying $LI_{s_\alpha}$ to both sides of the above equation proves the proposition. 
\end{proof}
Combined with \cite[Ch. 3 \S3 Cor. 3.22]{localization}, this implies the following result. 
\begin{theorem}
\label{Cohomology of Costandard HC Sheaves}
If $\lambda \in \mf{h}^*$ is $\alpha$-antidominant, and $C\in \W$ is such that $Cs_\alpha<C$, we have 
\[
H^p(X, \M(w^C, \lambda, \eta)) = H^p(X, \M(w^Cs_\alpha, s_\alpha \lambda, \eta))
\] 
for any $p \in \Z_+$. 
\end{theorem}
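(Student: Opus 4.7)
The plan is to combine Proposition \ref{Intertwining Functors on Costandard HC Sheaves} with the cited result \cite[Ch. 3 \S3 Cor. 3.22]{localization} which, under an $\alpha$-antidominance hypothesis on $\lambda$, asserts that the derived intertwining functor $LI_{s_\alpha}$ preserves the derived global sections functor, i.e.\ there is a natural isomorphism
\[
R\Gamma(X, LI_{s_\alpha}(\mc{V}^\cdot)) \simeq R\Gamma(X, \mc{V}^\cdot)
\]
for $\mc{V}^\cdot \in D^b(\mc{M}(\mc{D}_\lambda))$.

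First, I would apply Proposition \ref{Intertwining Functors on Costandard HC Sheaves} with the coset $C$ and $\alpha$ satisfying $Cs_\alpha < C$. This gives $I_{s_\alpha}(\M(w^C, \lambda, \eta)) = \M(w^C s_\alpha, s_\alpha \lambda, \eta)$ and vanishing of all higher derived functors $L^p I_{s_\alpha}$ for $p \neq 0$. Consequently, in the derived category,
\[
LI_{s_\alpha}(\M(w^C, \lambda, \eta)) \simeq \M(w^C s_\alpha, s_\alpha \lambda, \eta),
\]
concentrated in degree zero.

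Next, since $\lambda$ is $\alpha$-antidominant, the cited corollary yields
\[
R\Gamma(X, LI_{s_\alpha}(\M(w^C, \lambda, \eta))) \simeq R\Gamma(X, \M(w^C, \lambda, \eta))
\]
in the derived category of $\mc{U}_\theta$-modules. Substituting the identification from the previous step on the left-hand side gives
\[
R\Gamma(X, \M(w^C s_\alpha, s_\alpha \lambda, \eta)) \simeq R\Gamma(X, \M(w^C, \lambda, \eta)).
\]
Taking the $p$-th cohomology of both sides yields the claimed equality $H^p(X, \M(w^C, \lambda, \eta)) = H^p(X, \M(w^C s_\alpha, s_\alpha \lambda, \eta))$ for every $p \in \Z_+$.

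The proof is essentially a bookkeeping exercise once the two key inputs are in place; the main subtlety is simply verifying that the hypothesis $Cs_\alpha < C$ is what makes Proposition \ref{Intertwining Functors on Costandard HC Sheaves} (rather than its standard-sheaf counterpart) the correct tool, and that $\alpha$-antidominance of $\lambda$ is precisely the condition needed to invoke \cite[Ch. 3 \S3 Cor. 3.22]{localization}. No further calculation is required.
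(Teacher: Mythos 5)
Your proposal is correct and is exactly the paper's argument: the paper derives this theorem in one line by combining Proposition \ref{Intertwining Functors on Costandard HC Sheaves} (which gives $LI_{s_\alpha}(\M(w^C,\lambda,\eta)) \simeq \M(w^Cs_\alpha, s_\alpha\lambda,\eta)$ concentrated in degree zero) with \cite[Ch. 3 \S3 Cor. 3.22]{localization}, precisely as you do. Your write-up just makes the bookkeeping explicit.
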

The final result of this section is a technical lemma which uses Proposition \ref{Intertwining Functors on Costandard HC Sheaves} to relate costandard $\eta$-twisted Harish-Chandra sheaves supported on arbitrary $P_\Theta$-orbits to costandard $\eta$-twisted Harish-Chandra sheaves supported on the unique closed $P_\Theta$-orbit. This lemma will be critical in the arguments of Section \ref{Geometric description of Whittaker modules}. Recall that every coset $C \in \W$ has a unique longest coset representative $w^C$ and unique shortest coset representative $w_C$ \cite[Ch. 6 \S1 Thm. 1.4]{localization}. If $w_\Theta \in W_\Theta$ is the longest element, then by \cite[Ch. 6 \S1 Thm. 1.2 Thm. 1.4]{localization}, we have $w_\Theta w_C = w^C$, and $\ell(w_\Theta w_C) = \ell(w_\Theta) + \ell(w_C) = \ell(w^C)$.

\begin{lemma}
\label{Intertwining with Shortest Elements}
Let $\lambda \in \mf{h}^*$ be arbitrary. For any $C \in \W$, 
\[
I_{w_C}(\mc{M}(w^C, \lambda, \eta)) = \mc{M}(w_\Theta, w_C \lambda, \eta),
\]
and
\[
L^pI_{w_C}(\mc{M}(w^C, \lambda, \eta)) = 0 \text{ for } p\neq0.
\]
\end{lemma}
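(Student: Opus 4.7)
My plan is to induct on $\ell(w_C)$, reducing one simple reflection at a time via Proposition \ref{Intertwining Functors on Costandard HC Sheaves} and the composition rule for derived intertwining functors. Recall from \cite[Ch.~3 \S3]{localization} that whenever $\ell(ww')=\ell(w)+\ell(w')$ one has $LI_{w}\circ LI_{w'}=LI_{ww'}$, so a reduced decomposition $w_C=s_{\alpha_1}\cdots s_{\alpha_n}$ lets me compute $LI_{w_C}$ by applying the simple intertwiners one at a time (beginning with $s_{\alpha_n}$, the rightmost factor). The base case $\ell(w_C)=0$ is trivial: then $C$ is the coset of $w_\Theta$, $w^C=w_\Theta$, and $LI_e$ is the identity.

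For the inductive step, fix a simple reflection $s_\alpha$ with $w_C s_\alpha<w_C$, so that $w_C=(w_C s_\alpha)\cdot s_\alpha$ is a reduced factorization. I first need the combinatorial claim that $w_C s_\alpha$ is the shortest element in $Cs_\alpha$ and that $Cs_\alpha\neq C$. If $w\in Cs_\alpha$ had length less than $\ell(w_C s_\alpha)=\ell(w_C)-1$, then $ws_\alpha\in C$ would have length at most $\ell(w_C)-1$, contradicting minimality of $w_C$ in $C$; the same argument forces $w_C s_\alpha\notin C$. Consequently $w_{Cs_\alpha}=w_C s_\alpha$ and
\[
w^{Cs_\alpha}=w_\Theta w_{Cs_\alpha}=w_\Theta w_C s_\alpha=w^Cs_\alpha,
\]
with $\ell(w^{Cs_\alpha})=\ell(w_\Theta)+\ell(w_C)-1=\ell(w^C)-1$. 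Hence $Cs_\alpha<C$ in the Bruhat order on $\W$.

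With this verified, Proposition \ref{Intertwining Functors on Costandard HC Sheaves} applies and gives
\[
I_{s_\alpha}(\mc{M}(w^C,\lambda,\eta))=\mc{M}(w^C s_\alpha, s_\alpha\lambda,\eta)=\mc{M}(w^{Cs_\alpha}, s_\alpha\lambda,\eta),
\]
with $L^p I_{s_\alpha}(\mc{M}(w^C,\lambda,\eta))=0$ for $p\neq 0$. The composition rule then yields
\[
LI_{w_C}(\mc{M}(w^C,\lambda,\eta))=LI_{w_C s_\alpha}\bigl(LI_{s_\alpha}(\mc{M}(w^C,\lambda,\eta))\bigr)=LI_{w_C s_\alpha}(\mc{M}(w^{Cs_\alpha}, s_\alpha\lambda,\eta)).
\]
Since $w_{Cs_\alpha}=w_C s_\alpha$ has length $\ell(w_C)-1$, the inductive hypothesis applied to the coset $Cs_\alpha$ and the parameter $s_\alpha\lambda$ produces
\[
LI_{w_C s_\alpha}(\mc{M}(w^{Cs_\alpha}, s_\alpha\lambda,\eta))=\mc{M}(w_\Theta,(w_C s_\alpha)(s_\alpha\lambda),\eta)=\mc{M}(w_\Theta,w_C\lambda,\eta),
\]
with higher derived functors vanishing. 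This gives both assertions of the lemma.

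I expect the only real obstacle to be the combinatorial claim that $w_C s_\alpha$ is the shortest representative in $Cs_\alpha$, since everything else is a formal application of Proposition \ref{Intertwining Functors on Costandard HC Sheaves} and the composition property of derived intertwining functors. Once that claim is in hand, the induction runs cleanly because each reduction step in the intertwining corresponds bijectively to a descent in the Bruhat order on $\W$, terminating at the coset of $w_\Theta$.
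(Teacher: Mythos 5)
Your proof is correct and follows essentially the same route as the paper: induct on $\ell(w_C)$, using the product formula $LI_{ww'}=LI_w\circ LI_{w'}$ (valid when $\ell(ww')=\ell(w)+\ell(w')$) together with Proposition \ref{Intertwining Functors on Costandard HC Sheaves} to strip off one simple reflection per step. The only cosmetic difference is the phrasing of the induction: the paper starts from a coset $C$ for which the claim is known and steps up to $Cs_\alpha>C$, whereas you start from an arbitrary $C$ with $\ell(w_C)>0$ and step down to $Cs_\alpha<C$; these are the same induction read in opposite directions. Your verification that $w_Cs_\alpha=w_{Cs_\alpha}$ and $Cs_\alpha<C$ is a correct unpacking of the fact the paper cites as \cite[Ch.~6 \S1 Prop.~1.6]{localization}, and the observation that the derived product formula, combined with the degree-$0$ concentration from Proposition \ref{Intertwining Functors on Costandard HC Sheaves}, propagates the vanishing of $L^p$ for $p\neq 0$ is exactly what is needed and matches what the paper leaves implicit.
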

\begin{proof}
We proceed by induction in $\ell(w_C)$. If $\ell(w_C)=0$, then $C=W_\Theta$, and the assertion is trivially true.  If $\ell(w_C) = 1$, then $w_C$ is a simple reflection $s_\alpha$ for $\alpha \in \Pi - \Theta$. Then $\ell(w_\Theta s_\alpha) = \ell(w_\Theta) + 1$ and $W_\Theta s_\alpha > W_\Theta$. By Proposition \ref{Intertwining Functors on Costandard HC Sheaves},
\[
I_{s_\alpha}(\mc{M}(w_\Theta s_\alpha, \lambda, \eta)) = \mc{M}(w_\Theta, s_\alpha \lambda, \eta),
\]
and 
\[
L^pI_{s_\alpha}(\mc{M}(w_\Theta s_\alpha, \lambda, \eta))=0 \text{ for }p\neq0.
\]

Now let $C \in \W$ be arbitrary and assume that 
\[
I_{w_C}(\mc{M}(w^C, \lambda, \eta)) = \mc{M}(w_\Theta, w_C \lambda, \eta) \text{ and } L^pI_{w_C}(\mc{M}(w^C, \lambda, \eta)) = 0 \text{ for } p\neq0.
\]
Let $\alpha \in \Pi$ be such that $Cs_\alpha>C$. By \cite[Ch. 6 \S1 Prop. 1.6]{localization}, the shortest element $w_{Cs_\alpha}$ in $Cs_\alpha$ is $w_Cs_\alpha$. Thus, 
\begin{align*}
I_{w_Cs_\alpha}(\mc{M}(w^Cs_\alpha, \lambda, \eta)) &= I_{w_C}(I_{s_\alpha}(\mc{M}(w^Cs_\alpha, \lambda, \eta))) \\
&= I_{w_C}(\mc{M}(w^C, s_\alpha \lambda, \eta)) \\
&= \mc{M}(w_\Theta, w_Cs_\alpha \lambda, \eta).
\end{align*}
Here the first equality follows from the ``product formula'' for intertwining functors \cite[Ch. 3 \S3 Cor. 3.8]{localization} and the second equality from Proposition \ref{Intertwining Functors on Costandard HC Sheaves}. This completes the proof of the lemma by induction.
\end{proof}


\section{Geometric description of Whittaker modules}
\label{Geometric description of Whittaker modules}

In this section we establish the connection between the category of Whittaker modules and the category of twisted Harish-Chandra sheaves by proving that global sections of costandard twisted Harish-Chandra sheaves are standard Whittaker modules. The theorem is proven in three steps: first, we establish the result for costandard sheaves where the parameter $\eta \in \ch{\mf{n}}$ is nondegenerate; then, we prove that the formal characters align properly for costandard sheaves corresponding to the smallest $P_\Theta$-orbit (where the parameter $\eta$ is allowed to be arbitrary); finally, we extend the result to all costandard sheaves. This proof is new to the literature, though an alternate proof of this relationship was given in the unpublished work \cite{Lukic}. This allows us to use geometric arguments to draw conclusions about our algebraic category of Whittaker modules, which will be essential in the interpretation of the algorithm developed in Section \ref{A Kazhdan-Lusztig algorithm}. Our main tool in this section is the theory of formal characters developed in Section \ref{Character theory}. 

We begin by examining the nondegenerate case. Let $w_0$ be the longest element of the Weyl group $W$ of $\mf{g}$.  
\begin{proposition}
\label{nondegenerate}
Let $\eta \in \ch{\mf{n}}$ be nondegenerate and $\lambda \in \mf{h}^*$. Then 
\[
\Gamma(X, \mc{M}(w_0, \lambda, \eta)) = M(w_0 \lambda, \eta).
\]
\end{proposition}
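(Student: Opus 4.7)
The plan is to exploit the fact that nondegeneracy of $\eta$ collapses the classification of $\eta$-twisted Harish-Chandra sheaves to a single orbit, then identify the global sections of the resulting sheaf directly with Kostant's irreducible nondegenerate Whittaker module. Since $\eta$ is nondegenerate, $\Theta = \Pi$, $W_\Theta = W$, and the quotient $\W$ is a singleton whose longest coset representative is $w_0$. By the analysis in \cite[\S4]{TwistedSheaves}, no $N$-orbit contained in $\partial C(w_0) = X \setminus C(w_0)$ admits a compatible $\eta$-twisted connection, so any $\eta$-twisted Harish-Chandra sheaf supported in $\partial C(w_0)$ has no possible composition factors and therefore vanishes. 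Applying this observation to the kernel of $\M(w_0, \lambda, \eta) \twoheadrightarrow \LL(w_0, \lambda, \eta)$ from Proposition \ref{costandardHCsheaves} and the cokernel of $\LL(w_0, \lambda, \eta) \hookrightarrow \I(w_0, \lambda, \eta)$, all three sheaves collapse to one:
\[
\M(w_0, \lambda, \eta) = \LL(w_0, \lambda, \eta) = \I(w_0, \lambda, \eta) = i_{w_0 +}(\OO_{C(w_0)}).
\]

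Next, since $i_{w_0}\colon C(w_0) \hookrightarrow X$ is an affine open immersion, $i_{w_0+}$ is the underived sheaf-theoretic direct image, so
\[
\Gamma(X, \M(w_0, \lambda, \eta)) = \Gamma(C(w_0), \OO_{C(w_0)}) \cong \C[N]
\]
as $\C$-vector spaces, via the identification $C(w_0) \cong N$ coming from the free transitive left action of $N$ on the big cell. Through Beilinson--Bernstein, $\Gamma(X, \D_\lambda) = \mc{U}_\theta$ with $\theta$ the $W$-orbit of $\lambda$, which turns this vector space into an object of $\Nte$ by \cite[\S2 Lem. 2.3]{TwistedSheaves}. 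Unwinding the compatibility condition from the definition of an $\eta$-twisted Harish-Chandra sheaf, $X \in \mf{n}$ acts on $\C[N]$ by $\pi(X) = \mu(X) + \eta(X)$, where $\mu(X)$ is the vector field on $N$ generating left translation by $\exp(tX)$.

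To finish, I would appeal to Kostant's theorem \cite[\S3 Thm.\ 3.6.1]{Kostant}: for nondegenerate $\eta$, the category $\Nte$ is semisimple with unique simple object $M(w_0\lambda, \eta) = M(\lambda, \eta)$ (note $W_\Theta = W$), and the composition length of any object in it equals the dimension of its $\eta$-Whittaker space. By the explicit formula for $\pi(X)$, the Whittaker condition $\pi(X)f = \eta(X)f$ reduces to $\mu(X) f = 0$ for all $X \in \mf{n}$, i.e.\ $f$ invariant under left translation; transitivity of this action forces $f$ to be constant. Hence the Whittaker space is one-dimensional, so $\Gamma(X, \M(w_0, \lambda, \eta))$ has length one and must equal $M(w_0\lambda, \eta)$. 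The main obstacle I anticipate is the second paragraph: carefully identifying the abstract $\ug$-action produced by Beilinson--Bernstein with the concrete shifted-vector-field action on $\C[N]$, so that the Whittaker computation at the end is unambiguous. Once this identification is in hand, the rest of the argument is a clean combination of the Mili\v{c}i\'{c}--Soergel classification of simple $\eta$-twisted Harish-Chandra sheaves with the Kostant theory of nondegenerate Whittaker modules.
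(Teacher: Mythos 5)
Your proof is correct, and it takes a genuinely different route from the paper's. The paper first treats antidominant $\lambda$ by citing Mili\v{c}i\'{c}--Soergel's Theorems 5.1 and 5.2 directly, and then extends to arbitrary $\lambda$ via an intertwining-functor argument: it applies $LI_w$, uses that every coherent $(\mc{D}_{w\lambda},N,\eta)$-module is a finite sum of copies of the unique irreducible to deduce $LI_w(\mc{M}(w_0,\lambda,\eta)) = \mc{M}(w_0,w\lambda,\eta)$, and invokes the derived equivalence $R\Gamma$. Your argument instead computes global sections directly and uniformly in $\lambda$: the unique sheaf is the pushforward $i_{w_0+}\mc{O}_{C(w_0)}$ along the affine open big cell, so $\Gamma(X,\mc{M}(w_0,\lambda,\eta)) \cong \C[N]$, and the $\eta$-Whittaker vectors unwind via the compatibility $\pi(\xi)=\mu(\xi)+\eta(\xi)$ to the $N$-translation invariants of $\C[N]$, i.e.\ the constants. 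This avoids intertwining functors entirely; what the paper's route buys instead is the identity $LI_w(\mc{M}(w_0,\lambda,\eta))=\mc{M}(w_0,w\lambda,\eta)$ and the higher-cohomology vanishing, which are reused later (in the proof of Lemma \ref{graded cohomology}). In fact your computation also yields the cohomology vanishing for free, since $C(w_0)$ is affine and $i_{w_0}$ is affine open, though you do not note this.

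Two refinements. First, the ``main obstacle'' you flag --- identifying the full abstract $\ug$-action from Beilinson--Bernstein with a shifted-vector-field action --- is not actually needed: your Whittaker-space calculation only uses the $\mf{n}$-action, and this is pinned down by condition (iii) in the definition of a $(\mc{D}_\lambda,N,\eta)$-module, without reference to the rest of $\mf{g}$. Second, the step ``composition length equals dimension of the $\eta$-Whittaker space'' needs more than Kostant's classification of simples (Thm.\ 3.6.1); it uses the exactness of the Whittaker-vector functor for nondegenerate $\eta$, also due to Kostant. One can sidestep this and the unstated finite-generation of $\Gamma(X,\mc{M}(w_0,\lambda,\eta))$ over $\mc{U}_\theta$ by a softer argument: the cyclic submodule $V'$ generated by the one-dimensional Whittaker space is a quotient of $\mc{U}_\theta \otimes_{\mc{U}(\mf{n})}\C_\eta = M(w_0\lambda,\eta)$, hence equals $M(w_0\lambda,\eta)$ by Kostant's irreducibility; the quotient by $V'$ is still $\un$-finite with generalized $\mf{n}$-character $\eta$, so if nonzero it would carry a nonzero $\eta$-Whittaker vector by Engel's theorem applied to the $(-\eta)$-twisted action, which would already lie in $V'$ by the one-dimensionality of $Wh_\eta$ --- a contradiction, so the quotient vanishes.
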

\begin{proof}
If $\eta$ is nondegenerate, then $W=W_\Theta$, so by \cite[Thm. 5.1]{TwistedSheaves}, there exists a unique irreducible object $\mc{L}(w_0, \lambda, \eta)=\mc{I}(w_0, \lambda, \eta)=\mc{M}(w_0, \lambda, \eta) = \mc{D}_\lambda \otimes_{\mc{U}(\mf{n})} \C_\eta$ in $\mc{M}_{coh}(\mc{D}_\lambda, N, \eta)$. Assume $\lambda$ is antidominant, and let $\theta\subset \mf{h}^*$ be the $W$-orbit of $\lambda$. Then by \cite[Thm. 5.2]{TwistedSheaves}, 
\[
\Gamma(X, \mc{M}(w_0, \lambda, \eta)) = \mc{U}_\theta \otimes _{\mc{U}(\mf{n})} \C_\eta = M(w_0 \lambda, \eta). 
\]
Now, in order to deal with general $\lambda \in \mf{h}^*$, let $w \in W$ be arbitrary. By the preceding argument (first equality) and \cite[Ch. 3 \S3 Thm 3.23]{localization} (second equality), we have 
\[
M(w_0 \lambda, \eta)= R\Gamma(\mc{M}(w_0, \lambda, \eta))=R\Gamma(LI_w(\mc{M}(w_0, \lambda, \eta)))=R\Gamma(\mc{C}^\cdot),
\]
where $\mc{C}^\cdot$ is a complex in $D^b(\mc{D}_{w\lambda})$ such that for any $i \in \Z$, $\mc{C}^i$ is a finite sum of copies of the unique irreducible object $\mc{M}(w_0, w\lambda, \eta)$. (The last equality follows from \cite[\S5 Thm. 5.6]{TwistedSheaves}.) Because the image of $M(w_0 \lambda, \eta)$ in the derived category is a complex with a single irreducible object in degree zero and zeros elsewhere and $R\Gamma$ is an equivalence of derived categories, the equality above implies that 
\[
LI_w(\mc{M}(w_0, \lambda, \eta)) = \mc{M}(w_0, w\lambda, \eta). 
\]
Therefore, 
\[
\Gamma(X, \mc{M}(w_0, w\lambda, \eta)) = M(w_0 \lambda, \eta) = M(w_0w\lambda, \eta).
\]
This completes the proof of the proposition. 
\end{proof}

\begin{proposition}
\label{smallest orbit}
Let $\eta \in \ch{\mf{n}}$ be arbitrary, $\lambda \in \mf{h}^*$, and $\theta \subset \mf{h}^*$ the Weyl group orbit of $\lambda$. In the Grothendieck group $K\mc{M}_{fg}(\mc{U}_\theta, N, \eta)$,
\[
[\Gamma(X, \mc{M}(w_\Theta, \lambda, \eta))] = [M(w_\Theta \lambda, \eta)]. 
\]
\end{proposition}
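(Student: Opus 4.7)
By Corollary~\ref{character determines composition series}, two objects of $\Nte$ have the same class in $K\Nte \simeq K\mc{M}_{fg}(\mc{U}_\theta, N, \eta)$ precisely when they have the same formal character. Since $\ch M(w_\Theta\lambda, \eta)$ is given explicitly by equation~(\ref{character of standard}), it suffices to compute $\ch \Gamma(X, \mc{M}(w_\Theta, \lambda, \eta))$ and verify the two formulas agree.

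The starting observation is that the coset $W_\Theta \in \W$ indexes the closed $P_\Theta$-orbit $X_\Theta \subset X$, which is isomorphic to the flag variety of the Levi $\mf{l}_\Theta$. Writing the Bruhat-cell inclusion as $i_{w_\Theta} = i \circ j$ with $i : X_\Theta \hookrightarrow X$ closed and $j : C(w_\Theta) \hookrightarrow X_\Theta$ open onto the big cell of $X_\Theta$, the restricted character $\eta|_{\mf{n}_\Theta}$ is nondegenerate on $\mf{l}_\Theta$; hence by \cite[Thm.~5.1]{TwistedSheaves} the sheaf $\mc{M}^\Theta := j_+\mc{O}_{C(w_\Theta)}$ is the unique irreducible $\eta|_{\mf{n}_\Theta}$-twisted Harish-Chandra sheaf on $X_\Theta$. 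Kashiwara's equivalence applied to $i$ then shows $i_+\mc{M}^\Theta$ is irreducible on $X$, so
\[
\mc{I}(w_\Theta, \lambda, \eta) \;=\; \mc{L}(w_\Theta, \lambda, \eta) \;=\; \mc{M}(w_\Theta, \lambda, \eta) \;=\; i_+\mc{M}^\Theta.
\]

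Next I would identify the global sections $\Gamma(X, i_+\mc{M}^\Theta)$ with the parabolically induced $\mf{g}$-module $\ug \otimes_{\mc{U}(\mf{p}_\Theta)} \Gamma(X_\Theta, \mc{M}^\Theta)$. This is a manifestation of the general principle that $\mc{D}$-module pushforward from the closed fiber of the projection $\pi : X \to G/P_\Theta$ corresponds under global sections to parabolic induction from $\mf{p}_\Theta$ to $\mf{g}$; concretely it can be obtained from the transfer-bimodule description $i_+\mc{F} = i_*(\mc{F} \otimes_{\mc{D}_{X_\Theta, \mu}} \mc{D}_{X_\Theta \to X, \lambda})$ by unwinding the PBW filtration on $\mc{D}_{X_\Theta \to X, \lambda}$ along the normal bundle $N_{X_\Theta/X}$, whose fiber is identified with $\bar{\mf{u}}_\Theta$. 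Applying Proposition~\ref{nondegenerate} to $\mf{l}_\Theta$ (for which $w_\Theta$ is the longest Weyl-group element and $\eta|_{\mf{n}_\Theta}$ is nondegenerate) identifies $\Gamma(X_\Theta, \mc{M}^\Theta)$ with the irreducible $\mf{l}_\Theta$-Whittaker module $Y(w_\Theta\lambda - \rho + \rho_\Theta, \eta)$, where the $\rho - \rho_\Theta$ shift between the twisted-differential-operator conventions on $X$ and on $X_\Theta$ is absorbed into the parameter. Combining these gives
\[
\Gamma(X, \mc{M}(w_\Theta, \lambda, \eta)) \;\cong\; \ug \otimes_{\mc{U}(\mf{p}_\Theta)} Y(w_\Theta\lambda - \rho + \rho_\Theta, \eta) \;=\; M(w_\Theta\lambda, \eta),
\]
which is actually an isomorphism of $\mf{g}$-modules, stronger than the Grothendieck-group equality asserted by the proposition.

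The main obstacle is the parabolic-induction identification $\Gamma(X, i_+\mc{M}^\Theta) \cong \ug \otimes_{\mc{U}(\mf{p}_\Theta)} \Gamma(X_\Theta, \mc{M}^\Theta)$ in a manner compatible with the full $\mf{g}$-action on global sections, together with the bookkeeping of the $\rho - \rho_\Theta$ shift so that the highest $\mf{h}^\Theta$-weight of $\Gamma(X_\Theta, \mc{M}^\Theta)$ emerges as $\bm{w_\Theta\lambda - \rho}$. Once this identification is in hand the character comparison with (\ref{character of standard}) is immediate, since the $\mf{h}^\Theta$-weight decomposition of $\mc{U}(\bar{\mf{u}}_\Theta) \otimes_\C Y(w_\Theta\lambda - \rho + \rho_\Theta, \eta)$ is precisely the decomposition used to derive (\ref{character of standard}).
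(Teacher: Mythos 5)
Your proposal diverges from the paper in a way that masks a genuine gap. You want to upgrade the Grothendieck-group identity to an honest $\mf{g}$-module isomorphism $\Gamma(X, i_+\mc{M}^\Theta) \cong \ug \otimes_{\mc{U}(\mf{p}_\Theta)} \Gamma(X_\Theta, \mc{M}^\Theta)$, and you correctly flag the parabolic-induction identification as ``the main obstacle,'' but the route you suggest for resolving it --- ``unwinding the PBW filtration on $\mc{D}_{X_\Theta \to X, \lambda}$ along the normal bundle'' --- does not actually produce a $\mf{g}$-module isomorphism. The filtration of $\mc{D}_{Y \to X}$ by normal degree is a filtration by $(\mc{D}^i, i^{-1}\mc{O}_X)$-bimodules, and the filtration it induces on $i_+\mc{W}$ is therefore only an $\mc{O}_X$-module filtration (see the paper's Appendix A, where this point is made explicitly); the associated graded $Gr\, i_+\mc{W} = i_\bullet(\mc{W} \otimes_{\mc{O}_Y} S(\mc{N}_{X|Y}))$ carries a $\mc{D}_\lambda$-module structure that is not inherited from $i_+\mc{W}$. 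Upon taking global sections, what you obtain is a vector-space filtration whose associated graded is the parabolically induced module, not a $\mf{g}$-submodule filtration that splits. What the PBW filtration can give you is precisely the equality of formal characters $\ch \Gamma(X, \mc{I}(w_\Theta, \lambda, \eta)) = \ch \Gamma(X, Gr\, \mc{I}(w_\Theta, \lambda, \eta))$, and even that requires a cohomology-vanishing argument to justify passing between the filtered module and its graded pieces (this is the content of Lemma \ref{graded cohomology}, which you skip). That character equality is exactly what yields the Grothendieck-group statement of the proposition, and no more.

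In other words, if you carry out the route you sketch to a finish, you land on the paper's proof: realize $\mc{I}(w_\Theta, \lambda, \eta)$ as $j_+(\mc{F})$ for $\mc{F}$ a standard sheaf on the flag variety of $\mf{l}_\Theta$ (Lemma \ref{induced from standard}, which agrees with your first paragraph), compute the character of the graded module via $S(\mc{N}_{X|P(w_\Theta)}) = \bigoplus_{\mu \in Q}\mc{O}(\mu)$ (Lemma \ref{symmetric decomposition}) and the nondegenerate case (Proposition \ref{nondegenerate}), and compare with equation (\ref{character of standard}). The paper deliberately claims only the Grothendieck-group identity here, for arbitrary $\lambda$; the genuine $\mf{g}$-module isomorphism $\Gamma(X,\mc{M}(w_\Theta, \lambda, \eta)) \cong M(w_\Theta\lambda, \eta)$ is established separately in Theorem \ref{global sections of costandards}, and the argument there (via Proposition \ref{classification}) requires $\lambda$ to be antidominant and goes through the uniqueness of irreducible quotients rather than through a splitting of the normal-degree filtration. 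If you do want the stronger module isomorphism directly, you need a different mechanism than the PBW/normal-degree filtration --- and supplying one, rather than asserting it as a ``general principle,'' is where the real work lies.
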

Here $w_\Theta$ is the longest element in the Weyl group $W_\Theta$ determined by $\Theta$. 
We will prove the proposition in a series of steps. Our first step is to realize the standard sheaf corresponding to the smallest $P_\Theta$-orbit as the direct image of a twisted Harish-Chandra sheaf for the flag variety of $\mf{l}_\Theta$. Let $P(w_\Theta)$ be the $P_\Theta$-orbit with open Bruhat cell $C(w_\Theta)\subset P(w_\Theta)$. Because $w_\Theta$ is minimal in the set of longest coset representatives \cite[Ch. 6 \S 1 Lem. 1.7]{localization}, $P(w_\Theta)$ is a closed subvariety of $X$. Because $P(w_\Theta)$ is an orbit of an algebraic group action it is also a smooth subvariety of $X$. In fact, $P(w_\Theta)$ is isomorphic to the flag variety of $\mf{l}_\Theta$. In particular, by \cite[Ch. 6, \S1, Lem. 1.9]{localization}, we have the following orbit decomposition $P(w_\Theta) = \bigcup_{t \in W_\Theta} C(tw_\Theta) = \bigcup_{w \in W_\Theta} C(w)$. Let 
\[
i_{w_\Theta}:C(w_\Theta) \rightarrow P(w_\Theta),  j:P(w_\Theta) \rightarrow X, \text{ and } i:C(w_\Theta) \rightarrow X
\]  
be the natural inclusions, so $i = j \circ i_{w_\Theta}$ is the composition of an open immersion and a closed immersion. By definition, $\mc{I}(w_\Theta, \lambda, \eta) = j_+(\mc{F})$, where $\mc{F} = i_{w_\Theta +}(\mc{O}_{C(w_\Theta)})$, and $\mc{O}_{C(W_\Theta)}$ is the $N$-equivariant connection in $\mc{M}_{coh}(\mc{D}_\lambda^i, N, \eta)$ described in Section \ref{The Harish-Chandra pair (g,N)}. 
\begin{lemma}
\label{induced from standard}
The sheaf $\mc{F}$ is the standard object $\mc{I}(w_\Theta, \lambda+ \rho - \rho_\Theta, \eta|_{\mf{n}_\Theta})$ in the category $\mc{M}_{coh}(\mc{D}_{P(w_\Theta), \lambda + \rho}, N_\Theta, \eta|_{\mf{n}_\Theta})$ corresponding to the open Bruhat cell $C(w_\Theta) \subset P(w_\Theta)$. 
\end{lemma}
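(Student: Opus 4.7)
The plan is to view the claim as an unwinding of Definition \ref{standard sheaf} applied to the Harish-Chandra pair $(\mf{l}_\Theta, N_\Theta)$ on the flag variety $P(w_\Theta)$ of $\mf{l}_\Theta$. The sheaf $\mc{F} = i_{w_\Theta +}(\mc{O}_{C(w_\Theta)})$ already has the shape of a standard sheaf in the sense of that definition, so the content of the lemma lies in identifying all the parameters that appear.

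First I would confirm the orbit and connection data. Under the isomorphism of $P(w_\Theta)$ with the flag variety of $\mf{l}_\Theta$, the $N_\Theta$-orbits on $P(w_\Theta)$ are precisely the Bruhat cells $C(w)$ for $w \in W_\Theta$ appearing in the decomposition $P(w_\Theta) = \bigsqcup_{w \in W_\Theta} C(w)$; the longest element $w_\Theta \in W_\Theta$ indexes the open orbit $C(w_\Theta)$. The $\eta$-twisted $N$-equivariant structure on $\mc{O}_{C(w_\Theta)}$ introduced in Section \ref{The Harish-Chandra pair (g,N)} restricts to the unique irreducible $\eta|_{\mf{n}_\Theta}$-twisted $N_\Theta$-equivariant connection on this cell, providing the compatible connection required as input to Definition \ref{standard sheaf} in the $\mf{l}_\Theta$-setting.

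Second I would identify the twisted sheaf of differential operators on $P(w_\Theta)$. Following the convention recorded in Definition \ref{standard sheaf} and the appendix, the pullback $(\mc{D}_\lambda)^j$ is denoted $\mc{D}_{P(w_\Theta), \lambda + \rho}$. On the other hand, $P(w_\Theta)$ is itself a flag variety and carries a family of TDOs indexed by $\mf{h}^*$, viewed as the abstract Cartan of $\mf{l}_\Theta$; applying the Harish-Chandra conventions of Section \ref{The Harish-Chandra pair (g,N)} to $(\mf{l}_\Theta, N_\Theta)$ uses the $\mf{l}_\Theta$-half-sum $\rho_\Theta$ in the specialization procedure rather than $\rho$. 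Comparing the two parameterizations of the same TDO on $P(w_\Theta)$ then yields the identification $\mc{D}_{P(w_\Theta), \lambda + \rho} = \mc{D}_{P(w_\Theta), (\lambda + \rho - \rho_\Theta) + \rho_\Theta}$, so the pulled-back TDO corresponds, in the $\mf{l}_\Theta$-notation, to the parameter $\lambda + \rho - \rho_\Theta \in \mf{h}^*$.

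Putting these pieces together and applying Definition \ref{standard sheaf} to the Harish-Chandra pair $(\mf{l}_\Theta, N_\Theta)$ on $P(w_\Theta)$ with open Bruhat cell $C(w_\Theta)$, compatible connection $\mc{O}_{C(w_\Theta)}$, and parameter $\lambda + \rho - \rho_\Theta$, one obtains precisely the sheaf $i_{w_\Theta +}(\mc{O}_{C(w_\Theta)}) = \mc{F}$, identifying it with $\mc{I}(w_\Theta, \lambda + \rho - \rho_\Theta, \eta|_{\mf{n}_\Theta})$. The main obstacle is the second step: carefully tracking the $\rho$-versus-$\rho_\Theta$ shift in the specialization procedure that governs the passage between the $\mf{g}$-side and $\mf{l}_\Theta$-side parameterizations of TDOs on $P(w_\Theta)$. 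Once this bookkeeping is settled, the remainder of the proof is immediate from the definitions.
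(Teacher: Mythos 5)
Your proposal is correct and follows essentially the same route as the paper: restrict the $N$-equivariant structure on $\mc{O}_{C(w_\Theta)}$ to $N_\Theta$, view $P(w_\Theta)$ as the flag variety of $\mf{l}_\Theta$, and match the TDO parameters via $(\mc{D}_\lambda)^j = \mc{D}_{P(w_\Theta),\lambda+\rho} = \mc{D}_{\lambda+\rho-\rho_\Theta}$, where the last equality is exactly the $\rho$-versus-$\rho_\Theta$ bookkeeping you identify as the key step. The paper adds only the parenthetical observation that nondegeneracy of $\eta|_{\mf{n}_\Theta}$ makes this the unique standard sheaf in the $\mf{l}_\Theta$-category, which is not needed for the identification itself.
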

\begin{proof} As described above, we can view $P(w_\Theta)$ as the flag variety for $\mf{l}_\Theta$, and the character $\eta|_{\mf{n}_\Theta}$ is nondegenerate on $\mf{l}_\Theta$. The irreducible $N$-equivariant connection $\mc{O}_{C(w_\Theta)}$ is compatible with $(\lambda, \eta) \in \mf{h}^* \times \ch{\mf{n}}$ by construction. We can restrict the $N$-action to $N_\Theta \subset N$, and consider $\mc{O}_{C(w_\Theta)}$ as an irreducible $N_\Theta$-equivariant connection compatible with $(\lambda, \eta|_{\mf{n}_\Theta}) \in \mf{h}^* \times \mf{n}_\Theta^*$. This allows us to interpret $\mc{F} = i_{w_\Theta +}(\mc{O}_{C(w_\Theta)})$ as the standard sheaf on the flag variety of $\mf{l}_\Theta$ induced from the irreducible $N_\Theta$-equivariant connection $\mc{O}_{C(w_\Theta)}$ on $C(w_\Theta)$ in $\mc{M}_{coh}((\mc{D}_\lambda^j)^i, N_\Theta, \eta|_{\mf{n}_\Theta})$. (Note that because $\eta|_{\mf{n}_\Theta}$ is nondegenerate, this is the only standard $\eta|_{\mf{n}_\Theta}$-twisted Harish-Chandra sheaf in the category $\mc{M}_{coh}(\mc{D}_\lambda^j, N_\Theta, \eta|_{\mf{n}_\Theta})$ by \cite[Thm. 5.1]{TwistedSheaves}.) Because 
\[
\mc{D}_\lambda^j = (\mc{D}_{X, \lambda + \rho})^j = \mc{D}_{P(w_\Theta), \lambda + \rho} = \mc{D}_{\lambda + \rho - \rho_\Theta},
\]
we have that 
\[
\mc{F} = \mc{I}(w_\Theta, \lambda + \rho - \rho_\Theta, \eta|_{\mf{n}_\Theta}).
\]
This completes the proof. 
\end{proof}
Our next step is to use the normal degree filtration (Appendix \ref{Modules over twisted sheaves of differential operators}) to analyze the global sections of the standard sheaf $\mc{I}(w_\Theta, \lambda, \eta)$. We will do so using the theory of formal characters established in Section \ref{Character theory}. By Lemma \ref{induced from standard}, we can express our standard sheaf $\mc{I}(w_\Theta, \lambda, \eta) = j_+(\mc{F})$, where $\mc{F} =  \mc{I}(w_\Theta, \lambda+ \rho - \rho_\Theta, \eta|_{\mf{n}_\Theta})$. Because $j:P(w_\Theta) \rightarrow X$ is a closed immersion, this implies that $\mc{I}(w_\Theta, \lambda, \eta)$ has a filtration by normal degree, $F_n\mc{I}(w_\Theta, \lambda, \eta)$. Let $Gr \mc{I}(w_\Theta, \lambda, \eta)$ be the associated graded sheaf. Let $ch:\Nte \longrightarrow \prod_{\bm{\mu} \leq S_0} K \N([\mf{l}_\Theta, \mf{l}_\Theta]) e^{\bm{\mu}}$ be the formal character function described in Section \ref{Character theory}. 
\begin{lemma}
\label{graded cohomology}
$ch \Gamma(X, Gr \mc{I}(w_\Theta, \lambda, \eta)) = ch\Gamma(X, \mc{I}(w_\Theta, \lambda, \eta)).$
\end{lemma}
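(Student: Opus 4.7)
The plan is to use the normal degree filtration $F_\bullet\mc{I}(w_\Theta, \lambda, \eta)$ induced by the closed immersion $j:P(w_\Theta)\to X$ (Appendix \ref{Modules over twisted sheaves of differential operators}), verify that it is a filtration in $\mc{M}_{coh}(\mc{D}_\lambda, N, \eta)$, pass to global sections to obtain a filtration in $\Nte$, and then conclude from additivity of $\ch$ combined with an $\mf{h}^\Theta$-weight finiteness property of the subquotients.

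First I would observe that because $N$ preserves the closed subvariety $P(w_\Theta)\subset X$, the normal degree filtration is automatically $N$-stable, so each $F_n\mc{I}(w_\Theta, \lambda, \eta)$ and each subquotient $F_n/F_{n-1}$ lies in $\mc{M}_{coh}(\mc{D}_\lambda, N, \eta)$. As an $\mc{O}_X$-module, $F_n/F_{n-1}\cong j_*(\mc{F}\otimes_{\mc{O}_{P(w_\Theta)}} S^n\mc{N})$, where $\mc{N}$ is the normal bundle of $P(w_\Theta)$ in $X$. The $\mf{h}^\Theta$-weights appearing on $\mc{N}$ are (up to sign) the restrictions of roots in $\Sigma^+-\Sigma^+_\Theta$ to $\mf{h}^\Theta$, which, as observed in the computation leading to (\ref{character of standard}), lie strictly in the negative cone of $\hs$. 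Consequently the $\mf{h}^\Theta$-weights of $\Gamma(X,F_n/F_{n-1})$ are shifted strictly downward as $n$ grows, and for any fixed $\bm{\mu}\in\hs$ only finitely many graded pieces contribute to the $\bm{\mu}$-weight space. Since global sections commutes with the filtered colimit $\mc{I}(w_\Theta, \lambda, \eta)=\bigcup_n F_n$, the inclusion $\Gamma(X,F_n)^{\bm{\mu}}\hookrightarrow \Gamma(X,\mc{I}(w_\Theta,\lambda, \eta))^{\bm{\mu}}$ is an isomorphism for $n$ large enough.

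Combining this with the additivity of $\ch$ on short exact sequences in $\Nte$ (Theorem \ref{ch is injective}), the equality of characters is reduced to the finite filtration of $\Gamma(X, F_N)$ on each $\bm{\mu}$-weight space, yielding
\[
\ch \Gamma(X, \mc{I}(w_\Theta, \lambda, \eta)) = \sum_n \ch \Gamma(X, F_n/F_{n-1}) = \ch \Gamma(X, Gr\,\mc{I}(w_\Theta, \lambda, \eta)).
\]
The main obstacle is controlling exactness of $\Gamma$ across the filtration, i.e., verifying that $0\to\Gamma(X,F_{n-1})\to\Gamma(X,F_n)\to\Gamma(X,F_n/F_{n-1})\to 0$ remains short exact. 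Because $j$ is a closed immersion, $R\Gamma(X, j_+(\cdot))\cong R\Gamma(P(w_\Theta),\cdot)$, so all cohomological questions reduce to the flag variety of $\mf{l}_\Theta$ where they can be handled directly; alternatively, one can pass to Euler characteristics throughout, under which additivity is automatic and the higher cohomology corrections cancel in the alternating sum, which is sufficient since $\ch$ only sees Grothendieck classes.
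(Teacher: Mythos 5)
Your overall strategy matches the paper's: use the normal degree filtration $F_\bullet \mc{I}(w_\Theta, \lambda, \eta)$, pass to global sections, and invoke additivity of $\ch$. The weight-finiteness observation is a nice supplement (the paper tacitly relies on it when treating the infinite sum over $n$, and you make it explicit). But there is a real gap at the step you yourself flag as the ``main obstacle.''

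The content of this lemma, for arbitrary $\lambda \in \mf{h}^*$ rather than antidominant $\lambda$, is precisely the vanishing $H^p(X, Gr_n \mc{I}(w_\Theta,\lambda,\eta)) = 0$ for $p>0$, which is what makes the sequences $0 \to \Gamma(X,F_{n-1}) \to \Gamma(X,F_n) \to \Gamma(X, Gr_n) \to 0$ exact. Saying that cohomology reduces via Kashiwara to the flag variety of $\mf{l}_\Theta$ ``where it can be handled directly'' is not a proof: the specific fact needed is that each $Gr_n$ is the direct image under $j$ of a sheaf on $P(w_\Theta)$ admitting a finite filtration whose graded pieces are standard nondegenerate $\eta|_{\mf{n}_\Theta}$-twisted Harish-Chandra sheaves (this comes from Lemma \ref{symmetric decomposition}, Lemma \ref{twisting standards}, and Lemma \ref{induced from standard}), and these have vanishing higher cohomology by (the proof of) Proposition \ref{nondegenerate} — an assertion that itself required the intertwining functor argument and \cite[\S5]{TwistedSheaves}. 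Without this, your argument only produces a complex, not a short exact sequence, in $\Nte$.

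Your fallback to Euler characteristics also does not rescue the statement as written. The lemma asserts equality of $\ch\Gamma(X,-)$, not of $\sum_p(-1)^p\ch H^p(X,-)$, and this distinction matters downstream: Proposition \ref{smallest orbit} is applied (via Theorem \ref{global sections of costandards}) to $w_C\lambda$, which need not be antidominant, so one cannot appeal to exactness of $\Gamma$ to collapse the alternating sum. The alternating-sum identity would only recover the stated lemma if you already knew the higher cohomology vanished — which is exactly the vanishing you were trying to avoid proving. You need to supply the acyclicity of the graded pieces explicitly, as the paper does.
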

\begin{proof}
By construction, we have 
\[
\Gamma(X, \mc{I}(w_\Theta, \lambda, \eta)) = \varinjlim \Gamma(X, F_n \mc{I}(w_\Theta, \lambda, \eta)). 
\]
For each $n \in \Z_+$, we have an exact sequence 
\[
0 \rightarrow F_{n-1} \mc{I}(w_\Theta, \lambda, \eta) \rightarrow F_n \mc{I}(w_\Theta, \lambda, \eta) \rightarrow Gr_n \mc{I}(w_\Theta, \lambda, \eta) \rightarrow 0 . 
\]
We claim that $H^p(X, Gr_n \mc{I}(w_\Theta, \lambda, \eta)) =0$ for $p> 0$. To see this, note that by construction, $Gr_n \mc{I}(w_\Theta, \lambda, \eta)$ is the sheaf-theoretic direct image of a sheaf on $P(w_\Theta)$ which has a finite filtration such that the graded pieces are standard $\eta|_{\mf{n}_\Theta}$-twisted Harish-Chandra sheaves on the flag variety $P(w_\Theta)$ of $\mf{l}_\Theta$. These have vanishing cohomologies by the proof of Proposition \ref{nondegenerate}, which implies the claim. The short exact sequence above gives rise to a long exact sequence 
\begin{align*}
0 &\rightarrow \Gamma(X, F_{n-1} \mc{I}(w_\Theta, \lambda, \eta)) \rightarrow \Gamma(X, F_n \mc{I}(w_\Theta, \lambda, \eta)) \rightarrow \Gamma(X, Gr_n \mc{I}(w_\Theta, \lambda, \eta)) \rightarrow \\
& \rightarrow H^1(X, F_{n-1} \mc{I}(w_\Theta, \lambda, \eta)) \rightarrow H^1(X, F_n \mc{I}(w_\Theta, \lambda, \eta)) \rightarrow 0 \rightarrow \cdots 
\end{align*}
Using induction on $n$ and the preceding paragraph, we see that $H^p(X, F_n \mc{I}(w_\Theta, \lambda, \eta)) = 0$ for $p>0$, and therefore $H^p(X, \mc{I}(w_\Theta, \lambda, \eta)) = 0$ for $p>0$. This implies that for each $n \in \Z_+$, we have a short exact sequence  
\[
0 \rightarrow \Gamma(X, F_{n-1} \mc{I}(w_\Theta, \lambda, \eta)) \rightarrow \Gamma(X, F_{n} \mc{I}(w_\Theta, \lambda, \eta)) \rightarrow \Gamma(X, Gr_n \mc{I}(w_\Theta, \lambda, \eta)) \rightarrow 0. 
\]
Note that if $\lambda \in \mf{h}^*$ is antidominant, the existence of this short exact sequence follows from the exactness of $\Gamma$, but this argument above holds for arbitrary $\lambda \in \mf{h}^*$. This gives us a filtration of $\Gamma(X, \mc{I}(w_\Theta, \lambda, \eta))$, with associated graded module  
\begin{align*}
\Gamma(X, Gr \mc{I}(w_\Theta, \lambda, \eta)) &= \bigoplus \Gamma(X, Gr_n\mc{I}(w_\Theta, \lambda, \eta)) \\
&= \bigoplus \Gamma(X, F_n \mc{I}(w_\Theta, \lambda, \eta))/ \Gamma(X, F_{n-1}\mc{I}(w_\Theta, \lambda, \eta)). 
\end{align*}
Because the formal character sums over short exact sequences, we have 
\[
ch \Gamma(X, Gr_n \mc{I}(w_\Theta, \lambda, \eta)) = ch \Gamma(X, F_n \mc{I}(w_\Theta, \lambda, \eta)) - ch \Gamma(X, F_{n-1} \mc{I}(w_\Theta, \lambda, \eta)).
\]
Now we compute the formal character, using the fact that it distributes through direct sums. 
\begin{align*}
ch \Gamma(X, Gr\mc{I}(w_\Theta, \lambda, \eta)) &= ch \bigoplus_{n \in \Z_+} \Gamma(X, Gr_n \mc{I}(w_\Theta, \lambda, \eta)) \\
&= \sum_{n \in \Z_+} (ch\Gamma (X, F_n \mc{I}(w_\Theta, \lambda, \eta)) - ch\Gamma(X, F_{n-1} \mc{I}(w_\Theta, \lambda, \eta))) \\
&= ch \Gamma(X, \mc{I}(w_\Theta, \lambda, \eta)). 
\end{align*}
This completes the proof.
\end{proof}
This reduces our calculation of the formal character of $\Gamma(X, \mc{I}(w_\Theta, \lambda, \eta))$ to the calculation of the formal character of $\Gamma(X, Gr\mc{I}(w_\Theta, \lambda, \eta))$. Before completing this calculation, we need a few more supporting lemmas. 

The adjoint action of the Borel $\mf{b}$ on $\overline{\mf{u}}_\Theta$ extends to an action of $\mf{b}$ on the universal enveloping  algebra $\mc{U}(\overline{\mf{u}}_\Theta)$.  The $\mf{h}$-weights of this action are 
\[
Q=\left.\left\{ - \sum_{\alpha \in \Sigma^+ \backslash \Sigma_\Theta ^+} m_\alpha \alpha  \right|  m_\alpha \in \Z_{\geq 0} \right\}.
\]
Let $\N_{X|P(w_\Theta)}=j^*(\mc{T}_X)/\mc{T}_{P(w_\Theta)}$ be the normal sheaf of $P(w_\Theta)$ in $X$ and $S(\N_{X|P(w_\Theta)})$ the corresponding sheaf of symmetric algebras.
\begin{lemma} 
\label{symmetric decomposition} As $\mc{O}_{P(w_\Theta)}$-modules,
\[
 S(\mc{N}_{X|P(w_\Theta)})=\bigoplus_{\mu \in Q} \mc{O}(\mu).
\]
\end{lemma}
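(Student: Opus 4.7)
The plan is to identify the normal bundle $\mathcal{N}_{X|P(w_\Theta)}$ explicitly as a homogeneous vector bundle on $P(w_\Theta)$, first realizing the latter as the flag variety of $\mathfrak{l}_\Theta$. Since the unipotent radical $U_\Theta$ of $P_\Theta$ fixes the base point $x_0 = w_\Theta \mathfrak{b}$ (because $w_\Theta$, being the longest element of $W_\Theta$, preserves $\mathfrak{u}_\Theta$ and sends $\mathfrak{n}_\Theta$ to $\overline{\mathfrak{n}}_\Theta$, so $\mathfrak{u}_\Theta \subset w_\Theta \mathfrak{b}$), the Levi $L_\Theta$ acts transitively on $P(w_\Theta)$ with stabilizer the opposite Borel $\overline{B}_L = L_\Theta \cap w_\Theta B w_\Theta^{-1}$, whose Lie algebra is $\overline{\mathfrak{n}}_\Theta \oplus \mathfrak{h}$. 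This yields $P(w_\Theta) \cong L_\Theta/\overline{B}_L$.

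Next, from the root-space decomposition one reads off $T_{x_0}X = \mathfrak{g}/w_\Theta\mathfrak{b} \cong \mathfrak{n}_\Theta \oplus \overline{\mathfrak{u}}_\Theta$ and $T_{x_0}P(w_\Theta) \cong \mathfrak{n}_\Theta$, so the fiber of $\mathcal{N}$ at $x_0$ is $\overline{\mathfrak{u}}_\Theta$ with its natural $\overline{B}_L$-action. Crucially, $\overline{\mathfrak{u}}_\Theta$ is $\mathfrak{l}_\Theta$-stable in $\mathfrak{g}$, so this $\overline{B}_L$-module extends to a genuine $L_\Theta$-module. The $\mathfrak{h}$-weight decomposition $\overline{\mathfrak{u}}_\Theta = \bigoplus_{\alpha \in -(\Sigma^+ \setminus \Sigma_\Theta^+)} \mathfrak{g}_\alpha$ is one-dimensional in each weight, and applying $S(-)$ together with PBW gives $S(\overline{\mathfrak{u}}_\Theta) \cong \mathcal{U}(\overline{\mathfrak{u}}_\Theta)$ as $\mathfrak{h}$-modules, so the $\mathfrak{h}$-weights of $S(\overline{\mathfrak{u}}_\Theta)$ are precisely $Q$, with multiplicities $p(\mu)$.

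The main obstacle is to upgrade this fiber-level $\mathfrak{h}$-weight decomposition to a sheaf-level decomposition into line bundles $\mathcal{O}(\mu)$, since the $\overline{B}_L$-representation on $\overline{\mathfrak{u}}_\Theta$ is not semisimple (the nilpotent part $\overline{N}_L$ strictly shifts weights). The strategy is to use the height filtration $F_k \overline{\mathfrak{u}}_\Theta = \bigoplus_{\alpha:\; \mathrm{ht}(-\alpha) \geq k} \mathfrak{g}_\alpha$, which is $\overline{B}_L$-stable because $\overline{\mathfrak{n}}_L$ strictly increases height. The associated graded is a semisimple $\overline{B}_L$-module whose graded pieces correspond via the Borel--Weil construction to the line bundles $\mathcal{O}(\alpha)$ on $L_\Theta/\overline{B}_L$, and the induced filtration on $S(\overline{\mathfrak{u}}_\Theta)$ has associated graded $\bigoplus_{\mu \in Q} \mathbb{C}_\mu^{\oplus p(\mu)}$. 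The resulting filtration of $S(\mathcal{N})$ then splits as $\mathcal{O}_{P(w_\Theta)}$-modules --- though not $\overline{B}_L$-equivariantly --- because $\overline{\mathfrak{u}}_\Theta$ extends to an $L_\Theta$-module, trivializing the relevant extension classes on the flag variety of $\mathfrak{l}_\Theta$. This yields the desired decomposition $S(\mathcal{N}) = \bigoplus_{\mu \in Q} \mathcal{O}(\mu)$ (with the multiplicities $p(\mu)$ implicit in the indexing of $Q$).
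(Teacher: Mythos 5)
Your argument up to the construction of the filtration is essentially the paper's own proof: identify $P(w_\Theta)$ with the flag variety of $\mathfrak{l}_\Theta$, identify the fibre of $\mathcal{N}_{X|P(w_\Theta)}$ at a base point with $\overline{\mathfrak{u}}_\Theta$ carrying the adjoint action of the stabilizer, read off that the $\mathfrak{h}$-weights of $S(\overline{\mathfrak{u}}_\Theta)$ are exactly $Q$ with multiplicities $p(\mu)$, and produce a Borel-stable filtration with one-dimensional quotients $\mathbb{C}_\mu$, hence a filtration of $S(\mathcal{N}_{X|P(w_\Theta)})$ by equivariant subsheaves whose subquotients are the $\mathcal{O}(\mu)$. (The paper works at the $B$-fixed point rather than at $w_\Theta\mathfrak{b}$, which avoids untwisting the specialization by $w_\Theta$; since $w_\Theta$ permutes $-(\Sigma^+\setminus\Sigma_\Theta^+)$ this does not change the multiset of weights.) The paper's proof stops at the filtration, and that is all that is ever used: the lemma enters only the character computation in Proposition \ref{smallest orbit}, where additivity of $\mathrm{ch}\,\Gamma$ over filtrations suffices.

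Your final step is where the argument breaks. The claim that the filtration splits ``because $\overline{\mathfrak{u}}_\Theta$ extends to an $L_\Theta$-module, trivializing the relevant extension classes'' proves the opposite of what you want: the fibre of $\mathcal{N}_{X|P(w_\Theta)}$ is $\mathfrak{g}/\mathfrak{p}_\Theta$ with the stabilizer acting by the adjoint action, and this representation extends to all of $P_\Theta$; the homogeneous bundle attached to the restriction to a subgroup $H$ of a representation of the whole group $G$ is the \emph{trivial} bundle on $G/H$. So the extension property yields $\mathcal{N}_{X|P(w_\Theta)}\cong\mathcal{O}_{P(w_\Theta)}^{\oplus\dim\overline{\mathfrak{u}}_\Theta}$, not $\bigoplus_\mu\mathcal{O}(\mu)$. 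Concretely, for $\mathfrak{g}=\mathfrak{sl}_3$ and $\Theta=\{\alpha_1\}$, the orbit $P(w_\Theta)\cong\mathbb{P}^1$ is a fibre of $p_{\alpha_1}:X\rightarrow X_{\alpha_1}$, so its normal bundle is $\mathcal{O}^{\oplus 2}$, whereas $\mathcal{O}(-\alpha_2)\oplus\mathcal{O}(-\alpha_1-\alpha_2)\cong\mathcal{O}_{\mathbb{P}^1}(1)\oplus\mathcal{O}_{\mathbb{P}^1}(-1)$; here the filtration realizes $\mathcal{O}^{\oplus 2}$ as the \emph{non-split} extension of $\mathcal{O}(1)$ by $\mathcal{O}(-1)$. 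Thus the direct-sum statement cannot be established as an isomorphism of $\mathcal{O}_{P(w_\Theta)}$-modules; the equality must be read in the Grothendieck group, i.e.\ as the existence of the filtration, which is what your argument (and the paper's) actually proves and which is enough for every subsequent application of the lemma.
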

\begin{proof}
For any $x \in P(w_\Theta)$, there is an equivalence of categories between the category $\mc{M}_{qc}(\mc{O}_{P(w_\Theta)},P_\Theta)$ of quasicoherent $P_\Theta$-equivariant $\mc{O}_{P(w_\Theta)}$-modules and the category of algebraic representations of $B_x=stab_{P_\Theta}\{x\}$ given by taking the geometric fiber of a sheaf $\mc{F}$ in $\mc{M}_{qc}(\mc{O}_{P(w_\Theta)},P_\Theta)$. Under this correspondence, the one-dimensional representation $\C_\mu$ of weight $\mu$ corresponds to the sheaf $\mc{O}_{P(w_\Theta)}(\mu)$. 

Let $x_0 \in X$ be the point corresponding to $B$. The $P_\Theta$-orbit of $x_0$ in $X$ is the unique closed $P_\Theta$-orbit, so it must be equal to $P(w_\Theta)$. In particular, $x_0 \in P(w_\Theta)$, so we have an equivalence of the category $\mc{M}_{qc}(\mc{O}_{P(w_\Theta)},P_\Theta)$ with the category of algebraic representations of $B$. Under this equivalence, the normal sheaf $\mc{N}_{X|P(w_\Theta)}$ corresponds to the Adjoint representation of $B$ on $\overline{\mf{u}}_\Theta$, or, equivalently, the adjoint representation of $\mf{b}$ on $\overline{\mf{u}}_\Theta$. 

Therefore to analyze the $\mc{O}_{P(w_\Theta)}$-module $S(\N_{X|P(w_\Theta)})$, we can examine the symmetric algebra $S(\overline{\mf{u}}_\Theta)$, viewed as a $\mf{b}$-module under the inherited action of the adjoint representation of $\mf{b}$ on $\overline{\mf{u}}_\Theta$. The universal enveloping algebra $\mc{U}(\overline{\mf{u}}_\Theta)$ has a PBW filtration such that the associated graded module $Gr\mc{U}(\overline{\mf{u}}_\Theta)$ is isomorphic to $S(\overline{\mf{u}}_\Theta)$. Under the adjoint action, $\mc{U}(\overline{\mf{u}}_\Theta)$ decomposes into $\mf{h}$-weight spaces corresponding to weights in $Q$. Therefore, the $\mf{b}$-module $S(\overline{\mf{u}}_\Theta)$ decomposes into $\mf{h}$-weight spaces corresponding to the same weights in $Q$. 

For $k \in \Z_{\geq 0}$, consider $V=S^k(\overline{\mf{u}}_\Theta)$. There is a $\mf{b}$-invariant filtration 
\[
0=F_0V \subset F_1V \subset \cdots \subset F_nV=V
\]
such that $F_iV/F_{i-1}V=\C_\mu$, where $\mu \in Q$ is an $\mf{h}$-weight of $S^k(\overline{\mf{u}}_\Theta)$. This induces a filtration of $\mc{V}=S^k(\N_{X|P(w_\Theta)})$
\[
0=F_0\mc{V} \subset F_1\mc{V} \subset \cdots \subset F_n\mc{V}=\mc{V}
\]
where each $F_i\mc{V}$ is a $P_\Theta$-equivariant subsheaf and $F_i\mc{V}/F_{i+1}\mc{V}=\mc{O}_{P(w_\Theta)}(\mu)$. This proves the result.
\end{proof}

\begin{lemma}
\label{twisting standards}
For $\lambda, \mu \in \mf{h}^*$,
\[
\mc{I}(w_\Theta, \lambda, \eta|_{\mf{n}_\Theta}) \otimes_{\mc{O}_{P(w_\Theta)}} \mc{O}(\mu) = \mc{I}(w_\Theta, \lambda + \mu, \eta|_{\mf{n}_\Theta}).
\]
\end{lemma}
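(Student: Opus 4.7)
The plan is to verify this by combining the projection formula for $\mc{D}$-modules (Proposition \ref{Projection Formula}) with a direct identification of the twist that the line bundle $\mc{O}(\mu)$ induces after restriction to the open Bruhat cell $C(w_\Theta) \subset P(w_\Theta)$.

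First I would unpack the definitions on the flag variety $P(w_\Theta)$ of $\mf{l}_\Theta$. By Lemma \ref{induced from standard} and the discussion of Section \ref{The Harish-Chandra pair (g,N)}, the standard sheaf $\mc{I}(w_\Theta, \lambda, \eta|_{\mf{n}_\Theta})$ is the direct image $i_{w_\Theta +}(\mc{O}_{C(w_\Theta)})$ under the open immersion $i_{w_\Theta}:C(w_\Theta) \hookrightarrow P(w_\Theta)$, where $\mc{O}_{C(w_\Theta)}$ denotes the unique irreducible object of $\mc{M}_{coh}(\mc{D}_{P(w_\Theta), \lambda+\rho}^{i_{w_\Theta}}, N_\Theta, \eta|_{\mf{n}_\Theta})$. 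The line bundle $\mc{O}(\mu)$ is naturally $P_\Theta$-equivariant, so in particular $N_\Theta$-equivariant, and its restriction $i_{w_\Theta}^*\mc{O}(\mu)$ to the affine cell $C(w_\Theta)$ is a trivializable $\mc{O}_{C(w_\Theta)}$-module which nevertheless carries a nontrivial $N_\Theta$-equivariant structure encoding the weight $\mu$.

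Next I would apply the projection formula to $i_{w_\Theta}$ to move the tensor factor inside the direct image:
\[
i_{w_\Theta +}(\mc{O}_{C(w_\Theta)}) \otimes_{\mc{O}_{P(w_\Theta)}} \mc{O}(\mu) = i_{w_\Theta +}\bigl(\mc{O}_{C(w_\Theta)} \otimes_{\mc{O}_{C(w_\Theta)}} i_{w_\Theta}^*\mc{O}(\mu)\bigr).
\]
Tensoring an $(\mc{D}_{P(w_\Theta), \lambda+\rho}^{i_{w_\Theta}}, N_\Theta, \eta|_{\mf{n}_\Theta})$-module by $i_{w_\Theta}^*\mc{O}(\mu)$ is an equivalence onto $\mc{M}_{coh}(\mc{D}_{P(w_\Theta), \lambda+\mu+\rho}^{i_{w_\Theta}}, N_\Theta, \eta|_{\mf{n}_\Theta})$: the shift of the twisted differential operator sheaf by the weight $\mu$ follows from the general identity for twisting a TDO by a line bundle, while the $\mf{n}_\Theta$-character $\eta|_{\mf{n}_\Theta}$ is preserved because it controls only the action of the unipotent part, which commutes with the Cartan shift produced by $\mc{O}(\mu)$.

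Finally I identify $\mc{O}_{C(w_\Theta)} \otimes i_{w_\Theta}^*\mc{O}(\mu)$ with the unique irreducible object of $\mc{M}_{coh}(\mc{D}_{P(w_\Theta), \lambda+\mu+\rho}^{i_{w_\Theta}}, N_\Theta, \eta|_{\mf{n}_\Theta})$, which, applying $i_{w_\Theta +}$, gives $\mc{I}(w_\Theta, \lambda+\mu, \eta|_{\mf{n}_\Theta})$ by definition. The only real obstacle is bookkeeping of the various $\rho$/$\rho_\Theta$ shifts relating the absolute twist $\lambda$ on $X$ to the relative twist on $P(w_\Theta)$ established in Lemma \ref{induced from standard}; once these conventions are tracked consistently the equality is formal.
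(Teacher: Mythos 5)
Your proof is correct and follows essentially the same route as the paper, which disposes of the lemma in one line by invoking Definition \ref{standard sheaf} and the projection formula (Proposition \ref{Projection Formula}); you have simply unpacked that reference into the explicit steps — writing the standard sheaf as a direct image under the open immersion $i_{w_\Theta}: C(w_\Theta)\hookrightarrow P(w_\Theta)$, pulling $\mc{O}(\mu)$ through with the projection formula, and identifying the resulting twisted connection on the affine cell. The $\rho$/$\rho_\Theta$ normalization bookkeeping you flag at the end is indeed the only thing to track, and it is exactly the content of Lemma \ref{induced from standard}, which you correctly cite.
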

\begin{proof} 
This follows immediately from the definition of $\mc{I}(w_\Theta, \lambda + \mu, \eta|_{\mf{n}_\Theta})$ (Definition \ref{standard sheaf}) and the projection formula (Proposition \ref{Projection Formula}). 
\end{proof}

\begin{lemma}
\label{graded decomposition}
As a left $\mc{D}_\lambda$-module, the graded sheaf 
\[
Gr\mc{I}(w_\Theta, \lambda, \eta) = j_\bullet(\mc{F} \otimes_{\mc{O}_{P(w_\Theta)}}S(\mc{N}_{X|P(w_\Theta)})\otimes_{\mc{O}_{P(w_\Theta)}}\mc{O}(2\rho_\Theta-2\rho)).
\]
\end{lemma}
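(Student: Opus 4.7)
The plan is to recognize that $\mc{I}(w_\Theta,\lambda,\eta) = j_+\mc{F}$ with $\mc{F} = \mc{I}(w_\Theta, \lambda + \rho - \rho_\Theta, \eta|_{\mf{n}_\Theta})$ by Lemma~\ref{induced from standard}, and then apply the classical computation of the associated graded of a $\mc{D}$-module direct image along a closed immersion. For a closed immersion $j: Y \hookrightarrow X$ of smooth varieties, the transfer bimodule $\mc{D}_{X \leftarrow Y}$ carries a natural normal order filtration whose associated graded, as an $\mc{O}_Y$-module with compatible right $\mc{D}_Y$-action, is $S(\mc{N}_{X|Y})$ twisted by a line bundle coming from the canonical bundle ratio in the left/right $\mc{D}$-module conversion. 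Tensoring this over $\mc{D}_Y$ with $\mc{F}$ and pushing forward via $j_\bullet$ yields $\mathrm{Gr}\, j_+\mc{F}$ in the form claimed in the lemma, with $\mc{D}_\lambda$-action inherited from the filtration-compatible left $\mc{D}_X$-action on $\mc{D}_{X \leftarrow Y}$.

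To identify the line bundle factor as $\mc{O}(2\rho_\Theta - 2\rho)$, I would use that $X$ is the flag variety of $\mf{g}$, so $\omega_X \cong \mc{O}_X(-2\rho)$, and that $P(w_\Theta)$ is (isomorphic to) the flag variety of the Levi $\mf{l}_\Theta$, so $\omega_{P(w_\Theta)} \cong \mc{O}_{P(w_\Theta)}(-2\rho_\Theta)$. The canonical bundle shift that appears in the associated graded formula is $\omega_{P(w_\Theta)}^{-1} \otimes_{\mc{O}_{P(w_\Theta)}} j^*\omega_X \cong \mc{O}_{P(w_\Theta)}(2\rho_\Theta - 2\rho)$, matching the statement. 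Compatibility with the $\mc{D}_\lambda$-module structure then follows by tracking the identification $\mc{D}_\lambda^j = \mc{D}_{P(w_\Theta), \lambda + \rho}$ used in the proof of Lemma~\ref{induced from standard}, which ensures that the twist parameter $\lambda$ on the left matches the twist parameter $\lambda + \rho - \rho_\Theta$ on $\mc{F}$ after the canonical bundle factor has been absorbed.

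The main obstacle I anticipate is bookkeeping: verifying that the normal order filtration on $\mc{D}_{X \leftarrow P(w_\Theta)}$ agrees with the normal degree filtration $F_n\mc{I}(w_\Theta, \lambda, \eta)$ of Appendix~\ref{Modules over twisted sheaves of differential operators}, pinning down the sign of the canonical bundle shift under the paper's left/right $\mc{D}$-module conventions, and checking that the $\mc{D}_\lambda$-action on each graded piece agrees with the action induced on the right-hand side of the formula via the twist identification. Once these conventions are reconciled, the remainder of the argument is the standard associated-graded computation for direct images along closed embeddings applied to the concrete situation of $j: P(w_\Theta) \hookrightarrow X$.
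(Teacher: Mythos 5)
Your overall strategy matches the paper's proof: both go through the normal degree filtration on the direct image, both reduce to identifying a single line-bundle twist coming from the left/right $\mc{D}$-module conversion, and both then compute that twist as an equivariant line bundle on $P(w_\Theta)$. The paper's route is exactly the one you describe: apply equation (\ref{grading by normal degree}) to $\mc{F}\otimes\omega_{P(w_\Theta)}$ to get the right-module answer, then untwist by $\omega_X$ and collect the factor $\omega_{P(w_\Theta)|X}=\omega_{P(w_\Theta)}\otimes_{\mc{O}_{P(w_\Theta)}}j^*(\omega_X^{-1})$.

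However, your identification of the twist contains \emph{two compensating sign errors}. First, the line bundle produced by passing $\mc{F}\mapsto\mc{F}\otimes\omega_{P(w_\Theta)}$ to right modules, applying the right-module associated-graded formula, and then returning to left modules by tensoring with $\omega_X^{-1}$ is $\omega_{P(w_\Theta)}\otimes_{\mc{O}_{P(w_\Theta)}}j^*(\omega_X^{-1})$, i.e.\ the relative canonical bundle $\omega_{P(w_\Theta)|X}$, which by adjunction equals $\det\mc{N}_{X|P(w_\Theta)}$. You instead write $\omega_{P(w_\Theta)}^{-1}\otimes_{\mc{O}_{P(w_\Theta)}}j^*\omega_X$, which is its inverse, $\det\mc{N}_{X|P(w_\Theta)}^{-1}$. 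Second, in the paper's conventions the equivalence $\mc{M}_{qc}(\mc{O}_{P(w_\Theta)},P_\Theta)\simeq\mathrm{Rep}(B)$ from the proof of Lemma~\ref{symmetric decomposition} sends $\mc{O}(\mu)\mapsto\C_\mu$, so the fiber of $\omega_X$ at the base point has weight $+2\rho$ (this is the $\delta=2\rho$ needed so that $\mc{D}_\lambda^\circ=\mc{D}_{X,-\lambda-\rho+\delta}=\mc{D}_{-\lambda}$); hence $\omega_X\cong\mc{O}(2\rho)$ and $\omega_{P(w_\Theta)}\cong\mc{O}(2\rho_\Theta)$, not $\mc{O}(-2\rho)$ and $\mc{O}(-2\rho_\Theta)$ as you claim. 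Each of these alone would give the incorrect twist $\mc{O}(2\rho-2\rho_\Theta)$; together they cancel to produce the correct $\mc{O}(2\rho_\Theta-2\rho)$. A clean check that avoids all conventions: $\omega_{P(w_\Theta)|X}=\det\mc{N}_{X|P(w_\Theta)}$, and $\mc{N}_{X|P(w_\Theta)}$ corresponds to the $B$-module $\bar{\mf{u}}_\Theta$ whose weights are $-\alpha$ for $\alpha\in\Sigma^+\setminus\Sigma_\Theta^+$, so its determinant has weight $2\rho_\Theta-2\rho$.
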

\begin{proof}
Recall the left $\mc{D}_\lambda^j$-module $\mc{F}$ of Lemma \ref{induced from standard}. By an application of equation (\ref{grading by normal degree}) to the right $\mc{D}_\lambda^j$-module $\mc{F}\otimes_{\mc{O}_{P(w_\Theta)}}\omega_{P(w_\Theta)}$, we see that as a right $\mc{D}_\lambda$-module,
\[
Gr\mc{I}(w_\Theta, \lambda, \eta) = j_\bullet(\mc{F} \otimes_{\mc{O}_{P(w_\Theta)}}S(\mc{N}_{X|P(w_\Theta)})\otimes_{\mc{O}_{P(w_\Theta)}}\omega_{P(w_\Theta)}).
\]
Twisting by $\omega_X$ gives us the left $\mc{D}_\lambda$-module structure
\[
Gr\mc{I}(w_\Theta, \lambda, \eta) = j_\bullet(\mc{F} \otimes_{\mc{O}_{P(w_\Theta)}}S(\mc{N}_{X|P(w_\Theta)})\otimes_{\mc{O}_{P(w_\Theta)}}\omega_{P(w_\Theta)|X}),
\]
where $\omega_{P(w_\Theta)|X} = \omega_{P(w_\Theta)} \otimes_{\mc{O}_{P(w_\Theta)}}j^*(\omega_X^{-1})$ is the invertible $\mc{O}_{P(w_\Theta)}$-module of top degree relative differential forms for the morphism $j$. The result then follows from the fact that $\omega_{P(w_\Theta)|X}=\mc{O}(2\rho_\Theta - 2 \rho)$.
\end{proof}
Now we are ready to prove Proposition \ref{smallest orbit}.
\begin{proof}
{\em of Proposition \ref{smallest orbit}}. Using the preceding lemmas and the computation of the character of standard Whittaker modules from Section \ref{Character theory}, we can show that the formal character of $\Gamma(X, \mc{I}(w_\Theta, \lambda, \eta))$ is equal to the formal character of $M(w_\Theta \lambda, \eta)$. By Corollary \ref{character determines composition series}, this implies our result. Here $\lambda \in \mf{h}^*$ and $\eta \in \ch{\mf{n}}$ are arbitrary.  We compute:
\begin{align}
ch\Gamma(X, \mc{I}(&w_\Theta, \lambda, \eta)) = ch\Gamma(X,  Gr \mc{I}(w_\Theta, \lambda, \eta)) \label{eqn4.1}\\ 
&=ch \Gamma(X,  j_\bullet(\mc{F} \otimes_{\mc{O}_{P(w_\Theta)}}S(\mc{N}_{X|P(w_\Theta)})\otimes_{\mc{O}_{P(w_\Theta)}}\mc{O}(2\rho_\Theta-2\rho))) \label{eqn4.2}\\
&=ch\Gamma(P(w_\Theta), \mc{F} \otimes_{\mc{O}_{P(w_\Theta)}}S(\mc{N}_{X|P(w_\Theta)})\otimes_{\mc{O}_{P(w_\Theta)}}\mc{O}(2\rho_\Theta-2\rho)) \label{eqn4.3}\\
&=ch\Gamma(P(w_\Theta), \mc{F} \otimes_{\mc{O}_{P(w_\Theta)}}\bigoplus_{\mu \in Q} \mc{O}(\mu)\otimes_{\mc{O}_{P(w_\Theta)}}\mc{O}(2\rho_\Theta-2\rho)) \label{eqn4.4}\\
&=ch\Gamma(P(w_\Theta),\bigoplus_{\mu \in Q}\mc{I}(w_\Theta, \lambda + \rho - \rho_\Theta + \mu+2\rho_\Theta-2\rho, \eta|_{\mf{n}_\Theta})) \label{eqn4.5}\\
&= ch\bigoplus_{\mu \in Q} Y( \lambda - \rho + \rho_\Theta + \mu, \eta|_{\mf{n}_\Theta}) \label{eqn4.6}\\
&=\sum_{\mu \in Q}[\overline{Y(\lambda - \rho + \rho_\Theta + \mu, \eta)}]e^{\bm{\lambda - \rho + \mu}} \label{eqn4.7}\\
&=chM(\lambda, \eta) =chM(w_\Theta \lambda, \eta) \label{eqn4.8}.
\end{align}
Here, (\ref{eqn4.1}) follows from Lemma \ref{graded cohomology}, (\ref{eqn4.2}) from Lemma \ref{graded decomposition}, (\ref{eqn4.3}) from Kashiwara's theorem, (\ref{eqn4.4}) from Lemma \ref{symmetric decomposition}, (\ref{eqn4.5}) from Lemma \ref{induced from standard}, (\ref{eqn4.6}) from Proposition \ref{nondegenerate}, (\ref{eqn4.7}) from Definition \ref{formal character}, and (\ref{eqn4.8}) from equation (\ref{character of standard}) and the fact that two standard Whittaker modules are isomorphic if their $\mf{h}^*$ parameters are in the same $W_\Theta$-orbit. 

Because $\mc{I}(w_\Theta, \lambda, \eta)=\mc{M}(w_\Theta, \lambda, \eta)$, we conclude using Corollary \ref{character determines composition series} that in $K\mc{M}_{fg}(\mc{U}_\theta, N, \eta)$, 
\[
[\Gamma(X,\mc{M}(w_\Theta, \lambda, \eta))]=[M(w_\Theta \lambda, \eta)].
\]
This completes the proof of Proposition \ref{smallest orbit}.
\end{proof}
Before stating and proving the main result of this section, we record one final fact about tensor products of standard Whittaker modules with finite-dimensional $\mf{g}$-modules. This lemma will be used in the proof of Theorem \ref{global sections of costandards} to deal with the case of singular $\lambda \in \mf{h}^*$. 

Let $\lambda \in \mf{h}^*$ be antidominant and $\mu \in P(\Sigma)$ be antidominant and regular. Then $\lambda + \mu$ is antidominant and regular. Let $Q(\Sigma)$ be the root lattice. Let 
\[
W_\lambda = \{ w \in W \mid w\lambda - \lambda \in Q(\Sigma)\} \subset W
\]
be the integral Weyl group of $\lambda$, which is the Weyl group of the root subsystem
\[
\Sigma_\lambda = \{ \alpha \in \Sigma \mid \alpha^\vee(\lambda) \in \Z\} \subset \Sigma.
\]
For any $\mf{g}$-module $V$, denote by $V_{[\lambda]}$ the generalized $\mc{Z}(\mf{g})$-eigenspace of $V$ corresponding to the infinitesimal character $\chi_\lambda$. 
\begin{lemma}
\label{twisting}
Let $F$ be the finite-dimensional $\mf{g}$-module of highest weight $-\mu$. For $w \in W$, 
\[
(M(w(\lambda+ \mu), \eta) \otimes_\C F)_{[\lambda]} = M(w \lambda, \eta).
\]
\end{lemma}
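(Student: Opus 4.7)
The plan is to follow the standard translation-principle argument, adapted to the Whittaker setting. The key step is to produce a filtration of $M(w(\lambda+\mu), \eta) \otimes_\C F$ by $\mf{g}$-submodules whose successive quotients are standard Whittaker modules $M(w(\lambda+\mu) + \nu, \eta)$ indexed by the $\mf{h}$-weights $\nu$ of $F$ with multiplicity, and then to project onto the generalized $\chi_\lambda$-eigenspace.

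First, using the tensor identity
\[
M(\lambda', \eta) \otimes_\C F \cong \ug \otimes_{\up} \bigl( Y(\lambda' - \rho + \rho_\Theta, \eta) \otimes_\C F|_{\mf{p}_\Theta}\bigr),
\]
I will reduce the problem to filtering $Y(\lambda' - \rho + \rho_\Theta, \eta) \otimes F$ as a $\mf{p}_\Theta$-module. Since $\mf{u}_\Theta$ acts nilpotently on the finite-dimensional $F$, iterating the functor of $\mf{u}_\Theta$-invariants yields a $\mf{p}_\Theta$-stable filtration of $F$ whose graded pieces are finite-dimensional $\mf{l}_\Theta$-modules on which $\mf{u}_\Theta$ acts trivially. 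Tensoring with $Y(\lambda' - \rho + \rho_\Theta, \eta)$ and refining each piece by Kostant's theorem on tensor products of irreducible Whittaker modules with finite-dimensional modules \cite[\S4 Thm. 4.6]{Kostant} applied over $\mf{l}_\Theta$, one obtains a $\mf{p}_\Theta$-filtration whose subquotients are the $\mf{l}_\Theta$-Whittaker modules $Y(\lambda' - \rho + \rho_\Theta + \nu, \eta|_{\mf{n}_\Theta})$, indexed by the $\mf{h}$-weights $\nu$ of $F$ with multiplicity. Since $\ug \otimes_{\up}(-)$ is exact, this yields the desired filtration of $M(\lambda', \eta) \otimes F$ with subquotients $M(\lambda' + \nu, \eta)$.

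Next, I apply the exact projection $(-)_{[\lambda]}$ onto the generalized $\chi_\lambda$-eigenspace (available since $\mc{N}_\eta$ decomposes as a direct sum of blocks indexed by $W$-orbits), which kills any subquotient $M(\lambda'+\nu, \eta)$ whose infinitesimal character $\chi_{\lambda'+\nu}$ differs from $\chi_\lambda$. With $\lambda' = w(\lambda+\mu)$, the surviving subquotients correspond to weights $\nu$ of $F$ satisfying $w(\lambda+\mu) + \nu \in W \cdot \lambda$. The combinatorial heart of the argument is to show that the only such weight is $\nu = -w\mu$, occurring with multiplicity one: writing $w(\lambda+\mu) + \nu = w'\lambda$ and applying $w^{-1}$, one obtains $w^{-1}\nu = (w^{-1}w' - 1)\lambda - \mu$. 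Since the set of weights of $F$ is $W$-stable and bounded above by $-\mu$ in the dominance order, this forces $w^{-1}w'\lambda \leq \lambda$; on the other hand, antidominance of $\lambda$ gives $w^{-1}w'\lambda \geq \lambda$, so $w^{-1}w' \in W_\lambda$ and hence $\nu = -w\mu$. Since $-w\mu$ is an extremal weight of $F$, it appears with multiplicity one.

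Consequently the filtration on $(M(w(\lambda+\mu), \eta) \otimes F)_{[\lambda]}$ has a single non-zero subquotient, namely $M(w\lambda, \eta)$, which yields the claimed isomorphism. I expect the main obstacle to be the careful invocation of the Kostant-type tensor decomposition over the reductive subalgebra $\mf{l}_\Theta$, where the behavior of the center $\mf{h}^\Theta$ and the semisimple part $\mf{s}_\Theta = [\mf{l}_\Theta, \mf{l}_\Theta]$ must be tracked separately; here the theory of formal characters developed in Section \ref{Character theory}, in particular Corollary \ref{character determines composition series}, should allow one to verify the composition series at the level of characters before upgrading to the filtration via the direct-summand structure of the generalized infinitesimal character decomposition.
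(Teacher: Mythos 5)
Your proof is correct, and its core --- filtering $M(w(\lambda+\mu), \eta) \otimes_\C F$ by standard Whittaker modules $M(w(\lambda+\mu)+\nu, \eta)$ indexed by weights $\nu$ of $F$, then arguing combinatorially that only $\nu = -w\mu$ (with multiplicity one) survives projection to the $\chi_\lambda$-block --- matches the paper's proof exactly. Where you diverge is at the two ends. For the filtration, the paper simply cites \cite[Lem.\ 5.12]{CompositionSeries} as a black box; you instead sketch its construction via the tensor identity, the $\mf{u}_\Theta$-invariants filtration of $F|_{\mf{p}_\Theta}$, and Kostant's tensor product theorem applied over $\mf{l}_\Theta$. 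This is a sound, more self-contained route, and the subtlety you flag is the right one: since $\mf{l}_\Theta$ is reductive rather than semisimple, one must use the semisimple $\mf{h}^\Theta$-action to reduce to $\mf{h}^\Theta$-weight components before applying Kostant over $\mf{s}_\Theta$; filling this in would essentially reprove the cited lemma. For the endgame, you invoke exactness of $(-)_{[\lambda]}$ (projection onto a direct-sum block of $\mc{N}_\eta$) to collapse the filtration to its unique $\chi_\lambda$-graded piece, which is cleaner than the paper's more hands-on argument that $T_{[\lambda]}$ is a direct complement of $T_{i-1}$ in $T_i$. One small reordering to make in your combinatorial step: to invoke antidominance of $\lambda$ and obtain $w^{-1}w'\lambda \geq \lambda$ you must first establish $w^{-1}w' \in W_\lambda$, which follows because $w^{-1}w'\lambda - \lambda = w^{-1}\nu + \mu$ is a difference of weights of $F$ and hence lies in $Q(\Sigma)$ (the paper records this explicitly); as written you appear to derive $w^{-1}w' \in W_\lambda$ from the inequality rather than the other way around.
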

\begin{proof}
By \cite[Lem. 5.12]{CompositionSeries}, $T:=M(w(\lambda + \mu), \eta) \otimes_\C F$ has a filtration by $\mf{g}$-submodules 
\[
0=T_0 \subset T_1 \subset \cdots \subset T_n=T
\]
such that the associated graded module $GrT$ is isomorphic to the direct sum 
\[
\bigoplus_{\nu \in P(F)} M(w(\lambda+ \mu)+\nu, \eta),
\]
where $P(F)$ is the set of weights of $F$, counted with multiplicity. We claim that there is exactly one standard Whittaker module appearing in this sum with infinitesimal character $\chi_\lambda$, and it is equal to $M(w\lambda, \eta)$. Indeed, assume that for some $v \in W$ and $\nu \in P(F)$, 
\[
w(\lambda + \mu) + \nu = v \lambda.
\] 
Then $\lambda + \mu + w^{-1}\nu = w^{-1} v \lambda$, so $w^{-1}v \lambda - \lambda = w^{-1} \nu - (-\mu) \in Q(\Sigma)$. On one hand, since $\lambda$ is antidominant, $w^{-1}v\lambda - \lambda$ must be a positive sum of positive roots in $\Sigma_\lambda$. On the other hand, since $-\mu$ is the highest weight of $F$ and $w^{-1}\nu \in P(F)$, $w^{-1}\nu - (- \mu)$ is a negative sum of positive roots in $\Sigma_\lambda$. Hence
\[
w^{-1} v \lambda - \lambda = \mu + w^{-1} \nu = 0. 
\]
This implies that $\nu = -w \mu$. The weight $\nu = -w \mu$ is an extremal weight of $F$, so it must occur with multiplicity $1$. Therefore, there is exactly one standard Whittaker module in the direct sum decomposition above with infinitesimal character $\chi_\lambda$, and it is equal to $M(w \lambda, \eta)$. 

The generalized $\mc{Z}(\mf{g})$-eigenspace corresponding to $\chi_\lambda$ is the submodule 
\[
T_{[\lambda]}=\{t \in T \mid (\ker \chi_\lambda)^k \cdot t = 0 \text{ for some }k \in \Z\} \subset T. 
\]
Since $M(w \lambda, \eta)$ appears exactly once in $Gr T$, there is some index $1 \leq i \leq n$ such that 
\[
T_i / T_{i-1} \simeq M(w \lambda , \eta),
\]
and the quotient $T/T_i$ is annihilated by a power of $\prod_{j=i+1}^n \ker \chi_{w(\lambda + \mu) + \nu_j}$ with $\chi_{w(\lambda + \mu) + \nu_j} \neq \chi_\lambda$. This implies that $T/T_i$ is a direct sum of submodules with generalized infinitesimal characters different from $\chi_\lambda$. It follows that $T_{[\lambda]} \subset T_i$. 

Since $T_i$ is annihilated by a power of $\prod_{j=1}^i \ker \chi_{w(\lambda + \mu) + \nu_j}$, $T_i$ splits into a direct sum of submodules with generalized infinitesimal characters $\chi_{w(\lambda + \mu) + \nu_j}$ for $1 \leq j \leq i$. Since $T_{i-1}$ is not annihilated by any power of $\ker \chi_\lambda$, it follows that $T_{[\lambda]}$ is a direct complement of $T_{i-1}$ in $T_i$. Hence $T_{[\lambda]} \simeq M(w \lambda, \eta)$. 
\end{proof}

Finally, we are ready to prove our desired result. 
\begin{theorem}
\label{global sections of costandards}
Let $\lambda \in \mf{h}^*$ be antidominant, $C \in \W$, and $\eta \in \ch{\mf{n}}$ be arbitrary. Then 
\[
\Gamma(X, \mc{M}(w^C, \lambda, \eta)) = M(w^C \lambda, \eta). 
\]
\end{theorem}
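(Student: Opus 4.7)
The plan is to prove the theorem in two steps: first handle the case of regular antidominant $\lambda$ via Proposition \ref{classification}, then reduce singular antidominant $\lambda$ to the regular case by a translation argument based on Lemma \ref{twisting}.

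For the first step, suppose $\lambda$ is regular and antidominant. McDowell's construction guarantees that each $M(w^C\lambda, \eta)$ has $L(w^C\lambda, \eta)$ as its unique irreducible quotient, so condition (i) of Proposition \ref{classification} is immediate. To verify condition (ii), I will combine Lemma \ref{Intertwining with Shortest Elements} with the fact that $R\Gamma \circ LI_w = R\Gamma$ whenever $\lambda$ is $w$-antidominant \cite[Ch. 3 \S3 Cor. 3.22]{localization}. Since antidominance of $\lambda$ implies $w$-antidominance for every $w \in W$, this yields
\[
R\Gamma(X, \mc{M}(w^C, \lambda, \eta)) = R\Gamma(X, LI_{w_C}(\mc{M}(w^C, \lambda, \eta))) = R\Gamma(X, \mc{M}(w_\Theta, w_C\lambda, \eta)).
\]
Proposition \ref{smallest orbit} then gives $[\Gamma(X, \mc{M}(w^C, \lambda, \eta))] = [M(w_\Theta w_C \lambda, \eta)] = [M(w^C\lambda, \eta)]$ in the Grothendieck group, so Proposition \ref{classification} closes the regular case.

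For the second step, let $\lambda$ be antidominant and possibly singular. Fix $\mu \in P(\Sigma)$ antidominant and regular, so $\lambda + \mu$ is antidominant and regular, and let $F$ be the finite-dimensional $\mf{g}$-module of highest weight $-\mu$. The idea is to establish a geometric analogue of Lemma \ref{twisting}: the sheaf $\mc{M}(w^C, \lambda+\mu, \eta) \otimes_\C F$, equipped with the diagonal $\mf{g}$-action and the induced $N$-equivariance, carries a filtration by $(\mc{D}_{\lambda+\mu}, N, \eta)$-compatible subsheaves whose graded pieces are costandard sheaves with twist parameter shifted by the weights of $F$, and its generalized $\chi_\lambda$-eigenspace is isomorphic to $\mc{M}(w^C, \lambda, \eta)$. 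Granting this, since tensoring with $F$ and passing to the $\chi_\lambda$-generalized eigenspace both commute with $\Gamma$, the regular case together with Lemma \ref{twisting} gives
\[
\Gamma(X, \mc{M}(w^C, \lambda, \eta)) = (\Gamma(X, \mc{M}(w^C, \lambda+\mu, \eta)) \otimes_\C F)_{[\lambda]} = (M(w^C(\lambda+\mu), \eta) \otimes_\C F)_{[\lambda]} = M(w^C\lambda, \eta).
\]

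The main obstacle will be this geometric translation step: constructing the filtration on $\mc{M}(w^C, \lambda+\mu, \eta) \otimes_\C F$ respecting the $(\mc{D}, N, \eta)$-structure, identifying its graded pieces with costandard sheaves for the shifted twists, and verifying that only the extremal weight $-w^C\mu \in P(F)$ contributes to the $\chi_\lambda$-generalized eigenspace. This parallels the algebraic argument in the proof of Lemma \ref{twisting}, where $-w^C\mu$ is the unique (multiplicity one) weight of $F$ producing a shifted parameter in $W \cdot \lambda$. Care is required with the $\eta$-twist and the $N$-equivariance, but the framework is analogous to the classical translation principle for Verma modules.
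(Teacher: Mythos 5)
Your overall plan mirrors the paper's: handle the regular antidominant case by combining Lemma \ref{Intertwining with Shortest Elements}, Proposition \ref{smallest orbit}, and Proposition \ref{classification}, then reduce the singular antidominant case to the regular one via translation. The regular step is carried out correctly and is essentially identical to the paper's argument.

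The issue is in the singular step, and it is twofold. First, you propose to construct from scratch a filtration of $\mc{M}(w^C, \lambda+\mu, \eta) \otimes_\C F$ by ``$(\mc{D}_{\lambda+\mu}, N, \eta)$-compatible subsheaves'' whose graded pieces are costandard sheaves with shifted twists, and you explicitly flag this as ``the main obstacle'' left unverified. The paper does not need any such bespoke construction: it notes from the definitions (and holonomic duality) that $\mc{M}(w^C, \lambda, \eta) = \mc{M}(w^C, \lambda+\mu, \eta)(-\mu)$, and then cites the general geometric translation principle \cite[Ch. 2 \S2 Lem. 2.1]{localization}, which identifies the twist $\mc{V}(-\mu)$ with the generalized $\chi_\lambda$-eigensheaf $(\mc{V} \otimes_{\mc{O}_X} \mc{F})_{[\lambda]}$ for any $\mc{D}_{\lambda+\mu}$-module $\mc{V}$. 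This sidesteps your filtration entirely and also resolves your worry about compatibility with the $\eta$-twist and $N$-equivariance, since the identification is at the level of the TDO and is purely local. (Also note your parenthetical ``$(\mc{D}_{\lambda+\mu}, N, \eta)$-compatible subsheaves'' cannot literally be right: once you impose the diagonal $\mf{g}$-action, the subsheaves live over different TDOs $\mc{D}_{\lambda+\mu+\nu}$, not a single $\mc{D}_{\lambda+\mu}$; this is exactly why the $\mc{U}^\circ$-module formalism of \cite{localization} is used.)

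Second, your claim that ``only the extremal weight $-w^C\mu \in P(F)$ contributes to the $\chi_\lambda$-generalized eigenspace'' is wrong at the geometric level. For the sheaf-theoretic filtration, the graded pieces are $\mc{M}(w^C, \lambda+\mu+\nu, \eta)$ (twist parameter $\lambda+\mu+\nu$, independent of $C$), and the condition $\chi_{\lambda+\mu+\nu} = \chi_\lambda$ forces $\nu = -\mu$: if $\lambda + \mu + \nu = v\lambda$ for some $v \in W$, then $v\lambda - \lambda = \nu - (-\mu)$ is simultaneously a nonnegative and nonpositive sum of positive roots, hence zero. The weight $-w^C\mu$ is what appears in the \emph{algebraic} argument of Lemma \ref{twisting}, because there one filters $M(w^C(\lambda+\mu), \eta) \otimes_\C F$ and the ambient parameter already carries the $w^C$. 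Conflating these two filtrations would lead you astray if you actually tried to execute the construction. In the paper the two statements never conflict because Lemma \ref{twisting} is applied only after passing to global sections, while the geometric identification is handled by the cited translation lemma.
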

\begin{proof}

\noindent
Lemma \ref{Intertwining with Shortest Elements} implies that for $C \in \W$,
\[
LI_{w_C}(\mc{M}(w^C, \lambda, \eta)) = \mc{M}(w_\Theta, w_C \lambda, \eta)
\]
and 
\[
R\Gamma(LI_{w_C}(\mc{M}(w^C, \lambda, \eta)) = R\Gamma(\mc{M}(w_\Theta, w_C \lambda, \eta)).
\]
If $\lambda \in \mf{h}^*$ is antidominant, then by \cite[Ch. 3 \S3 Thm. 3.23]{localization},
\[
R\Gamma(\mc{M}(w^C, \lambda, \eta)) = R\Gamma(\mc{M}(w_\Theta, w_C \lambda, \eta)),
\]
and 
\[
H^p(X, \mc{M}(w^C, \lambda, \eta)) = 0 \text{ for } p>0.
\]
Therefore, by Proposition \ref{smallest orbit},
\[
[\Gamma(X, \mc{M}(w^C, \lambda, \eta))] = [\Gamma(X, \mc{M}(w_\Theta, w_C \lambda, \eta))] = [M(w^C \lambda, \eta)].
\]
Assume furthermore that $\lambda \in \mf{h}^*$ is regular. Because $M(w^C \lambda, \eta)$ has a unique irreducible quotient and $\lambda \in \mf{h}^*$ is antidominant and regular, Proposition \ref{classification} implies our result. 

Now assume that $\lambda \in \mf{h}^*$ is antidominant but not necessarily regular. We extend the result above to this setting using the Zuckerman translation functors of \cite[Ch. 2 \S2]{localization}. Let $\mu \in P(\Sigma)$ be antidominant and regular, so $\lambda + \mu$ is antidominant and regular. By definition, for any coset $C \in \W$, $\mc{I}(w^C, \lambda, \eta) = \mc{I}(w^C, \lambda+ \mu, \eta)(-\mu)$, and by dualizing, the analogous statement is also true for costandard $\eta$-twisted Harish-Chandra sheaves. Let $F$ be the finite-dimensional irreducible $\mf{g}$-module of highest weight $-\mu$. Let $\mc{F}=\mc{O}_X \otimes_\C F$. The sheaf $\mc{F}$ naturally has the structure of an $\mc{U}^\circ:=\mc{O}_X \otimes_\C \mc{U}(\mf{g})$-module. For any $\mc{U}^\circ$-module $\mc{V}$, we denote by $\mc{V}_{[\lambda]}$ the generalized $\mc{Z}(\mf{g})$-eigensheaf corresponding to $\lambda$. (For more details on this construction, see \cite[Ch. 2 \S2]{localization}.) Then, using the fact that $\lambda + \mu$ is antidominant and regular, we compute
\begin{align*}
\Gamma(X, \mc{M}(w^C, \lambda, \eta)) &= \Gamma(X, \mc{M}(w^C, \lambda+ \mu, \eta)(-\mu)) \\
&= \Gamma(X, (\mc{M}(w^C, \lambda + \mu, \eta) \otimes_{\mc{O}_X}\mc{F})_{[\lambda]}) \\
&= \Gamma(X, \mc{M}(w^C, \lambda + \mu, \eta) \otimes_{\mc{O}_X}\mc{F})_{[\lambda]} \\
&= (\Gamma(X, \mc{M}(w^C, \lambda + \mu, \eta)) \otimes_\C F)_{[\lambda]}\\
&= (M(w^C(\lambda+\mu), \eta) \otimes_\C F)_{[\lambda]} \\
&=M(w^C\lambda, \eta).
\end{align*}
Here the second equality follows from \cite[Ch. 2 \S2 Lem. 2.1]{localization} and the final equality follows from Lemma \ref{twisting}. This completes the proof of Theorem \ref{global sections of costandards}.
\end{proof}
It is now straightforward to calculate the global sections of irreducible modules. 
\begin{theorem}
\label{global sections of irreducibles}
Let $\lambda \in \mf{h}^*$ be regular antidominant. Then, for any $C \in \W$, we have 
\[
\Gamma(X, \mc{L}(w^C, \lambda, \eta)) = L(w^C\lambda, \eta). 
\]
\end{theorem}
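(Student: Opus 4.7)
The plan is to obtain this as a direct consequence of Proposition \ref{classification}, which is the abstract statement packaging exactly this type of deduction. All the hard work has already been done: the antidominance and regularity of $\lambda$ are precisely the hypotheses of that proposition, and the previous theorem supplies the global sections of the costandards needed to feed into it.

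Concretely, I would set up the correspondence by taking the pairs $(Q,\tau)$ indexing the simple objects in $\mc{M}_{coh}(\mc{D}_\lambda, N, \eta)$ to be $(C(w^C), \mc{O}_{C(w^C)})$ for $C \in \W$, and taking the family of algebraic modules to be $\{M(w^C \lambda, \eta)\}_{C \in \W}$ in $\mc{M}_{fg}(\mc{U}_\theta, N, \eta)$ (via the Mili\v ci\'c--Soergel equivalence $\Nte \simeq \mc{M}_{fg}(\mc{U}_\theta, N, \eta)$). Then one checks the two hypotheses of Proposition \ref{classification}: (i) each $M(w^C \lambda, \eta)$ has the unique irreducible quotient $L(w^C \lambda, \eta)$ by McDowell's classification reviewed in Section \ref{Standard and simple modules}; (ii) Theorem \ref{global sections of costandards}, whose hypothesis is met since $\lambda$ is antidominant, gives the equality $\Gamma(X, \mc{M}(w^C, \lambda, \eta)) = M(w^C\lambda, \eta)$ in $\mc{M}_{fg}(\mc{U}_\theta, N, \eta)$, which in particular implies equality of classes in the Grothendieck group.

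Having verified both hypotheses, the conclusion of Proposition \ref{classification} immediately yields $\Gamma(X, \mc{L}(w^C, \lambda, \eta)) = L(w^C \lambda, \eta)$ for every $C \in \W$. There is no real obstacle here beyond bookkeeping; the only subtlety is to remember that Proposition \ref{classification} is stated for an arbitrary Harish-Chandra pair $(\mf{g}, K)$ and that we are specializing to $K = N$, where the classification of irreducibles has the particularly simple form recorded in Section \ref{The Harish-Chandra pair (g,N)}. In short, Theorem \ref{global sections of irreducibles} is the corollary that Theorem \ref{global sections of costandards} was designed to produce, via the general mechanism of Proposition \ref{classification}.
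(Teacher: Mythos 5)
Your proof is correct, but it takes a different route from the paper. You re-invoke Proposition \ref{classification}, feeding it hypothesis (i) from McDowell's classification and hypothesis (ii) from Theorem \ref{global sections of costandards}. This is logically sound: the hypotheses are satisfied, and the conclusion of Proposition \ref{classification} hands you exactly the desired equality $\Gamma(X, \mc{L}(w^C, \lambda, \eta)) = L(w^C\lambda, \eta)$. The paper, by contrast, gives a one-line argument that does not re-enter the machinery of Proposition \ref{classification} at all: since $\lambda$ is regular antidominant, the Beilinson--Bernstein global sections functor $\Gamma(X,-)$ is an equivalence of categories, and equivalences carry unique irreducible quotients to unique irreducible quotients, so Theorem \ref{global sections of costandards} immediately forces the conclusion. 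The paper's argument is the sharper one here --- once the module-level equality $\Gamma(X, \mc{M}(w^C, \lambda, \eta)) = M(w^C\lambda, \eta)$ is in hand, the induction inside Proposition \ref{classification} is overkill, since that proposition was designed to bootstrap from mere Grothendieck-group-level information, which is no longer the bottleneck. Your route works, and it is a reasonable instinct given that Proposition \ref{classification} was ``designed'' to produce precisely such statements; but it is worth noticing that the equivalence-of-categories shortcut makes the proposition unnecessary at this stage. (It is also worth noting that the paper already implicitly obtains the statement about $\Gamma(X, \mc{L}(w^C, \lambda, \eta))$ inside the proof of Theorem \ref{global sections of costandards}, where Proposition \ref{classification} is applied for regular $\lambda$; Theorem \ref{global sections of irreducibles} is recorded separately mainly for emphasis, with a streamlined proof.)
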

\begin{proof}Because $\lambda$ is regular antidominant, the global sections functor $\Gamma(X, -)$ is an equivalence of categories. Therefore, by Theorem \ref{global sections of costandards}, the unique irreducible quotient $\mc{L}(w^C, \lambda, \eta)$ of $\mc{M}(w^C, \lambda, \eta)$ must be mapped to the unique irreducible quotient $L(w^C \lambda, \eta)$ of $M(w^C \lambda, \eta)$ by $\Gamma(X, -)$. 
\end{proof}

These results explicitly establish the connection between the category of Whittaker modules and the category of twisted Harish-Chandra sheaves and prepare us to describe the algorithm in the following section. 


\section{A Kazhdan--Lusztig algorithm}
\label{A Kazhdan-Lusztig algorithm}

This section provides an algorithm for computing composition multiplicities of standard Whittaker modules with regular integral infinitesimal character. These multiplicities are given by Whittaker Kazhdan--Lusztig polynomials which are constructed geometrically using twisted Harish-Chandra sheaves. This algorithm is the main result of this paper, and was inspired by the Kazhdan--Lusztig algorithm for Verma modules in \cite[Ch. 5 \S 2]{localization}. 

To state the theorem containing the algorithm, we return to the combinatorial setting of the introduction. Let $W$ be the Weyl group of a reduced root system $\Sigma$ with simple roots $\Pi \subset \Sigma$, and let $S \subset W$ be the corresponding set of simple reflections. For a subset of simple roots $\Theta \subset \Pi$ with Weyl group $W_\Theta \subset W$, let $\mathcal{H}_\Theta$ be the free $\Z[q,q^{-1}]$-module with basis $\delta_C$, $C \in W_\Theta \backslash W$. For $\alpha \in \Pi$, we define a $\Z[q,q^{-1}]$-module endomorphism by 
\[
T_\alpha (\delta_C) = 
  \begin{cases} 
   0 & \text{if } Cs_\alpha = C; \\
   q \delta_C + \delta_{Cs_\alpha} & \text{if } Cs_\alpha>C; \\
   q^{-1} \delta_C + \delta_{Cs_\alpha} & \text{if } Cs_\alpha < C.
  \end{cases}
\]
The order relation on cosets is the Bruhat order on longest coset representatives. This is a partial order \cite[Ch. 6 \S 1]{localization}. The formula for $T_\alpha$ is inspired by formulas related to the antispherical module for the Hecke algebra appearing in \cite{Soergel97}. We will describe explicitly the relationship between our setting and the setting of \cite{Soergel97} in Section \ref{Whittaker Kazhdan-Lusztig polynomials}. The algorithm is given in the following theorem.

\begin{theorem} 
\label{KLalgorithm}
There exists a unique function $\varphi: W_\Theta \backslash W \rightarrow \mathcal{H}_\Theta$ satisfying the following properties. 
\begin{enumerate}[label=(\roman*)]
\item{For $C \in W_\Theta \backslash W$, 
\[
\varphi(C) = \delta_C + \sum_{D<C}P_{CD} \delta_D, 
\]
where $P_{CD} \in q\Z[q]$.}
\item{For $\alpha \in \Pi$ and $C \in \W$ such that $Cs_\alpha < C$, there exist $c_D \in \Z$ such that 
\[
T_\alpha(\varphi(Cs_\alpha))= \sum_{D\leq C} c_D\varphi(D).
\]}
\end{enumerate}
\end{theorem}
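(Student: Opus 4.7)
The plan is to prove uniqueness and existence separately, tackling uniqueness first since it is essentially a formal consequence of the shape of the recursion, and then verifying that the geometrically defined function $\varphi(C) = \nu(\mc{L}(w^C, -\rho, \eta))$ satisfies both (i) and (ii). For uniqueness I would induct on the Bruhat order on $\W$. The base case $C = W_\Theta$ is forced by (i): the sum is empty, so $\varphi(W_\Theta) = \delta_{W_\Theta}$. Inductively, fix $C > W_\Theta$ and pick $\alpha \in \Pi$ with $Cs_\alpha < C$; such an $\alpha$ exists because $w^C$ is not the shortest coset representative. By induction $\varphi(D)$ is known for all $D < C$, and the only occurrence of $\delta_C$ on the right side of (ii) comes from $c_C \delta_C$, since $\varphi(D)$ for $D < C$ is supported on cosets $\leq D < C$. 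On the other hand, $T_\alpha(\varphi(Cs_\alpha))$ contains $\delta_C$ with coefficient equal to the coefficient of $\delta_C$ in $T_\alpha(\delta_{Cs_\alpha}) = q\delta_{Cs_\alpha} + \delta_C$, which is $1$. This forces $c_C = 1$. The remaining coefficients $c_D$ for $D < C$ are then uniquely determined by the standard Kazhdan--Lusztig triangularity trick: the requirement $P_{CD} \in q\Z[q]$ for every $D < C$ (strictly positive powers of $q$ only, no negative or constant terms) pins down each $c_D$ by downward recursion on $D$ in the Bruhat order.

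For existence, I would set $\varphi(C) = \nu(\mc{L}(w^C, -\rho, \eta))$ and verify (i) and (ii) geometrically. Property (i) reduces to the analysis of $R^m i^!_{w^D}(\mc{L}(w^C, -\rho, \eta))$. Since $\mc{L}(w^C, -\rho, \eta)$ is supported on $\overline{C(w^C)}$, this derived restriction vanishes unless $C(w^D) \subset \overline{C(w^C)}$, i.e.\ $D \leq C$. On the open cell $C(w^C)$, the sheaf restricts to $\mc{O}_{C(w^C)}$, which by the normalization of $\nu$ contributes exactly $\delta_C$. The positivity $P_{CD} \in q\Z[q]$ for $D < C$ is the crucial geometric input: because $\mc{L}(w^C, -\rho, \eta)$ is the minimal extension of an irreducible connection on the open stratum, the decomposition theorem of Mochizuki forces the derived stalks $R^m i^!_{w^D}(\mc{L}(w^C, -\rho, \eta))$ along strictly smaller strata to vanish for non-positive $m$. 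This is the twisted analog of the parity vanishing underlying the Kazhdan--Lusztig positivity phenomenon.

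Property (ii) is the main geometric content. The operator $T_\alpha$ is designed to be the combinatorial shadow of the U-functor $U^0$ from Section \ref{Intertwining functors and U-functors}: one proves the identity $\nu(U^0 \mc{F}) = T_\alpha(\nu(\mc{F}))$ for any $\mc{F}$ in $\M_{coh}(\D_{-\rho}, N, \eta)$. The three cases in the definition of $T_\alpha$ correspond to the three possible behaviors of the fiber-product geometry over $X_\alpha$ above a given Bruhat cell, controlled by whether $s_\alpha$ stabilizes, enlarges, or shrinks the coset. Assuming this identity and applying it with $\mc{F} = \mc{L}(w^{Cs_\alpha}, -\rho, \eta)$, one has $T_\alpha(\varphi(Cs_\alpha)) = \nu(U^0 \mc{L}(w^{Cs_\alpha}, -\rho, \eta))$. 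By Theorem \ref{UfunctorsIfunctors} and the decomposition theorem, $U^0 \mc{L}(w^{Cs_\alpha}, -\rho, \eta)$ is semisimple, hence a direct sum of irreducibles $\mc{L}(w^D, -\rho, \eta)$, and the support constraint $\text{supp}(U^0 \mc{L}(w^{Cs_\alpha}, -\rho, \eta)) \subset \overline{C(w^C)}$ (coming from the fact that $Cs_\alpha < C$) restricts the sum to $D \leq C$. Passing through $\nu$ yields property (ii) with $c_D \in \Z_{\geq 0}$ recording multiplicities.

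The main obstacle will be establishing the compatibility identity $\nu \circ U^0 = T_\alpha \circ \nu$, which requires a careful analysis of how $R^m i^!_{w^D}$ interacts with the fiber-product construction defining $U^0$. The three-case split in $T_\alpha$ is dictated by base change along $q_1 : Y_\alpha \to X$ and the behavior of the $\mf{p}_\alpha$-orbit $p_\alpha^{-1}(p_\alpha(C(w^D)))$: when $Ds_\alpha \neq D$ this orbit is a $\p^1$-bundle over $C(w^D)$ that links $C(w^D)$ and $C(w^{Ds_\alpha})$, producing the two-term formula with the $q^{\pm 1}$ coming from the degree shift associated with the $\rho$-twist in the definition of $U^0$; when $Ds_\alpha = D$, the $P_\Theta$-orbit structure forces $U^0$ to act trivially on that stratum, matching $T_\alpha(\delta_D) = 0$. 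Once this compatibility is in hand, properties (i) and (ii) both follow, and combined with the combinatorial uniqueness argument above, the theorem is proved.
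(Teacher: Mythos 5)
Your overall strategy matches the paper's: combinatorial uniqueness by induction on the Bruhat order, then existence via $\varphi(C)=\nu(\mc{L}(w^C,-\rho,\eta))$ and the $U$-functors. The uniqueness argument is essentially the paper's Lemma \ref{uniqueness} and is fine. For condition (i) your conclusion is right, but the mechanism you give for the key vanishing $R^m i^!_{w^D}(\mc{L}_C)=0$ for $m\le 0$ and $D<C$ is misattributed: this is not a consequence of the decomposition theorem. It follows from the minimal-extension property directly: $R^m i^!_{w^D}=0$ for $m<0$ simply because $i^!_{w^D}$ is a right derived functor, and $R^0 i^!_{w^D}(\mc{L}_C)$ pushes forward to the submodule of sections of $\mc{L}_C|_{X-\partial C(w^D)}$ supported on $C(w^D)$, which vanishes because that restriction is irreducible with support strictly larger than $\overline{C(w^D)}$.

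The genuine gap is in condition (ii). You assert the identity $\nu(U^0_\alpha\mc{F})=T_\alpha(\nu(\mc{F}))$ ``for any $\mc{F}$'' and treat its verification as a base-change computation. As a polynomial identity in $q$ this is false in general: for an irreducible $\mc{F}$ one may be in case (i) of Theorem \ref{UfunctorsIfunctors}, where $U^{\pm1}(\mc{F})\neq0$ and $U^0(\mc{F})=0$; and even when $U^{\pm1}(\mc{F})=0$, the open/closed distinguished triangle for $p_\alpha^{-1}(p_\alpha(C(w^D)))=C(w^D)\cup C(w^Ds_\alpha)$ only yields a long exact sequence, hence an alternating-sum (i.e.\ $q=-1$) relation among $\mc{O}$-dimensions, not the degree-by-degree equality that $T_\alpha$ encodes. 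The paper supplies exactly the two missing ingredients: first, $U^{\pm1}(\mc{L}_{Cs_\alpha})=0$ because $L^{-1}I_{s_\alpha}$ annihilates the standard sheaf containing $\mc{L}_{Cs_\alpha}$ (Proposition \ref{Intertwining Functors on Standard HC Sheaves}); second, and crucially, the whole verification of (ii) is run as an induction on $\ell(w^C)$ in which the parity condition of Lemma \ref{parity} (available for $Cs_\alpha$ by the inductive hypothesis, since $\varphi$ already satisfies (i) and (ii) on $\W_{\le k}$) forces every third term of the long exact sequence to vanish, so it splits into short exact sequences and the $\mc{O}$-dimensions add termwise, giving equations (\ref{fact6})--(\ref{fact8}). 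You flag the compatibility $\nu\circ U^0_\alpha=T_\alpha\circ\nu$ as the main obstacle but your sketch of how to overcome it (the $\mathbb{P}^1$-bundle geometry and degree shifts) does not contain this parity/induction mechanism; without it you only recover the statement after evaluation at $q=-1$, which determines the multiplicities but not the polynomials $P_{CD}$, and hence does not verify (i) and (ii) as stated.
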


The function $\varphi$ determines a family of polynomials $P_{CD}$ parameterized by pairs of cosets in $\W$. We refer to these polynomials as {\em Whittaker Kazhdan--Lusztig polynomials}, because, as we will see in Section \ref{Composition multiplicities of standard Whittaker modules}, they determine composition multiplicities of standard Whittaker modules. 

First we will prove uniqueness of the function $\varphi:\W \rightarrow \mc{H}_\Theta$ in Theorem \ref{KLalgorithm} using a straightforward combinatorial argument. Next, we prove existence of $\varphi$ by appealing to geometry. Defining $\varphi$ geometrically provides the critical link between the Whittaker Kazhdan--Lusztig polynomials $P_{CD}$ of Theorem \ref{KLalgorithm} and Whittaker modules. This is explained in detail in Section \ref{Composition multiplicities of standard Whittaker modules}. 

We begin by proving uniqueness of $\varphi$ in a slightly stronger form. Denote by $\W _{\leq k}$ the set of cosets $C \in \W$ such that $\ell(w^C)\leq k$. 
\begin{lemma}
\label{uniqueness}
Let $k \in \mathbb{N}$. Then there exists at most one function $\varphi: \W_{\leq k} \longrightarrow \mathcal{H}_\Theta$ such that the following properties are satisfied. 
\begin{enumerate}[label=(\roman*)]
\item{For $C \in \W_{\leq k}$, 
\[
\varphi(C) = \delta_C + \sum_{D<C}P_{CD} \delta_D, 
\]
where $P_{CD} \in q\Z[q]$.}
\item{For $\alpha \in \Pi$ and $C \in \W_{\leq k}$ such that $Cs_\alpha < C$, there exist $c_D \in \Z$ such that 
\[
T_\alpha(\varphi(Cs_\alpha))= \sum_{D\leq C} c_D\varphi(D).
\]}
\end{enumerate}
\end{lemma}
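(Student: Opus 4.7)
The plan is an induction on $k$. In the base case the set $\W_{\leq k}$ is either empty (if $k<\ell(w_\Theta)$) or equal to $\{W_\Theta\}$; in the latter case property (i) has no summands below $W_\Theta$, so $\varphi(W_\Theta)=\delta_{W_\Theta}$ is forced. For the inductive step, assume uniqueness on $\W_{\leq k-1}$. A function $\varphi$ on $\W_{\leq k}$ satisfying (i) and (ii) restricts to one on $\W_{\leq k-1}$, so by induction it is determined there. It thus suffices, for each $C\in\W$ with $\ell(w^C)=k$, to show that $\varphi(C)$ is uniquely determined by $\{\varphi(D) : D<C\}$.

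Since $C\neq W_\Theta$, one can select $\alpha\in\Pi$ with $Cs_\alpha<C$ (take any simple reflection shortening the shortest coset representative $w_C$). Property (ii) then yields
\begin{equation*}
T_\alpha(\varphi(Cs_\alpha))=\sum_{D\le C}c_D\,\varphi(D),
\end{equation*}
whose left side is known by induction. Reading off the coefficient of $\delta_C$ shows the right side contributes $c_C$. Direct computation $T_\alpha(\varphi(Cs_\alpha))=q\delta_{Cs_\alpha}+\delta_C+\sum_{E<Cs_\alpha}P_{Cs_\alpha,E}\,T_\alpha(\delta_E)$ gives a $\delta_C$-coefficient of $1$: each $T_\alpha(\delta_E)$ with $E<Cs_\alpha$ is supported on $\{\delta_E,\delta_{Es_\alpha}\}\subset\{\delta_F:F\le Cs_\alpha\}$ and so cannot contribute a $\delta_C$. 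Hence $c_C=1$.

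For each $D<C$, let $b_D$ denote the coefficient of $\delta_D$ in $T_\alpha(\varphi(Cs_\alpha))$. Matching the $\delta_D$-coefficients in the displayed equation (using $c_C=1$) gives
\begin{equation*}
c_D+P_{CD}=b_D-\sum_{D<E<C}c_E\,P_{ED}.
\end{equation*}
Processing cosets $D<C$ in decreasing Bruhat order, the right-hand side is known at each step: the $P_{ED}$ come from $\varphi(E)$, which is known by the outer induction; the $c_E$ for $D<E<C$ were fixed at earlier steps of this inner induction. The two unknowns $c_D\in\Z$ and $P_{CD}\in q\Z[q]$ live in the complementary subspaces $\Z$ and $q\Z[q]$ of $\Z[q,q^{-1}]$, so $c_D$ is the constant term of the right-hand side and $P_{CD}$ is the remainder. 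This pins down $\varphi(C)$, completing both inductions.

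The one subtlety is that the separation in the last step presupposes the right-hand side lies in $\Z[q]$ (no strictly negative powers of $q$, and integer constant term). Since the lemma asserts uniqueness only, this integrality is automatic under the standing hypothesis that a $\varphi$ satisfying (i) and (ii) exists; verifying it will become a genuine obligation only in the companion existence argument.
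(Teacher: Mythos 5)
Your argument is correct and follows essentially the same route as the paper: induction on $k$, choice of a simple root $\alpha$ with $Cs_\alpha<C$, extraction of $c_C=1$ from the leading term, and recovery of $\varphi(C)$ as $T_\alpha(\varphi(Cs_\alpha))-\sum_{D<C}c_D\varphi(D)$. The only cosmetic difference is that the paper determines all the $c_D$ at once by evaluating the identity at $q=0$ (which is exactly your separation of $\Z[q]$ into $\Z\oplus q\Z[q]$, packaged globally rather than via your coefficient-by-coefficient downward induction), and your closing remark about integrality is indeed automatic for the uniqueness statement, just as you say.
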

\begin{proof} We proceed by induction in $k$. By \cite[Ch. 6 \S1 Lem. 1.7]{localization}, the unique minimal element in the coset order is $W_\Theta$, so the base case is $k=\ell(w_\Theta)$, where $w_\Theta$ is the longest element in $W_\Theta$. In this case, $\W _{\leq k} = \{ W_\Theta \}$. The only possible function $\varphi:\W \longrightarrow \HH$ which satisfies (i) is $\varphi(W_\Theta) = \delta_{W_\Theta}$, and (ii) is void.  

Assume that for $k>\ell(w_\Theta)$, there exists $\varphi:\W_{\leq k} \longrightarrow \HH$ which satisfies (i) and (ii). Our induction assumption is that $\varphi|_{\W_{\leq k-1}}$ is unique. By \cite[Ch. 6 \S1 Prop. 1.6]{localization}, there is a coset $C \in \W_{\leq k}$ such that $\ell(w^C)=k$. Then by \cite[Ch. 6 \S1 Lem. 1.7]{localization}, there exists $\alpha \in \Pi$ such that $Cs_\alpha<C$. By (ii), 
\[
T_\alpha(\varphi(Cs_\alpha))=\sum_{D\leq C} c_D \varphi(D).
\]
Evaluating at $q=0$ and using (i), we have 
\[
T_\alpha(\varphi(Cs_\alpha))(0) = \sum_{D\leq C} c_D\left(\delta_D + \sum_{E<D}P_{DE}(0)\delta_C \right) = \sum_{D\leq C} c_D \delta_D.
\]
Because $\ell(w^{Cs_\alpha}) = k-1$, the induction assumption implies that the coefficients $c_D$ in this sum are uniquely determined. On the other hand, using the definition of $\varphi$ and $T_\alpha$, we compute
\begin{align*}
T_\alpha(\varphi(Cs_\alpha)) &=T_\alpha(\delta_{Cs_\alpha})+\sum_{D<Cs_\alpha}P_{Cs_\alpha D}T_\alpha (\delta_D) \\
&=q\delta_{Cs_\alpha}+\delta_C+\sum_{D<Cs_\alpha}P_{Cs_\alpha D}T_\alpha(\delta_D).
\end{align*}
Because all cosets $D$ appearing in the sum are less than $Cs_\alpha$ in the coset order, $\ell(w^D)<k-1$ for any such $D$. In particular, $\delta_C$ does not show up in this sum. Evaluating at zero and setting this equal to our first computation, we conclude that $c_C = 1$. Therefore, 
\[
\varphi(C) = T(\varphi(Cs_\alpha)) - \sum_{D<C}c_D\varphi(D).
\]
This shows that the Lemma holds for $\W_{\leq k}$ , and we are done by induction.
\end{proof}
The uniqueness of Theorem \ref{KLalgorithm} follows immediately from Lemma \ref{uniqueness}. Next we establish a parity condition on solutions of Lemma \ref{uniqueness} which will be critical in upcoming computations. 

We define additive involutions $i$ on $\Zq$ and $\iota$ on $\mathcal{H}_\Theta$ by 
\begin{align*}
i(q^m)&= (-1)^m q^m, \text{ for } m \in \Z, \text{ and } \\
\iota(q^m \delta_C) &= (-1)^{m+\ell(w^C)}q^m \delta_C, \text{ for } m \in \Z \text{ and } C \in \W.
\end{align*}
A simple calculation shows that $\iota T_\alpha \iota = -T_\alpha$. 
\begin{lemma}
\label{parity}
Let $k \in \mathbb{N}$. Let $\varphi: \W_{\leq k} \longrightarrow \HH$ be a function satisfying properties (i) and (ii) of Lemma \ref{uniqueness}. Then 
\[
P_{CD}=q^{\ell(w^C)-\ell(w^D)}Q_{CD},
\] 
where $Q_{CD} \in \Z[q^2, q^{-2}]$. 
\end{lemma}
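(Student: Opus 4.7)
The plan is to reformulate the parity condition as an eigenvalue identity under a certain involution, and then deduce it from the uniqueness established in Lemma \ref{uniqueness}. First I would introduce two additive involutions: $i: \Z[q,q^{-1}] \to \Z[q,q^{-1}]$ defined by $i(q^m) = (-1)^m q^m$, and $\iota: \HH \to \HH$ defined on the $\Z$-basis $\{q^m \delta_C\}$ by $\iota(q^m \delta_C) = (-1)^{m + \ell(w^C)} q^m \delta_C$. The asserted factorization $P_{CD} = q^{\ell(w^C) - \ell(w^D)} Q_{CD}$ with $Q_{CD} \in \Z[q^2, q^{-2}]$ is equivalent to saying that every monomial of $P_{CD}$ has degree $\equiv \ell(w^C) - \ell(w^D) \pmod 2$ (using $\ell(w^D) < \ell(w^C)$ for $D<C$ to absorb $q^{\ell(w^C)-\ell(w^D)}$), and this in turn is equivalent to the identity $\iota(\varphi(C)) = (-1)^{\ell(w^C)} \varphi(C)$ holding for every $C \in \W_{\leq k}$.

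Next I would verify the anticommutation relation $\iota T_\alpha = -T_\alpha \iota$ by a direct calculation on $q^m \delta_C$, splitting into the three cases $Cs_\alpha = C$, $Cs_\alpha > C$, and $Cs_\alpha < C$, and using that in the latter two cases $\ell(w^{Cs_\alpha}) = \ell(w^C) \pm 1$. With this in hand, define $\tilde\varphi : \W_{\leq k} \to \HH$ by $\tilde\varphi(C) := (-1)^{\ell(w^C)} \iota(\varphi(C))$ and check that $\tilde\varphi$ also satisfies the two hypotheses of Lemma \ref{uniqueness}. For (i), note that $\iota(\delta_C) = (-1)^{\ell(w^C)} \delta_C$ pins the leading coefficient to $1$, while the remaining coefficients become $(-1)^{\ell(w^D) - \ell(w^C)} i(P_{CD})$, which still lie in $q\Z[q]$ because $i$ preserves the submodule $q\Z[q]$. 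For (ii), applying $\iota$ to the relation $T_\alpha \varphi(Cs_\alpha) = \sum_{D \leq C} c_D \varphi(D)$ and using $\iota T_\alpha = -T_\alpha \iota$ together with $\iota \varphi(D) = (-1)^{\ell(w^D)} \tilde\varphi(D)$ gives
\[
T_\alpha \tilde\varphi(Cs_\alpha) = \sum_{D \leq C} (-1)^{\ell(w^C) + \ell(w^D)} c_D \, \tilde\varphi(D),
\]
whose coefficients are integers. By Lemma \ref{uniqueness} we then conclude $\tilde\varphi = \varphi$, which is precisely the desired identity $\iota(\varphi(C)) = (-1)^{\ell(w^C)} \varphi(C)$.

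The main obstacle in this approach is identifying the right involution and verifying the intertwining relation $\iota T_\alpha = -T_\alpha \iota$; once that is available, the conclusion is a short unwinding of uniqueness rather than a separate induction. An alternative route by direct strong induction on $\ell(w^C)$ is also feasible, but that version requires the extra step of showing that the integers $c_D$ in condition (ii) vanish whenever $\ell(w^D) \not\equiv \ell(w^C) \pmod{2}$, which one proves by evaluating $T_\alpha(\varphi(Cs_\alpha))|_{q=0}$ and invoking the inductive parity on $\varphi(Cs_\alpha)$; the involution argument above packages exactly this parity bookkeeping into a single application of uniqueness.
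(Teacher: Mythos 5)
Your proof is correct and follows essentially the same approach as the paper: you introduce the same pair of involutions $i$ and $\iota$, verify the anticommutation $\iota T_\alpha = -T_\alpha\iota$, define the same auxiliary function (which the paper calls $\psi$), check it satisfies both hypotheses of Lemma \ref{uniqueness}, and invoke uniqueness to conclude. The opening reformulation of the parity condition as the eigenvalue identity $\iota(\varphi(C)) = (-1)^{\ell(w^C)}\varphi(C)$ is a helpful clarification, but the core argument is identical.
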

\begin{proof} Define a function $\psi:\W_{\leq k} \rightarrow \HH$ by $\psi(C)=(-1)^{\ell(w^C)}\iota(\varphi(C))$. Then 
\[
\psi(C)=\delta_C + \sum_{D<C} (-1)^{\ell(w^C)-\ell(w^D)}i(P_{CD})\delta_D.
\]
The polynomials $(-1)^{\ell(w^C)-\ell(w^D)}i(P_{CD})$ are in $q \Z[q]$, so $\psi$ satisfies (i). We will show that $\psi$ also satisfies (ii), then use Lemma \ref{uniqueness} to conclude that $\psi = \varphi$. Let $C \in \W_{\leq k}$ and $\alpha \in \Pi$ such that $Cs_\alpha<C$. Then 
{\allowdisplaybreaks
\begin{align*}
T_\alpha(\psi(Cs_\alpha)) &=(-1)^{\ell(w^C)}\left( -T_\alpha(\iota(\varphi(Cs_\alpha)))\right) \\
&=(-1)^{\ell(w^C)}\iota T_\alpha \iota (\iota (\varphi(Cs_\alpha))) \\
&=(-1)^{\ell(w^C)}\iota\left( \sum_{D\leq C}c_D \varphi(D)\right) \\
&=(-1)^{\ell(w^C)} \sum_{D\leq C} c_D \iota(\varphi(D)) \\
&=\sum_{D\leq C} (-1)^{\ell(w^C) - \ell(w^D)} c_D \psi(D).
\end{align*}
}
This shows that $\psi$ satisfies (ii), so Lemma \ref{uniqueness} implies that $\varphi = \psi$; that is, that 
\[
P_{CD} = (-1)^{\ell(w^C)-\ell(w^D)}i(P_{CD}). 
\]
This relationship implies the result. 
\end{proof}
Now we are ready to prove the existence statement of Theorem \ref{KLalgorithm}. Let $\mc{F} \in \MDNe$. For $w \in W$, let $i_w: C(w) \longrightarrow X$ be the canonical immersion of the corresponding Bruhat cell into the flag variety. We note the following facts.
\begin{itemize}
\item{For any $k \in \Z$, $L^{-k}i_w^+(\mc{F})$ is an $\eta$-twisted $N$-equivariant connection on $C(w)$, so it is isomorphic to a direct sum of copies of $\mc{O}_{C(w)}$.  We refer to the number of copies of $\mc{O}_{C(w)}$ that appear in this decomposition as the $\mc{O}$-dimension, and denote it $\text{dim}_\mc{O}(L^{-k}i^+_w(\mc{F}))$.}
\item{Because the dimension of $C(w)$ is $\ell(w)$, for any $k \in \Z$, 
\[
R^{n-\ell(w)-k}i_w^!(\mc{F}) = L^{-k}i_w^+(\mc{F}).
\]
Here $n=\dim X$.}
\end{itemize} 
We define a function $\nu: \MDNe \longrightarrow \HH$ by 
\begin{equation}
\label{nu}
\nu(\mc{F})=\sum_{C \in \W} \sum_{m \in \Z} \text{dim}_\mc{O}(R^m i^!_{w^C}(\mc{F}))q^m \delta_{C}.
\end{equation}
For $C \in \W$, let $\mc{I}_C := \mc{I}(w^C, -\rho, \eta)$ be the standard sheaf in $\mc{M}_{coh}(\mc{D}_X,N,\eta)$ corresponding to the coset $C$ and $\mc{L}_C := \mc{L}(w^C, -\rho, \eta)$ its unique irreducible subsheaf. 
\begin{proposition} 
\label{existence}
Let $\varphi(C)=\nu(\mc{L}_C)$. Then $\varphi$ satisfies conditions (i) and (ii) in Theorem \ref{KLalgorithm}.
\end{proposition}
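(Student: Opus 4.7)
The plan is to verify conditions (i) and (ii) of Theorem \ref{KLalgorithm} separately, using the description of $\mc{L}_C$ as an intermediate extension together with the $U$-functors of Section \ref{Intertwining functors and U-functors}.

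For (i), I would begin by observing that $\text{supp}(\mc{L}_C) = \overline{C(w^C)} = \bigcup_{D \leq C} C(w^D)$, so $R^m i^!_{w^D}(\mc{L}_C) = 0$ whenever $D \not\leq C$, and no $\delta_D$ with $D \not\leq C$ appears in $\nu(\mc{L}_C)$. On the open stratum, $\mc{L}_C$ restricts to the $N$-equivariant connection $\mc{O}_{C(w^C)}$ in the normalized degree of $i^!_{w^C}$, contributing the leading summand $\delta_C$ with coefficient $1$. For $D < C$, the nonvanishing $R^m i^!_{w^D}(\mc{L}_C)$ produce the polynomial $P_{CD}$; one then invokes Lemma \ref{parity} together with the defining property of an intermediate extension (no sections in nonnegative degrees on proper substrata) to conclude that $P_{CD} \in q\Z[q]$.

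For (ii), fix $\alpha \in \Pi$ and $C \in \W$ with $Cs_\alpha < C$. The strategy is to apply the $U$-functor $U^0_\alpha$ of Section \ref{Intertwining functors and U-functors} to $\mc{L}_{Cs_\alpha}$ and verify two separate claims. First, I would show that $\nu$ intertwines $U^0_\alpha$ with $T_\alpha$; that is, $\nu(U^0_\alpha(\mc{F})) = T_\alpha(\nu(\mc{F}))$ for every $\mc{F} \in \mc{M}_{coh}(\D_X, N, \eta)$. This can be checked by base-changing along the projection $p_\alpha: X \to X_\alpha$ using the fiber-product geometry $Y_\alpha = Z_e \cup Z_{s_\alpha}$; the three cases in the definition of $T_\alpha$ then emerge from the fact that each $\mathbb{P}^1$-fiber of $p_\alpha$ partitions into the pair of Bruhat cells $C(w) \cup C(ws_\alpha)$, combined with the line-bundle twist $\mc{L}$ appearing in the definition of $U^0_\alpha$. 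Second, I would argue that $U^0_\alpha(\mc{L}_{Cs_\alpha})$ is semisimple with composition factors among $\{\mc{L}_D : D \leq C\}$: semisimplicity follows from Mochizuki's decomposition theorem applied to the proper smooth morphism $q_1: Y_\alpha \to X$ (using that $\mc{L}_{Cs_\alpha}$ is a pure simple $\eta$-twisted Harish-Chandra sheaf), and the support constraint $D \leq C$ comes from noting that the closure of $p_\alpha^{-1}(p_\alpha(C(w^{Cs_\alpha})))$ is contained in $\overline{C(w^C)}$.

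Combining the two claims yields nonnegative integers $c_D$ and the identity
\[
T_\alpha\,\varphi(Cs_\alpha) \;=\; T_\alpha\,\nu(\mc{L}_{Cs_\alpha}) \;=\; \nu(U^0_\alpha(\mc{L}_{Cs_\alpha})) \;=\; \sum_{D \leq C} c_D\,\varphi(D),
\]
which is condition (ii). The main obstacle I expect is the intertwining identity $\nu \circ U^0_\alpha = T_\alpha \circ \nu$. Matching the $q^{\pm 1}$ coefficients of $T_\alpha$ with the one-dimensional cohomology of the $\mathbb{P}^1$-fibers of $q_1$, twisted by $\mc{L}$, requires careful bookkeeping of cohomological degrees under $i^!_{w^D}$ and of how the $N$-equivariant structure propagates through $U^0_\alpha$. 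Once this geometric-combinatorial bridge is established, the remaining ingredients (decomposition theorem, support analysis, and Lemma \ref{parity} for positivity) assemble into the result.
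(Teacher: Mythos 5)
Your overall shape — use $U^0_\alpha$, the decomposition theorem, and a comparison with $T_\alpha$ — matches the paper's strategy, but there are two gaps.

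For condition (i), invoking Lemma \ref{parity} is circular: that lemma \emph{assumes} a function satisfying (i) and (ii), so it cannot be an ingredient in proving (i). The positivity $P_{CD}\in q\Z[q]$ is in fact elementary and does not need the intermediate-extension formalism as a black box. Since $i_{w^D}$ is an immersion, $i^!_{w^D}$ is a right derived functor, so $R^m i^!_{w^D}(\mc{L}_C)=0$ automatically for $m<0$. For $m=0$, factoring $i_{w^D}$ through $X'=X-\partial C(w^D)$ and applying Kashiwara's theorem identifies $j_{w^D+}R^0 i^!_{w^D}(\mc{L}_C)$ with $R^0\Gamma_{C(w^D)}(\mc{L}_C|_{X'})$, the submodule of sections supported on $C(w^D)$; this vanishes because $\mc{L}_C|_{X'}$ is irreducible and not supported there.

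For condition (ii), the crux of your proposal is the universal identity $\nu\circ U^0_\alpha=T_\alpha\circ\nu$, but this is not expected to hold on all of $\mc{M}_{coh}(\mc{D}_X,N,\eta)$, and the paper deliberately does not claim it. The functor $U^0_\alpha$ sits inside a triangle whose costalk analysis produces a long exact sequence relating $R^m i^!_{w^D}(U^0_\alpha(\mc{F}))$ to $R^{m+1}i^!_{w^D}(\mc{F})$ and $R^m i^!_{w^Ds_\alpha}(\mc{F})$, and this sequence need not break into short exact sequences, so $\mc{O}$-dimensions do not simply add. What the paper does is establish the identity $T_\alpha(\varphi(Cs_\alpha))=\nu(U^0_\alpha(\mc{L}_{Cs_\alpha}))$ \emph{only for the irreducibles $\mc{L}_{Cs_\alpha}$}, by an induction on $\ell(w^C)$: on $\W_{\leq k}$ the inductive hypothesis lets one apply Lemma \ref{parity}, yielding a mod-2 vanishing pattern for $R^m i^!_{w^D}(\mc{L}_{Cs_\alpha})$ and $R^m i^!_{w^Ds_\alpha}(\mc{L}_{Cs_\alpha})$ which forces the long exact sequence to split degreewise. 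Without this parity input your "careful bookkeeping" has no way to get started. You are also missing the case $Ds_\alpha=D$, which the paper handles via Lemma \ref{whittakercase} showing that $R^k i^!_v(\mc{F})=0$ for irreducible $\mc{F}$ whenever $v$ is not a longest coset representative; that vanishing is what removes the $Cs_\alpha=C$ case from the $T_\alpha$-formula. Your second claim (semisimplicity of $U^0_\alpha(\mc{L}_{Cs_\alpha})$ with factors $\mc{L}_D$, $D\leq C$, via the decomposition theorem and local triviality of $p_\alpha$, plus the vanishing of $U^{\pm1}_\alpha$ via the relation to $LI_{s_\alpha}$) does match the paper's Lemma \ref{semisimplicity}.
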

Checking that $\varphi$ satisfies \ref{KLalgorithm} (i) is straightforward. 
\begin{lemma} Let $\varphi(C) = \nu(\mc{L}_C)$. Then 
\[
\varphi(C)=\delta_C+\sum_{D<C}P_{CD}\delta_D,
\]
where $P_{CD} \in q\Z[q]$. 
\end{lemma}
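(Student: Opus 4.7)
The plan is to evaluate, for each $D \in W_\Theta \backslash W$, the coefficient of $\delta_D$ in $\nu(\mathcal{L}_C)$ by exploiting three geometric features of $\mathcal{L}_C$: its support, its restriction to the top stratum of that support, and its intermediate-extension behaviour on lower strata. Throughout I will use the relation $R^{n-\ell(w)-k} i^!_w = L^{-k} i^+_w$ recorded before equation~(\ref{nu}).

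\textit{Support.} The sheaf $\mathcal{L}_C = \mathcal{L}(w^C,-\rho,\eta)$ is by construction supported on $\overline{C(w^C)}$, which is the disjoint union of the Bruhat cells $C(w)$ with $w \leq w^C$. Using the identification of the partial order on $W_\Theta \backslash W$ with the Bruhat order on longest coset representatives recalled in Section~\ref{The Harish-Chandra pair (g,N)}, the cells $C(w^D)$ contained in $\overline{C(w^C)}$ are exactly those with $D \leq C$. For $D \not\leq C$ the sheaf $\mathcal{L}_C$ therefore restricts to zero on $C(w^D)$, so $R^\bullet i^!_{w^D}(\mathcal{L}_C) = 0$ and no $\delta_D$ term appears in $\nu(\mathcal{L}_C)$.

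\textit{Leading coefficient and positivity.} On the open stratum $C(w^C)$ of its support, $\mathcal{L}_C$ coincides with the unique compatible $N$-equivariant connection $\mathcal{O}_{C(w^C)}$ classified in Section~\ref{The Harish-Chandra pair (g,N)}; hence $L^{-k} i^+_{w^C}(\mathcal{L}_C) = \mathcal{O}_{C(w^C)}$ for $k=0$ and vanishes otherwise, and the relation above then collapses the $\delta_C$-coefficient to a single monomial with $\dim_\mathcal{O} = 1$, producing the leading $\delta_C$ required by~(i). For $D<C$, the coefficient of $\delta_D$ is a nonnegative integer combination of the $q^m$, so only the range of $m$ is at issue: negative powers are precluded by the cohomological amplitude of $Li^*_{w^D}$, which via the relation above forces $R^m i^!_{w^D}(\mathcal{L}_C) = 0$ for $m<0$, while the constant term is precluded by the intermediate-extension vanishing $R^0 i^!_{w^D}(\mathcal{L}_C) = 0$. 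This last vanishing follows from the characterisation of $\mathcal{L}_C$ in Proposition~\ref{costandardHCsheaves}: $\mathcal{L}_C$ is the unique irreducible subsheaf of $\mathcal{I}(w^C,-\rho,\eta)$ and the unique irreducible quotient of $\mathcal{M}(w^C,-\rho,\eta)$, with the complementary composition factors supported on $\partial C(w^C)$ in each case, which is precisely the cleanness condition defining an intermediate extension on proper substrata.

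The main technical obstacle is exactly this intermediate-extension vanishing $R^0 i^!_{w^D}(\mathcal{L}_C) = 0$ for $D < C$; it is the defining feature separating the irreducible $\mathcal{L}_C$ from its standard and costandard companions, and once it is secured the remaining assertions in the lemma are formal consequences of support containment together with the nonnegativity of $\dim_\mathcal{O}$.
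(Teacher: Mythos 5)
Your overall outline — compute the $\delta_D$-coefficient of $\nu(\mc{L}_C)$ stratum by stratum, using support for $D\not\leq C$, the restriction to the open stratum for $D=C$, and a costalk vanishing for $D<C$ — matches the structure of the paper's proof. However there are two places where the argument as written does not hold up.

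First, the leading-coefficient step contains a degree error. You claim $L^{-k}i^+_{w^C}(\mc{L}_C)=\mc{O}_{C(w^C)}$ for $k=0$ and zero otherwise. Plugging this into the relation $R^{n-\ell(w)-k}i^!_w = L^{-k}i^+_w$ (with $k=0$) would give $R^{n-\ell(w^C)}i^!_{w^C}(\mc{L}_C)=\mc{O}_{C(w^C)}$, hence a contribution $q^{\,n-\ell(w^C)}\delta_C$, not $\delta_C$. The correct statement is that $L^{-k}i^+_{w^C}(\mc{L}_C)$ is concentrated at $k=n-\ell(w^C)$, reflecting the Kashiwara shift for the closed immersion $C(w^C)\hookrightarrow X'=X-\partial C(w^C)$ of codimension $n-\ell(w^C)$. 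The paper sidesteps this bookkeeping by computing $R^0 i^!_{w^C}(\mc{L}_C)$ directly: since $\mc{L}_C|_{X'}=\mc{I}_C|_{X'}=j_{w^C+}(\mc{O}_{C(w^C)})$, Kashiwara's theorem gives $R^0 i^!_{w^C}(\mc{L}_C)=\mc{O}_{C(w^C)}$ and $R^m i^!_{w^C}(\mc{L}_C)=0$ for $m\neq 0$.

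Second, the key vanishing $R^0 i^!_{w^D}(\mc{L}_C)=0$ for $D<C$ is asserted rather than proved. You say the irreducible-sub/quotient characterization in Proposition~\ref{costandardHCsheaves} ``is precisely the cleanness condition defining an intermediate extension,'' but the paper never develops such a framework, and the implication from the composition-series description of $\mc{I}_C$ and $\mc{M}(w^C,-\rho,\eta)$ to the costalk vanishing is not immediate. What makes the argument go through is more elementary: factor $i_{w^D}=k_{w^D}\circ j_{w^D}$ through $X'=X-\partial C(w^D)$, note $R^0 j_{w^D+}(R^0 i^!_{w^D}(\mc{L}_C))=R^0\Gamma_{C(w^D)}(\mc{L}_C|_{X'})$ is a $\mc{D}$-submodule of $\mc{L}_C|_{X'}$ supported on $C(w^D)$, observe that $\mc{L}_C|_{X'}$ is irreducible (restriction to an open set preserves irreducibility) and has support strictly larger than $C(w^D)$ since $D<C$, and conclude the submodule must be zero. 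That is the missing content: irreducibility of $\mc{L}_C$ alone, not the precise sub/quotient relationships with $\mc{I}_C$ and $\mc{M}(w^C,-\rho,\eta)$, is what forces the costalk vanishing.
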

\begin{proof} We need to show three things:
\begin{enumerate}[label=(\alph*)]
\item{If $D \not \leq C$, $\text{dim}_\mc{O}(R^mi^!_{w^D}(\mc{L}_{C})) = 0$ for all $m \in \Z$,} 
\item $\text{dim}_\mc{O}(R^mi^!_{w^C}(\mc{L}_{C}))=
\begin{cases} 
      1 & \text{ if } m=0 \\
      0 & \text{otherwise} 
   \end{cases}, \text{ and }$
\item{if $D<C$, $\text{dim}_\mc{O}(R^mi^!_{w^D}(\mc{L}_{C}))=0$ for all $m\leq0$.}
\end{enumerate}
Part (a) follows immediately from the fact that $\text{supp}\mc{L}_{C} = \overline{C(w^C)}$ and $D\leq C$ in the coset order if and only if $C(w^D) \subset \overline{C(w^C)}$ \cite[Prop 1.11]{localization}. To see part (b), we first observe that
\[
R^0i^!_{w^C}(\mc{L}_{C}) = R^0i_{w^C}^!(\mc{I}_{C})=R^0i^!_{w^C}(i_{w^C+}(\mc{O}_{C(w^C)})) = \mc{O}_{C(w^C)}.
\]
So dim$_\mc{O}(R^0i^!_{w^C}(\mc{L}_{C}))=1$. Furthermore, for $m \neq 0$,
\[
R^mi_{w^C}^!(\mc{L}_{C})=R^mi_{w^C}^!(\mc{I}_{C}) = R^mi^!_{w^C}(i_{w^C+}(\mc{O}_{C(w^C)})) = 0.
\]
This proves (b). We end by showing (c).  Let $D \in \W$ be a coset so that $D<C$. Because $i_{w^D}$ is an immersion, $i^!_{w^D}$ is a right derived functor, so for any $m<0$, $R^mi^!_{w^D}(\mc{V})=0$ for any $\mc{D}$-module $\mc{V}$ on $X$. Thus all that remains is to show that $R^0i_{w^D}^!(\mc{L}_{C})=0$.  Let $X'=X-\partial C(w^D)$, and let $j_{w^D}:C(w^D) \rightarrow X'$ be the natural closed immersion, and $k_{w^D}:X'\rightarrow X$ the natural open immersion. Then we have a commutative diagram. 
\begin{center}
\begin{tikzcd}
C(w^D) \arrow[swap]{rd}{j_{w^D}}\arrow{rr}{i_{w^D}} 
& \text{ } & X \\
 \text{ }
  & X' \arrow[swap]{ru}{k_{w^D}}  & \text{ } 
\end{tikzcd}
\end{center}
Using the fact that $\dim X= \dim X'$, that $k_{w^D}$ is an open immersion, and Kashiwara's Theorem, we compute
\begin{align*}
R^0j_{w^D+}(R^0i_{w^D}^!(\mc{L}_{C}))&= R^0j_{w^D+}(R^0j_{w^D}^!(R^0k_{w^D}^!(\mc{L}_{C}))) \\
&=R^0j_{w^D+}(R^0j_{w^D}^!(L^0k_{w^D}^+(\mc{L}_{C}))) \\
&=R^0j_{w^D+}(R^0j_{w^D}^!(\mc{L}_{C}|_{X'})) \\
&=R^0\Gamma_{C(w^D)}(\mc{L}_{C}|_{X'}).
\end{align*}
From this calculation we see that $R^0j_{w^D+}(R^0i_{w^D}^!(\mc{L}_{C}))$ is the submodule of $\mc{L}_{C}|_{X'}$ consisting of sections supported on $C(w^D)$. However, because $X'$ is open, $\mc{L}_C|_{X'}$ is irreducible, so this submodule must be zero. We conclude that $R^0i_{w^D}^!(\mc{L}_{D})=0$, which completes the proof of the lemma.
\end{proof}

Our final step in proving Theorem \ref{KLalgorithm} is establishing that $\varphi$ satisfies Theorem \ref{KLalgorithm}(ii). Before we make this argument, we need to introduce a useful family of functors $U_\alpha^k:\mc{M}_{qc}(\mc{D}_X)\rightarrow \mc{M}_{qc}(\mc{D}_X)$ and examine their semisimplicity properties. We dedicate the next page to doing so.  

Fix $\alpha \in \Pi$, and let $p_\alpha: X \longrightarrow X_\alpha$ be projection onto the flag variety of parabolic subalgebras of type $\alpha$. If $P_\alpha \subset G$ is the standard parabolic of type $\alpha$, then $P_\alpha = B \cup Bs_\alpha B$. Let $C(v)$ be the Bruhat cell corresponding to $v \in W$. Then we have the following facts:
\begin{itemize}
\item{The Bruhat cell $C(v)\simeq \C^{\ell(v)}$, so $i_v: C(v) \longrightarrow X$ is an affine morphism.}
\item{The image $p_\alpha(C(v))$ is an affine subvariety of $X_\alpha$.}
\item{The projection $p_\alpha$ is locally trivial, so $p_\alpha^{-1}(p_\alpha(C(v))$ is a smooth, affinely embedded subvariety of $X$.}
\end{itemize}
We conclude that $p_\alpha^{-1}(p_\alpha(C(v)))=C(v)\cup C(vs_\alpha)$. One of these orbits is closed in $p_\alpha^{-1}(p_\alpha(C(v)))$ and the other is open and dense. We have two possible scenarios:
\begin{enumerate}
\item{$\ell(vs_\alpha) = \ell(v)+1$. Then $\text{dim}(C(vs_\alpha))>\text{dim}(C(v))$, and so }
\begin{itemize}
\item{$C(vs_\alpha)$ is open and dense in $p_\alpha^{-1}(p_\alpha(C(v)))$,}
\item{$C(v)$ is closed in $p_\alpha^{-1}(p_\alpha(C(v)))$, and}
\item{$p_\alpha: C(v) \longrightarrow p_\alpha(C(v))$ is an isomorphism.}
\end{itemize}
\item{$\ell(vs_\alpha) = \ell(v)-1$. Then $\text{dim}(C(vs_\alpha))<\text{dim}(C(v))$, and so }
\begin{itemize}
\item{$C(vs_\alpha)$ is closed in $p_\alpha^{-1}(p_\alpha(C(v)))$,}
\item{$C(v)$ is open and dense in $p_\alpha^{-1}(p_\alpha(C(v)))$, and}
\item{$p_\alpha: C(v) \longrightarrow p_\alpha(C(v))$ is a fibration with fibers isomorphic to an affine line.}
\end{itemize}
\end{enumerate} 
We define a family of functors
\index{U-functors} $U_\alpha^k: \mc{M}_{qc}(\mc{D}_X) \longrightarrow \mc{M}_{qc}(\mc{D}_X)$ by 
\[
U_\alpha^k(\mc{F}) = p_\alpha^+(H^kp_{\alpha +}(\mc{F})).
\]
Because the fibers of the projection map $p_\alpha: X \rightarrow X_\alpha$ are one-dimensional, $U_\alpha^k$ can be non-zero only for $k\in \{ -1, 0, 1\}$. These functors are closely related to the $U$-functors discussed in Section \ref{Intertwining functors and U-functors}. (We will make this relationship explicit in the proof of Theorem \ref{semisimplicity}.) Their main utility in our argument comes from their semisimplicity properties. 
\begin{lemma}
\label{semisimplicity}
Let $C \in \W$ and $\alpha \in \Pi$ be such that $Cs_\alpha < C$. Then 
\begin{enumerate}[label=(\roman*)]
\item{$U_\alpha^k(\mc{L}_{Cs_\alpha})=0$ for all $k \neq 0$, and }
\item{$U_\alpha^0(\mc{L}_{Cs_\alpha})$ is a direct sum of $\mc{L}_D$ for $D\leq C$.}
\end{enumerate}
\end{lemma}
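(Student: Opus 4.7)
The plan is to reduce the lemma to the dichotomy of Theorem~\ref{UfunctorsIfunctors} for the $U^k$ functors of Section~\ref{Intertwining functors and U-functors}, combined with Mochizuki's decomposition theorem applied to the smooth proper $\mathbb{P}^1$-fibration $p_\alpha \colon X \to X_\alpha$. Base change along the Cartesian square defining $Y_\alpha$ gives a natural isomorphism $p_\alpha^+ \circ p_{\alpha +} \simeq q_{1+} \circ q_2^+$, and for $\lambda = -\rho$ the integer $p = -\alpha^\vee(-\rho) = 1$ makes the line bundle $\mc{L}$ appearing in the definition of $U^k$ reduce to $q_1^{\ast}\mc{O}(\alpha)$. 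Via the projection formula this identifies the $U^k$ of Theorem~\ref{UfunctorsIfunctors} with the $U_\alpha^k$ defined here, up to an invertible twist on $X$, so vanishing and semisimplicity transfer between the two families.

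For part (i), I would apply Theorem~\ref{UfunctorsIfunctors} to the irreducible $\mc{L}_{Cs_\alpha}$. Case (i) of that dichotomy forces $I_{s_\alpha}(\mc{L}_{Cs_\alpha}) = 0$, so it suffices to rule this out. Apply $LI_{s_\alpha}$ to the short exact sequence $0 \to \mc{L}_{Cs_\alpha} \to \mc{I}_{Cs_\alpha} \to \mc{Q} \to 0$; since $(Cs_\alpha)s_\alpha = C > Cs_\alpha$, Proposition~\ref{Intertwining Functors on Standard HC Sheaves} yields $I_{s_\alpha}(\mc{I}_{Cs_\alpha}) = \mc{I}_C$ with $L^{-1}I_{s_\alpha}(\mc{I}_{Cs_\alpha}) = 0$. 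Under the hypothetical assumption $I_{s_\alpha}(\mc{L}_{Cs_\alpha}) = 0$, the long exact sequence would identify $\mc{I}_C$ isomorphically with $I_{s_\alpha}(\mc{Q})$. But $\mc{Q}$ is supported in $\partial C(w^{Cs_\alpha})$, a union of cells of dimension at most $\ell(w^{Cs_\alpha}) - 1$, so the support of $I_{s_\alpha}(\mc{Q})$ has dimension at most $\ell(w^{Cs_\alpha}) < \ell(w^C)$, whereas $\mc{I}_C \supset \mc{L}_C$ has support equal to $\overline{C(w^C)}$ of dimension $\ell(w^C)$. This contradiction places us in case (ii), giving $U^{\pm 1}(\mc{L}_{Cs_\alpha}) = 0$ and hence $U_\alpha^{\pm 1}(\mc{L}_{Cs_\alpha}) = 0$.

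For part (ii), Mochizuki's decomposition theorem applied to the proper smooth morphism $p_\alpha$ guarantees that $p_{\alpha +}(\mc{L}_{Cs_\alpha})$ is a semisimple complex in $D^b_{coh}(\mc{D}_{X_\alpha})$, so $H^0 p_{\alpha +}(\mc{L}_{Cs_\alpha})$ is a direct sum of irreducibles. Applying the exact functor $p_\alpha^+$, which preserves $N$-equivariance and $\eta$-compatibility since $p_\alpha$ is $N$-equivariant, produces a semisimple object of $\mc{M}_{coh}(\mc{D}_X, N, \eta)$ whose irreducible summands must be of the form $\mc{L}_D$ by the classification of irreducibles in Section~\ref{The Harish-Chandra pair (g,N)}. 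For the support constraint, proper base change gives $\mathrm{supp}\, U_\alpha^0(\mc{L}_{Cs_\alpha}) \subseteq p_\alpha^{-1}(p_\alpha(\overline{C(w^{Cs_\alpha})}))$. A cell $C(v) \subseteq \overline{C(w^{Cs_\alpha})}$ satisfies $v \leq w^{Cs_\alpha}$, and its $p_\alpha$-fiber partner is $C(vs_\alpha)$; the subword property of Bruhat order gives $vs_\alpha \leq \max\{w^{Cs_\alpha}, w^{Cs_\alpha} s_\alpha\} \leq w^C$, where the final inequality holds because $w^{Cs_\alpha} < w^C$ and because $w^{Cs_\alpha} s_\alpha \in C$ is bounded by the longest element $w^C$ of $C$. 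Hence each irreducible summand $\mc{L}_D$ satisfies $D \leq C$.

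The main obstacle will be the bookkeeping in the first paragraph: carefully tracking the line-bundle twist and the relative degree indexing between the two presentations of the $U$-functors, so that the dichotomy of Theorem~\ref{UfunctorsIfunctors} transfers in the correct degrees (matching the normalization that $U^j$ and $U_\alpha^k$ are both supported in $\{-1, 0, 1\}$). Once this identification is clean, both (i) and (ii) follow from the decomposition theorem and the support estimate above.
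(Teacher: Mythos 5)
Your proposal is correct and follows essentially the same strategy as the paper: transfer to the $U^k$-functors of Section~\ref{Intertwining functors and U-functors} via base change and the twist computation at $\lambda=-\rho$, apply Mochizuki's decomposition theorem to $p_{\alpha+}(\mc{L}_{Cs_\alpha})$ for semisimplicity, and invoke Proposition~\ref{Intertwining Functors on Standard HC Sheaves} together with Theorem~\ref{UfunctorsIfunctors} to obtain the vanishing in part (i). Where you argue by contradiction that $I_{s_\alpha}(\mc{L}_{Cs_\alpha})\neq 0$ via a support-dimension estimate on $I_{s_\alpha}(\mc{Q})$, the paper gets the same conclusion more directly by noting that since $\mc{L}_{Cs_\alpha}$ is a submodule of $\mc{I}_{Cs_\alpha}$ and $LI_{s_\alpha}$ has left cohomological dimension one, the long exact sequence gives an injection $L^{-1}I_{s_\alpha}(\mc{L}_{Cs_\alpha})\hookrightarrow L^{-1}I_{s_\alpha}(\mc{I}_{Cs_\alpha})=0$, immediately placing $\mc{L}_{Cs_\alpha}$ in case (ii) of Theorem~\ref{UfunctorsIfunctors}.
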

\begin{proof} By construction, $U_\alpha^0(\mc{L}_{Cs_\alpha})$ is a holonomic $(\mc{D}_X, N, \eta)$-module supported in $ \overline{C(w^C)}$, so $U_\alpha^0(\mc{L}_{Cs_\alpha})$ has finite length, and its composition factors must be in the set $\{\mc{L}_D|D\in \W \text{ and } D\leq C\}$. Because $p_\alpha$ is a locally trivial fibration with fibers isomorphic to $\mathbb{P}^1$ (in particular, it is a projective morphism of smooth quasi-projective varieties), and $\mc{L}_{Cs_\alpha}$ is a semisimple holonomic $\mc{D}$-module the decomposition theorem \cite[\S 1 Thm. 1.4.1]{Mochizuki} implies that $H^kp_{\alpha +}(\mc{L}_{Cs_\alpha})$ are semisimple. By the local triviality of $p_\alpha$, this in turn implies that $U_\alpha^0(\mc{L}_{Cs_\alpha})$ are semisimple, which completes the proof of (ii).   

To prove (i), we establish the connection between $U_\alpha^0$ and the U-functors of Section \ref{Intertwining functors and U-functors}. Let $Y_\alpha = X \times_{X_\alpha} X$ be the fiber product of $X$ with itself relative to the morphism $p_\alpha$ with projections $q_1$ and $q_2$ onto the factors. By base change (Theorem \ref{basechange}), 
\[
U_\alpha^k(\mc{L}_{Cs_\alpha}) = p_\alpha^+(H^kp_{\alpha+}(\mc{L}_{Cs_\alpha})) = H^kq_{1+}(q_2^+(\mc{L}_{Cs_\alpha})).
\]
Because $\mc{D}_X = \mc{D}_{-\rho}$, we have that the twist $U_\alpha^k(\mc{L}_{Cs_\alpha})(\alpha) = U^k(\mc{L}_{Cs_\alpha})$, where $U^k$ is the functor from Section \ref{Intertwining functors and U-functors}. To complete the proof, we need to show that we are in case (ii) of Theorem \ref{UfunctorsIfunctors}; that is, that $L^{-1}I_{s_\alpha}(\mc{L}_{Cs_\alpha})=0$. Because $Cs_\alpha<C$, we can apply Proposition \ref{Intertwining Functors on Standard HC Sheaves} to the coset $Cs_\alpha$ and conclude that 
\[
LI_{s_\alpha}(\mc{I}(w^Cs_\alpha, \lambda, \eta)) = \mc{I}(w^C, s_\alpha \lambda, \eta).
\] 
In particular, this implies that $L^{-1}I_{s_\alpha}(\mc{I}(w^cs_\alpha, \lambda, \eta))=0$, and because $\mc{L}_{Cs_\alpha}$ is a submodule of $\mc{I}(w^cs_\alpha, \lambda, \eta)$, $L^{-1}I_{s_\alpha}(\mc{L}_{Cs_\alpha})=0$ as well.
\end{proof}
We are working toward showing that $\varphi(C)=\nu(\mc{L}_C)$ satisfies (ii). We will do so by proving that for $\alpha \in \Pi$ and $C \in \W$ such that $Cs_\alpha <C$, $T_\alpha(\varphi(Cs_\alpha))=\nu(U_\alpha^0(\mc{L}_{Cs_\alpha}))$. This relationship is useful because it allows us to use Lemma \ref{semisimplicity} to decompose $\nu(U_\alpha^0(\mc{L}_{Cs_\alpha}))$ and obtain the desired sum in Theorem \ref{KLalgorithm}(ii). Before jumping into the argument, we must establish what happens if we pull back an irreducible module to a Bruhat cell which corresponds to a Weyl group element which is not a longest representative in some coset $C\in \W$. Lemma \ref{whittakercase} will be critical in upcoming computations.
 
\begin{lemma}
\label{whittakercase}
Let $v \in W$ be a Weyl group element such that $v \neq w^C$ is not a longest coset element for any coset $C \in \W$. Let $\mc{F} \in \mc{M}_{coh}(\mc{D}_X,N,\eta)$ be irreducible. Then 
\[
R^ki_v^!(\mc{F})=0
\]
for all $k \in \Z$. 
\end{lemma}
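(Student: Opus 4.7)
The plan is to prove this vanishing by identifying each $R^k i_v^!(\mc{F})$ as an object in an equivariant category on the Bruhat cell $C(v)$ that turns out to be trivial under the hypothesis on $v$. The lemma is really a categorical triviality statement in disguise, and no explicit cohomological calculation should be needed once the right category is isolated.

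First, since $i_v:C(v)\rightarrow X$ is an $N$-equivariant immersion and the $\eta$-twist is compatible with pullback under equivariant morphisms, I would observe that each cohomology sheaf $R^k i_v^!(\mc{F})$ naturally inherits the structure of an $N$-equivariant $\eta$-twisted (a priori quasi-coherent) $\mc{D}_{C(v),\mu}$-module, where $\mu$ is determined by the specialization at a point of $C(v)$, in the manner reviewed in Section \ref{Twisted Harish-Chandra sheaves}. This is a general property of the extraordinary inverse image under equivariant locally closed immersions.

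Next, I would appeal to the classification recalled at the start of Section \ref{The Harish-Chandra pair (g,N)}: by \cite[\S4]{TwistedSheaves}, the Bruhat cells on which nonzero irreducible $N$-equivariant $\eta$-twisted connections can exist are precisely those $C(w)$ which are open in their $P_\Theta$-orbit, equivalently, those $w$ that occur as the unique longest representative $w^C$ of some coset $C \in \W$. Since the hypothesis of the lemma excludes this case for $v$, no nonzero irreducible compatible connection exists on $C(v)$.

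Finally, I would extend this vanishing from irreducible connections to the full category of $N$-equivariant $\eta$-twisted $\mc{D}_{C(v),\mu}$-modules. Because $C(v)$ is a single $N$-orbit, any coherent $N$-equivariant module in this category is a finite direct sum of copies of a single irreducible compatible connection, and any quasi-coherent one is an inductive limit of its coherent subobjects. Since no nonzero irreducible compatible connection exists on $C(v)$, every such module must vanish, and in particular $R^k i_v^!(\mc{F})=0$ for all $k \in \Z$. The main obstacle is this last step: carefully confirming that the classification of irreducible coherent compatible connections in \cite{TwistedSheaves} forces the triviality of the full quasi-coherent $N$-equivariant category on $C(v)$, via the equivariant structure theorem on a single orbit. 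Once that categorical triviality is in place, the lemma is immediate.
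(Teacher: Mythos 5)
Your proposal is correct, but it takes a genuinely different route from the paper's proof. You argue \emph{forward}: you observe that each $R^k i_v^!(\mc{F})$ is an $N$-equivariant $\eta$-twisted connection on the single orbit $C(v)$, and then invoke the Mili\v{c}i\'{c}--Soergel classification from \cite[\S4]{TwistedSheaves} to conclude that the ambient category of such connections is trivial when $v$ is not a longest coset representative, hence every object in it vanishes. The paper argues \emph{backward}: it factors $i_v = k_v \circ j_v$ through $X' = X - \partial C(v)$, rewrites $i_v^!(\mc{F})$ in terms of the local cohomology $R\Gamma_{C(v)}(\mc{F}|_{X'})$, and then uses irreducibility of $\mc{F}|_{X'}$ and the fact that holonomic irreducibles are determined by their restriction to an open set to show that a nonzero answer would force $\text{supp}\,\mc{F} \subseteq \overline{C(v)}$, which the classification forbids. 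Both proofs ultimately lean on the same input --- that irreducible objects in $\mc{M}_{coh}(\mc{D}_X, N, \eta)$ are supported only on closures of longest-coset-representative cells --- but your version localizes the argument to the cell $C(v)$ while the paper's version works with $\mc{F}$ globally and its support. Your approach is arguably cleaner conceptually, since it makes explicit the structural reason the lemma holds (the compatible equivariant category on $C(v)$ is empty).

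One small comment on the "main obstacle" you flag: the worry about quasi-coherent objects is unnecessary here. Since $\mc{F}$ is holonomic and $i_v^!$ preserves holonomicity, each $R^k i_v^!(\mc{F})$ is automatically coherent, and a coherent $N$-equivariant module on a single $N$-orbit is a finite-rank connection, hence of finite length. So the needed triviality statement is just the coherent one: if no nonzero irreducible compatible connection exists on $C(v)$, then Jordan--H\"older forces every finite-length compatible connection to vanish. The extension to the full quasi-coherent category, while plausible via filtered colimits, is not needed for this lemma and you can safely drop that paragraph.
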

\begin{proof} 
Let $X'=X-\partial C(v)$, and express the canonical immersion $i_v$ as the composition of a closed immersion and an open immersion in the following way. 
\begin{center}
\begin{tikzcd}
C(v) \arrow[rightarrow]{r}{j_v} \arrow[bend right]{rr}[swap]{i_v}
 & X' \arrow[rightarrow]{r}{k_v}
  & X
\end{tikzcd}
\end{center}
Then, if $\mc{F}$ is an irreducible $(D_X, N, \eta)$-module,
\begin{align*}
i_v^!(\mc{F})&=j_v^!k_v^!(\mc{F}) \\
&= i_v^!k_{v+}j_{v+}j_v^!k_v^!(\mc{F}) \\
&=i_c^!k_{v+}R\Gamma_{C(v)}(k_v^!(\mc{F})) \\
&=i_v^!k_{v+}R\Gamma_{C(v)}(\mc{F}|_{X'}). 
\end{align*}
Here we are using Kashiwara's theorem, the fact that $\dim X = \dim X'$, and the fact that $k_v$ is an open immersion. Because $X'$ is open in $X$ and $\mc{F}$ is irreducible, $\mc{F}|_{X'}$ is irreducible as well. For all $k \in \Z$, $R^k \Gamma_{C(v)} \mc{F}|_{X'}$ is a submodule of $\mc{F}|_{X'}$, so either $R^k \Gamma_{C(v)}\mc{F}|_{X'} = 0$, or $R^k \Gamma_{C(v)} \mc{F}|_{X'} = \mc{F}|_{X'}$.  In the first case, the preceding calculation implies that $R^ki_v^!(\mc{F})=0$, and we are done. In the second case, we have $\text{supp}\mc{F}|_{X'} = \text{supp} R^k \Gamma_{C(v)} \mc{F}|_{X'} \subseteq C(v)$. By \cite[Ch. V \S 4 Cor. 4.2]{D-modulesnotes}, $\mc{F}$ is the unique irreducible holonomic $\mc{D}_X$-module that restricts to $\mc{F}|_{X'}$, and $\text{supp}\mc{F} = \overline{\text{supp}\mc{F}|_{X'}} \subseteq \overline{C(v)}$. There are no irreducible objects in $\mc{M}_{coh}(\mc{D}_X, N, \eta)$ with support equal to $\overline{C(v)}$ because $v$ is not a longest coset element, so we must have $\text{supp}\mc{F}\subseteq \partial C(v) = \overline{C(v)} - C(v)$. But this implies that $\text{supp}\mc{F}|_{X'}=\text{supp}R^k\Gamma_{C(v)}\mc{F}|_{X'}=0$, so the second case cannot happen. 
\end{proof}

Let $C \in \W$ and $\alpha \in \Pi$ be such that $Cs_\alpha < C$. The rest of this section is spent proving that $T_\alpha(\varphi(Cs_\alpha))=\nu(U_\alpha^0(\mc{L}_{Cs_\alpha}))$. Our first step in relating these two quantities is to establish the existence of a certain long exact sequence in cohomology which will be useful in relating $\mc{O}$-dimensions of modules which appear in the decomposition of $\nu(U_\alpha^0(\mc{L}_{Cs_\alpha}))$. 

Let $D \in \W$ be a coset such that $D \leq C$, so $\ell(w^D) \leq \ell(w^C)$ and $C(w^D) \subset \overline{C(w^C)}$. By \cite[Ch. 6 \S1 Prop 1.6]{localization}, $w^Cs_\alpha$ is the longest element of $Cs_\alpha$, and $\ell(w^Cs_\alpha) = \ell(w)-1$. By assumption, $C(w^C)$ is open and dense in $p_\alpha^{-1}(p_\alpha(C(w^C)))=C(w^C)\cup C(w^Cs_\alpha)$, so the closure $\overline{p_\alpha^{-1}(p_\alpha(C(w^C)))}=\overline{C(w^C)}.$ Because $C(w^D) \subset \overline{C(w^C)},$ the image $p_\alpha(C(w^D)) \subset p_\alpha(\overline{C(w^C)})$, so    
\[
C(w^D)\cup C(w^Ds_\alpha) = p_\alpha^{-1}(p_\alpha(C(w^D))) \subset \overline{p_\alpha^{-1}(p_\alpha(C(w^C)))}=\overline{C(w^C)}.
\]
We conclude that both $w^Ds_\alpha\leq w^C$ and $w^D \leq w^C$. Because both elements are less than or equal to $w^C$ in the Bruhat order, we can assume without loss of generality that $w^Ds_\alpha \leq w^D$; i.e. $\ell(w^Ds_\alpha) = \ell(w^D) -1$ and $C(w^D)$ is open in $Z_\alpha := p_\alpha^{-1}(p_\alpha(C(w^D)))=C(w^D)\cup C(w^Ds_\alpha)$.

Let $j:Z_\alpha \longrightarrow X$ and $j_D:p_\alpha(C(w^D))\longrightarrow X_\alpha$ be natural inclusions. Let $q_\alpha:Z_\alpha \longrightarrow p_\alpha(C(w^D))$ be the restriction of $p_\alpha$ to $Z_\alpha$. Then we have the following fiber product diagram:  
\begin{center}
\begin{tikzcd}
Z_\alpha \arrow[rightarrow]{r}{j}\arrow[rightarrow]{d}{q_\alpha} 
  & X\arrow[rightarrow]{d}{p_\alpha} \\
p_\alpha(C(w^D)) \arrow[rightarrow]{r}{j_D}  
  &X_\alpha.
\end{tikzcd}
\end{center}
Note that because $p_\alpha$ and $q_\alpha$ are surjective submersions, $p_\alpha^+$ and $q_\alpha^+$ are exact, so they both lift to functors on the respective derived categories $D^b(\mc{M}(\mc{D}_X))$ and $D^b(\mc{M}(\mc{D}_{Z_\alpha}))$. In the calculations below we denote both the functors on the derived category and the functors on modules by the same name, either $p_\alpha^+$ or $q_\alpha^+$. Let $d$ be the codimension of $Z_\alpha$ in $X$. Note that the codimension of $p_\alpha(C(w^D))=p_\alpha(Z_\alpha)$ in $X_\alpha$ is also $d$. Recall that for any immersion $i:Y \rightarrow X$ of smooth algebraic varieties, the extraordinary inverse image and the $\mc{D}$-module inverse image are related by $i^![\text{codim}(Y)]=Li^+$. By this relationship, base change (Theorem \ref{basechange}), and Lemma \ref{semisimplicity}, we compute

\begin{align*}
R^kj^!(U_\alpha^0(\mc{L}_{Cs_\alpha}))&=H^k(j^!p_\alpha^+p_{\alpha+}(\mc{L}_{Cs_\alpha})) \\
&=H^{k+d}(Lj^+(p_\alpha^+p_{\alpha+}(\mc{L}_{Cs_\alpha}))) \\
&=H^{k+d}(q_\alpha^+(Lj_D^+(p_{\alpha+}(\mc{L}_{Cs_\alpha})))) \\
&=H^k(q_\alpha^+j_D^!p_{\alpha+}(\mc{L}_{Cs_\alpha})) \\
&=q_\alpha^+H^k(j_D^!p_{\alpha+}(\mc{L}_{Cs_\alpha})) \\ 
&=q_\alpha^+H^k(q_{\alpha+}j^!(\mc{L}_{Cs_\alpha})). 
\end{align*}

Our next step is to analyze the complex $j^!(\mc{L}_{Cs_\alpha})$. Denote by $i:C(w^D)\longrightarrow Z_\alpha$ and $i':C(w^Ds_\alpha) \longrightarrow Z_\alpha$ the canonical affine immersions. Note that $i$ is an open immersion, and $i'$ is a closed immersion. We have the following commutative diagram. 
\begin{center}
\begin{tikzcd}
C(w^D) \arrow[rightarrow]{rd}{i}\arrow[bend left]{rrd}{i_{w^D}} \\
  
 \text{ }
  & Z_\alpha\arrow{r}{j}  
  & X \\
 C(w^Ds_\alpha) \arrow[rightarrow]{ru}{i'} \arrow[bend right]{rru}[swap]{i_{w^Ds_\alpha}} 
\end{tikzcd}
\end{center}
For any complex $\mc{F}^\cdot \in D^b(\mc{M}(\mc{D}_{Z_\alpha}))$, we have the following distinguished triangle \cite[Ch. IV \S 9]{D-modulesnotes}:
\[
i'_+ i'^!\mc{F}^\cdot  \longrightarrow \mc{F}^\cdot \longrightarrow i_+\mc{F}^\cdot |_{C(w^D)}.
\]
Applying this to $\mc{F}^\cdot = j^!(\mc{L}_{Cs_\alpha})$ and using the facts that  $j^!(\mc{L}_{Cs_\alpha})|_{C(w^D)}=i^+j^!(\mc{L}_{Cs_\alpha})=i^!j^!(\mc{L}_{Cs_\alpha})=i_{w^D}^!(\mc{L}_{Cs_\alpha})$ because $i$ is an open immersion and $i'^!\circ j^! = i_{w^Ds_\alpha}^!$, we obtain the  distinguished triangle 
\[
i'_+i_{w^Ds_\alpha}^!(\mc{L}_{Cs_\alpha}) \longrightarrow j^!(\mc{L}_{Cs_\alpha}) \longrightarrow i_+i_{w^D}^!(\mc{L}_{Cs_\alpha}).
\]
Applying the exact functor $q_{\alpha+}$ we get the following distinguished triangle in $D^b(\mc{M}(\mc{D}_{p_{\alpha}(C(w^D))}))$:
\[
(q_{\alpha}\circ i')_+(i_{w^Ds_\alpha}^!(\mc{L}_{Cs_\alpha})) \longrightarrow q_{\alpha+}j^!(\mc{L}_{Cs_\alpha}) \longrightarrow (q_{\alpha}\circ i)_+(i_{w^D}^!(\mc{L}_{Cs_\alpha})).
\]
Because $p_\alpha(C(w^D))$ is an $N$-orbit in $X_\alpha$ and all $\mc{D}$-modules in the arguments above are $N$-equivariant, the cohomologies of the complexes in this triangle are all direct sums of copies of $\mc{O}_{p_\alpha(C(w^D))}$. From this final distinguished triangle, we obtain a long exact sequence in cohomology:
\begin{align*}
\cdots \rightarrow &H^{k-1} ((q_{\alpha}\circ i)_+(i_{w^D}^!(\mc{L}_{Cs_\alpha})) \rightarrow H^k((q_{\alpha}\circ i')_+(i_{w^Ds_\alpha}^!(\mc{L}_{Cs_\alpha}))) \rightarrow \\
&H^k(q_{\alpha+}(j^!(\mc{L}_{Cs_\alpha})) \rightarrow H^{k} ((q_{\alpha}\circ i)_+(i_{w^D}^!(\mc{L}_{Cs_\alpha})) \rightarrow  \\
&H^{k+1}((q_{\alpha}\circ i')_+(i_{w^Ds_\alpha}^!(\mc{L}_{Cs_\alpha}))) \rightarrow \cdots .
\end{align*}
This is a sequence of $\mc{D}_{p_\alpha(C(w^D))}$-modules which are direct sums of copies of $\mc{O}_{p_\alpha(C(w^D))}$. 

Note that the map 
\[
q_\alpha \circ i': C(w^Ds_\alpha) \longrightarrow p_\alpha(C(w^D))
\]
is an isomorphism, and the map 
\[
q_\alpha \circ i: C(w^D) \longrightarrow p_\alpha(C(w^D))
\]
is a locally trivial projection with one-dimensional fibers. This implies that 
\begin{align}
\text{dim}_\mc{O}H^k((q_\alpha \circ i')_+(i^!_{w^Ds_\alpha}(\mc{L}_{Cs_\alpha})))&=\text{dim}_\mc{O}R^ki^!_{w^Ds_\alpha}(\mc{L}_{Cs_\alpha}), \text{ and } \label{fact3}\\
\text{dim}_\mc{O}H^k((q_\alpha \circ i)_+(i^!_{w^D}(\mc{L}_{Cs_\alpha}))) &= \text{dim}R^{k+1}i^!_{w^D}(\mc{L}_{Cs_\alpha}). \label{fact4}
\end{align}

Now we are ready to prove that $\varphi(C)=\nu(\mc{L}_C)$ satisfies \ref{KLalgorithm} (ii) by induction in the length of $w^C$. The base case is when $w^C=w_\Theta$ and $C=W_\Theta$. In this case, for any $\alpha \in \Pi$, either $Cs_\alpha = C$, or $Cs_\alpha >C$ because $w_\Theta$ is minimal length in the set of longest coset elements, so \ref{KLalgorithm}(ii) is void. 

Fix $k \in \mathbb{N}$. Assume that $\varphi(C):=\nu(\mc{L}_C)$ satisfies \ref{KLalgorithm} (ii) for $C \in \W _{\leq k}$. This is our induction assumption. Under this assumption, we can reformulate the parity condition of Lemma \ref{parity} in the following way. Since $\varphi|_{\W}$ satisfies conditions (i) and (ii) of Lemma \ref{uniqueness} on $\W _{\leq k}$, if $C \in \W _{\leq k}$ and $D \in \W$, then $P_{CD}=q^{\ell(w^C)-\ell(w^D)}Q_{CD}$, for some $Q_{CD} \in \Z[q^2, q^{-2}]$\footnote{Note that we are adopting the convention that for $D \not \leq C$, $P_{CD}=0$, and this statement is trivially true.}. Because 
\[
P_{CD}(q)=\sum_{m \in \Z} \text{dim}_\mc{O}(R^m i^! _{w^D} (\mc{L}_C))q^m, 
\]
by the definition of $\varphi$, we conclude that for any $C \in \W_{\leq k}$ and $D \in \W$, if $m \equiv \ell(w^C)-\ell(w^D)-1$ (mod 2), then $R^mi_{w^D}^!(\mc{L}_C)=0$. We refer to this as the inductive parity condition. 

Let $C \in \W$ be a coset such that $\ell(w^C)=k+1$ and $\alpha \in \Pi$ such that $Cs_\alpha <C$. Let $D \in \W$ be such that $D \leq C$. Then $Cs_\alpha \in \W_{\leq k}$, so we can apply the inductive parity condition to the cosets $Cs_\alpha$ and $D$. This yields
\begin{equation}
\label{fact1}
R^mi^!_{w^D}(\mc{L}_{Cs_\alpha})=0 \text{ for all }m \in \Z \text{ with } m \equiv \ell(w^C) - \ell(w^D) \text{ (mod 2)}.
\end{equation}
Now since we've chosen $D$ arbitrarily, there are two possible relationships between $D$ and $\alpha$. Either $Ds_\alpha=D$ or $Ds_\alpha \neq D$. In the first case, Lemma \ref{whittakercase} implies that for all $m \in \Z$, $R^mi^!_{w^Ds_\alpha}(\mc{L}_{Cs_\alpha})=0$, since $w^Ds_\alpha$ isn't a longest coset representative. In the second case, we can apply the inductive parity condition again to the cosets $Cs_\alpha$ and $Ds_\alpha$ to see that 
\begin{equation}
\label{fact2}
R^mi^!_{w^Ds_\alpha}(\mc{L}_{Cs_\alpha})=0 \text{ for all }m \in \Z \text{ with } m\equiv \ell(w^C) - \ell(w^D) + 1 \text{ (mod 2)}.
\end{equation}

Combining equations (\ref{fact1}) and (\ref{fact2}) with equations (\ref{fact3}) and (\ref{fact4}), we see that for any $D\leq C$ and any integer $m$ such that $m \equiv \ell(w^C)-\ell(w^D)+1$ (mod 2),  
\begin{align*}
H^m((q_\alpha \circ i)_+(i_{w^D}^!(\mc{L}_{Cs_\alpha})))&=0, \text{ and } \\
H^m((q_\alpha \circ i')_+(i^!_{w^Ds_\alpha}(\mc{L}_{Cs_\alpha})))&=0. 
\end{align*}
Using the long exact sequence in cohomology from earlier, we conclude that for any integer $m$ such that $m \equiv \ell(w^C)-\ell(w^D)+1$ (mod 2), 
\begin{equation*}
H^m(q_{\alpha+}j^!(\mc{L}_{Cs_\alpha}))=0.
\end{equation*}
The outcome of the this discussion is that the long exact sequence in cohomology associated to the cosets $C$ and $D$ has the form 
\[
\cdots \rightarrow 0 \rightarrow 0 \rightarrow 0 \rightarrow * \rightarrow * \rightarrow * \rightarrow 0 \rightarrow 0 \rightarrow 0 \rightarrow * \rightarrow * \rightarrow * \rightarrow 0 \rightarrow 0 \rightarrow 0 \rightarrow \cdots ,
\]
where the $*$'s represent possibly non-zero elements. Since $\mc{O}$-dimension sums over short exact sequences, we conclude after another application of equations (\ref{fact3}) and (\ref{fact4}) that for any integer $m$ such that $m \equiv \ell(w^C) + \ell(w^D)+1$ (mod 2),
\begin{equation*}
\text{dim}_\mc{O}H^m(q_{\alpha+}j^!(\mc{L}_{Cs_\alpha})) = \text{dim}_\mc{O}R^mi^!_{w^Ds_\alpha}(\mc{L}_{Cs_\alpha})+\text{dim}_\mc{O}R^{m+1}i^!_{w^D}(\mc{L}_{Cs_\alpha}).
\end{equation*}
By restricting this further to $C(w^D)$ and $C(w^Ds_\alpha)$, we see that for any $m \in \Z$,
\begin{align}
\text{dim}_\mc{O}R^mi^!_{w^D}(U^0_\alpha(\mc{L}_{Cs_\alpha})) &= \text{dim}_\mc{O}R^{m+1}i_{w^D}^!(\mc{L}_{Cs_\alpha})+ \text{dim}_\mc{O}R^mi^!_{w^Ds_\alpha}(\mc{L}_{Cs_\alpha})
\label{fact6}, \text{ and }\\
\text{dim}_\mc{O}R^mi_{w^Ds_\alpha}^!(U^0_\alpha(\mc{L}_{Cs_\alpha}))&=\text{dim}_\mc{O}R^{m}i_{w^D}^!(\mc{L}_{Cs_\alpha})+ \text{dim}_\mc{O}R^{m-1}i^!_{w^Ds_\alpha}(\mc{L}_{Cs_\alpha}). \label{fact7}
\end{align}

In addition, if $D \in \W$ has the property that $Ds_\alpha = D$, we can use Lemma \ref{whittakercase} to further reduce equations (\ref{fact6}) and (\ref{fact7}). Indeed, by Lemma \ref{whittakercase}, if $Ds_\alpha = D$,
\begin{align*}
\text{dim}_\mc{O}R^{m-1}i^!_{w^Ds_\alpha}(\mc{L}_{Cs_\alpha}) &=0, \text{ and }\\
\text{dim}_\mc{O}R^mi^!_{w^Ds_\alpha}(\mc{L}_{Cs_\alpha})&=0
\end{align*}
for all $m \in \Z_+$. By Lemma \ref{semisimplicity}, $U_\alpha^0(\mc{L}_{Cs_\alpha}) = \bigoplus_{D\leq C} m_{CD}\mc{L}_D$ for some $m_{CD} \in \Z_+$, hence Lemma \ref{whittakercase} also implies that 
\[
\text{dim}_\mc{O}R^mi_{w^Ds_\alpha}^!(U^0_\alpha(\mc{L}_{Cs_\alpha}))=0. 
\]
Therefore, we conclude that for all cosets $D\leq C$ such that $Ds_\alpha = D$, 
\begin{equation}
\label{fact8}
\text{dim}_\mc{O}R^mi^!_{w^D}(U^0_\alpha(\mc{L}_{Cs_\alpha}))=0
\end{equation}
for all $m \in \Z$. 

The equations (\ref{fact6}), (\ref{fact7}), and (\ref{fact8}) are what we need to show that $T_\alpha(\varphi(Cs_\alpha))=\nu(U^0_\alpha(\mc{L}_{Cs_\alpha}))$. The computation is as follows.  

{\allowdisplaybreaks
\begin{align*}
\nu(U^0_\alpha(\mc{L}_{Cs_\alpha})) =& \sum_{D \in \W} \sum_{m \in \Z} \text{dim}_\mc{O}(R^mi^!_{w^D}(U_\alpha^0(\mc{L}_{Cs_\alpha})))q^m \delta_D \\
=& \sum_{Ds_\alpha > D} \sum_{m \in \Z} \text{dim}_\mc{O}(R^mi^!_{w^D}(U_\alpha^0(\mc{L}_{Cs_\alpha})))q^m \delta_D \\
&+ \sum_{Ds_\alpha<D} \sum_{m \in \Z} \text{dim}_\mc{O}(R^mi^!_{w^D}(U_\alpha^0(\mc{L}_{Cs_\alpha})))q^m \delta_D \\ 
&+ \sum_{Ds_\alpha = D} \sum_{m \in \Z} \text{dim}_\mc{O}(R^mi^!_{w^D}(U_\alpha^0(\mc{L}_{Cs_\alpha})))q^m \delta_D \\
=& \sum_{Ds_\alpha < D} \sum_{m \in \Z} \text{dim}_\mc{O}(R^mi^!_{w^Ds_\alpha}(U_\alpha^0(\mc{L}_{Cs_\alpha})))q^m \delta_{Ds_\alpha} \\
&+ \sum_{Ds_\alpha<D} \sum_{m \in \Z} \text{dim}_\mc{O}(R^mi^!_{w^D}(U_\alpha^0(\mc{L}_{Cs_\alpha})))q^m \delta_D \\ 
=& \sum_{Ds_\alpha < D} \sum_{m \in \Z} (\text{dim}_\mc{O}R^{m}i_{w^D}^!(\mc{L}_{Cs_\alpha})+ \text{dim}_\mc{O}R^{m-1}i^!_{w^Ds_\alpha}(\mc{L}_{Cs_\alpha}))q^m \delta_{Ds_\alpha} \\
&+\sum_{Ds_\alpha<D} \sum_{m \in \Z}  (\text{dim}_\mc{O}R^{m+1}i_{w^D}^!(\mc{L}_{Cs_\alpha})+ \text{dim}_\mc{O}R^mi^!_{w^Ds_\alpha}(\mc{L}_{Cs_\alpha}))q^m \delta_D \\ 
=&\sum_{Ds_\alpha<D} \sum_{m \in \Z}  (\text{dim}_\mc{O}R^{m+1}i_{w^D}^!(\mc{L}_{Cs_\alpha})+ \text{dim}_\mc{O}R^mi^!_{w^Ds_\alpha}(\mc{L}_{Cs_\alpha}))q^m (\delta_D+q\delta_{Ds_\alpha}) \\
=&\sum_{Ds_\alpha<D} \sum_{m \in \Z}  \text{dim}_\mc{O}R^{m+1}i_{w^D}^!(\mc{L}_{Cs_\alpha})q^{m+1}(q^{-1}\delta_D+\delta_{Ds_\alpha})  \\
&+\sum_{Ds_\alpha>D} \sum_{m \in \Z} \text{dim}_\mc{O}R^mi^!_{w^D}(\mc{L}_{Cs_\alpha}))q^m (\delta_{Ds_\alpha}+q\delta_D) \\
= & T_\alpha(\nu(\mc{L}_{Cs_\alpha})) = T_\alpha(\varphi(Cs_\alpha)).
\end{align*}
}
Therefore, for $C \in \W_{\leq k+1}$ and $\alpha \in \Pi$ such that $Cs_\alpha < C$, 
\[
T_\alpha(\varphi(Cs_\alpha))=\nu(U^0_\alpha(\mc{L}_{Cs_\alpha})) = \nu(\bigoplus_{D\leq C} c_D \mc{L}_D) = \sum_{D \leq C} c_D \nu(\mc{L}_D) = \sum_{D\leq C} c_D \varphi(D),
\]
i.e. Theorem \ref{KLalgorithm} (ii) holds on $\W_{\leq k+1}$. By induction, this completes the proof of Proposition \ref{existence}, which in turn completes the proof of Theorem \ref{KLalgorithm}. 


\subsection{Composition multiplicities of standard Whittaker modules}
\label{Composition multiplicities of standard Whittaker modules}

We are now ready to establish the connection between Whittaker Kazhdan--Lusztig polynomials and multiplicities of irreducible Whittaker modules in standard Whittaker modules. We start with two preliminary lemmas. 

\begin{lemma}
\label{factors through Grothendieck group}
The evaluation $\nu (-1)$ of the map $\nu$ at $-1$ factors through the \linebreak Grothendieck group $K(\mc{M}_{coh}(\mc{D}_X, N, \eta))$ of $\mc{M}_{coh}(\mc{D}_X, N, \eta)$. 
\end{lemma}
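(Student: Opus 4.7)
The plan is to verify that $\nu(-1)$ is additive on short exact sequences in $\mc{M}_{coh}(\mc{D}_X, N, \eta)$, which is equivalent to factoring through the Grothendieck group. Given a short exact sequence
\[
0 \to \mc{F}' \to \mc{F} \to \mc{F}'' \to 0
\]
in $\mc{M}_{coh}(\mc{D}_X, N, \eta)$, I would apply the right derived functor $Ri^!_{w^C}$ for each $C \in \W$ to obtain a long exact sequence in cohomology
\[
\cdots \to R^{m-1}i^!_{w^C}(\mc{F}'') \to R^m i^!_{w^C}(\mc{F}') \to R^m i^!_{w^C}(\mc{F}) \to R^m i^!_{w^C}(\mc{F}'') \to R^{m+1}i^!_{w^C}(\mc{F}') \to \cdots
\]
Because every object in $\mc{M}_{coh}(\mc{D}_X, N, \eta)$ is holonomic, the cohomologies $R^m i^!_{w^C}(-)$ vanish for all but finitely many $m$, so this long exact sequence has only finitely many nonzero terms.

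Next I would use the fact recorded just before (\ref{nu}) that each $R^m i^!_{w^C}$ applied to an object of $\mc{M}_{coh}(\mc{D}_X, N, \eta)$ is an $\eta$-twisted $N$-equivariant connection on the Bruhat cell $C(w^C)$, hence a direct sum of copies of $\mc{O}_{C(w^C)}$. In particular, $\dim_\mc{O}$ is a well-defined non-negative integer that behaves just like dimension over a field: it is additive on short exact sequences of such connections, and therefore its alternating sum vanishes on the long exact sequence above. This yields, for each $C \in \W$,
\[
\sum_{m \in \Z} (-1)^m \dim_\mc{O}(R^m i^!_{w^C}(\mc{F})) = \sum_{m \in \Z} (-1)^m \dim_\mc{O}(R^m i^!_{w^C}(\mc{F}')) + \sum_{m \in \Z} (-1)^m \dim_\mc{O}(R^m i^!_{w^C}(\mc{F}'')).
\]

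Finally, I would observe that evaluating $\nu$ at $q = -1$ precisely computes these alternating sums, since the coefficient of $\delta_C$ in $\nu(\mc{F})(-1)$ is $\sum_{m} (-1)^m \dim_\mc{O}(R^m i^!_{w^C}(\mc{F}))$. Summing over $C$ then gives $\nu(\mc{F})(-1) = \nu(\mc{F}')(-1) + \nu(\mc{F}'')(-1)$, showing $\nu(-1)$ is additive on short exact sequences and hence descends to a $\Z$-linear map on the Grothendieck group $K(\mc{M}_{coh}(\mc{D}_X, N, \eta))$. The only potential obstacle is confirming boundedness of the cohomology so that the alternating sums make sense, but this is immediate from holonomicity of all objects in the category (as noted in Section \ref{Twisted Harish-Chandra sheaves}).
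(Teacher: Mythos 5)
Your proof is correct and follows essentially the same approach as the paper: apply $i^!_{w^C}$ to the short exact sequence, obtain the long exact sequence of $N$-equivariant $\eta$-twisted connections on $C(w^C)$, and deduce additivity of $\nu(-1)$ from the vanishing of the alternating sum of $\dim_\mc{O}$ over that long exact sequence. The only minor point is that boundedness of the $R^m i^!_{w^C}$ cohomologies is really a consequence of the finite cohomological amplitude of $i^!$ for an immersion of smooth varieties rather than of holonomicity per se, but this does not affect the argument.
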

\begin{proof} For an object $\mc{F}$ in $\mc{M}_{coh}(\mc{D}_X, N, \eta)$, 
\[
\nu(\mc{F})(-1) = \sum_{C \in \W} \sum_{m \in \Z} (-1)^m \dim_{\mc{O}}(R^mi_{w^C}^!(\mc{F}))\delta_C. 
\]
If $0 \rightarrow \mc{F}_1 \rightarrow \mc{F}_2 \rightarrow \mc{F}_3 \rightarrow 0$ is a short exact sequence in $\mc{M}_{coh}(\mc{D}_X, N, \eta)$, then for each $C \in \W$, we have a long exact sequence 
\[
\cdots \xrightarrow{\partial_{m-1}} R^m i_{w^C}^!(\mc{F}_1) \xrightarrow{f_m} R^m i_{w^C}^!(\mc{F}_2) \xrightarrow{g_m} R^m i_{w^C}^!(\mc{F}_3) \xrightarrow{\partial_m} R^{m+1} i_{w^C}^!(\mc{F}_1) \rightarrow \cdots 
\]
of $N$-equivariant $\eta$-twisted connections on $C(w^C)$. 
For each $m \in \Z$, we have short exact sequences 
\begin{align*}
0 &\rightarrow \ker f_m \rightarrow R^mi_{w^C}^!(\mc{F}_1) \rightarrow \im f_m \rightarrow 0, \\
0 &\rightarrow \ker g_m \rightarrow R^mi_{w^C}^!(\mc{F}_2) \rightarrow \im g_m \rightarrow 0, \text{ and }  \\
0 &\rightarrow \ker \partial_m \rightarrow R^mi_{w^C}^!(\mc{F}_3) \rightarrow \im \partial_m \rightarrow 0 .
\end{align*}
Since $\mc{O}$-dimension sums over short exact sequences and $\ker f_m = \im \partial_{m-1}$, $\ker g_m = \im f_m$, and $\ker \partial_m = \im g_m$, we have 
\begin{align*}
\sum_{m \in \Z}(-1)^m \dim_{\mc{O}}(R^mi_{w^C}^!(\mc{F}_2)) =& \sum_{m \in \Z} (-1)^m \dim_\mc{O}(R^mi_{w^C}^!(\mc{F}_1)) \\
&- \sum_{m \in \Z}(-1)^m \dim_\mc{O} \ker f_m \\
&+ \sum_{m \in \Z}(-1)^m \dim_\mc{O} (R^mi_{w^C}^!(\mc{F}_3))  \\
&- \sum_{m \in \Z}(-1)^m \dim_\mc{O} \ker \partial_m \\
=&  \sum_{m \in \Z} (-1)^m \dim_\mc{O}(R^mi_{w^C}^!(\mc{F}_1)) \\
&+\sum_{m \in \Z}(-1)^m \dim_\mc{O} (R^mi_{w^C}^!(\mc{F}_3)).
\end{align*}
This implies the result. 
\end{proof}

\begin{lemma}
\label{standard modules}
$\nu(\mc{I}_C) = \delta_C$.
\end{lemma}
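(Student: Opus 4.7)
By definition $\mc{I}_C = i_{w^C,+}(\mc{O}_{C(w^C)})$, so the problem reduces to computing $R^m i_{w^D}^!(\mc{I}_C)$ for each coset $D \in \W$ and each integer $m$. When $D = C$, the identity $i_{w^C}^! \circ i_{w^C,+} \simeq \mathrm{id}$ for the immersion $i_{w^C}$ gives $R^0 i_{w^C}^!(\mc{I}_C) = \mc{O}_{C(w^C)}$ and $R^m i_{w^C}^!(\mc{I}_C) = 0$ for $m \neq 0$, contributing exactly $\delta_C$. When $D \not\leq C$, the cell $C(w^D)$ is disjoint from $\overline{C(w^C)} = \text{supp}\,\mc{I}_C$, so $\mc{I}_C$ vanishes on an open neighborhood of $C(w^D)$, killing all $R^m i_{w^D}^!(\mc{I}_C)$.

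The delicate case is $D < C$, in which $C(w^D) \subset \overline{C(w^C)}$ and one cannot simply argue by supports. The key observation is that $w^D < w^C$ forces $w^C \not\leq w^D$, so $C(w^C) \cap \overline{C(w^D)} = \emptyset$. Set $X'' := X \setminus \overline{C(w^D)}$, with open immersion $l:X''\hookrightarrow X$ and complementary closed immersion $i_Z:\overline{C(w^D)}\hookrightarrow X$. Since $C(w^C) \subset X''$, the immersion $i_{w^C}$ factors as $C(w^C) \xrightarrow{i''_{w^C}} X'' \xrightarrow{l} X$, and
\[
\mc{I}_C = l_+(\mc{G}), \qquad \mc{G}:=i''_{w^C,+}(\mc{O}_{C(w^C)}) \in \mc{M}(\mc{D}_{X''}).
\]
Applying the recollement distinguished triangle $i_{Z,+} i_Z^!(\mc{F}) \to \mc{F} \to l_+ l^+(\mc{F}) \to +1$ to $\mc{F} = l_+(\mc{G})$ and using $l^+ l_+ \cong \mathrm{id}$ for the open immersion $l$ forces $i_{Z,+} i_Z^!(\mc{I}_C) = 0$ in the derived category, so by Kashiwara's equivalence $R^m i_Z^!(\mc{I}_C) = 0$ for every $m$. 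Factoring $i_{w^D} = i_Z \circ b$ for the (open) immersion $b: C(w^D) \hookrightarrow \overline{C(w^D)}$ then yields $R^m i_{w^D}^!(\mc{I}_C) = 0$ for all $m$.

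Assembling the three cases in the definition of $\nu$, only the contribution from $D = C$, $m = 0$ survives, so $\nu(\mc{I}_C) = \delta_C$. The main obstacle is the $D < C$ subcase: although $C(w^D)$ lies in the support of $\mc{I}_C$, one must exploit that $\mc{I}_C$ is a pushforward from $X''$ along the open immersion $l$, and such pushforwards have vanishing derived local cohomology along the closed complement $\overline{C(w^D)}$.
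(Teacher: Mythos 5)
Your treatment of $D=C$ (Kashiwara, via $i_{w^C}^!\circ i_{w^C+}\simeq\mathrm{id}$) is exactly the paper's, and your support argument for $D\not\leq C$ is fine. The divergence is in the case $D<C$, which you correctly identify as the only nontrivial one but then handle by a recollement argument that, as written, has a technical flaw: you apply the functors $i_Z^!$ and $i_{Z,+}$, and Kashiwara's equivalence, to the closed subvariety $Z=\overline{C(w^D)}$. Schubert varieties are singular in general, so neither Kashiwara's theorem (Theorem \ref{kashiwara}, stated for \emph{smooth} closed subvarieties) nor the $\mc{D}$-module inverse/direct image functors of Appendix \ref{Modules over twisted sheaves of differential operators} are available for $Z$; the factorization $i_{w^D}=i_Z\circ b$ through $\overline{C(w^D)}$ therefore does not make sense in the paper's framework. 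The underlying idea — that a pushforward from the open complement has vanishing derived local cohomology along $\overline{C(w^D)}$ — is sound and can be repaired (replace $i_{Z,+}i_Z^!$ by $R\Gamma_{\overline{C(w^D)}}$, or work inside $X'=X-\partial C(w^D)$ where $C(w^D)$ is closed and smooth, as the paper does elsewhere), but the written step does not go through.

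The paper's own argument sidesteps all of this: for any $D\neq C$ the fiber product $C(w^D)\times_X C(w^C)=i_{w^D}^{-1}(C(w^C))$ is empty because distinct Bruhat cells are disjoint, so base change (Theorem \ref{basechange}) gives $R^mi_{w^D}^!(i_{w^C+}(\mc{O}_{C(w^C)}))=0$ for all $m$ in one stroke, with no case division, no recollement, and no contact with the singularities of $\overline{C(w^D)}$. The point you flag as the ``main obstacle'' — that $C(w^D)$ lies in the support of $\mc{I}_C$ when $D<C$ — is irrelevant to base change, which only sees the locally closed cell $C(w^C)$ from which $\mc{I}_C$ is pushed forward, not its closure. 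I recommend replacing your $D<C$ (and $D\not\leq C$) analysis with this single application of base change.
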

\begin{proof} By definition, $\mc{I}_C = i_{w^C+}(\mc{O}_{C(w^C)})$. By Kashiwara's theorem (Theorem \ref{kashiwara}), 
\[
R^0i_{w^C}^!(\mc{I}_C) = R^0i_{w^C}^!(i_{w^C+}(\mc{O}_{C(w^C)})) = \mc{O}_{C(w^C)},
\]
and for $m \neq 0$, 
\[
R^mi_{w^C}^!(\mc{I}_C) = R^mi_{w^C}^!(i_{w^C+}(\mc{O}_{C(w^C)})) = 0.
\]
Let $D \neq C$ be another coset in $\W$. Then $i_{w^D}^{-1}(C(w^C))=0$, so by base change (Theorem \ref{basechange}), 
\[
R^mi_{w^D}^!(\mc{I}_C) = R^m i_{w^D}^!(i_{w^C+}(\mc{O}_{C(w^C)})) = 0
\]
for all $m \in \Z$. 
\end{proof}
Let $\chi:\mc{M}_{coh}(\mc{D}_X, N, \eta) \rightarrow K(\mc{M}_{coh}(\mc{D}_X, N, \eta))$ be the natural map of the category $\mc{M}_{coh}(\mc{D}_X, N, \eta)$ into its Grothendieck group $K(\mc{M}_{coh}(\mc{D}_X, N, \eta))$. 
\begin{theorem}
\label{standards in irreducible}
Let $P_{CD}$, $C,D \in \W$ be the polynomials in Theorem \ref{KLalgorithm}. Then 
\[
\chi(\mc{L}_C) = \chi(\mc{I}_C) + \sum_{D<C}P_{CD}(-1)\chi(\mc{I}_D). 
\]
\end{theorem}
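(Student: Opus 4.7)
The plan is to rewrite both sides in a common $\Z$-basis of the Grothendieck group and then extract the coefficients via the evaluation of $\nu$ at $q=-1$.

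First I would observe that the classes $\{\chi(\mc{I}_C)\}_{C \in \W}$ form a $\Z$-basis of $K(\M_{coh}(\D_X, N, \eta))$. Indeed, every object of $\M_{coh}(\D_X, N, \eta)$ has finite length with simple constituents among $\{\mc{L}_C\}_{C \in \W}$, so the irreducible classes form a basis. Moreover, $\mc{L}_C$ is the unique irreducible subsheaf of $\mc{I}_C$, and the quotient $\mc{I}_C / \mc{L}_C$ is supported on $\partial C(w^C)$, so its composition factors are of the form $\mc{L}_D$ with $D < C$. This yields a unitriangular change-of-basis
\[
\chi(\mc{I}_C) = \chi(\mc{L}_C) + \sum_{D < C} n_{CD}\,\chi(\mc{L}_D), \qquad n_{CD} \in \Z_{\geq 0},
\]
and therefore $\{\chi(\mc{I}_C)\}_{C \in \W}$ is also a $\Z$-basis. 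Consequently there exist unique integers $a_{CD}$, with $a_{CC}=1$ and $a_{CD}=0$ for $D \not\leq C$, such that
\[
\chi(\mc{L}_C) = \sum_{D \leq C} a_{CD}\,\chi(\mc{I}_D).
\]
It suffices to show that $a_{CD} = P_{CD}(-1)$ for every $D < C$.

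To pin down these coefficients, I would apply the $\Z$-linear map $\bar{\nu}(-1): K(\M_{coh}(\D_X, N, \eta)) \longrightarrow \HH|_{q=-1}$, which is well defined by Lemma \ref{factors through Grothendieck group}. On the right-hand side, Lemma \ref{standard modules} gives
\[
\bar{\nu}(-1)\!\left(\sum_{D \leq C} a_{CD}\,\chi(\mc{I}_D)\right) = \sum_{D \leq C} a_{CD}\,\delta_D.
\]
On the left-hand side, Proposition \ref{existence} identifies $\varphi(C) = \nu(\mc{L}_C)$, so Theorem \ref{KLalgorithm}(i) gives
\[
\bar{\nu}(-1)(\chi(\mc{L}_C)) = \varphi(C)(-1) = \delta_C + \sum_{D < C} P_{CD}(-1)\,\delta_D.
\]
Since $\{\delta_D\}_{D \in \W}$ is a $\Z$-basis of $\HH|_{q=-1}$, matching coefficients forces $a_{CC} = 1$ and $a_{CD} = P_{CD}(-1)$ for $D < C$, which is the desired identity.

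The only step that is not automatic is the triangularity needed to conclude that $\{\chi(\mc{I}_C)\}$ is a basis of the Grothendieck group; this is where we use the structural fact that $\mc{L}_C$ is the unique irreducible subsheaf of $\mc{I}_C$ and that the remaining composition factors have strictly smaller support. Once this is in place, everything else is a straightforward coefficient comparison, and no further geometric input is needed beyond the two preceding lemmas and Theorem \ref{KLalgorithm}.
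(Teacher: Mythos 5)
Your proof is correct and follows essentially the same approach as the paper: express $\chi(\mc{L}_C)$ in the basis $\{\chi(\mc{I}_D)\}$, apply the $\Z$-linear map $\nu(-1)$ (well-defined by Lemma \ref{factors through Grothendieck group}), use $\nu(\mc{I}_D)=\delta_D$ from Lemma \ref{standard modules} and $\nu(\mc{L}_C)=\varphi(C)$ from Proposition \ref{existence}, then compare coefficients. The only difference is that you spell out the unitriangularity argument for why $\{\chi(\mc{I}_C)\}$ is a basis slightly more explicitly than the paper does.
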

\begin{proof}
By definition, $\chi(\mc{L}_C), C \in \W$ form a basis for the Grothendieck group $K(\mc{M}_{coh}(\mc{D}_X, N, \eta))$. Because $\mc{I}_C$ contains $\mc{L}_C$ as a unique irreducible submodule, and the other composition factors of $\mc{I}_C$ are $\mc{L}_D$ for $D<C$, we can see that $\chi(\mc{I}_C), C \in \W$ form another basis for the Grothendieck group. Therefore, there exist $\lambda_{CD} \in \Z$ such that 
\[
\chi(\mc{L}_C) = \sum_{D\leq C} \lambda_{CD} \chi(\mc{I}_D). 
\]
By Lemma \ref{factors through Grothendieck group}, $\nu(-1)$ factors through $K(\mc{M}_{coh}(\mc{D}_X, N, \eta))$ and by Lemma \ref{standard modules}, $\nu(\mc{I}_D) = \delta_D$, so by comparing coefficients and using the definition of $\nu$, we have
\[
\nu(\mc{L}_C) (-1) = \sum_{D \leq C} \lambda_{CD}\nu(\mc{I}_D)(-1) = \sum_{D\leq C} \lambda_{CD} \delta_D.
\]
By construction, $P_{CC} = 1$ for any $C \in \W$, so $\lambda_{CC} = 1$ and $P_{CD}(-1) = \lambda_{CD}$. This proves the theorem. 
\end{proof}
This theorem gives an algorithm for calculating the multiplicities of irreducible Whittaker modules in standard Whittaker modules. Pick a total order compatible with the partial order on $\W$. With respect to this order, the matrix $(\lambda_{CD})_{C,D \in \W}$ is lower triangular and has $1$'s on the diagonal. Here $\lambda_{CD}=P_{CD}(-1)$ as in the proof of Theorem \ref{standards in irreducible}. Let $(\mu_{CD})_{C,D \in \W}$ be the inverse matrix. From Theorem \ref{standards in irreducible}, we have 
\begin{align*}
\chi(\mc{I}_C) &= \sum_{D \in \W} \sum_{E \in \W} \mu_{CE} \lambda_{ED} \chi(\mc{I}_D) \\
&= \sum _{E \in \W} \mu_{CE} \left( \sum_{D \in \W} \lambda_{ED} \chi(\mc{I}_D) \right) \\ 
&= \sum_{E \in \W} \mu_{CE} \chi(\mc{L}_E) \\ 
&= \sum_{E \leq C} \mu_{CE} \chi(\mc{L}_E). 
\end{align*}
By Theorem \ref{global sections of costandards} and Theorem \ref{global sections of irreducibles}, we have established the main result of this paper. 
\begin{corollary}
\label{multiplicity of irreducible in standard}
The multiplicity of the irreducible Whittaker module $L(-w^D \rho, \eta)$ in the standard Whittaker module $M(-w^C \rho, \eta)$ is $\mu_{CD}$. 
\end{corollary}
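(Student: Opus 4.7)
My plan is to deduce the corollary from Theorem \ref{standards in irreducible} by inverting the unitriangular expansion given there and then transporting the resulting identity from the geometric Grothendieck group to $K\Nte$ via global sections.

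First I would invert the relation
\[
\chi(\mc{L}_C) = \chi(\mc{I}_C) + \sum_{D<C} P_{CD}(-1)\, \chi(\mc{I}_D)
\]
of Theorem \ref{standards in irreducible}. With respect to any linear refinement of the coset order on $\W$, the matrix $(P_{CD}(-1))_{C,D \in \W}$ is lower unitriangular, so it admits a lower unitriangular integer inverse $(\mu_{CD})_{C,D \in \W}$. Rearranging produces
\[
\chi(\mc{I}_C) = \sum_{D \leq C} \mu_{CD}\, \chi(\mc{L}_D)
\]
in $K\mc{M}_{coh}(\mc{D}_X, N, \eta)$.

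Next I would apply global sections. Because $-\rho$ is antidominant and regular, Beilinson--Bernstein localization yields an equivalence $\Gamma(X, -) \colon \mc{M}_{coh}(\mc{D}_X, N, \eta) \xrightarrow{\sim} \mc{M}_{fg}(\mc{U}_\theta, N, \eta) \simeq \Nte$, so in particular $\Gamma(X, -)$ is exact and induces an isomorphism on Grothendieck groups. Theorem \ref{global sections of irreducibles} identifies $\chi(\mc{L}_D)$ with $[L(-w^D\rho, \eta)]$. For the standard side, I note that $\chi(\mc{I}_C) = \chi(\mc{M}(w^C, -\rho, \eta))$ in the geometric Grothendieck group, since by Proposition \ref{costandardHCsheaves}(i) the standard and costandard sheaves attached to the same datum share composition factors, and Theorem \ref{global sections of costandards} then identifies the image of this common class with $[M(-w^C\rho, \eta)]$. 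Combining these observations with the inverted identity yields
\[
[M(-w^C\rho, \eta)] = \sum_{D \leq C} \mu_{CD}\, [L(-w^D\rho, \eta)]
\]
in $K\Nte$, and reading off the coefficient of $[L(-w^D\rho, \eta)]$ gives the claimed multiplicity.

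There is no serious obstacle remaining: the substantive work has already been absorbed into Theorem \ref{standards in irreducible}, the matrix inversion is purely formal, and the passage to the algebraic category is controlled by the antidominance and regularity of $-\rho$. The proof is essentially a bookkeeping exercise that combines Theorem \ref{standards in irreducible}, Theorems \ref{global sections of costandards}--\ref{global sections of irreducibles}, and the equality of standard and costandard classes in the Grothendieck group.
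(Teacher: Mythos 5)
Your proof is correct and follows essentially the same route as the paper: invert the unitriangular expansion from Theorem \ref{standards in irreducible} and then transport across the Beilinson--Bernstein equivalence using Theorems \ref{global sections of costandards} and \ref{global sections of irreducibles}. The one small point you make explicit that the paper leaves implicit is the identification $\chi(\mc{I}_C) = \chi(\mc{M}(w^C,-\rho,\eta))$ in the Grothendieck group (needed because Theorem \ref{global sections of costandards} concerns costandard rather than standard sheaves), which indeed follows from Proposition \ref{costandardHCsheaves}(i).
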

We can get results analogous to Theorem \ref{standards in irreducible} and Corollary \ref{multiplicity of irreducible in standard} for integral $\lambda \in \mf{h}^*$ by twisting by a equivariant invertible $\mc{O}_X$-module. 

\begin{corollary}
\label{multiplicity integral character}
Let $\lambda \in \mf{h}^*$ be regular, integral, and antidominant. Then the multiplicity of the irreducible Whittaker module $L(w^D(\lambda - \rho), \eta)$ in the standard Whittaker module $M(w^C(\lambda - \rho), \eta)$ is $\mu_{CD}$. 
\end{corollary}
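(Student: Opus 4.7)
The plan is to reduce Corollary \ref{multiplicity integral character} to Corollary \ref{multiplicity of irreducible in standard} (which handles the case $\lambda = 0$) by twisting the entire geometric picture with a $G$-equivariant invertible $\mc{O}_X$-module. Since $\lambda$ is integral, the line bundle $\mc{O}(\lambda)$ on $X$ is well-defined and $N$-equivariant. Moreover, because $\lambda$ is regular antidominant, $\alpha^\vee(\lambda - \rho) \leq -2$ for every $\alpha \in \Pi$, so $\lambda - \rho$ is also regular antidominant; this is precisely what is needed to apply Theorem \ref{global sections of costandards} and Theorem \ref{global sections of irreducibles} to the shifted weight and to guarantee that $\Gamma(X, -)$ is an exact equivalence of the corresponding categories.

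First I would set up the twisting equivalence
\[
(-) \otimes_{\mc{O}_X} \mc{O}(\lambda) \colon \mc{M}_{coh}(\mc{D}_{-\rho}, N, \eta) \xrightarrow{\sim} \mc{M}_{coh}(\mc{D}_{\lambda - \rho}, N, \eta),
\]
which is an equivalence of abelian categories because $\mc{D}_{\lambda - \rho}$ agrees with the twist of $\mc{D}_{-\rho} = \mc{D}_X$ by the invertible sheaf $\mc{O}(\lambda)$. Using the projection formula of the appendix together with the definition of standard sheaves, this functor sends $\mc{I}(w^C, -\rho, \eta)$ to $\mc{I}(w^C, \lambda - \rho, \eta)$. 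Since twisting by an invertible sheaf intertwines the holonomic duality functors on $\mc{D}_X$ and $\mc{D}_{\lambda - \rho}$, it also sends the costandard $\mc{M}(w^C, -\rho, \eta)$ to $\mc{M}(w^C, \lambda - \rho, \eta)$ and the irreducible $\mc{L}(w^D, -\rho, \eta)$ to $\mc{L}(w^D, \lambda - \rho, \eta)$. In particular the composition multiplicity of $\mc{L}(w^D, \lambda - \rho, \eta)$ in $\mc{M}(w^C, \lambda - \rho, \eta)$ coincides with the multiplicity of $\mc{L}(w^D, -\rho, \eta)$ in $\mc{M}(w^C, -\rho, \eta)$, which is $\mu_{CD}$ by the sheaf-side computation underlying Corollary \ref{multiplicity of irreducible in standard}.

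Next I would transport this sheaf-side multiplicity statement to the module side. Because $\lambda - \rho$ is regular antidominant, $\Gamma(X, -)$ is an exact equivalence between $\mc{M}_{coh}(\mc{D}_{\lambda - \rho}, N, \eta)$ and $\mc{M}_{fg}(\mc{U}_\theta, N, \eta)$, so it preserves composition multiplicities. By Theorem \ref{global sections of costandards} and Theorem \ref{global sections of irreducibles} applied at $\lambda - \rho$,
\[
\Gamma(X, \mc{M}(w^C, \lambda - \rho, \eta)) = M(w^C(\lambda - \rho), \eta), \quad \Gamma(X, \mc{L}(w^D, \lambda - \rho, \eta)) = L(w^D(\lambda - \rho), \eta),
\]
and therefore the multiplicity of $L(w^D(\lambda - \rho), \eta)$ in $M(w^C(\lambda - \rho), \eta)$ is $\mu_{CD}$, as claimed.

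The main step to verify carefully is the compatibility of the twist $(-) \otimes \mc{O}(\lambda)$ with the formation of standard, costandard, and irreducible $\eta$-twisted Harish-Chandra sheaves. The compatibility with standards is an immediate consequence of the projection formula for the inclusion $i_{w^C}$ of a Bruhat cell; the compatibility with costandards then follows from the definition $\mc{M}(Q, \tau) = \mc{I}(Q, \tau^*)^*$ together with the identity $(\mc{F} \otimes \mc{O}(-\lambda))^* = \mc{F}^* \otimes \mc{O}(\lambda)$, and the compatibility with irreducibles follows because irreducibility is preserved by both the twist and the duality. Once these compatibilities are established, the remainder of the argument is formal.
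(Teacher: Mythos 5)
Your proposal is correct and follows essentially the same route as the paper: twist by the $G$-equivariant line bundle $\mc{O}(\lambda)$, use the projection formula to match standard sheaves (hence irreducibles, since each standard has a unique irreducible subsheaf), and then apply Theorems \ref{global sections of costandards} and \ref{global sections of irreducibles} at the regular antidominant parameter $\lambda-\rho$. The only cosmetic difference is that you spell out the compatibility of the twist with costandards and with duality, whereas the paper works directly with the Grothendieck-group identity $[\mc{I}(w^C,-\rho,\eta)]=[\mc{M}(w^C,-\rho,\eta)]$ from Corollary \ref{multiplicity of irreducible in standard}.
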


\begin{proof}
From Corollary \ref{multiplicity of irreducible in standard}, we know that in the Grothendieck group of \linebreak $\mc{M}_{coh}(\mc{D}_{-\rho}, N, \eta)$, 
\[
[\mc{I}(w^C, -\rho, \eta)]=[\mc{M}(w^C, -\rho, \eta)]=\sum_{D \in \W} \mu_{CD} [\mc{L}(w^D, -\rho, \eta)].
\]
Moreover, by the projection formula (Proposition \ref{Projection Formula}), we have $\mc{I}(w^C, -\rho, \eta)(\lambda) = \mc{I}(w^C, \lambda - \rho, \eta)$, which in turn implies that $\mc{L}(w^C, -\rho, \eta)=\mc{L}(w^C, \lambda-\rho, \eta)$ since the twist functor $-(\lambda)$ must send irreducible objects in $\mc{M}_{coh}(\mc{D}_{-\rho},N, \eta)$ to irreducible objects in $\mc{M}_{coh}(\mc{D}_{\lambda - \rho}, N, \eta)$ and each standard $\eta$-twisted Harish-Chandra sheaf has a unique irreducible subsheaf. By Theorem \ref{global sections of costandards} this implies the result.
\end{proof}

Establishing the same multiplicity results for standard Whittaker modules of arbitrary infinitesimal character requires further analysis, which we will examine in future work. It is of note that the proof of Theorem \ref{KLalgorithm} immediately implies that the coefficients of the Whittaker Kazhdan--Lusztig polynomials $P_{CD}$ are non-negative integers. 
\begin{corollary}
The coefficients of the polynomials $P_{CD}$ from Theorem \ref{KLalgorithm} are non-negative integers. 
\end{corollary}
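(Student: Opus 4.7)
The plan is to read off non-negativity directly from the geometric realization $\varphi(C) = \nu(\mc{L}_C)$ established in Proposition \ref{existence}. The key observation is that the coefficients of $P_{CD}$, viewed as polynomials in $q$, are built from $\mc{O}$-dimensions of certain cohomology sheaves, which are manifestly non-negative integers.

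First I would unpack the definition: by the formula
\[
\nu(\mc{L}_C) = \sum_{D \in \W} \sum_{m \in \Z} \dim_\mc{O}(R^m i^!_{w^D}(\mc{L}_C))\, q^m \delta_D,
\]
and by Theorem \ref{KLalgorithm}(i) together with Proposition \ref{existence}, the polynomial $P_{CD}$ is precisely
\[
P_{CD}(q) = \sum_{m \in \Z} \dim_\mc{O}(R^m i^!_{w^D}(\mc{L}_C))\, q^m.
\]
Since each $R^m i^!_{w^D}(\mc{L}_C)$ is an $N$-equivariant $\eta$-twisted connection on the Bruhat cell $C(w^D)$, it is isomorphic to a direct sum of finitely many copies of $\mc{O}_{C(w^D)}$ (as noted just before equation (\ref{nu})), so $\dim_\mc{O}(R^m i^!_{w^D}(\mc{L}_C)) \in \Z_{\geq 0}$ for every $m \in \Z$ and every pair $C,D \in \W$. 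This immediately gives the non-negativity of the coefficients of $P_{CD}$.

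There is no real obstacle here: the entire content is the identification of $P_{CD}$ with a generating function of $\mc{O}$-dimensions, which was already established in the course of proving Proposition \ref{existence}. The combinatorial uniqueness (Lemma \ref{uniqueness}) guarantees that the polynomials produced by the algorithm coincide with the geometric ones, so the geometric non-negativity transfers to the purely combinatorially defined $P_{CD}$.
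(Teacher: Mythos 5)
Your proof is correct and takes exactly the same approach as the paper's one-line proof, which simply cites Proposition \ref{existence} and the definition of $\nu$. You have just spelled out the details: identifying $P_{CD}(q)$ as the generating function $\sum_m \dim_\mc{O}(R^m i^!_{w^D}(\mc{L}_C))\,q^m$ and noting that $\mc{O}$-dimensions are non-negative integers.
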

\begin{proof}
This follows immediately from Proposition \ref{existence} and the definition of $\nu$. 
\end{proof}


\section{Whittaker Kazhdan--Lusztig polynomials}
\label{Whittaker Kazhdan-Lusztig polynomials}

This section relates the Whittaker Kazhdan--Lusztig polynomials $P_{CD}$ of Theorem \ref{KLalgorithm} to the combinatorics of Kazhdan--Lusztig polynomials appearing in \cite{Soergel97} and \cite[Ch. 5 \S2 \S3]{localization}. We also describe a duality between the Kazhdan--Lusztig algorithm for Whittaker modules established in Section \ref{A Kazhdan-Lusztig algorithm} and the Kazhdan-Lusztig algorithm for generalized Verma modules established in \cite[Ch. 6 \S3 Thm. 3.5]{localization}, following the philosophy of dual Hecke algebra modules laid out in \cite[\S12 \S13]{VoganIV}. To make these associations, we need to introduce the Hecke algebra into our story.


\subsection{The Hecke algebra} 
\label{The Hecke algebra}

Let $(W,S)$ be a Coxeter system with length function $\ell:W \rightarrow \mathbb{N}$.

\begin{definition}
The \emph{Hecke algebra} $\mc{H}=\mc{H}(W,S)$ of the Coxeter system $(W,S)$ is the associative algebra over $\Z[q, q^{-1}]$ with generators $\{H_s\}_{s \in S}$ satisfying the relations
\begin{enumerate}[label=(\roman*)]
\item (quadratic) 
\[
(H_s+q)(H_s-q^{-1})=0 \text{ for all } s \in S, \text{ and }
\] 
\item (braid) for each pair $s,t \in S$, 
\[
H_sH_tH_s \cdots = H_t H_s H_t \cdots  \\
\]
with $m_{st}$ elements on each side of the equality. (Here $m_{st}$ is the order of $st$ in $W$.) 
\end{enumerate}
\end{definition}

All $H_s$ for $s \in S$ are invertible with $H_s^{-1}=H_s+(q-q^{-1})$. For $w \in W$, we choose a reduced expression $rs\cdots t$ of $w$ and define $H_w \in \mc{H}$ by $H_rH_s \cdots H_t$. This element is independent of choice of reduced expression. If $\ell(w)+\ell(v)=\ell(wv)$, then we have $H_wH_v=H_{wv}$. There is exactly one ring homomorphism 
\begin{align*}
d: \mc{H}& \rightarrow \mc{H} \\
H & \mapsto \overline{H}
\end{align*}
such that $\overline{q}=q^{-1}$ and $\overline{H}_w=(H_{w^{-1}})^{-1}$. This is clearly an involution. We say that $H \in \mc{H}$ is \emph{self-dual} if $\overline{H}=H$. For each $s \in S$, the element $C_s:=H_s+q$ is self-dual. Indeed, $\overline{C_s}=(H_s)^{-1}+q^{-1}=H_s+q=C_s$.


\subsection{$\mathcal{H}_\Theta$ is a Hecke algebra module}
\label{H_Theta is a Hecke algebra module}

Now we return to the setting of Section \ref{A Kazhdan-Lusztig algorithm}. Let $W$ be the Weyl group of a reduced root system $\Sigma$ with simple roots $\Pi \subset \Sigma$ and corresponding simple reflections $S \subset W$. Then $(W,S)$ is a Coxeter system. Let $\Theta \subset \Pi$ be a fixed subset of simple roots and let $\mc{H}_\Theta = \bigoplus_{C \in \W} \Z[q,q^{-1}] \delta_C$ be the $\Z[q,q^{-1}]$-module from Theorem \ref{KLalgorithm}. Recall that for each $\alpha \in \Pi$ we defined a $\Z[q,q^{-1}]$-linear endomorphism $T_\alpha$ of $\mc{H}_\Theta$ by 
\[
T_\alpha(\delta_C)=
\begin{cases}
0 & \text{ if }Cs_\alpha=C \\
q\delta_C+\delta_{Cs_\alpha} & \text{ if } Cs_\alpha>C \\
q^{-1}\delta_C + \delta_{Cs_\alpha} & \text{ if } Cs_\alpha<C 
\end{cases}.
\]
Our first observation is that the operators $\{T_\alpha\}_{\alpha \in \Pi}$ give an action of the Hecke algebra of $(W,S)$ on $\mc{H}_\Theta$. Indeed, if we define $S_\alpha:=T_\alpha-q$, then a computation shows that $S_\alpha$ satisfies both the quadratic and braid relations of the Hecke algebra, thus the map $\psi: \mc{H} \rightarrow \End_{\Z[q, q^{-1}]}(\mc{H}_\Theta)$ given by $\psi(H_{s_\alpha})=S_\alpha$ gives $\mc{H}_\Theta$ the structure of a left $\mc{H}$-module. The map $\psi$ sends  the self-dual basis element $C_{s_\alpha} \in \mc{H}$ described in the previous section to the endomorphism $T_\alpha$. 

This extra structure will allow us to relate Theorem \ref{KLalgorithm} to the results in \cite[\S2 \S3]{Soergel97}. Our first step is to establish a relationship between $\mc{H}_\Theta$ and a certain induced right $\mc{H}$-module (the antispherical module for the Hecke algebra) in order to extend the duality in $\mc{H}$ given by the involution $d$ to a duality in $\mc{H}_\Theta$. If $S_\Theta \subset S$ is the subset of simple reflections corresponding to $\Theta \subset \Pi$, then the subalgebra $\mc{H}^\Theta$ of $\mc{H}$ generated by $\{H_{s_\alpha}\}$ for $\alpha \in \Theta$ is isomorphic to the Hecke algebra of the Coxeter system $(W_\Theta, S_\Theta)$. The surjection $\mc{H}^\Theta \twoheadrightarrow \Z[q, q^{-1}]$ sending $H_{s_\alpha} \mapsto -q$ gives $\Z[q,q^{-1}]$ the structure of a $\mc{H}^\Theta$-bimodule, and with this bimodule structure we can form the induced right $\mc{H}$-module
\[
\mc{N}^\Theta:=\Z[q,q^{-1}] \otimes _{\mc{H}^\Theta} \mc{H}.
\]
This is the \emph{antispherical module} of the Hecke algebra $\mc{H}$. Note that in the special case $\Theta = \emptyset$, $\mc{N}^\Theta$ is the Hecke-algebra $\mc{H}$ as a module over itself with the right regular action. The set $\{N_w:=1 \otimes H_w\}$ for minimal coset representatives $w \in C \in \W$ forms a basis for $\mc{N}^\Theta$ as a $\Z[q,q^{-1}]$-module. 

\begin{remark}
\label{spherical module}
By instead using the surjection $\mc{H}^\Theta \twoheadrightarrow \Z[q,q^{-1}]$ given by $H_{s_\alpha} \mapsto q^{-1}$ to form the $\mc{H}^\Theta$-bimodule structure on $\Z[q,q^{-1}]$, it is possible to construct another induced right $\mc{H}$-module $\mc{M}^\Theta:=\Z[q,q^{-1}]\otimes_{\mc{H}^\Theta}\mc{H}$ \cite[\S3]{Soergel97}. This is the {\em spherical module} of the Hecke algebra $\mc{H}$. This module also has the property that $\mc{M}^\emptyset = \mc{H}$. By an analogous argument to the one below, one can show that the Kazhdan--Lusztig combinatorics of generalized Verma modules (as described in \cite[Ch. 6 \S3]{localization}) is given by the spherical $\mc{H}$-module. 
\end{remark}

One can compute \cite{Soergel97} that the action of $C_s$ on $\mc{N}^\Theta$ for $s \in S$ is given by 
\[
N_w C_s = 
\begin{cases}
0 & \text{ if } ws \in C\\
qN_w + N_{ws} & \text{ if } ws>w \text{ and }ws \not \in C \\
q^{-1} N_w+ N_{ws} & \text{ if } ws<w \text{ and } ws \not \in C 
\end{cases}.
\]
Therefore, there is a $\Z[q,q^{-1}]$-module isomorphism
\begin{align*}
\phi: \mc{H}_\Theta &\rightarrow \mc{N}^\Theta \\
\delta_C & \mapsto N_{w_\Theta w^C}
\end{align*}
which intertwines the left $\mc{H}$-action on $\mc{H}_\Theta$ with the right $\mc{H}$-action on $\mc{N}^\Theta$. That is, for $E \in \mc{H}_\Theta$, $\phi(C_{s_\alpha}E)=\phi(E)C_{s_\alpha}$. Here $w_\Theta$ is the longest element in $W_\Theta$. 

Note that in the special case $\Theta = \emptyset$, this provides an $\Z[q,q^{-1}]$-module isomorphism between $\mc{H}_\emptyset$ and the Hecke algebra $\mc{H}$.\footnote{This justifies the notational choice in \cite[Ch. 5 \S2]{localization}, where the $\Z[q,q^{-1}]$-module $\mc{H}_\emptyset$ is referred to as $\mc{H}$.} The benefit of relating $\mc{H}_\Theta$ to this induced module is that it allows us to use the involution $d$ of $\mc{H}$ to construct an involution of the induced module, which we can then use to define self-duality in $\mc{H}_\Theta$. There is a homomorphism of additive groups 
\begin{align*}
\mc{N}^\Theta & \rightarrow \mc{N}^\Theta \\
a \otimes H & \mapsto \overline{a \otimes H}:= \overline{a} \otimes \overline{H}.
\end{align*}
This homomorphism has the property that $\overline{N}_e = N_e $ and 
\begin{equation}
\label{duality distribution}
\overline{NH}=\overline{N} \hspace{1mm} \overline{H}
\end{equation}
for all $N \in \mc{N}^\Theta$ and $H \in \mc{H}$. We say that an element $E \in \mc{H}_\Theta$ is \emph{self-dual} if the corresponding element in $\mc{N}^\Theta$ is fixed under this involution; that is, if $\overline{\phi(E)}=\phi(E)$. Since $\phi(T_\alpha(E))=\phi(E)C_{s_\alpha}$ for any $\alpha \in \Pi$ and $E \in \mc{H}_\Theta$ and $C_{s_\alpha}$ is self-dual in $\mc{H}$, property (\ref{duality distribution}) implies that $T_\alpha$ preserves self-duality. 


\subsection{The recursion relation in Theorem \ref{KLalgorithm} is equivalent to self-duality}
\label{Recursion is equivalent to duality}

The main content of this section is a proof that condition (ii) in Theorem \ref{KLalgorithm} is equivalent to $\varphi(C)$ being self-dual in the sense of the preceding section. 

\begin{theorem}
\label{KLalgorithmequivalence}
Let $\varphi: \W \rightarrow \mc{H}_\Theta$ be a function satisfying 
\begin{equation}
\label{KLcondition}
\varphi(C)=\delta_C+\sum_{D<C}P_{CD}\delta_D \text{ for } P_{CD} \in q\Z[q]
\end{equation}
for all $C \in \W$. Then the following are equivalent. 
\begin{enumerate}[label=(\roman*)]
\item If $\alpha \in \Pi$ and $C \in \W$ are such that $Cs_\alpha<C$, then there exist $m_D \in \Z$ such that 
\[
T_\alpha(\varphi(Cs_\alpha))=\sum_{D \leq C} m_D \varphi(D).
\]
\item All $\varphi(C)$ are self-dual. 
\end{enumerate}
\end{theorem}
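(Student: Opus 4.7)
The plan is to prove both implications by induction on length while tracking self-duality carefully; direction (i)$\Rightarrow$(ii) is straightforward, while the converse requires extracting integrality from symmetry under the bar involution.

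For (i)$\Rightarrow$(ii) I would induct on $\ell(w^C)$. The base case $C=W_\Theta$ is immediate, since $\varphi(W_\Theta)=\delta_{W_\Theta}$ maps under $\phi$ to $N_e \in \mc{N}^\Theta$, which is $d$-invariant. For the inductive step, pick $\alpha \in \Pi$ with $Cs_\alpha<C$ and compare $\delta_C$-coefficients in the recursion $T_\alpha(\varphi(Cs_\alpha))=\sum_{D\leq C}m_D\varphi(D)$ to force $m_C=1$: the only $\delta_C$-contribution on the left is $\delta_C \subset T_\alpha(\delta_{Cs_\alpha}) = q\delta_{Cs_\alpha}+\delta_C$, and on the right only $\varphi(C)$ contributes. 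Rearranging gives $\varphi(C)=T_\alpha(\varphi(Cs_\alpha))-\sum_{D<C}m_D\varphi(D)$, whose right-hand side is self-dual because $\varphi(Cs_\alpha)$ and each $\varphi(D)$ with $D<C$ are self-dual by induction, the integer coefficients $m_D$ are fixed by $d$, and $T_\alpha$ preserves self-duality (Section \ref{H_Theta is a Hecke algebra module}).

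For (ii)$\Rightarrow$(i), fix $C,\alpha$ with $Cs_\alpha<C$ and set $A:=T_\alpha(\varphi(Cs_\alpha))$, which is self-dual. Since $\{\varphi(D)\}_{D\leq C}$ is unitriangular in $\{\delta_D\}_{D\leq C}$, I can uniquely expand $A=\sum_{D\leq C}h_D\varphi(D)$ with $h_D \in \Zq$; the containment of the $\delta$-support of $A$ in $\{D:D\leq C\}$ follows from a Z-lemma argument on longest coset representatives ($w^E \leq w^{Cs_\alpha}$ implies $\max(w^E,w^E s_\alpha) \leq w^C$) together with the standard fact that $F\leq G$ in $\W$ whenever some $f\in F$ satisfies $f\leq w^G$ in the Weyl group. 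Uniqueness of the expansion combined with self-duality of $A$ and of each $\varphi(D)$ forces $\overline{h_D}=h_D$, so $h_D \in \Z[q+q^{-1}]$.

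The technical heart of the proof is upgrading $h_D \in \Z[q+q^{-1}]$ to $h_D \in \Z$. I would accomplish this by downward induction on $D$ in the coset order, starting from $h_C=1$. Case analysis of the explicit formula for $T_\alpha$ shows that the $\delta_D$-coefficient of $A$ is $0$ when $Ds_\alpha=D$, lies in $q\Z[q]$ when $Ds_\alpha>D$, and lies in $\Z[q]$ when $Ds_\alpha<D$ (the last using $q^{-1}\cdot q\Z[q] \subset \Z[q]$). On the other side the same coefficient equals $h_D+\sum_{D<D'\leq C}h_{D'}P_{D'D}$, whose tail lies in $q\Z[q]$ by the inductive assumption $h_{D'}\in \Z$ and the normalization $P_{D'D}\in q\Z[q]$. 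In the first two cases this forces $h_D \in q\Z[q]\cap \Z[q+q^{-1}] = \{0\}$, hence $h_D=0$; in the third case $h_D \in \Z[q]\cap \Z[q+q^{-1}]=\Z$. The main obstacle will be the case bookkeeping around boundary cosets ($D=C$, $D=Cs_\alpha$, $Ds_\alpha=D$) and verifying the Z-lemma argument for coset support; the clean payoff is that the degree constraint $P_{CD}\in q\Z[q]$ is exactly tight enough to cut $\Z[q+q^{-1}]$ down to $\Z$.
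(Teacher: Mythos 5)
Your proof is correct. The forward direction (i)$\Rightarrow$(ii) matches the paper's argument exactly: force $m_C=1$ by comparing $\delta_C$-coefficients, rearrange, and induct on $\ell(w^C)$ using that $T_\alpha$ preserves self-duality. For the converse you take a genuinely different, more computational route. The paper defines a candidate $\widetilde{\varphi}(C):=T_\alpha(\varphi(Cs_\alpha))-\sum_{D<C}Q_{CD}(0)\varphi(D)$, checks that it is self-dual and satisfies the normalization (\ref{KLcondition}), and concludes $\widetilde{\varphi}(C)=\varphi(C)$ from the observation that a self-dual element of $\sum_{C} q\Z[q]\delta_C$ vanishes; the integers in (i) are then read off as the constant terms $Q_{CD}(0)$. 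You instead expand $A=T_\alpha(\varphi(Cs_\alpha))$ directly in the $\varphi$-basis, use semilinearity of the bar involution together with uniqueness of the unitriangular expansion to get $\overline{h_D}=h_D$, and then pin each $h_D$ down to $\Z$ (indeed to $0$ unless $Ds_\alpha<D$) by comparing $\delta_D$-coefficients in a downward induction. Both arguments ultimately rest on the same elementary fact, $q\Z[q]\cap\Z[q+q^{-1}]=\{0\}$ (and its companion $\Z[q]\cap\Z[q+q^{-1}]=\Z$); the paper applies it once to a difference of elements of $\mc{H}_\Theta$, while you apply it coefficientwise. Your version is longer but yields the extra information that $m_D=0$ whenever $Ds_\alpha\geq D$, and it makes explicit the support bookkeeping ($E\leq Cs_\alpha$ forces $E\leq C$ and $Es_\alpha\leq C$ via the lifting property and the fact that $w^{Es_\alpha}=w^Es_\alpha$) that the paper leaves implicit when it writes $T_\alpha(\varphi(Cs_\alpha))=\delta_C+\sum_{D<C}Q_{CD}\delta_D$ with $Q_{CD}\in\Z[q]$. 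All the case analysis you describe checks out, so I see no gap.
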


\begin{proof}
Assume that (i) holds, and take $C$ and $\alpha$ such that $Cs_\alpha<C$. Using the definition of $T_\alpha$ we compute 
\begin{align*}
T_\alpha(\varphi(Cs_\alpha))&=T_\alpha(\delta_{Cs_\alpha}+\sum_{E<Cs_\alpha}P_{Cs_\alpha E}\delta_E)\\
&= \delta_C + q\delta_{Cs_\alpha} + \sum_{E<Cs_\alpha}P_{Cs_\alpha E}T_\alpha(\delta_E)\\
&= \delta_C + \sum_{D<C} Q_{CD} \delta_C
\end{align*}
for some $Q_{CD} \in \Z[q]$. Therefore, $m_C=1$. Thus, for any $\alpha \in \Pi$ such that $Cs_\alpha<C$,
\begin{equation}
\label{selfdual}
\varphi(C)=T_\alpha(\varphi(Cs_\alpha))-\sum_{D<C}m_D\varphi(D).
\end{equation}
Now we show that all $\varphi(C)$ are self-dual by induction in $\ell(w^C)$. If $C=W_\Theta$, then $\varphi(W_\Theta)=\delta_{W_\Theta}$ is self-dual because $\phi(\delta_{W_\Theta})=1 \otimes H_e$ and $\overline{H}_e=H_e$ in $\mc{H}$. Assume $\varphi(D)$ is self-dual for all $D<C$. Then because $T_\alpha$ preserves self-duality, equation (\ref{selfdual}) implies that $\varphi(C)$ is self-dual. We conclude that (i) implies (ii). 

Now let $\varphi:\W \rightarrow \mc{H}_\Theta$ be a function satisfying equation \ref{KLcondition} and condition (ii). For $C\in \W$, choose $\alpha\in \Pi$ such that $Cs_\alpha <C$. If no such $\alpha$ exists, then (i) is void and we are done. If such an $\alpha$ does exist, we have 
\[
T_\alpha(\varphi(Cs_\alpha))=\delta_C+\sum_{D<C}Q_{CD}\delta_D \
\]
for appropriately chosen $Q_{CD} \in \Z[q]$. Define 
\[
\widetilde{\varphi}(C):=T_\alpha(\varphi(Cs_\alpha))-\sum_{D<C}Q_{CD}(0)\varphi(D).
\]
The function $\widetilde{\varphi}$ satisfies equation (\ref{KLcondition}) and is self-dual by the fact that $T_\alpha$ preserves self-duality. Next we argue that there is a unique function satisfying both equation \ref{KLcondition} and condition (ii), and thus $\widetilde{\varphi}=\varphi$. First, observe that for any $E \in \sum_{C \in \W} q \Z[q] \delta_C$, self-duality implies $E=0$. Indeed, if $E=\sum_{C \in \W} R_C \delta_C$ and we let $C$ be maximal such that $R_C \neq 0$, then $\overline{\phi(E)}=\phi(E)$ implies that $\overline{R}_C=R_C$, which is impossible because $R_C \in q\Z[q]$. Therefore, if $\varphi':\W \rightarrow \mc{H}_\Theta$ and $\varphi:\W \rightarrow \mc{H}_\Theta$ are two functions satisfying equation (\ref{KLcondition}) and (ii), then $\varphi(C)-\varphi'(C) \in \sum_{C \in \W} q \Z[q] \delta_C$ is self-dual, so $\varphi(C)=\varphi'(C)$. 

We conclude that $\widetilde{\varphi}=\varphi$, and by rearranging we obtain
\[
T_\alpha(\varphi(Cs_\alpha))=\sum_{D\leq C}m_D \varphi(D) \text{ for } m_D=\begin{cases} Q_{CD}(0) & \text{ if } D<C \\ 1 & \text{ if } D=C \end{cases}. 
\]
Thus  (ii) implies (i).
\end{proof}

This establishes the relationship between the results in this paper and the results in \cite[\S2 \S3]{Soergel97}. In particular, it establishes that Theorem \ref{KLalgorithm} in this paper is equivalent to part 2 of Theorem 3.1 in \cite{Soergel97}. This allows us to explicitly compare Whittaker Kazhdan--Lusztig polynomials $P_{CD}$ to polynomials that have shown up elsewhere in the  literature under the name ``parabolic Kazhdan--Lusztig polynomials.'' We list these relationships now. 
\begin{remark}
\label{conversions}
\begin{enumerate}
\item The Whittaker Kazhdan--Lusztig polynomials $P_{CD}$ are equal to the polynomials $n_{y,x}$ in \cite{Soergel97} for $x=w_\Theta w^C$ and $y=w_\Theta w^D$. 
\item A normalization of $P_{CD}$ gives the parabolic Kazhdan--Lusztig polynomials in \cite{Deodhar}. The polynomials 
\[
(q^{\ell(w_\Theta w^D)} - q^{\ell(w_\Theta w^C)})P_{CD}
\]
are polynomials in the variable $v:=q^{-2}$, and they are precisely the polynomials $P^I_{(w_\Theta w^D)^{-1}, (w_\Theta w^D)^{-1}}$ in \cite{Deodhar} for $u=v$ and $W_\Theta=W_I$. 
\item In the special case where $\Theta = \emptyset$, the polynomials 
\[
(q^{\ell(v)}-q^{\ell(w)})P_{wv}
\]
are the Kazhdan--Lusztig polynomials as defined in \cite{KL}. 

\end{enumerate}
\end{remark}


\subsection{Duality of Whittaker modules and generalized Verma modules}
\label{Combinatorial duality of Whittaker modules and generalized Verma modules}

We conclude this paper by relating the Whittaker Kazhdan--Lusztig polynomials $P_{CD}$ to the polynomials arising in the Kazhdan--Lusztig algorithm for generalized Verma modules established in \cite[Ch. 6 \S3]{localization}. Generalized Verma modules are a class of parabolically induced highest weight modules for a Lie algebra. For details of their construction, see \cite[Ch. 6]{localization}. The main results of this section are equation (\ref{KLinversionWhittakergVerma}) which relates the algorithm in Theorem \ref{KLalgorithm} to the algorithm in \cite[Ch. 6 Thm. 3.5]{localization}, and Proposition \ref{dualityofKLpolys}, which provides a formula relating Whittaker Kazhdan--Lusztig polynomials to Kazhdan--Lusztig polynomials. By Theorem \ref{KLalgorithmequivalence}, Proposition \ref{dualityofKLpolys} is a special case of \cite[Prop. 3.4]{Soergel97}, but our proof is new, and independent of results in \cite{Soergel97}. Equation (\ref{KLinversionWhittakergVerma}) also recovers the Kazhdan--Lusztig inversion formulas of \cite{KL} as a special case. 

In \cite[Ch. 6 \S3]{localization}, Mili\v{c}i\'{c} establishes a Kazhdan--Lusztig algorithm for generalized Verma modules. We review his results here to establish their relationship with the Whittaker Kazhdan--Lusztig algorithm of this paper. Let $\mc{H}_\Theta=\bigoplus_{C \in \W} \Z[q,q^{-1}] \delta_C$ be the $\Z[q,q^{-1}]$-module from the preceding section. We can realize $\mc{H}_\Theta$ as a $\Z[q,q^{-1}]$-submodule of the $\Z[q,q^{-1}]$-module $\mc{H}_\emptyset=\bigoplus_{w \in W}\Z[q,q^{-1}]\delta_w$ by setting 
\[
\delta_C=\sum_{v \in W_\Theta}q^{\ell(v)}\delta_{vw^C}.
\]
For $\alpha \in \Pi$, let $T_\alpha^\emptyset:\mc{H}_\emptyset \rightarrow \mc{H}_\emptyset$ be the endomorphism defined by 
\[
T_\alpha^\emptyset(\delta_w) = \begin{cases} q\delta_w + \delta_{ws_\alpha} & \text{ if } ws_\alpha>w \\ 
q^{-1} \delta_w + \delta_{ws_\alpha} & \text{ if } ws_\alpha<w \end{cases},
\]
as in Section \ref{H_Theta is a Hecke algebra module}. We introduce $\emptyset$ into the notation here to emphasize that $T_\alpha^\emptyset$ is an endomorphism of $\mc{H}_\emptyset$.  A computation shows that the endomorphism $T_\alpha^\emptyset$ transforms $\delta_C$ in the following way:
\[
T_\alpha^\emptyset(\delta_C)=\begin{cases} (q+q^{-1})\delta_C & \text{ if }Cs_\alpha =C;\\
q \delta_C + \delta_{Cs_\alpha} & \text{ if } Cs_\alpha<C;\\ 
q^{-1} \delta_C + \delta_{Cs_\alpha} & \text{ if } Cs_\alpha>C.
\end{cases}
\]
It follows that $\mc{H}_\Theta$ is stable under $T_\alpha^\emptyset$, so $\mc{H}_\Theta$ is an $\mc{H}$-submodule of $\mc{H}_\emptyset$. In \cite[Ch. 6 \S3]{localization}, Mili\v{c}i\'{c} proves the following Kazhdan-Lusztig algorithm for generalized Verma modules. 
\begin{theorem}
\label{KLalgorithmgvermas}
\cite[Ch. 6 \S3 Thm. 3.5]{localization}
There exists a unique function $\varphi':\W \rightarrow \mc{H}_\Theta$ satisfying the following. 
\begin{enumerate}[label=(\roman*)]
\item For $C \in \W$, 
\[
\varphi'(C)=\delta_C+\sum_{D<C}P'_{CD}\delta_D 
\]
for $P_{CD}' \in q\Z[q]$, and 
\item for $\alpha \in \Pi$ such that $Cs_\alpha<C$, there exist integers $m'_D$ such that 
\[
T^\emptyset_\alpha(\varphi'(Cs_\alpha))=\sum_{D\leq C} m'_D \varphi'(D).
\]
\end{enumerate}
Furthermore, the polynomials $P_{CD}'$ are given by the Kazhdan--Lusztig polynomials for $(W,S)$ by 
\[
P_{CD}'=P_{w^Cw^D}.
\]
\end{theorem}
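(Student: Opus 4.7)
The plan parallels the proof of Theorem \ref{KLalgorithm} for Whittaker modules: establish uniqueness of $\varphi'$ combinatorially, then establish existence geometrically using a category of $P_\Theta$-equivariant $\mc{D}$-modules, and finally identify $P'_{CD}$ with classical Kazhdan--Lusztig polynomials by passing to global sections.

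For uniqueness, I would run the induction of Lemma \ref{uniqueness} verbatim with $T^\emptyset_\alpha$ in place of $T_\alpha$. The base case $C = W_\Theta$ forces $\varphi'(W_\Theta) = \delta_{W_\Theta}$; for $\ell(w^C) > \ell(w_\Theta)$, pick $\alpha \in \Pi$ with $Cs_\alpha < C$, expand $T^\emptyset_\alpha(\varphi'(Cs_\alpha))$ in the $\delta_D$-basis, and evaluate at $q=0$. The formula for $T^\emptyset_\alpha$ shows that the coefficient of $\delta_C$ in (ii) is automatically $1$, and the remaining $m'_D$ for $D < C$ are pinned down by the inductive hypothesis, yielding the recursion
\[
\varphi'(C) = T^\emptyset_\alpha(\varphi'(Cs_\alpha)) - \sum_{D<C} m'_D\, \varphi'(D).
\]

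For existence, I would replace the $\eta$-twisted $N$-equivariant setting of Section \ref{A Kazhdan-Lusztig algorithm} with the setting of $P_\Theta$-equivariant $\mc{D}_X$-modules. The irreducible objects of this category are indexed by $P_\Theta$-orbits on $X$, equivalently by cosets $C \in \W$. After defining standard sheaves $\mc{I}'_C$, irreducibles $\mc{L}'_C$, and an analogous map
\[
\nu' : \mc{M}_{coh}(\mc{D}_X, P_\Theta) \to \mc{H}_\Theta, \qquad \nu'(\mc{F}) = \sum_{C \in \W} \sum_{m \in \Z} \dim_\mc{O}(R^m i^!_{w^C}(\mc{F}))\, q^m\, \delta_C,
\]
one sets $\varphi'(C) := \nu'(\mc{L}'_C)$ and verifies (i), (ii) along the lines of Proposition \ref{existence}. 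Condition (i) follows from the same support and restriction arguments as in the Whittaker case, while (ii) follows from the semisimplicity of $U^0_\alpha(\mc{L}'_{Cs_\alpha})$---via the decomposition theorem applied to the $\mathbb{P}^1$-fibration $p_\alpha : X \to X_\alpha$---together with the long exact sequence in cohomology arising from the distinguished triangle relating $\mc{D}$-module restrictions to the open and closed strata inside $p_\alpha^{-1}(p_\alpha(C(w^D)))$. The appearance of the extra $(q+q^{-1})\delta_C$ term in $T^\emptyset_\alpha(\delta_C)$ when $Cs_\alpha = C$---in contrast to the Whittaker formula $T_\alpha(\delta_C) = 0$---is geometrically accounted for by the fact that in the $P_\Theta$-equivariant setting, $U^0_\alpha$ extends a module across the full $P_\Theta$-orbit rather than stopping at the single Bruhat cell that was open in a smaller orbit.

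For the identification $P'_{CD} = P_{w^C w^D}$, I would apply Beilinson--Bernstein localization at $\lambda = -\rho$: the global sections of $\mc{I}'_C$ and $\mc{L}'_C$ are, respectively, the generalized Verma module $M(w^C(-\rho), \mf{p}_\Theta)$ and the simple highest weight module $L(w^C(-\rho))$. The classical Kazhdan--Lusztig conjecture (proved by Beilinson--Bernstein and Brylinski--Kashiwara), together with the BGG resolution expressing a generalized Verma module in the Grothendieck group as an alternating sum of ordinary Vermas over the $W_\Theta$-orbit of the highest weight, gives the multiplicity $[M(w^C(-\rho), \mf{p}_\Theta) : L(w^D(-\rho))]$ as an explicit classical Kazhdan--Lusztig value. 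Matching this against $P'_{CD}(-1)$ computed from $\nu'$ yields $P'_{CD}(-1) = P_{w^C w^D}(-1)$, and the parity condition---proved by the argument of Lemma \ref{parity} applied to $\varphi'$---upgrades this numerical equality to equality of polynomials. The main obstacle in this outline is carrying out the $P_\Theta$-equivariant analogues of Propositions \ref{Intertwining Functors on Standard HC Sheaves}--\ref{Intertwining Functors on Costandard HC Sheaves} and Lemma \ref{semisimplicity}: each is expected to go through, but the careful tracking of the $W_\Theta$-symmetry introduced by the fibers of $X \to X_\Theta$ (which is precisely what the $q^{\ell(v)}$ coefficients in the embedding $\mc{H}_\Theta \hookrightarrow \mc{H}_\emptyset$ encode) requires some care.
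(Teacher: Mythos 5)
First, note that the paper does not actually prove this statement: Theorem \ref{KLalgorithmgvermas} is quoted from \cite[Ch.~6 \S3 Thm.~3.5]{localization}, and the surrounding text indicates the route taken there, namely that the unique solution is $\varphi'(C):=\varphi_\emptyset(w^C)$, the restriction of the $\Theta=\emptyset$ (ordinary Verma) solution to longest coset representatives. Existence then amounts to checking that $\varphi_\emptyset(w^C)$ lands in the submodule $\mc{H}_\Theta\subset\mc{H}_\emptyset$ and inherits the recursion, and the formula $P'_{CD}=P_{w^Cw^D}$ is immediate from the definition. Your uniqueness induction is fine and matches Lemma \ref{uniqueness}. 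Your existence argument via $P_\Theta$-equivariant $\mc{D}_X$-modules is a legitimate but much heavier alternative: it requires redoing Propositions \ref{Intertwining Functors on Standard HC Sheaves}--\ref{Intertwining Functors on Costandard HC Sheaves} and Lemma \ref{semisimplicity} in that category, which you acknowledge but do not carry out; one also has to address that, unlike in the Whittaker case (Lemma \ref{whittakercase}), $R^\bullet i_v^!(\mc{L}'_C)$ does not vanish for $v$ a non-longest coset representative, so one must verify that $\nu'$ defined only on the cells $C(w^D)$ still captures the full recursion (this is exactly what the embedding $\delta_D=\sum_{v\in W_\Theta}q^{\ell(v)}\delta_{vw^D}$ encodes).

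The genuine gap is in your final identification $P'_{CD}=P_{w^Cw^D}$. Knowing $P'_{CD}(-1)=P_{w^Cw^D}(-1)$ together with the parity condition does not determine the polynomials: parity says $P'_{CD}=q^{\ell(w^C)-\ell(w^D)}Q(q^2)$, and evaluation at $q=-1$ only recovers $Q(1)$, i.e.\ the sum of the coefficients (for instance $q^2$ and $q^4$ agree at $q=-1$ and have the same parity). Moreover, the multiplicity you would extract from the BGG-type resolution is an alternating sum $\sum_{v\in W_\Theta}(-1)^{\ell(v)}[M(vw^C(-\rho)):L(w^D(-\rho))]$ of classical Kazhdan--Lusztig data, and converting that into the single value $P_{w^Cw^D}(-1)$ already requires the inversion/Deodhar-type identity that is essentially the content of the theorem. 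The fix is both simpler and internal to your own setup: the irreducible $P_\Theta$-equivariant sheaf $\mc{L}'_C$, viewed as an $N$-equivariant $\mc{D}_X$-module, \emph{is} $\mc{L}(w^C,-\rho,0)$ (both are the unique irreducible extension of $\mc{O}_{C(w^C)}$ to $\overline{C(w^C)}$), so $\dim_\mc{O}R^mi^!_{w^D}(\mc{L}'_C)$ is by definition the coefficient of $q^m$ in $P_{w^Cw^D}$ from the $\Theta=\emptyset$ case of Theorem \ref{KLalgorithm}; the identification of polynomials is then a tautology, with no appeal to localization, the Kazhdan--Lusztig conjecture, or a parity upgrade.
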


Since Theorem \ref{KLalgorithm} specializes to the Kazdhan--Lusztig algorithm for Verma modules \cite[Ch. 5 \S2 Thm. 2.1]{localization} when $\Theta = \emptyset$, one can see from Mili\v{c}i\'{c}'s proof of Theorem \ref{KLalgorithmgvermas} that the unique function $\varphi':\W \rightarrow \mc{H}_\Theta$ satisfying Theorem \ref{KLalgorithmgvermas} is the function $\varphi'(D):=\varphi_\emptyset(w^D)$, where $\varphi_\emptyset:W \rightarrow \mc{H}_\emptyset$ is the unique function guaranteed by Theorem \ref{KLalgorithm} in the special case $\Theta=\emptyset$. The Kazhdan--Lusztig polynomials $P_{CD}'$ of Theorem \ref{KLalgorithmgvermas} describe the multiplicities of irreducible highest weight modules in generalized Verma modules \cite[Ch. 6 \S3 Cor. 3.7]{localization}. 

For arbitrary $\Theta \subset \Pi$, the Whittaker Kazhdan--Lusztig polynomials are inverse to the polynomials appearing in Theorem \ref{KLalgorithmgvermas} in the following sense. 
\begin{equation}
\label{KLinversionWhittakergVerma}
\sum_{E \in \W} (-1)^{\ell(w^E) + \ell(w^C)} P'_{Cw_0Ew_0} P_{DE} = \begin{cases} 1 & \text{ if } C=D \\ 0 & \text{ if } C \neq D \end{cases}. 
\end{equation}
This relationship appears as Proposition 3.9 in \cite{Soergel97}, where it is originally attribued to Douglass \cite{Douglass}. If we specialize to $\Theta =\emptyset$, then $\W=W$, equation (\ref{KLinversionWhittakergVerma}) recovers the Kazhdan--Lusztig inversion formulas. 
\begin{equation}
\label{KLinversionformula}
\sum_{u \in W} (-1)^{\ell(u) + \ell(w)} P_{wu} P_{vw_0 \hspace{1mm} uw_0} = \begin{cases} 1 & \text{ if } v=w \\ 0 & \text{ if } v \neq w \end{cases}. 
\end{equation}

We complete this section by describing the relationship between the Whittaker Kazhdan--Lusztig polynomials $P_{CD}$ and the Kazhdan--Lusztig polynomials in \cite{localization}. If $\Theta = \emptyset$, Theorem \ref{KLalgorithm} specializes the algorithm in \cite[Ch. 5 \S 2 Thm. 2.1]{localization}, and the polynomials $P_{wv}$ are the Kazhdan--Lusztig polynomials as defined in \cite{localization}. Note that these polynomials differ in normalization from the Kazhdan--Lusztig polynomials appearing in \cite{KL}; see Remark \ref{conversions}. The following formula relates Whittaker Kazhdan--Lusztig polynomials for general $\Theta$ to Kazhdan--Lusztig polynomials. 
\begin{proposition}
\label{dualityofKLpolys}
For $\Theta \subset \Pi$ arbitrary, 
\[
P_{CD}=\sum_{v \in W_\Theta} (-q)^{\ell(v)} P_{w_\Theta w^C \hspace{1mm} v w_\Theta w^D}.
\]
\end{proposition}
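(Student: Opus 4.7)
My plan is to construct a candidate function $\tilde\varphi:\W \to \mc{H}_\Theta$ matching the right-hand side of the desired formula and then invoke the uniqueness in Theorem \ref{KLalgorithm} to conclude $\tilde\varphi = \varphi$. The key tool is the canonical projection $\pi:\mc{H}_\emptyset \to \mc{H}_\Theta$ corresponding to the Hecke-algebra surjection $\mc{H} \twoheadrightarrow \mc{N}^\Theta$, $H \mapsto 1 \otimes H$, under the identifications $\mc{H}_\emptyset \simeq \mc{H}$ ($\delta_w \mapsto H_w$) and $\mc{H}_\Theta \simeq \mc{N}^\Theta$ (via $\phi$ from Section \ref{H_Theta is a Hecke algebra module}). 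Concretely, writing any $w \in W$ uniquely as $w = vw_D$ with $v \in W_\Theta$ and $w_D$ the shortest element of the coset $D$, one has $\pi(\delta_w) = (-q)^{\ell(v)}\delta_D$. Both $T_\alpha^\emptyset$ on $\mc{H}_\emptyset$ and $T_\alpha$ on $\mc{H}_\Theta$ are given by right multiplication by $C_{s_\alpha} = H_{s_\alpha}+q$ (a short check using the quadratic Hecke relation), so $\pi$ is a morphism of right $\mc{H}$-modules; in particular $\pi \circ T_\alpha^\emptyset = T_\alpha \circ \pi$. A direct computation also shows $\pi$ commutes with the bar involutions of Section \ref{H_Theta is a Hecke algebra module}.

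Define $\tilde\varphi(C) := \pi(\varphi_\emptyset(w_C))$, where $\varphi_\emptyset$ is the function from Theorem \ref{KLalgorithm} in the case $\Theta = \emptyset$. Expanding $\varphi_\emptyset(w_C) = \delta_{w_C} + \sum_{u < w_C} P_{w_C, u}\delta_u$ and grouping by coset (with each $u \in D$ written uniquely as $vw_D$, $v \in W_\Theta$), the coefficient of $\delta_D$ in $\tilde\varphi(C)$ is
\[
\tilde P_{CD} \;=\; \sum_{v \in W_\Theta} (-q)^{\ell(v)} P_{w_C,\, vw_D},
\]
where $P_{w_C,\, vw_D}$ is taken to be zero unless $vw_D \leq w_C$. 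Since $vw_D$ is a reduced factorization, $w_D$ is a subword of any reduced expression for $vw_D$, so $vw_D \leq w_C$ forces $w_D \leq w_C$, i.e.\ $D \leq C$; hence no $\delta_D$ with $D \not\leq C$ appears. The coefficient $\tilde P_{CC}$ reduces to $P_{w_C, w_C} = 1$ (any $v \neq e$ gives $vw_C > w_C$), and for $D < C$ every term lies in $q\Z[q]$ (the $v=e$ term uses $w_D < w_C$, while $v \neq e$ contributes via the factor $(-q)^{\ell(v)}$). This verifies Theorem \ref{KLalgorithm}(i) for $\tilde\varphi$. Moreover, $\varphi_\emptyset(w_C)$ is self-dual by Theorem \ref{KLalgorithmequivalence} in the case $\Theta = \emptyset$, hence so is $\tilde\varphi(C) = \pi(\varphi_\emptyset(w_C))$. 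Theorem \ref{KLalgorithmequivalence} then gives Theorem \ref{KLalgorithm}(ii) for $\tilde\varphi$, and the uniqueness in Theorem \ref{KLalgorithm} yields $\tilde\varphi = \varphi$. Substituting $w_C = w_\Theta w^C$ and $w_D = w_\Theta w^D$ in the formula for $\tilde P_{CD}$ completes the proof.

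The main obstacle is verifying carefully that $\pi$ simultaneously intertwines the right Hecke-module structures and the bar involutions. The intertwining with $T_\alpha$ and $T_\alpha^\emptyset$ is formal once both are identified as right multiplication by $C_{s_\alpha}$, and the bar compatibility reduces to checking well-definedness of the bar involution on $\mc{N}^\Theta = \Z[q,q^{-1}]\otimes_{\mc{H}^\Theta}\mc{H}$, a short calculation using $\overline{H_s} = H_s + (q - q^{-1})$. Once these intertwining properties are in hand, everything else reduces to bookkeeping with minimal coset representatives and the Bruhat order, and the appeal to Theorem \ref{KLalgorithmequivalence} bypasses any need to verify the recursion in Theorem \ref{KLalgorithm}(ii) directly.
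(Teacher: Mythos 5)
Your proof is correct, and it takes a genuinely different route from the paper's. The paper works with the inclusion $\mc{H}_\Theta \hookrightarrow \mc{H}_\emptyset$, $\delta_C \mapsto \sum_{v \in W_\Theta} q^{\ell(v)} \delta_{vw^C}$, together with the inversion formula (\ref{KLinversionWhittakergVerma}) (imported from Douglass and Soergel, not proved in the paper), and then performs matrix manipulations on the inverse of the generalized-Verma polynomial matrix to extract the formula. You instead go in the opposite direction: you use the quotient map $\pi:\mc{H}_\emptyset \twoheadrightarrow \mc{H}_\Theta$ induced by $\mc{H}\twoheadrightarrow \mc{N}^\Theta$, check that it is a morphism of $\mc{H}$-modules and commutes with the bar involutions, set $\tilde\varphi(C)=\pi(\varphi_\emptyset(w_C))$, verify property (i) directly, obtain property (ii) from self-duality via Theorem \ref{KLalgorithmequivalence}, and conclude by uniqueness in Theorem \ref{KLalgorithm}. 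A concrete advantage of your approach is that it is entirely self-contained relative to the paper: it depends only on Theorems \ref{KLalgorithm} and \ref{KLalgorithmequivalence}, and does not invoke the Douglass--Soergel inversion (\ref{KLinversionWhittakergVerma}), which the paper's proof does invoke even while claiming independence from \cite{Soergel97}. The price is a small amount of bookkeeping that the paper sidesteps by working in the opposite (embedding) direction.

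One step deserves an explicit citation or a one-line argument: you pass from $w_D \leq w_C$ (minimal coset representatives) to $D \leq C$, but the paper defines the order on $\W$ via longest representatives $w^C$. The equivalence of these two orders on parabolic double quotients is standard (it follows from the subword property applied to reduced factorizations $w^C=w_\Theta w_C$, since left-multiplication by $w_\Theta$ in this length-additive situation is order-preserving), but it is not proved in the paper and should be flagged. A minor terminological point: on $\mc{H}_\Theta$ the operators $T_\alpha$ define a \emph{left} $\mc{H}$-module structure, which $\phi$ then converts to the right regular action on $\mc{N}^\Theta$; your phrase ``right multiplication by $C_{s_\alpha}$'' is accurate only after the identifications, so it is worth saying explicitly that the intertwining of $T_\alpha^\emptyset$ with $T_\alpha$ is taking place through $\iota:\mc{H}_\emptyset\simeq\mc{H}$ and $\phi:\mc{H}_\Theta\simeq\mc{N}^\Theta$.
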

\begin{proof}
Fix an arbitrary $\Theta \subset \Pi$, and pick a total order compatible with the partial order on $\W$. From Theorem \ref{KLalgorithmgvermas} we see that $P_{CD}'=0$ for $D>C$ and $P'_{CD}=1$ if $C=D$, so the matrix $P=(P'_{CD})$ of polynomials with respect to our total order is lower triangular with 1's on the diagonal and coefficients in $\Z[q]$. The inverse matrix $Q=(Q_{CD})$ is also lower triangular with 1's on the diagonal and coefficients in $\Z[q]$. From equation (\ref{KLinversionWhittakergVerma}) we see that the coefficients $Q_{CD}$ of the inverse matrix are related to Whittaker Kazhdan--Lusztig polynomials in the following way:
\begin{equation}
\label{QandWhittaker}
Q_{CD}=(-1)^{\ell(w^C)+\ell(w^D)}P_{Dw_0Cw_0}.
\end{equation}
Then, if $\varphi_\emptyset:W \rightarrow \mc{H}_\emptyset$ is the unique function from Theorem \ref{KLalgorithm} corresponding to the subset $\Theta=\emptyset$, we have 
\begin{align*}
\sum_{D \in \W} Q_{CD}\varphi_\emptyset(w^D)&=\sum_{D \in \W}Q_{CD}\left( \sum_{E \in \W}P_{DE}'\delta_E \right) \\
&= \sum_{E \in \W} \left( \sum_{D \in \W} Q_{CD}P'_{DE} \right) \delta_E \\
&= \delta_C. 
\end{align*}
Here the polynomials $Q_{CD}$ correspond to our arbitrary fixed $\Theta$, and only the function $\varphi_\emptyset$ is specific to the special case $\Theta = \emptyset$. Now, if we specialize further to the case that our fixed $\Theta$ is $\Theta = \emptyset$, the computation above implies 
\begin{equation}
\label{specialsum}
\sum_{v \in W}Q_{wv}\varphi(v) = \delta_w.
\end{equation}
Then, because 
\[
\delta_C=\sum_{v \in W_\Theta} q^{\ell(v)}\delta_{vw^C}, 
\]
we have the following relationship: 
\begin{align*}
\sum_{D \in \W} Q_{CD}\varphi(w^D) & = \sum_{v \in W_\Theta} q^{\ell(v)}\delta_{vw^C} \\
&=\sum_{v \in W_\Theta} q^{\ell(v)} \left( \sum_{u \in W} Q_{vw^C\hspace{1mm}u}\varphi(u) \right) \\
&= \sum_{u \in W} \left( \sum_{v \in W_\Theta} q^{\ell(v)} Q_{vw^C \hspace{1mm}u} \right) \varphi(u).
\end{align*}
Here the second equality follows from equation (\ref{specialsum}). Since $\{\varphi(u): u \in W\}$ form a basis for $\mc{H}_\emptyset$ by Theorem \ref{KLalgorithm}, this implies that 
\[
Q_{CD}=\sum_{v \in W_\Theta} q^{\ell(v)}Q_{vw^C\hspace{1mm}w^D}. 
\]
Thus, since $\ell(vw^C)=\ell(w^C)-\ell(v)$ for $v \in W_\Theta$ by \cite[Ch. 6 \S1 Lem. 1.8]{localization}, an application of equation (\ref{QandWhittaker}) for the special case $\Theta = \emptyset$ results in the following formula: 
\begin{equation}
\label{Qs}
Q_{CD}=(-1)^{\ell(w^C)+\ell(w^D)}\sum_{v \in W_\Theta}(-1)^{\ell(v)} q^{\ell(v)}P_{w^Dw_0 \hspace{1mm} vw^Cw_0}.
\end{equation}
The element $w^Cw_0$ is the shortest element of the coset $Cw_0$, so it is equal to $w_\Theta w^{Cw_0}$ by \cite[Ch. 6 \S1 Thm. 1.4]{localization}. The proposition then follows by combining equation (\ref{Qs}) with equation (\ref{QandWhittaker}).
\end{proof}


\appendix
\section{Geometric preliminaries}
\label{Geometric preliminaries}

In this appendix we record some some fundamental results about functors between categories of modules over twisted sheaves of differential operators which play a critical role in the arguments of Sections \ref{Geometric description of Whittaker modules} and \ref{A Kazhdan-Lusztig algorithm}. For a detailed treatment of this subject, see \cite{HMSWI,D-modules,localization}. 


\subsection{Twisted sheaves of differential operators}
\label{Twisted sheaves of differential operators}

 Let $X$ be a smooth complex algebraic variety of dimension $n$. Denote by $\mc{O}_X$ the structure sheaf of $X$, $\mc{D}_X$ the sheaf of differential operators on $X$, $\mc{T}_x$ the tangent sheaf on $X$, $\Omega_X$ the cotangent sheaf on $X$, and $\omega_X$ the invertible $\mc{O}_X$-module of differential $n$-forms on $X$. Denote by $i_X:\mc{O}_X \rightarrow \mc{D}_X$ the natural inclusion. A \textit{twisted sheaf of differential operators on $X$} is a pair $(\mc{D}, i)$ of a sheaf $\mc{D}$ of associative $\C$-algebras with identity on $X$ and a homomorphism $i:\mc{O}_X \rightarrow \mc{D}$ of sheaves of $\C$-algebras with identity that is locally isomorphic to the pair $(\mc{D}_X, i_X)$.

For $f:Y \rightarrow X$ a morphism of smooth algebraic varieties and $\mc{D}$ a twisted sheaf of differential operators on $X$, we define
\[
\mc{D}_{Y \rightarrow X} =  \mc{O}_Y \otimes_{f^{-1}\mc{O}_X} f^{-1} \mc{D}. 
\]
Then $\mc{D}_{Y \rightarrow X}$ is a left $\mc{O}_Y$-module for left multiplication and a right $f^{-1}\mc{D}$-module for right multiplication on the second factor.  Denote by $\mc{D}^f$ the sheaf of differential $\mc{O}_Y$-module endomorphisms of $\mc{D}_{Y \rightarrow X}$ which are also $f^{-1}\mc{D}$-module endomorphisms. There is a natural morphism of sheaves of algebras $i_f: \mc{O}_Y \rightarrow \mc{D}^f$, and the pair $(\mc{D}^f, i_f)$ is a twisted sheaf of differential operators on $Y$.

Let $\mc{D}$ be a twisted sheaf of differential operators on $X$ and $\mc{L}$ an invertible $\mc{O}_X$-module. The \emph{twist} of $\mc{D}$ by $\mc{L}$ is the sheaf $\mc{D}^\mc{L}$  of differential $\mc{O}_X$-module endomorphisms of $\mc{L} \otimes_{\mc{O}_X} \mc{D}$ that commute with the right $\mc{D}$-action. Because $\mc{L} \otimes _{\mc{O}_X } \mc{D}$ is an $\mc{O}_X$-module for left multiplication, there is a natural homomorphism $i_\mc{L}:\mc{O}_X \rightarrow \mc{D}^\mc{L}$, and $(\mc{D}^\mc{L}, i_\mc{L})$ is a twisted sheaf of differential operators on $X$. If $f:Y \rightarrow X$ is a morphism of smooth algebraic varieties as above, $(\mc{D}^\mc{L})^f = (\mc{D}^f)^{f^*(\mc{L})}$.

 If $X$ is a homogeneous space for a group $G$ with Lie algebra $\mf{g}$, then a \textit{homogeneous twisted sheaf of differential operators} on $X$ is a triple $(\mc{D}, \gamma, \alpha)$, where $\mc{D}$ is a twisted sheaf of differential operators on $X$, $\gamma$ is the algebraic action of $G$ on $X$, and $\alpha:\ug \rightarrow \Gamma(X,\mc{D})$ is a morphism of algebras such that the following three conditions are satisfied: 
\begin{enumerate}[label=(\roman*)]
\item the multiplication in $\mc{D}$ is $G$-equivariant;
\item the differential of the $G$-action on $\mc{D}$ agrees with the action $T \mapsto [\alpha(\xi), T]$ for $\xi \in \mf{g}$ and $T \in \mc{D}$; and 
\item the map $\alpha: \mc{U}(\mf{g}) \rightarrow \Gamma(X, \mc{D})$ is a morphism of $G$-modules. 
\end{enumerate}  
For $x \in X$, denote by $B_x$ the stabilizer of $x$ in $G$ and $\mf{b}_x$ its Lie algebra. For each $B_x$-invariant linear form $\lambda \in \mf{b}_x^*$ one can construct a homogeneous twisted sheaf of differential operators $\mc{D}_{X,\lambda}$ \cite[App. A \S 1]{HMSWI} and all homogeneous twisted sheaves of differential operators on $X$ occur in this way. 

If $\mc{A}$ is a sheaf of $\C$-algebras  on $X$, we denote by $\mc{A}^\circ$ the opposite sheaf%
\index{opposite sheaf} of $\C$-algebras on $X$. Then if $(\mc{D}, i)$ is a twisted sheaf of differential operators on a smooth algebraic variety $X$, $(\mc{D}^\circ, i)$ is also a twisted sheaf of differential operators on $X$. In particular, the pair $(\mc{D}_X^\circ, i_X)$ is a twisted sheaf of differential operators, and it is naturally isomorphic to $(\mc{D}_X^{\omega_X}, i_{\omega_X})$. If $X$ is a homogeneous space and $\delta$ is the $B_x$-invariant linear form which is the differential of the representation of $B_x$ on the top exterior power of the cotangent space at $x$, then $(\mc{D}_{X,\lambda})^\circ$ is naturally isomorphic to $\mc{D}_{X, -\lambda + \delta}$. 


\subsection{Modules over twisted sheaves of differential operators}
\label{Modules over twisted sheaves of differential operators}

Let $\mc{D}$ be a twisted sheaf of differential operators on a smooth complex algebraic variety $X$. For a category $\mc{M}(\mc{D})$ of $\mc{D}$-modules, we denote by $\mc{M}_{qc}(\mc{D})$ (resp. $\mc{M}_{coh}(\mc{D})$) the corresponding category of quasicoherent (resp. coherent) $\mc{D}$-modules. We can view left $\mc{D}$-modules as right $\mc{D}^\circ$-modules and vice-versa. In other words, the category $\mc{M}^L_{qc}(\mc{D})$ of quasicoherent left $\mc{D}$-modules on $X$ is isomorphic to the category $\mc{M}_{qc}^R(\mc{D}^\circ)$ of quasicoherent right $\mc{D}^\circ$-modules on $X$. This relationship allows us to freely use right or left modules depending on the particular situation, and because of this, we frequently drop the exponents `L' and `R' from our notation. 

For a coherent $\mc{D}$-module $\mc{V}$, we can define the \textit{characteristic variety} Ch$\mc{V}$ of $\mc{V}$ in the same way as the non-twisted case \cite[Ch. III \S 3]{D-modulesnotes}. Because this construction is local, the results in the non-twisted case carry over to our setting. In particular, we have the following structure:
\begin{enumerate}[label=(\roman*)]
\item Ch$\mc{V}$ is a conical subvariety of the cotangent bungle $T^*(X)$. 
\item dim$(\text{Ch}\mc{V})\geq \text{dim}(X)$. 
\end{enumerate}
If dim$(\text{Ch}\mc{V})= \text{dim}(X)$, we say that $\mc{V}$ is a \textit{holonomic} $\mc{D}$-module%
\index{holonomic!$\mc{D}$-module}. Holonomic $\mc{D}$-modules form a thick subcategory $\mc{M}_{hol}(\mc{D})$ of $\mc{M}_{coh}(\mc{D})$. If $\mc{V}$ in $\mc{M}_{coh}(\mc{D})$ is coherent as an $\mc{O}_X$-module, we call $\mc{V}$ a \textit{connection}. Connections are locally free as $\mc{O}_X$-modules and their characteristic variety is the zero section of $T^*(X)$, so they are holonomic.

For an invertible $\mc{O}_X$-module $\mc{L}$ and a twisted sheaf $\mc{D}$ of differential operators on $X$, we define the \emph{twist functor} from $\mc{M}_{qc}^L(\mc{D})$ into $\mc{M}_{qc}^L(\mc{D}^\mc{L})$ by 
\[
\mc{V}\mapsto (\mc{L} \otimes_{\mc{O}_X} \mc{D}) \otimes_\mc{D} \mc{V}
\]
for $\mc{V} \in \mc{M}_{qc}^L(\mc{D})$. The twist functor is an equivalence of categories. 

For an abelian category $\mc{C}$, we use the notation $D(\mc{C})$ and $D^b(\mc{C})$ to refer to the derived category and bounded derived category of $\mc{C}$, respectively. We identify $\mc{C}$ with its image in $D(\mc{C})$ (resp. $D^b(\mc{C})$) under the natural embedding.

For a morphism $f: Y \rightarrow X$ of smooth algebraic varieties and a twisted sheaf $\mc{D}$ of differential operators on $X$, we define the \emph{inverse image functor} $f^+:\mc{M}_{qc}^L(\mc{D}) \rightarrow \mc{M}_{qc}^L(\mc{D}^f)$ by 
\[
f^+(\mc{V})=\mc{D}_{Y \rightarrow X} \otimes_{f^{-1}\mc{D}}f^{-1}\mc{V}
\]
for $\mc{V} \in \mc{M}_{qc}^L(\mc{D})$. In general $f^+$ is right exact with left derived functor $Lf^+$. If $f$ is an open immersion, then $f^+$ is exact and $f^+(\mc{V}) = \mc{V}|_Y$. If $f$ is a submersion, then $f^+$ is exact. We define the \emph{extraordinary inverse image functor} $f^!:D^b(\mc{M}_{qc}^L(\mc{D}))\rightarrow D^b(\mc{M}_{qc}^L(\mc{D}^f))$ by 
\[
f^!=Lf^+\circ [\text{dim}Y-\text{dim}X]. 
\] 
If $f$ is an immersion then $f^!$ is the right derived functor of the left exact functor $L^{\dim Y - \dim X} f^+:\mc{M}_{qc}^L(\mc{D})\rightarrow \mc{M}_{qc}^L(\mc{D}^f)$. In this setting, we refer to the functor $L^{\dim Y - \dim X} f^+$ as $f^!$, and for $\mc{V} \in \mc{M}_{qc}(\mc{D})$, we refer to the $k^{th}$-cohomology modules $H^kf^!(\mc{V})$ as $R^kf^!(\mc{V})$. 

We define the \emph{direct image functor} $f_+:D^b(\mc{M}_{qc}^R(\mc{D}^f)) \rightarrow D^b(\mc{M}_{qc}^R(\mc{D}))$ by
\[
f_+(\mc{W}^\cdot) = Rf_\bullet(\mc{W}^\cdot \otimes ^L_{\mc{D}^f} \mc{D}_{Y \rightarrow X}), 
\]
for $\mc{W}^\cdot \in D^b(\mc{M}^R(\mc{D}^f))$. Here $Rf_\bullet$ is the right derived functor of the sheaf-theoretic direct image functor $f_\bullet$. If $f$ is an immersion, $f_+$ is the right derived functor of the left exact functor $H^0 \circ f_+ \circ D: \mc{M}_{qc}^R(\mc{D}^f) \rightarrow \mc{M}_{qc}^R(\mc{D})$, where $D$ is the natural embedding of $\mc{M}_{qc}^R(\mc{D}^f)$ into the derived category $D(\mc{M}_{qc}^R(\mc{D}^f))$. In this setting, we refer to $H^0 \circ f_+ \circ D$ by $f_+$. If $f$ is an open immersion, then $f_+=Rf_\bullet$ is the sheaf-theoretic direct image. If $f$ is affine, then $f_+$ is exact. 

The relationship between the twist functor and the direct image functor is the following. 
\begin{proposition}
\label{Projection Formula}
(Projection Formula) Let $f:Y \rightarrow X$ be a morphism of smooth complex algebraic varieties, $\mc{D}$ a twisted sheaf of differential operators on $X$, and $\mc{L}$ be an invertible $\mc{O}_X$-module. Then the following diagram commutes. 
\begin{center}
\begin{tikzcd}
D(\mc{M}(\mc{D}^f)) \arrow[rightarrow]{r}{f_+}\arrow[rightarrow,swap]{d}{f^*(\mc{L}) \otimes_{\mc{O}_Y} -} 
  & D(\mc{M}(\mc{D}))\arrow[rightarrow]{d}{\mc{L}\otimes_{\mc{O}_X}-} \\
D(\mc{M}((\mc{D}^\mc{L})^f)) \arrow[rightarrow]{r}{f_+}  
  &D(\mc{M}(\mc{D}^\mc{L}))
\end{tikzcd}
\end{center}
\end{proposition}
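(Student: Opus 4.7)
The plan is to establish commutativity of the diagram by unwinding the definitions of $f_+$ and the twist functor, thereby reducing the claim to the classical sheaf-theoretic projection formula for $Rf_\bullet$. Recall that by definition, for $\mc{W}^\cdot \in D^b(\mc{M}(\mc{D}^f))$,
\[ f_+(\mc{W}^\cdot) = Rf_\bullet(\mc{W}^\cdot \otimes^L_{\mc{D}^f} \mc{D}_{Y\to X}), \]
and analogously on the bottom row of the diagram with $\mc{D}$ replaced by $\mc{D}^\mc{L}$. The twist functor $\mc{L} \otimes_{\mc{O}_X} -$ (and analogously $f^*\mc{L} \otimes_{\mc{O}_Y} -$) implements the equivalence between $\mc{M}(\mc{D})$ and $\mc{M}(\mc{D}^\mc{L})$ induced by the local isomorphism $\mc{D}^\mc{L} \cong \mc{L} \otimes_{\mc{O}_X} \mc{D} \otimes_{\mc{O}_X} \mc{L}^{-1}$.

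First, I would establish an identification of transfer bimodules,
\[ (\mc{D}^\mc{L})_{Y \to X} = \mc{O}_Y \otimes_{f^{-1}\mc{O}_X} f^{-1}(\mc{D}^\mc{L}) \cong f^*\mc{L} \otimes_{\mc{O}_Y} \mc{D}_{Y \to X} \otimes_{f^{-1}\mc{O}_X} f^{-1}\mc{L}^{-1}, \]
compatible with the left $(\mc{D}^\mc{L})^f$- and right $f^{-1}\mc{D}^\mc{L}$-actions. This follows by applying $\mc{O}_Y \otimes_{f^{-1}\mc{O}_X} f^{-1}(-)$ to the local isomorphism above, using that $\mc{L}$ is invertible (hence flat) to move tensor factors through. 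Combined with associativity, this yields a natural isomorphism
\[ (f^*\mc{L} \otimes_{\mc{O}_Y} \mc{W}^\cdot) \otimes^L_{(\mc{D}^\mc{L})^f} (\mc{D}^\mc{L})_{Y \to X} \cong (\mc{W}^\cdot \otimes^L_{\mc{D}^f} \mc{D}_{Y \to X}) \otimes_{f^{-1}\mc{O}_X} f^{-1}\mc{L} \]
of complexes of sheaves on $Y$.

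Second, applying $Rf_\bullet$ to both sides and invoking the classical sheaf-theoretic projection formula
\[ Rf_\bullet(\mc{F}^\cdot \otimes_{f^{-1}\mc{O}_X} f^{-1}\mc{L}) \cong Rf_\bullet(\mc{F}^\cdot) \otimes_{\mc{O}_X} \mc{L}, \]
which is valid because $\mc{L}$ is locally free of finite rank, gives the desired natural isomorphism $f_+(f^*\mc{L} \otimes_{\mc{O}_Y} \mc{W}^\cdot) \cong \mc{L} \otimes_{\mc{O}_X} f_+(\mc{W}^\cdot)$. One then checks that this natural transformation is an isomorphism of functors into $D(\mc{M}(\mc{D}^\mc{L}))$, not merely into the derived category of $\mc{O}_X$-modules, by verifying that the isomorphisms in the previous step were equivariant for the appropriate $\mc{D}^\mc{L}$-action induced via the identification $(\mc{D}^\mc{L})^f = (\mc{D}^f)^{f^*\mc{L}}$.

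The main obstacle will be carefully tracking the $\mc{D}$-bimodule structures through these identifications: since the twist by $\mc{L}$ involves $\mc{L}$ on one side and $\mc{L}^{-1}$ on the other, and since the $\mc{D}^f$-action on $\mc{D}_{Y \to X}$ comes from the right $f^{-1}\mc{D}$-factor while the $\mc{O}_Y$-structure sits on the left, one must verify at each step that the isomorphisms respect not only the underlying $\mc{O}$-module structures but also the appropriate twisted $\mc{D}$-actions. This bookkeeping is essentially routine but tedious, and is the reason the proposition is typically stated without a detailed proof.
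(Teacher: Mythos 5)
The paper records this proposition in the appendix as background, with no proof given (the reader is referred to \cite{HMSWI,D-modules,localization}), so there is no in-paper argument to compare against. Your proposal is the standard proof and is essentially correct: the canonical isomorphism $\mc{D}^{\mc{L}} \cong \mc{L}\otimes_{\mc{O}_X}\mc{D}\otimes_{\mc{O}_X}\mc{L}^{-1}$ (note this is global, not merely local, since $\mc{D}^{\mc{L}}$ is by definition the sheaf of endomorphisms of $\mc{L}\otimes_{\mc{O}_X}\mc{D}$ commuting with the right $\mc{D}$-action) yields the identification of transfer bimodules $(\mc{D}^{\mc{L}})_{Y\to X}\cong f^*\mc{L}\otimes_{\mc{O}_Y}\mc{D}_{Y\to X}\otimes_{f^{-1}\mc{O}_X}f^{-1}\mc{L}^{-1}$, after which the claim reduces to the $\mc{O}$-module projection formula for $Rf_\bullet$ against the locally free sheaf $\mc{L}$. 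Two small points deserve attention in the bookkeeping you defer. First, the isomorphism of algebras is canonical, which is what guarantees that the resulting natural transformation is well defined and $\mc{D}^{\mc{L}}$-linear rather than only $\mc{O}_X$-linear. Second, the left/right conventions are genuinely asymmetric: with the paper's definitions, $\mc{L}\otimes_{\mc{O}_X}-$ is the twist equivalence for \emph{left} modules, whereas a \emph{right} $\mc{D}$-module is carried to a right $\mc{D}^{\mc{L}}$-module by $-\otimes_{\mc{O}_X}\mc{L}^{-1}$ (equivalently, $(\mc{D}^{\mc{L}})^{\circ}=(\mc{D}^{\circ})^{\mc{L}^{-1}}$); since $f_+$ is defined in the appendix on right modules while the diagram is written with $\mc{L}\otimes-$, your middle isomorphism should either be run for left modules (converting $f_+$ via $\omega$ as usual) or restated with $f^{-1}\mc{L}^{-1}$ in place of $f^{-1}\mc{L}$ for right modules. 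This is exactly the "routine but tedious" verification you flag, and once the conventions are fixed consistently the argument goes through.
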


For a module $\mc{V} \in \mc{M}^R_{qc}(\mc{D})$, and a smooth subvariety $Y \subset X$, denote by $\Gamma_Y(\mc{V})$ the $\mc{D}$-module of local sections of $\mc{V}$ supported in $Y$. The functor $\Gamma_Y:\mc{M}^R_{qc}(\mc{D})\rightarrow \mc{M}^R_{qc}(\mc{D})$ is a left-exact functor, and we denote by $R\Gamma_Y:D^b(\mc{M}_{qc}^R(\mc{D}))\rightarrow D^b(\mc{M}_{qc}^R(\mc{D}))$ its right derived functor. The following equivalence of categories is very useful in computations.
\begin{theorem} (Kashiwara)
\label{kashiwara}
If $Y$ is a closed smooth subvariety of a smooth algebraic variety $X$, $i:Y \rightarrow X$ the natural immersion, and $\mc{D}$ a twisted sheaf of differential operators on $X$, then the functor 
\[
i_+:\mc{M}_{qc}^R(\mc{D}^i) \rightarrow \mc{M}^R_{qc}(\mc{D})
\]
establishes an equivalence of categories between $\mc{M}^R_{qc}(\mc{D}^i)$ and the full subcategory $\mc{M}^R_{qc,Y}(\mc{D})$ of $\mc{M}_{qc}(\mc{D})$ consisting of modules supported in $Y$. The quasiinverse of $i_+$ is $i^!$. In particular, if $\mc{V}$ is a quasicoherent $\mc{D}^i$-module, then $i^!(i_+(\mc{V}))=\mc{V}$, and if $\mc{U}$ is a quasicoherent $\mc{D}$-module, then $i_+(i^!(\mc{U}))=\Gamma_Y(\mc{U})$. 
\end{theorem}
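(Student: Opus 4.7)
The plan is to reduce Kashiwara's theorem in the twisted setting to the classical (untwisted) statement, which is local in nature. Since every twisted sheaf of differential operators $\mc{D}$ on $X$ is locally isomorphic to $\mc{D}_X$, and the formation of the functors $i_+$ and $i^!$ and of the category of modules supported on $Y$ are all local constructions on $X$, it suffices to establish the equivalence after passing to an open cover of $X$ that trivializes $\mc{D}$. After doing so, the statement becomes the standard Kashiwara equivalence for $\mc{D}_X$-modules on an affine chart.

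First I would set up the local model. Because $Y$ is a smooth closed subvariety of the smooth variety $X$, after shrinking $X$ we may choose \'etale coordinates $(x_1, \dots, x_n)$ such that $Y$ is cut out by $x_{k+1} = \cdots = x_n = 0$. On such a chart, $\mc{D}$ is isomorphic (as a twisted sheaf of differential operators) to $\mc{D}_X$, and $\mc{D}_{Y \rightarrow X}$ becomes the free $\mc{O}_Y$-module on monomials $\partial_{k+1}^{a_{k+1}} \cdots \partial_n^{a_n}$. The induced t.d.o.\ $\mc{D}^i$ is then identified with $\mc{D}_Y$. In this local model, $i_+(\mc{V})$ has the concrete description $\mc{V} \otimes_{\C} \C[\partial_{k+1}, \dots, \partial_n]$ as an $\mc{O}_Y$-module, with the $x_j$ for $j > k$ acting through the commutation relations $[x_j, \partial_j] = -1$.

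Next I would verify the two composition isomorphisms using this local description. For $\mc{V} \in \mc{M}_{qc}(\mc{D}^i)$, a direct computation of $i^!(i_+(\mc{V}))$ shows that the sheaf of sections annihilated by $x_{k+1}, \dots, x_n$ inside $i_+(\mc{V})$ is exactly $\mc{V} \otimes 1$, and the higher cohomologies $R^p i^!(i_+(\mc{V}))$ vanish by an explicit Koszul-type calculation, giving the isomorphism $i^! \circ i_+ \simeq \mathrm{id}$. For the converse, I would argue that for any $\mc{U} \in \mc{M}_{qc,Y}(\mc{D})$, local sections decompose according to the $x_j$-torsion filtration for $j > k$; iterating the exact sequence $0 \to \ker x_j \to \mc{U} \to x_j \mc{U} \to 0$ and using that $\mc{U}$ is set-theoretically supported on $Y$ shows that every section is annihilated by some power of each $x_j$, so that $\mc{U}$ is generated over $\mc{D}$ by $i_+$ applied to the $\mc{O}_X$-subsheaf of sections annihilated by $x_{k+1}, \dots, x_n$. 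This yields the natural isomorphism $i_+ \circ i^! \simeq \Gamma_Y$ on $\mc{M}_{qc,Y}(\mc{D})$ and, in combination with the first step, the full equivalence of categories.

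The main obstacle I anticipate is not the combinatorics of the two compositions—which is classical—but rather the bookkeeping needed to glue the local isomorphisms $\mc{D} \simeq \mc{D}_X$ into global, functorial statements. Two different local trivializations of $\mc{D}$ differ by an invertible function, and one must verify that this affects $i_+$ and $i^!$ compatibly so that the isomorphisms $i^! i_+ \simeq \mathrm{id}$ and $i_+ i^! \simeq \Gamma_Y$ constructed locally patch to global natural isomorphisms. I would handle this by checking that the construction of $i_+$ via $\mc{D}_{Y \rightarrow X} = \mc{O}_Y \otimes_{i^{-1}\mc{O}_X} i^{-1}\mc{D}$ and of $i^!$ as the derived functor of $L^{\dim Y - \dim X} i^+$ are manifestly covariant under isomorphisms of twisted sheaves of differential operators, so that the local equivalences assemble into a global one.
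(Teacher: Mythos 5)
The paper does not prove this statement at all: Theorem \ref{kashiwara} is recorded in Appendix \ref{Geometric preliminaries} as a classical background fact, with the reader referred to \cite{HMSWI,D-modules,localization}, so there is no in-paper argument to compare yours against. Your route --- note that $i_+$, $i^!$ and the condition of being supported in $Y$ are local on $X$, trivialize $\mc{D}\simeq \mc{D}_X$ on a chart where $Y$ is cut out by coordinates $x_{k+1},\dots,x_n$, identify $\mc{D}^i$ with $\mc{D}_Y$, and run the classical Kashiwara argument --- is the standard proof, and your treatment of the gluing issue is essentially the right one, though it can be said more cleanly: the natural transformations $i^!\circ i_+\rightarrow \mathrm{id}$ and $i_+\circ i^!\rightarrow \Gamma_Y$ are defined globally through $\mc{D}_{Y\rightarrow X}=\mc{O}_Y\otimes_{i^{-1}\mc{O}_X}i^{-1}\mc{D}$, so no patching of locally constructed isomorphisms is required; one only has to check that globally defined maps are isomorphisms, which is a local question, and locally the twist is invisible. (The switch between the left modules you compute with and the right modules in the statement is routine side-changing and harmless.)

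The one place your sketch elides the actual content is the isomorphism $i_+\circ i^!\simeq \Gamma_Y$ on $\mc{M}^R_{qc,Y}(\mc{D})$. That every local section of $\mc{U}$ is annihilated by a power of each $x_j$ does follow from quasicoherence and support in $Y$, but this alone does not yield that $\mc{U}$ is generated by --- much less isomorphic to --- $i_+$ of the subsheaf killed by the ideal; that implication is precisely the nontrivial assertion of Kashiwara's theorem, and your ``so that'' hides it. The standard way to close the gap is to reduce to codimension one by induction and use the eigenspace decomposition of $\mc{U}$ under the Euler operator $\theta_j=\partial_j x_j$: local nilpotence of $x_j$ forces the eigenvalues to be integers in a half line, $x_j$ and $\partial_j$ act as shift operators which are bijective away from the extreme eigenvalue, and the extreme eigenspace is exactly $\ker x_j$. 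This simultaneously gives surjectivity and injectivity of the natural map $i_+(i^!(\mc{U}))\rightarrow \mc{U}$, the vanishing of the other Koszul cohomologies of $Li^+$ on modules supported in $Y$, and the exactness of $i^!$ on $\mc{M}^R_{qc,Y}(\mc{D})$ needed for it to serve as a quasi-inverse between abelian categories. With that argument inserted, your proposal is a complete and correct proof of the quoted theorem.
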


Let $i:Y \rightarrow X$ be the immersion of a closed subvariety. If $\mc{J}_Y$ is the ideal of $\mc{O}_X$ consisting of germs vanishing on $Y$, we can define an increasing filtration of $\mc{D}_{Y \rightarrow X}$ by (left $\mc{D}^i$, right $i^{-1}\mc{O}_X$)-modules by 
\[
F_p\mc{D}_{Y \rightarrow X}  =\{T \in \mc{D}_{Y \rightarrow X}|T \varphi = 0 \text{ for } \varphi \in (\mc{J}_Y)^{p+1}\},
\]
for $p \in \Z_+$. We call this filtration the filtration by \textit{normal degree}.  By Kashiwara's theorem, it induces a natural $\mc{O}_X$-module filtration on $\mc{D}$-modules supported on $Y$. Namely, if $\mc{W} \in \mc{M}^R_{qc}(\mc{D}^i)$, 
\[
F_pi_+(\mc{W})=i_\bullet(\mc{W} \otimes_{\mc{D}^i} F_p\mc{D}_{Y \rightarrow X}). 
\]
The associated graded module has the form
\begin{equation}
\label{grading by normal degree}
Gr i_+(\mc{W})=i_\bullet(\mc{W} \otimes_{\mc{O}_Y} S(\mc{N}_{X|Y})),
\end{equation}
where $\mc{N}_{X|Y} = i^*(\mc{T}_X)/\mc{T}_Y$ denotes the normal sheaf of $Y$, and $S(\mc{N}_{X|Y})$ is the corresponding sheaf of symmetric algebras \cite[App. A \S 3.3]{HMSWI}. 

The interaction between $\mc{D}$-module functors and fiber products is captured by base change. 
\begin{theorem}
\label{basechange}
(Base Change Formula) Let $f: X \rightarrow Z$ and $g: Y \rightarrow Z$ be morphisms of smooth complex algebraic varieties such that the fiber product $X \times_Z Y$ is a smooth algebraic variety, and let $\mc{D}$ be a twisted sheaf of differential operators on $Z$. Then the commutative diagram 
\begin{center}
\begin{tikzcd}
X \times_Z Y \arrow[rightarrow]{r}{q}\arrow[rightarrow,swap]{d}{p} 
  & Y\arrow[rightarrow]{d}{g} \\
X \arrow[rightarrow]{r}{f}  
  &Z
\end{tikzcd}
\end{center}
determines an isomorphism 
\[
g^! \circ f_+ = q_+ \circ p^!
\]
of functors from $D^b(\mc{M}(\mc{D}^f))$ to $D^b(\mc{M}(\mc{D}^g))$. 
\end{theorem}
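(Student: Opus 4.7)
The plan is to reduce the base change formula to two standard cases via the graph factorization: any morphism $g: Y \to Z$ decomposes as $g = pr_Z \circ \Gamma_g$, where $\Gamma_g: Y \to Y \times Z$ is the graph (a closed immersion) and $pr_Z: Y \times Z \to Z$ is the projection (a smooth morphism). Functoriality of $f_+$ and $g^!$ with respect to composition, together with the associated decomposition of the fiber product $X \times_Z Y$ into two successive fiber product squares, then reduces the formula to the cases where $g$ is either smooth or a closed immersion. A preliminary step before either case is to construct the canonical identification $(\mc{D}^f)^p \simeq (\mc{D}^g)^q$ of twisted sheaves of differential operators on the fiber product; this follows from the functoriality of the twisting construction and the equality $f \circ p = g \circ q$, and it ensures that $g^! \circ f_+$ and $q_+ \circ p^!$ land in the same target category.

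Next I would construct a canonical natural transformation $q_+ p^! \to g^! f_+$ by adjunction and verify it is an isomorphism in each of the two reduced cases. In the smooth case, both $g^!$ and $p^!$ are exact up to a common shift by the relative dimension, and $f_+$, $q_+$ reduce to sheaf-theoretic direct images of quasi-coherent $\mc{O}$-modules tensored with the transfer bimodules $\mc{D}^f_{X \to Z}$ and $(\mc{D}^g)^q_{X \times_Z Y \to Y}$. The assertion then follows from the classical flat base change formula for quasi-coherent $\mc{O}$-modules applied to $f_\bullet$ and $q_\bullet$, combined with a natural isomorphism of the relevant transfer bimodules under pullback along $p$ and $q$. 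In the closed immersion case, Kashiwara's theorem (Theorem \ref{kashiwara}) identifies $\mc{M}_{qc}(\mc{D}^g)$ with the subcategory of $\mc{M}_{qc}(\mc{D})$ supported on $Y$, and analogously $\mc{M}_{qc}((\mc{D}^g)^q)$ with $\mc{D}^f$-modules supported on $X \times_Z Y = f^{-1}(Y)$. Under these identifications, $g^!$ becomes the local cohomology functor $R\Gamma_Y$, and the base change formula reduces to the compatibility of local cohomology with sheaf-theoretic direct image along $f$, which is a standard consequence of the commutation of $\Gamma_Y$ with direct image along a morphism landing in $Z$.

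The main obstacle will be the careful tracking of the twisted sheaves of differential operators and their transfer bimodules through each step — in particular, verifying that the canonical identification $(\mc{D}^f)^p \simeq (\mc{D}^g)^q$ intertwines the transfer bimodule structures that appear in the derived tensor products defining $f_+$ and $q_+$. Once this bookkeeping is handled, the proof is essentially local on $Z$: since a twisted sheaf of differential operators is by definition locally isomorphic to the untwisted one, each local verification reduces to the standard base change formula for $\mc{D}$-modules proved in \cite{D-modulesnotes} and \cite{localization}, and one only needs to check that the local isomorphisms are compatible with the transition data defining the twists.
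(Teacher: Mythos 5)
The paper does not actually prove Theorem \ref{basechange}: it appears in Appendix \ref{Geometric preliminaries}, which opens by saying it records ``fundamental results'' and refers the reader to \cite{HMSWI,D-modules,localization} for details, so there is no internal proof to compare your argument against. Taken on its own terms, your outline is the standard one: factor $g$ through its graph, identify $(\mc{D}^f)^p\simeq(\mc{D}^g)^q$, reduce to smooth $g$ (flat base change plus a comparison of transfer bimodules) and to a closed immersion $g$ (Kashiwara), and this is indeed how the result is proved in the references the paper cites, modulo the extra bookkeeping needed because the sheaves of differential operators are twisted.

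A few places could use tightening. In the closed-immersion case, after Kashiwara reduces the claim to $R\Gamma_Y\circ f_+\simeq f_+\circ R\Gamma_{f^{-1}(Y)}$, you describe this as ``a standard consequence of the commutation of $\Gamma_Y$ with direct image along a morphism landing in $Z$'' --- but that phrase is just a restatement of the claim, not a reason for it. What actually carries the argument is the local cohomology triangle $R\Gamma_Y\to\mathrm{id}\to Rj_*j^*$ together with the (easy) base change for the open immersion $j:Z\setminus Y\hookrightarrow Z$ and the composition formula $(f\circ j')_+=f_+\circ j'_+$; you should say that explicitly so the reduction does not look circular. Second, you never indicate where the hypothesis that $X\times_Z Y$ be smooth enters: in your decomposition the intermediate fiber product is $X\times Y$, which is automatically smooth, and it is only the inner square (the closed-immersion step) where smoothness of $X\times_Z Y$ is needed to make sense of $(\mc{D}^g)^q$ and to apply Kashiwara. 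Finally, in the smooth case the statement that $f_+$ and $q_+$ ``reduce to sheaf-theoretic direct images of quasi-coherent $\mc{O}$-modules tensored with the transfer bimodules'' is slightly misleading, since these are derived tensor products and one still has to check that the relevant isomorphism of transfer bimodules is compatible with the derived tensor; this is routine but it is the content of the lemma, not a formality.
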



\subsection{Beilinson--Bernstein localization}
\label{Beilinson-Bernstein localization}

A key ingredient in this story is the localization theory of Beilinson and Bernstein, which we briefly review here. Full details can be found in \cite{BB,localization}. For the remainder of this appendix, let $\mf{g}$ be a complex reductive Lie algebra, $\mf{h}$ the abstract Cartan subalgebra of $\mf{g}$ \cite[\S 2]{D-modules}, and $X$ the flag variety of $\mf{g}$. Fix $\lambda \in \mf{h}^*$, and let $\theta$ be the Weyl group orbit of $\lambda$ in $\mf{h}^*$.  In \cite{BB}, Beilinson and Bernstein construct a twisted sheaf of differential operators $\mc{D}_\lambda$ on $X$ for each $\lambda \in \mf{h}^*$. (In the notation of Section \ref{Twisted sheaves of differential operators}, $\mc{D}_\lambda = \mc{D}_{X, \lambda + \rho}$.) They show that for any $\mu$ in the Weyl group orbit $\theta$ of $\lambda$, the global sections $\Gamma(X, \mc{D}_\mu)$ of $\mc{D}_\mu$ are equal to $\mc{U}_\theta$, which is the quotient of $\ug$ by the ideal in $\zg$ corresponding to $\theta$ under the Harish-Chandra homomorphism. This implies that the global sections functor $\Gamma$ maps quasicoherent $\mc{D}_\lambda$-modules into $\ug$-modules with infinitesimal character $\chi_\lambda$; that is, there is a left exact functor 
\[
\Gamma: \mc{M}_{qc}(\mc{D}_\lambda) \rightarrow \mc{M}(\mc{U}_\theta).
\]
Beilinson and Bernstein define a \emph{localization functor} 
\[
\Delta_\lambda: \mc{M}(\mc{U}_\theta) \rightarrow \mc{M}_{qc}(\mc{D}_\lambda)
\]
by $\Delta_\lambda(V) = \mc{D}_\lambda \otimes_{\mc{U}_\theta}V$ for $V \in \mc{M}(\mc{U}_\theta)$. The localization functor is right exact and is a left adjoint to $\Gamma$. In \cite{BB} it is shown that for antidominant regular $\lambda \in \mf{h}^*$, $\Delta_\lambda$ is an equivalence of categories, and its quasi-inverse is $\Gamma$.


\subsection{Translation functors}
\label{Translation functors}

Fix $\lambda \in \mf{h}^*$, and let $\mc{D}_\lambda$ be the corresponding homogeneous twisted sheaf of differential operators. Any $\mu$ in the weight lattice $P(\Sigma) = \{\lambda \in \mf{h}^* | \alpha^\vee(\lambda) \in \Z \text{ for all } \alpha \in \Sigma \}$  naturally determines a $G=\Int{\mf{g}}$-equivariant invertible $\mc{O}_X$-module $\mc{O}(\mu)$ on $X$. Twisting by $\mc{O}(\mu)$ defines a functor 
\[
-(\mu): \mc{M}(\mc{D}_\lambda) \rightarrow \mc{M}(\mc{D}_{\lambda + \mu})
\]
by $\mc{V}(\mu) = \mc{O}(\mu) \otimes_{\mc{O}_X} \mc{V}$ for $\mc{V} \in \mc{M}(\mc{D}_\lambda)$. We call this functor the \textit{geometric translation functor}. It is evidently an equivalence of categories, and it also induces an equivalence of categories on $\mc{M}_{qc}(\mc{D}_\lambda)$ (resp. $\mc{M}_{coh}(\mc{D}_\lambda)$) with $\mc{M}_{qc}(\mc{D}_{\lambda+ \mu})$ (resp. $\mc{M}_{coh}(\mc{D}_{\lambda+\mu})$).

\bibliographystyle{alpha}
\bibliography{RomanovWhittakerModules}
\end{document}